\selectfont\symbol{60}\fontencoding{\encodingdefault}}
\selectfont\symbol{62}\fontencoding{\encodingdefault}}
\newcommand{\Iota}{\mathrm{I}}
\newcommand{\assign}{:=}
\newcommand{\asterisk}{\mathord{*}}
\newcommand{\comma}{{,}}
\newcommand{\divides}{\mathrel{|}}
\newcommand{\mathD}{\mathrm{D}}
\newcommand{\mathd}{\mathrm{d}}
\newcommand{\nobracket}{}
\newcommand{\nocomma}{}
\newcommand{\tmcolor}[2]{{\color{#1}{#2}}}
\newcommand{\tmop}[1]{\ensuremath{\operatorname{#1}}}
\newcommand{\tmstrong}[1]{\textbf{#1}}
\newcommand{\tmtextbf}[1]{{\bfseries{#1}}}
\newcommand{\tmtextit}[1]{{\itshape{#1}}}
\newenvironment{enumeratealpha}{\begin{enumerate}[a{\textup{)}}] }{\end{enumerate}}
\newenvironment{enumerateroman}{\begin{enumerate}[i.] }{\end{enumerate}}
\newenvironment{itemizedot}{\begin{itemize} }{\end{itemize}}
\newtheorem{corollary}{Corollary}
\newtheorem{definition}{Definition}
\newtheorem{lemma}{Lemma}
\newtheorem{notation}{Notation}
\newtheorem{proposition}{Proposition}
\newtheorem{remark}{Remark}
\newtheorem{theorem}{Theorem}
\def\Xint#1{\mathchoice
{\XXint\displaystyle\textstyle{#1}}%
{\XXint\textstyle\scriptstyle{#1}}%
{\XXint\scriptstyle\scriptscriptstyle{#1}}%
{\XXint\scriptscriptstyle\scriptscriptstyle{#1}}%
\!\int}
\def\XXint#1#2#3{{\setbox0=\hbox{$#1{#2#3}{\int}$ }
\vcenter{\hbox{$#2#3$ }}\kern-.6\wd0}}
\def\dashint{\Xint-}
\newcommand{\bint}{\dashint}
\newcommand{\dint}{\backslash{\hspace{-.5em}\mathd}}
\newcommand{\VV}{\mathscr{C}}
\newcommand{\CF}{\mathscr{F}}
\newcommand{\CB}{\mathscr{B}}
\newcommand{\CS}{\mathscr{S}}
\newcommand{\CZ}{\mathscr{Z}}
\begin{document}

\title{A variational method for $\Phi^4_3$}

\author{N. Barashkov and M. Gubinelli }
\address{Hausdorff Center for Mathematics \&\\
Institute for Applied Mathematics\\
University of Bonn, Germany}

\date{April 8th, 2019}

\begin{abstract}
  We introduce an explicit description of the $\Phi^4_3$ measure on a bounded
  domain. Our starting point is the interpretation of its Laplace transform as
  the value function of a stochastic optimal control problem along the flow of
  a scale regularization parameter. Once small scale singularities have been
  renormalized by the standard counterterms, $\Gamma$-convergence allows to
  extend the variational characterization to the unregularized model.
\end{abstract}

\keywords{Constructive Euclidean quantum field theory, Bou{\'e}--Dupuis
formula, renormalization group, paracontrolled calculus,
$\Gamma$-convergence.}

{\maketitle}

\section{Introduction }

The $\Phi^4_d$ Gibbs measure on the $d$-dimensional torus $\Lambda = \Lambda_L
=\mathbb{T}_L^d = (\mathbb{R}/ (2 \pi L\mathbb{Z}))^d$ is the probability
measure $\nu$ obtained as the weak limit for $T \rightarrow \infty$ of the
family $(\nu_T)_{T > 0}$ given by
\begin{equation}
  \nu_T (\mathd \phi) = \frac{\exp [- V_T (\phi_T)]}{\CZ_T} \vartheta (\mathd
  \phi) \label{eq:gibbs},
\end{equation}
where
\[ V_T (\varphi) \assign \lambda \int_{\Lambda} (| \varphi (\xi) |^4 - a_T |
   \varphi (\xi) |^2 - b_T) \mathd \xi, \qquad \CZ_T \assign \int e^{- V_T
   (\phi_T)} \vartheta (\mathd \phi) \]
Here $\lambda \geqslant 0$ is a fixed constant, $\Delta$ is the Laplacian on
$\Lambda$, $\vartheta$ is the centered Gaussian measure with covariance $(1 -
\Delta)^{- 1}$, $\CZ_T$ is a normalization factor, $a_T, b_T$ given constants
and $\phi_T = \rho_T \ast \phi$ with $\rho_T$ some appropriate smooth and
compactly supported cutoff function such that $\rho_T \rightarrow \delta$ as
$T \rightarrow \infty$. The measures $\vartheta$ and $\nu_T$ are realized as
probability measures on $\CS' (\Lambda)$, the space of tempered distributions
on $\Lambda$. They are supported on the H{\"o}lder--Besov space $\VV^{(2 - d)
/ 2 - \kappa} (\Lambda)$ for all small $\kappa > 0$. The existence of the
limit $\nu$ is conditioned on the choice of a suitable sequence of
\tmtextit{renormalization constants} $(a_T, b_T)_{T > 0}$. The constant $b_T$
is not necessary, but is useful to decouple the behavior of the numerator from
that of the denominator in eq.~{\eqref{eq:gibbs}}.

\

The aim of this paper is to give a proof of convergence using a variational
formula for the partition function $\CZ_T$ and for the generating function of
the measure $\nu_T$. As a byproduct we obtain also a variational description
for the generating function of the limiting measure $\nu$ via
$\Gamma$-convergence of the variational problem. Let us remark that, to our
knowledge, it is the first time that such explicit description of the
unregulated $\Phi^4_3$ measure is available.

\

Our work can be seen as an alternative realization of
Wilson's~{\cite{wilson_renormalization_1983}} and
Polchinski's~{\cite{polchinski_renormalization_1984}} continuous
renormalization group (RG) method. This method has been made rigorous by
Brydges, Slade et
al.~{\cite{brydges_functional_1993,brydges_short_1995,brydges_lectures_2009}}
and as such witnesses a lot of progress and
successes~{\cite{brydges_renormalisation_2014_3,brydges_renormalisation_2014_2,bauerschmidt_renormalisation_2014,brydges_renormalisation_2014_1,brydges_renormalisation_2014,brydges_short_1995}}.
The key idea is the nonperturbative study of a certain infinite dimensional
Hamilton--Jacobi--Bellman equation~{\cite{brydges_mayer_1987}} describing the
effective, scale dependent, action of the theory. Here we avoid the analysis
involved by the direct study of the PDE by going to the equivalent stochastic
control formulation, well established and understood in finite
dimensions~{\cite{fleming_controlled_2005}}. The time parameter of the
evolution corresponds to an increasing amount of small scale fluctuations of
the Euclidean field and our main tool is a variational representation formula,
introduced by Bou{\'e} and Dupuis~{\cite{boue_variational_1998}}, for the
logarithm of the partition function interpreted as the value function of the
control problem. See also the related papers of
{\"U}st{\"u}nel~{\cite{ustunel_variational_2014}} and
Zhang~{\cite{zhang_variational_2009}} where extensions and further results on
the variational formula are obtained. The variational formula has been used by
Lehec~{\cite{lehec_representation_2013}} to prove some Gaussian functional
inequalities, following the work of Borell~{\cite{borell_diffusion_2000}}. In
this representation we can avoid the analysis of an infinite dimensional
second order operator and concentrate more on pathwise properties of the
Euclidean interacting fields. We are able to leverage techniques developed for
singular SPDEs, in particular the paracontrolled calculus developed
in~{\cite{gubinelli_paracontrolled_2015}}, to perform the renormalization of
various non-linear quantities and show uniform bounds in the $T \rightarrow
\infty$ limit.

\

Define the normalized free energy $\mathcal{W}_T$ for the cutoff $\Phi^4_3$
measure, as
\begin{equation}
  \mathcal{W}_T (f) \assign - \frac{1}{| \Lambda |} \log \int_{\CS' (\Lambda)}
  \exp [- | \Lambda | f (\phi) - V_T (\phi_T)] \vartheta (\mathd \phi)
  \label{eq:free-energy-T}
\end{equation}
where $f \in C \left( \CS' (\Lambda) ; \mathbb{R} \right)$ is a given
function. The main result of the paper is the following

\begin{theorem}
  \label{th:main}Let $d = 3$ and take a small $\kappa > 0$. There exist
  renormalization constants $a_T, b_T$ (which depend polynomially on
  $\lambda$) such that the limit
  \[ \mathcal{W} (f) \assign \lim_{T \rightarrow \infty} \mathcal{W}_T (f), \]
  exists for every $f \in C \left( \VV^{- 1 / 2 - \kappa} ; \mathbb{R}
  \right)$ with linear growth. Moreover the functional $\mathcal{W} (f)$ has
  the variational form
  \[ \mathcal{W} (f) = \inf_{u \in \mathbb{H}_a^{- 1 / 2 - \kappa}} \mathbb{E}
     \left[ f (W_{\infty} + Z_{\infty} (u)) + \Psi_{\infty} (u) + \lambda \|
     Z_{\infty} (u) \|_{L^4}^4 + \frac{1}{2} \| l (u) \|^2_{L^2 ([0, \infty)
     \times \Lambda)} \right] \]
  where
  \begin{itemizedot}
    \item $\mathbb{E}$ denotes expectations on the Wiener space of a
    cylindrical Brownian motion $(X_t)_{t \geqslant 0}$ on $L^2 (\Lambda)$
    with law $\mathbb{P}$;
    
    \item $\mathbb{W}$ a collection of polynomial functions of the Brownian
    motion $(X_t)_{t \geqslant 0}$ comprising a Gaussian process $(W_t)_{t
    \geqslant 0}$ such that $\tmop{Law}_{\mathbb{P}} (W_t) =
    \tmop{Law}_{\vartheta} (\phi_t)$;
    
    \item $\mathbb{H}_a^{- 1 / 2 - \kappa}$ is the space of predictable
    processes (wrt. the Brownian filtration) in $L^2 (\mathbb{R}_+ ; H^{- 1 /
    2 - \kappa})$;
    
    \item $(Z_t (u), l_t (u))_{t \geqslant 0}$ are explicit (non-random)
    functions of $u \in \mathbb{H}_a^{- 1 / 2 - \kappa}$ and $\mathbb{W}$;
    
    \item $\Psi_{\infty} (u)$ a nice polynomial (non-random) functional of
    $(\mathbb{W}, u)$, independent of $f$. 
  \end{itemizedot}
\end{theorem}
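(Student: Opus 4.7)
The plan is to apply the Boué--Dupuis variational formula to $\mathcal{W}_T (f)$ at fixed $T$, renormalize the resulting stochastic control problem by absorbing $a_T, b_T$ into Wick products, obtain uniform in $T$ a priori estimates using paracontrolled calculus, and finally identify the limit through $\Gamma$-convergence of the cost functionals.

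\textbf{Step 1 (representation).} Realize the free field $\phi$ under $\vartheta$ as the terminal value $W_\infty$ of a Gaussian martingale $W_t = \int_0^t J_s\, dX_s$ driven by the cylindrical Brownian motion $(X_t)_{t \ge 0}$ on $L^2 (\Lambda)$, where the smoothing operator $J_s$ is tuned so that $W_t$ has the same law as $\phi_t$. Apply the Boué--Dupuis formula of~\cite{boue_variational_1998} to the functional $F (\phi) = | \Lambda | f (\phi) + V_T (\phi_T)$ to obtain
\begin{equation*}
  \mathcal{W}_T (f) = \inf_{u} \mathbb{E} \left[ f (W_\infty + Z_\infty (u)) + \tfrac{1}{| \Lambda |} V_T \bigl((W_\infty + Z_\infty (u))_T\bigr) + \tfrac{1}{2 | \Lambda |} \int_0^\infty \| l_s (u) \|^2_{L^2}\, \mathd s \right],
\end{equation*}
where $Z_t (u) = \int_0^t l_s (u)\, \mathd s$ is the drift associated with the instantaneous control $l_s (u) = J_s u_s$, and $u$ ranges over predictable processes.

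\textbf{Step 2 (renormalization).} Expand $(W + Z)^4$ in $V_T$ by the multinomial theorem and reorganize the contractions by introducing the Wick polynomials $\llbracket W^n_T \rrbracket$. Choosing $a_T, b_T$ as polynomials in $\lambda$ matching the divergent contractions of $W_T^2$ and the sunset-type diagram of $\llbracket W_T^3\rrbracket$, the diverging parts are absorbed and the cost inside the expectation is rewritten as a sum of an explicit non-random polynomial functional $\Psi_T (u)$ of the enhanced noise $\mathbb{W}_T$ and the control $u$, together with the two positive terms $\lambda \| Z_T (u) \|^4_{L^4}$ and $\tfrac12 \|l(u)\|_{L^2}^2$ which provide coercivity.

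\textbf{Step 3 (uniform control via paracontrolled calculus).} The dangerous term is the singular pairing $\llbracket W_T^3 \rrbracket Z_T (u)$: the Wick cube has regularity $-3/2 - \kappa$ while $Z_T (u) \in H^{1 / 2 - \kappa}$, so the product is not defined by Young's theorem. Decompose $Z_T (u)$ paracontrolled by $\llbracket W_T^2\rrbracket$ (or another base process from $\mathbb{W}_T$) using the paraproduct machinery of~\cite{gubinelli_paracontrolled_2015}; the resonant product is then controlled by stochastic estimates on $\mathbb{W}_T$ (showing $\mathbb{W}_T \to \mathbb{W}$ in the appropriate Besov spaces) and commutator lemmas. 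Combining with the quartic coercivity produces a uniform bound, along any almost minimizing sequence, on $\|l (u^{(T)}) \|_{L^2} + \| Z_T (u^{(T)}) \|_{L^4}$ in terms of the growth of $f$, together with tightness in $\mathbb{H}_a^{- 1 / 2 - \kappa}$.

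\textbf{Step 4 ($\Gamma$-convergence and conclusion).} Write $F_T (u)$ for the full integrand at cutoff $T$ and $F (u)$ for its proposed limit. The $\liminf$ inequality $F (u) \le \liminf_T F_T (u^{(T)})$ for $u^{(T)} \rightharpoonup u$ in $\mathbb{H}_a^{- 1 / 2 - \kappa}$ follows from lower semicontinuity of the quadratic cost, Fatou's lemma, continuity of $f$, and the paracontrolled estimates of Step~3. For the recovery sequence one takes $u^{(T)} = u$ and shows $F_T (u) \to F (u)$ using the convergence of $\mathbb{W}_T$ to $\mathbb{W}$. Uniform coercivity from Step~3 allows restriction to a common sublevel set, so that $\inf F_T \to \inf F$, which is exactly the assertion of the theorem. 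The main obstacle is Step~3: the regularity budget for the Wick cube against the drift is tight, requiring a careful paracontrolled ansatz and the use of the quartic $L^4$ term to close the estimates uniformly in $T$.
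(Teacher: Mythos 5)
Your overall architecture (Bou\'e--Dupuis formula, Wick renormalization, uniform bounds, $\Gamma$-convergence) matches the paper's, but Step~3 contains a genuine gap. You propose to control the pairing $\bint \llbracket W_T^3\rrbracket Z_T(u)$ by a paracontrolled decomposition of $Z_T(u)$ together with stochastic estimates showing $\mathbb{W}_T\to\mathbb{W}$. This cannot work as stated: in $d=3$ the Wick cube $\llbracket W_T^3\rrbracket$ does \emph{not} converge to any random distribution as $T\to\infty$ (only constructs like $\mathbb{W}^{[3]}_T=\int_0^T J_s^2\mathbb{W}^3_s\,\mathd s$ do), so the pairing is genuinely divergent for a generic drift and no commutator or resonant-product estimate can produce a bound uniform in $T$. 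The paper's essential move, which your proposal omits, is a \emph{change of variables in the drift}: setting $w_t=u_t+\lambda\mathbb{1}_{t\leqslant T}\mathbb{W}^{\langle 3\rangle}_t$ cancels the divergent pairing against the quadratic cost $\tfrac12\|u\|^2_{\mathcal H}$, leaving the purely stochastic divergence $\int_0^T\bint(\mathbb{W}^{\langle 3\rangle}_t)^2\,\mathd t$ which is removed by the energy counterterm $\delta_T$. A second shift, $l_t=w_t+\lambda\mathbb{1}_{t\leqslant T}J_t(\mathbb{W}^2_t\succ Z_t^\flat)$ (with the low-frequency cutoff $Z^\flat$ to stay compatible with the $L^4$ coercivity), is needed to absorb the paraproduct part of $\bint\mathbb{W}^2_T Z_T^2$, and the associated resonant products $\mathbb{W}^2\circ\mathbb{W}^{[3]}$ and $(J_t\mathbb{W}^2)\circ(J_t\mathbb{W}^2)$ require the logarithmically divergent mass counterterm $\gamma_T$ beyond Wick ordering. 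Your Step~2 only absorbs drift-independent constants into $a_T,b_T$, which does not reach these drift-dependent divergences.

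A secondary but nontrivial issue is Step~4: after the change of variables the natural topology for the drifts is too weak for almost-sure compactness, and the paper must relax the variational problem to a space of joint laws $\overline{\mathcal X}$ on $\mathfrak S\times\mathcal L_w$, prove equicoercivity there, use Skorokhod representation plus Fatou for the $\Gamma$-liminf, and construct the recovery sequence via nontrivial approximation lemmas (the truncation $\mathbbm{1}_{\{t\leqslant T\}}u$ works only after regularizing so that $l^\infty(u)\in\mathcal H$, which is \emph{more} regular than $u$ itself). Taking $u^{(T)}=u$ directly and invoking lower semicontinuity on the space of drifts, as you suggest, does not suffice to exchange the limit with the infimum.
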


See Section~\ref{sec:three-d} and in particular Theorem~\ref{th:main-exact}
for precise definitions of the various objects and a more detailed statement
of this result. With respect to the notations in Lemma~\ref{pointwiseconv},
observe that
\[ f (W_{\infty} + Z_{\infty} (u)) + \Psi_{\infty} (u) = \Phi_{\infty}
   (\mathbb{W}, Z (u), K (u)), \]
where $K (u)$ is another functional of $(\mathbb{W}, u)$.

\

Theorem~\ref{th:main} implies directly the convergence of $(\nu_T)_T$ to a
limit measure $\nu$ on $\CS' (\Lambda)$. Taking $f$ in the linear dual of
$\VV^{- 1 / 2 - \kappa}$ it also gives the following formula for the Laplace
transform of $\nu$:
\begin{equation}
  \int_{\CS' (\Lambda)} \exp (- f (\phi)) \nu (\mathd \phi) = \exp (- |
  \Lambda | (\mathcal{W} (f / | \Lambda |) -\mathcal{W} (0))) .
  \label{eq:laplace}
\end{equation}

To our knowledge this is the first such explicit description (i.e. without
making reference of the limiting procedure). The difficulty is linked to the
conjectured singularity of the $\Phi^4_3$ measure with respect to the
reference Gaussian measure. Another possible approach to an explicit
description goes via integration by parts (IBP) formulas,
see~{\cite{albeverio_remark_2006}} for an early proof and a discussion of this
approach. More recently~{\cite{gubinelli_pde_2018}} gives a self--contained
proof of the IBP formula for any accumulation point of the $\Phi^4_3$ in the
full space. However is still not clear how to use these formulas directly to
obtain uniqueness of the measure and/or other properties (either on the torus
or on the more difficult situation of the full space). Therefore, while our
approach here is limited to the finite volume situation, it could be used to
prove additional results, like large deviations or weak universality very much
like for SPDEs, see
e.g.~{\cite{hairer_large_2016,hairer_large-scale_2018,furlan_weak_2018}}.

\

The parameter $L$, which determines the size of the spatial domain $\Lambda =
\Lambda_L$, will be kept fixed all along the paper and we will not attempt
here to obtain the infinite volume limit $L \rightarrow \infty$. For this
reason we will avoid to explicitly show the dependence of $\mathcal{W}_T$ with
$\Lambda$. However some care will be taken to obtain estimates uniform in the
volume $| \Lambda |$.

\

An easy consequence of the estimates needed to establish the main theorem is
the following corollary (well known in the literature, see
e.g.~{\cite{benfatto_ultraviolet_1980}}):

\begin{corollary}
  \label{corollary:energy-bounds-3d}There exists functions $E_+ (\lambda), E_-
  (\lambda)$ not depending on $| \Lambda |$, such that
  \[ \lim_{\lambda \rightarrow 0 +} \frac{E_{\pm} (\lambda)}{\lambda^3} = 0,
  \]
  and, for any $\lambda > 0$,
  \[ E_- (\lambda) \leqslant \mathcal{W}_T (0) \leqslant E_+ (\lambda) . \]
\end{corollary}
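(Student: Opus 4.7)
The plan is to apply the variational formula of Theorem~\ref{th:main} in its $T$-finite form at $f = 0$,
\[
\mathcal{W}_T(0) = \inf_{u \in \mathbb{H}_a^{-1/2-\kappa}} \mathbb{E}\!\left[\Psi_T(u) + \lambda\|Z_T(u)\|_{L^4}^4 + \tfrac12\|l(u)\|_{L^2}^2\right],
\]
and to extract $E_+$ from a judicious choice of $u$ and $E_-$ from uniform coercivity. All the analytic input is exactly what will have been developed in Section~\ref{sec:three-d} for the proof of the main theorem, so the work reduces to tracking the dependence on $\lambda$ and $|\Lambda|$.

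For the upper bound, I would take $u \equiv 0$. Since both $l(0) \equiv 0$ and $Z_T(0) \equiv 0$, this immediately yields $\mathcal{W}_T(0) \leq \mathbb{E}[\Psi_T(0)]$, and the right-hand side is a finite sum of expectations of renormalized Wick polynomials of $W_T$, each weighted by an explicit power of $\lambda$. The counterterms $a_T, b_T$ are chosen precisely so that the potentially divergent low-order contributions are compensated or have vanishing expectation; what remains is bounded uniformly in $T$ and $|\Lambda|$ by a function $E_+(\lambda)$ with $E_+(\lambda)/\lambda^3 \to 0$ as $\lambda \to 0^+$.

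For the lower bound, I need a $u$-independent, pointwise (in $\mathbb{W}$) bound on the integrand. Expanding $\Psi_T(u)$ according to its polynomial structure in $Z_T(u)$ and $l(u)$, with coefficients built from renormalized stochastic objects in $\mathbb{W}$, I would absorb each cross term via Young's inequality into the two manifestly positive summands $\lambda\|Z_T(u)\|_{L^4}^4$ and $\tfrac12\|l(u)\|_{L^2}^2$. This produces
\[
\Psi_T(u) + \lambda\|Z_T(u)\|_{L^4}^4 + \tfrac12\|l(u)\|_{L^2}^2 \geq -G(\lambda, \mathbb{W})
\]
with $G\geq 0$ independent of $u$, $T$, and $|\Lambda|$; taking expectation and infimum yields $\mathcal{W}_T(0) \geq -\mathbb{E}[G(\lambda, \mathbb{W})] =: E_-(\lambda)$.

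The main obstacle, just as in the underlying proof of Theorem~\ref{th:main}, is the bookkeeping: the Young exponents have to be chosen so that the negative powers $\lambda^{-\alpha}$ pulled out of each cross term combine with the $\lambda^{k}$ factors carried by the stochastic coefficients to give an overall bound that is subcubic as $\lambda\to 0^+$, while simultaneously keeping every estimate per unit volume (so that the volume dependence on the right is always absorbed by the $1/|\Lambda|$ prefactor built into $\mathcal{W}_T$). Once the main theorem's uniform estimates are in place, both verifications are routine.
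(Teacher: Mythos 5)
Your lower bound is the paper's argument: the pathwise estimates of Section~\ref{section:analytic} give
$\Phi_T (\mathbb{W}, Z, K) \geqslant - Q_T - \varepsilon (\lambda \| Z_T \|_{L^4}^4 + \tfrac{1}{2} \| l^T (u) \|_{\mathcal{H}}^2)$
with $\mathbb{E} [Q_T]$ of order $E (\lambda)$ uniformly in $T$ and $| \Lambda |$, hence $F_T (u) \geqslant -\mathbb{E} [Q_T]$ for every drift, which is eq.~{\eqref{eq:lower-bound}}. That half is fine.

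The upper bound contains a genuine gap. In the renormalized functional of Lemma~\ref{lemma:change-of-variables} the choice $u \equiv 0$ gives $Z (0) \equiv 0$ but \emph{not} $l^T (0) \equiv 0$: by eq.~{\eqref{eq:full-ansatz}}, $l^T_t (0) = \lambda \mathbb{1}_{t \leqslant T} \mathbb{W}^{\langle 3 \rangle}_t$, so that
\[ \mathbb{E} [\| l^T (0) \|^2_{\mathcal{H}}] = \lambda^2 \mathbb{E} \int_0^T \bint (\mathbb{W}^{\langle 3 \rangle}_t)^2 \mathd t, \]
which grows linearly in $T$ --- this is exactly the second--order energy divergence that is absorbed into $\delta_T$ in eq.~{\eqref{eq:choice-delta}}, not a quantity that is ``compensated or has vanishing expectation''. (Equivalently, in the unrenormalized form $F_T (0) =\mathbb{E} [| \Lambda |^{- 1} V_T (Y_T)]$ the divergent constant survives, so $u = 0$ cannot yield a $T$--uniform bound.) The entire content of the upper--bound half of Lemma~\ref{lemma:bounds} is the construction of a nontrivial competitor $\check{u}$ solving the fixed--point equation~{\eqref{eq:int-ub}}, chosen so as to cancel both $\lambda \mathbb{W}^{\langle 3 \rangle}$ and the rough part $\mathcal{U}_{>} \mathbb{W}^2$ of the paraproduct term, leaving only $l^T (\check{u}) = \lambda \mathbb{1}_{t \leqslant T} J_t \mathcal{U}_{\leqslant} \mathbb{W}^2_t \succ \theta_t I_t (\check{u})$; a Gronwall argument in weighted Besov spaces together with translation invariance then gives $\mathbb{E} \| I_T (\check{u}) \|^4_{L^4} +\mathbb{E} \| l^T (\check{u}) \|^2_{\mathcal{H}} \lesssim \lambda^4$ uniformly in $T$ and $| \Lambda |$, which is what produces $E_+ (\lambda) = o (\lambda^3)$. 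Note also that the simpler idea of choosing $\check{u}$ with $l (\check{u}) = 0$ fails for volume--uniformity (Gronwall would require the H{\"o}lder norm of $\mathbb{W}^2$, which is not bounded uniformly in $| \Lambda |$), which is why the decomposition $\mathbb{W}^2 = \mathcal{U}_{\leqslant} \mathbb{W}^2 + \mathcal{U}_{>} \mathbb{W}^2$ is unavoidable. Your proposal is missing this entire construction.
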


A similar statement for $d = 2$ will be sketched below in order to introduce
some of the ideas on which the $d = 3$ proof is based.

\

The construction of the $\Phi^4_{2, 3}$ measure in finite volume is basic
problem of constructive quantum field theory to which many works have been
devoted, especially in the $d = 2$ case. It is not our aim to provide here a
comprehensive review of this literature. As far as the $d = 3$ case is
concerned, let us just mention some of the results that, to different extent,
prove the existence of the limit as the ultraviolet (small scale)
regularization is removed. After the early work on Glimm and
Jaffe~{\cite{glimm_boson_1968,glimm_positivity_1973}}, in part performed in
the Hamiltonian formalism, all the subsequent research has been formulated in
the Euclidean setting: i.e. as the problem of construction and study of the
probability measures $\nu$ on a space of distributions.
Feldman~{\cite{feldman_lambda_1974}}, Park~{\cite{park_lambda_1975}}, Benfatto
et al.~{\cite{benfatto_ultraviolet_1980}}, Magn{\'e}n and
Seneor~{\cite{magnen_infinite_1976}} and finally Brydges et
al.~{\cite{brydges_new_1983}} obtained the main results we are aware of.
Recent advances in the analysis of singular SPDEs put forward by the invention
of regularity structures by M.~Hairer~{\cite{hairer_theory_2014}} and related
approaches~{\cite{gubinelli_paracontrolled_2015,catellier_paracontrolled_2013,otto_quasilinear_2016}}
or even RG--inspired ones~{\cite{kupiainen_renormalization_2016}}, have
allowed to pursue the stochastic quantization program to a point where now can
be used to prove directly the existence of the finite volume $\Phi^4_3$
measure in two different
ways~{\cite{mourrat_dynamic_2016,albeverio_invariant_2017}}. Uniqueness by
these methods requires additional efforts but seems at reach. Some results on
the existence of the infinite volume measure~{\cite{gubinelli_pde_2018}} and
dynamics~{\cite{gubinelli_global_2018}} have been obtained recently. For an
overview of the status of the constructive program wrt. the analysis of the
$\Phi^4_{2, 3}$ models the reader can consult the introduction
to~{\cite{albeverio_invariant_2017}} and~{\cite{gubinelli_pde_2018}}

\

This paper is organized as follows. In Section~\ref{sec:rg-flow} we set up
our main tool, the Bou{\'e}--Dupuis variational formula of
Theorem~\ref{th:variational}. Then, as a warmup exercise, we use the formula
to show bounds and existence of the $\Phi^4_2$ measure in
Section~\ref{sec:two-d}. We then pass to the more involved situation of three
dimensions in Section~\ref{sec:three-d} where we introduce the renormalized
variational problem. In Section~\ref{sec:bounds} we establish uniform bounds
for this new problem and in Section~\ref{sec:gamma-convergence} we prove
Theorem~\ref{th:main}. Section~\ref{section:analytic} and
Section~\ref{sec:stochastic} are concerned with some details of the analytic
and probabilistic estimates needed throughout the paper.
Appendix~\ref{sec:appendix-para} gather background material on functional
spaces, paraproducts and related functional analytic background material.

\medskip
\textbf{Acknowledgments.}
The authors would like to thank the Isaac Newton Institute
for Mathematical Sciences for support and hospitality during the program SRQ:
Scaling limits, Rough paths, Quantum field theory during which part of the
work on this paper was undertaken. This work was supported by the German DFG
via CRC 1060 and by EPSRC Grant Number EP/R014604/1.
\medskip

\medskip
\textbf{Conventions.}
  Let us fix some notations and objects. Let $\langle a \rangle = (1 + a^2)^{1
  / 2}$. Denote with $\CS (\Lambda)$ the space of Schwartz functions on
  $\Lambda$ and with $\CS' (\Lambda)$ the dual space of tempered
  distributions. The notation $\hat{f}$ or $\CF f$ stands for the space
  Fourier transform of $f$. \ In order to easily keep track of the volume
  dependence of various objects we normalize the Lebesgue measure on $\Lambda$
  to have unit mass. We denote the normalized integral and measure by
  \[ \bint f \assign \frac{1}{| \Lambda |} \int_{\Lambda} f, \quad \dint x =
     \frac{1}{| \Lambda |} \mathd x \]
  where $| \Lambda |$ is the volume of $\Lambda$. Norms in all the related
  functional spaces (Lebesgue, Sobolev and Besov spaces) are understood
  similarly normalized unless stated otherwise. The various constants
  appearing in the estimates will be understood uniform in $| \Lambda |$,
  unless otherwise stated. The constant $\kappa > 0$ represents a small
  positive number which can be different from line to line. The reader is
  referred to the Appendix for an overview of the functional spaces and the
  additional notations used in the paper.

\section{A stochastic control problem}\label{sec:rg-flow}

We begin by constructing a probability space $\mathbb{P}$ endowed with a
process $(Y_t)_{t \in [0, \infty]}$ belonging to $C ([0, \infty], \VV^{(2 - d)
/ 2 - \kappa} (\Lambda))$ and such that $\tmop{Law}_{\vartheta} (\phi_T) =
\tmop{Law}_{\mathbb{P}} (Y_T)$ for all $T \geqslant 0$ and
$\tmop{Law}_{\mathbb{P}} (Y_{\infty}) = \vartheta$.

Fix $\alpha < - d / 2$ and let $\Omega \assign C (\mathbb{R}_+ ; H^{-
\alpha})$, $(X_t)_{t \geqslant 0}$ the canonical process on $\Omega$ and $\CB$
the Borel $\sigma$--algebra of $\Omega$. On $\left( \Omega, \CB \right)$
consider the probability measure $\mathbb{P}$ which makes the canonical
process $X$ a cylindrical Brownian motion in $L^2 (\Lambda)$. In the following
$\mathbb{E}$ without any qualifiers will denote expectations wrt. $\mathbb{P}$
and $\mathbb{E}_{\mathbb{Q}}$ will denote expectations wrt. some other measure
$\mathbb{Q}$. On the measure space $\left( \Omega, \CB, \mathbb{P} \right)$
there exists a collection $(B_t^n)_{n \in (L^{- 1} \mathbb{Z})^d}$ of complex
(2-dimensional) Brownian motions, such that $\overline{B^n_t} = B^{- n}_t$,
$B^n_t, B^m_t$ independent for $m \neq \pm n$ and $X_t = | \Lambda |^{- 1 / 2}
\sum_{n \in (L^{- 1} \mathbb{Z})^d} e^{i \langle n, \cdot \rangle} B^n_t$.
Note that $X$ has a.s. trajectories in $C \left( \mathbb{R}_+, \VV^{- d / 2 -
\varepsilon} (\Lambda) \right)$ for any $\varepsilon > 0$ by standard
arguments.

Fix some $\rho \in C_c^{\infty} (\mathbb{R}_+, \mathbb{R}_+)$ such that $\rho
(0) = 1$. Let $\rho_t (x) \assign \rho (x / t)$ and
\[ \sigma_t (x) \assign (2 \dot{\rho}_t (x) \rho_t (x))^{1 / 2} = (- 2 (x / t)
   \rho (x / t) \rho' (x / t))^{1 / 2} / t^{1 / 2}, \]
where $\dot{\rho}_t$ is the partial derivative of $\rho_t$ with respect to
$t$. Consider the process $(Y_t)_{t \geqslant 0}$ defined by
\begin{equation}
  Y_t \assign \frac{1}{| \Lambda |^{1 / 2}} \sum_{n \in (L^{- 1}
  \mathbb{Z})^d} \int_0^t \frac{\sigma_s (n)}{\langle n \rangle} e^{i \langle
  n, \cdot \rangle} \tmop{dB}^n_s, \qquad t \geqslant 0. \label{eq:rep-Y}
\end{equation}
It is a centered Gaussian process with covariance
\begin{eqnarray*}
  \mathbb{E} [\langle Y_t, \varphi \rangle \langle Y_s, \psi \rangle] & = &
  \frac{1}{| \Lambda |} \sum_{n, m \in (L^{- 1} \mathbb{Z})^d} \mathbb{E}
  \left[ \int_0^t \frac{\sigma_u (n)}{\langle n \rangle} \tmop{dB}^n_u 
  \hat{\varphi} (n) \overline{\int_0^s \frac{\sigma_u (m)}{\langle m \rangle}
  \tmop{dB}^m_s  \hat{\psi} (m)} \right]\\
  & = & \frac{1}{| \Lambda |} \sum_{n \in (L^{- 1} \mathbb{Z})^d}
  \frac{\rho_{\min (s, t)}^2 (n)}{\langle n \rangle^2} \hat{\varphi} (n)
  \overline{\hat{\psi} (n)},
\end{eqnarray*}
for any $\varphi, \psi \in \CS (\Lambda)$ and $t, s \geqslant 0$, by Fubini
theorem and Ito isometry. By dominated convergence $\lim_{t \rightarrow
\infty} \mathbb{E} [\langle Y_t, \varphi \rangle \langle Y_t, \psi \rangle] =
| \Lambda |^{- 1} \sum_{n \in (L^{- 1} \mathbb{Z})^d} \langle n \rangle^{- 2}
\hat{\varphi} (n) \overline{\hat{\psi} (n)}$ for any $\varphi \nocomma, \psi
\in L^2 (\Lambda)$.

Note that up to any finite time $T$ the r.v. $Y_T$ has a bounded spectral
support and the stopped process $Y^T_t = Y_{t \wedge T}$ for any fixed $T >
0$, is in $C (\mathbb{R}_+, W^{k, 2} (\Lambda))$ for any $k \in \mathbb{N}$.
Furthermore $(Y^T_t)_t$ only depends on a finite subset of the Brownian
motions $(B^n)_n$.

We will usually write $g (\mathD)$ for the Fourier multiplier operator with
symbol $g$. With this convention we can compactly denote
\begin{equation}
  Y_t = \int_0^t J_s \mathd X_s, \qquad t \geqslant 0, \label{eq:def-Y}
\end{equation}
where $J_s \assign \langle D \rangle^{- 1} \sigma_s (D)$. We observe that
$Y_t$ has a distribution given by the pushforward $(\rho_t (\mathD))_{\ast}
\vartheta$ of $\vartheta$ through $\rho_t (D)$. We write the measure $\nu_T$
in~{\eqref{eq:gibbs}} in terms of expectations over $\mathbb{P}$ as
\begin{equation}
  \int g (\phi) \nu_T (\mathd \phi) = \frac{\mathbb{E} [g (Y_T) e^{- V_T
  (Y_T)}]}{\CZ_T}, \label{eq:equiv}
\end{equation}
for any bounded measurable $g : \CS' (\Lambda) \rightarrow \mathbb{R}$.

For fixed $T$ the polynomial appearing in the expression for $V_T (Y_T)$ is
bounded below (since $\lambda > 0$) and $\CZ_T$ is well defined and also
bounded away from zero (this follows easily from Jensen's inequality). However
as $T \rightarrow \infty$ we tend to loose both these properties due to the
fact that we will be obliged to take $a_T \rightarrow + \infty$ to renormalize
the non--linear terms. To obtain uniform upper and lower bounds we need a more
detailed analysis and we proceed as follows.

Denote by $\mathbb{H}_a$ the space of progressively measurable processes which
are $\mathbb{P}$--almost surely in $\mathcal{H} \assign L^2 (\mathbb{R}_+
\times \Lambda)$. We say that an element $v$ of $\mathbb{H}_a$ is a
\tmtextit{drift}. Below we will need also drifts belonging to
$\mathcal{H}^{\alpha} \assign L^2 (\mathbb{R}_+ ; H^{\alpha} (\Lambda))$ for
some $\alpha \in \mathbb{R}$, we denote the corresponding space with
$\mathbb{H}_a^{\alpha}$. Consider the measure $\mathbb{Q}_T$ on $\left(
\Omega, \CB \right)$ whose Radon--Nykodim derivative wrt. $\mathbb{P}$ is
given by
\[ \frac{\mathd \mathbb{Q}_T}{\mathd \mathbb{P}} = \frac{e^{- V_T
   (Y_T)}}{\CZ_T} . \]
Since $Y_T$ depends on finitely many Brownian motions $(B^n)_n$ then it is
well known~{\cite{revuz_continuous_2004,follmer_entropy_1985}} that any
$\mathbb{P}$--absolutely continuous probability can be expressed via Girsanov
transform. In particular, by the Brownian martingale representation theorem
there exists a drift $u^T \in \mathbb{H}_a$ such that
\[ \frac{\mathd \mathbb{Q}_T}{\mathd \mathbb{P}} = \exp \left( \int_0^{\infty}
   u_s^T \mathd X_s - \frac{| \Lambda |}{2} \int_0^{\infty} \| u_s^T
   \|^2_{L^2} \mathd s \right), \]
(recall that we normalized the $L^2 (\Lambda)$ norm) and the entropy of
$\mathbb{Q}_T$ wrt.~$\mathbb{P}$ is given by
\[ H (\mathbb{Q}_T |\mathbb{P}) =\mathbb{E}_{\mathbb{Q}_T} \left[ \log
   \frac{\mathd \mathbb{Q}_T}{\mathd \mathbb{P}} \right] = \frac{| \Lambda
   |}{2} \mathbb{E}_{\mathbb{Q}_T} \left[ \int_0^{\infty} \| u_s^T \|^2_{L^2}
   \mathd s \right] . \]
Here equality holds also if one of the two quantities is $+ \infty$. By
Girsanov theorem, the canonical process $X$ is a semimartingale under
$\mathbb{Q}_T$ with decomposition
\[ X_t = \tilde{X}_t + \int_0^t u^T_s \mathd s, \qquad t \geqslant 0, \]
where $(\tilde{X}_t)_t$ is a cylindrical $\mathbb{Q}_T$--Brownian motion in
$L^2 (\Lambda)$. Under $\mathbb{Q}_T$ the process $(Y_t)_t$ has the
semimartingale decomposition $Y_t = W_t + U_t$ with
\[ W_t \assign \int_0^t J_s \mathd X_s, \quad \text{and} \quad U_t = I_t
   (u^T), \]
where for any drift $v \in \mathbb{H}_a$ we define
\[ I_t (v) \assign \int_0^t J_s v_s \mathd s. \]
The integral in the density can be restricted to $[0, T]$ since $u_t^T = 0$ if
$t > T$. Now
\begin{equation}
  - \log \CZ_T = - \log \left[ e^{- V_T (Y_T)} \left( \frac{\mathd
  \mathbb{Q}_T}{\mathd \mathbb{P}} \right)^{- 1} \right] = V_T (Y_T) +
  \int_0^{\infty} u_s^T \mathd X_s - \frac{| \Lambda |}{2} \int_0^{\infty} \|
  u_s^T \|^2 \mathd s, \label{eq:equilib}
\end{equation}
and taking expectation of~{\eqref{eq:equilib}} wrt $\mathbb{Q}_T$ we get
\begin{equation}
  - \log \CZ_T =\mathbb{E}_{\mathbb{Q}_T} \left[ V_T (W_T + I_T (u^T)) +
  \frac{| \Lambda |}{2} \int_0^{\infty} \| u_s^T \|^2 \mathd s \right] .
  \label{eq:pressure}
\end{equation}
For any $v \in \mathbb{H}_a$ define the measure $\mathbb{Q}^v$ by
\[ \frac{\mathd \mathbb{Q}^v}{\mathd \mathbb{P}} = \exp \left( \int_0^{\infty}
   v_s \mathd X_s - \frac{| \Lambda |}{2} \int_0^{\infty} \| v_s \|^2 \mathd s
   \right) . \]
Denote with $\mathbb{H}_c \subseteq \mathbb{H}_a$ the set of drifts $v \in
\mathbb{H}_a$ for which $\mathbb{Q}^v (\Omega) = 1$, in particular $u^T \in
\mathbb{H}_c$. By Jensen's inequality and Girsanov transformation we have
\[ - \log \CZ_T = - \log \mathbb{E}_{\mathbb{P}} [e^{- V_T (Y_T)}] = - \log
   \mathbb{E}^v \left[ e^{- V_T (Y_T) - \int_0^{\infty} v_s \mathd X_s +
   \frac{| \Lambda |}{2} \int_0^{\infty} \| v_s \|^2 \mathd s} \right] \]
\[ \leqslant \mathbb{E}^v \left[ V_T (Y_T) + \int_0^{\infty} v_s \mathd X_s -
   \frac{| \Lambda |}{2} \int_0^{\infty} \| v_s \|^2 \mathd s \right], \]
for all $v \in \mathbb{H}_c$, where $\mathbb{E}^v \assign
\mathbb{E}_{\mathbb{Q}^v}$. We conclude that
\begin{equation}
  - \log \CZ_T \leqslant \mathbb{E}^v \left[ V_T (W^v_T + I_T (v)) + \frac{|
  \Lambda |}{2} \int_0^{\infty} \| v_s \|^2 \mathd s \right],
  \label{eq:pressure-bound}
\end{equation}
where $Y = W^v_T + I_T (v)$ and $\tmop{Law}_{\mathbb{Q}^v} (W^v) =
\tmop{Law}_{\mathbb{P}} (Y)$. The bound is saturated when $v = u^T$. We record
this result in the following lemma which is a precursor of our main tool to
obtain bounds on the partition function and related objects.

\begin{lemma}
  \label{lemma:variational-easy}The following variational formula for the free
  energy holds:
  \[ \mathcal{W}_T (f) = - \frac{1}{| \Lambda |} \log \mathbb{E} [e^{- V_T^f
     (Y_T)}] = \min_{v \in \mathbb{H}_c} \mathbb{E}^v \left[ \frac{1}{|
     \Lambda |} V_T^f (W^v_T + I_T (v)) + \frac{1}{2} \int_0^{\infty} \| v_s
     \|^2_{L^2} \mathd s \right] . \]
  where $V^f_T \assign | \Lambda | f + V_T$.
\end{lemma}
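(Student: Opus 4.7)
The plan is to adapt the computation just performed for the $f=0$ case, with $V_T$ replaced throughout by $V_T^f := |\Lambda| f + V_T$; the statement of the lemma amounts to showing both that the variational infimum is attained and that it provides an upper bound for $-|\Lambda|^{-1}\log \mathcal{Z}_T^f$, where $\mathcal{Z}_T^f := \mathbb{E}[e^{-V_T^f(Y_T)}]$. Two preliminaries make the argument legitimate: $(i)$ $V_T$ is pointwise bounded below (optimizing $|\varphi|^4 - a_T|\varphi|^2$ gives $V_T(\varphi) \geq -\lambda(a_T^2/4 + b_T)|\Lambda|$), and $f$ has linear growth, so $V_T^f(Y_T)$ is bounded below and $e^{-V_T^f(Y_T)}$ is $\mathbb{P}$-integrable; $(ii)$ as already recorded in the text, $Y_T$ has finite spectral support, hence $V_T^f(Y_T)$ is a function of only finitely many of the Brownian motions $(B^n)_n$.

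For the attainment of the minimum, I would define the probability measure $\mathbb{Q}_T^f$ on $(\Omega, \mathcal{B})$ by $d\mathbb{Q}_T^f/d\mathbb{P} = e^{-V_T^f(Y_T)}/\mathcal{Z}_T^f$ and apply the Brownian martingale representation theorem in the finite-dimensional subspace generated by the relevant $B^n$'s. This produces a drift $u^{T,f} \in \mathbb{H}_a$ such that the density equals the Dol\'eans--Dade exponential $\exp(\int_0^\infty u_s^{T,f} dX_s - \frac{|\Lambda|}{2}\int_0^\infty \|u_s^{T,f}\|_{L^2}^2 ds)$. Since $\mathbb{Q}_T^f$ is a probability measure by construction, $u^{T,f}$ lies in $\mathbb{H}_c$. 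Taking logarithms in the density identity and then $\mathbb{Q}_T^f$-expectation, and using the Girsanov decomposition $Y_T = W_T^{u^{T,f}} + I_T(u^{T,f})$ together with $\mathbb{E}_{\mathbb{Q}_T^f}[\int u^{T,f} dX] = |\Lambda|\mathbb{E}_{\mathbb{Q}_T^f}[\int \|u^{T,f}\|_{L^2}^2 ds]$, I would obtain the identity showing that the variational bound is saturated at $v = u^{T,f}$.

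For the upper bound with arbitrary $v \in \mathbb{H}_c$, I would repeat verbatim the Jensen + Girsanov argument that produced \eqref{eq:pressure-bound}: rewrite $\mathcal{Z}_T^f = \mathbb{E}^v[e^{-V_T^f(Y_T) - \int v\, dX + \frac{|\Lambda|}{2}\int \|v\|^2 ds}]$, apply Jensen's inequality to the exponential under $\mathbb{Q}^v$, and use the decomposition $X = \tilde{X}^v + \int v\, ds$ together with $Y = W^v + I(v)$. This yields the desired inequality, and combining with the attainment at $u^{T,f}$ completes the proof.

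The only genuine point of care is the $\mathbb{H}_c$ membership of the optimizer $u^{T,f}$ -- that is, showing the associated exponential is a true martingale rather than a strict supermartingale. This is not a substantive obstacle here because the finite-dimensional nature of the dependence of $V_T^f(Y_T)$ on the noise makes the density bounded (for fixed $T$), so the Novikov/strict-positivity conditions are immediate; the lemma is best understood as packaging the preceding ad hoc computations into a reusable variational statement, and the work has really already been done in the discussion leading up to \eqref{eq:pressure-bound}.
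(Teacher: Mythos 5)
Your proposal is correct and follows essentially the same route as the paper: the lemma is exactly the packaging of the Girsanov/martingale-representation computation leading to \eqref{eq:pressure} (saturation at the Föllmer drift $u^{T,f}\in\mathbb{H}_c$) and the Jensen-plus-Girsanov bound \eqref{eq:pressure-bound} (the inequality for general $v\in\mathbb{H}_c$), applied with $V_T$ replaced by $V_T^f$. Your attention to the integrability of $e^{-V_T^f(Y_T)}$ and to why the optimizer genuinely lies in $\mathbb{H}_c$ matches the paper's appeal to the finite spectral support of $Y_T$ and the references to Revuz--Yor and Föllmer.
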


This formula is nice and easy to prove but somewhat inconvenient for certain
manipulations since the space $\mathbb{H}_c$ is indirectly defined and the
reference measure $\mathbb{E}^v$ depends on the drift $v$. A more
straightforward formula has been found by
Bou{\'e}--Dupuis~{\cite{boue_variational_1998}} which involves the fixed
canonical measure $\mathbb{P}$ and a general adapted drift $u \in
\mathbb{H}_a$. This formula will be our main tool in the following.

\begin{theorem}
  \label{th:variational}The Bou{\'e}--Dupuis (BD) variational formula for the
  free energy holds:
  \[ \mathcal{W}_T (f) = - \frac{1}{| \Lambda |} \log \mathbb{E} [e^{- V_T^f
     (Y_T)}] = \inf_{v \in \mathbb{H}_a} \mathbb{E} \left[ \frac{1}{| \Lambda
     |} V_T^f (Y_T + I_T (v)) + \frac{1}{2} \int_0^{\infty} \| v_s \|^2_{L^2}
     \mathd s \right] . \]
  where the expectation is taken wrt to the measure $\mathbb{P}$ on $\Omega$.
\end{theorem}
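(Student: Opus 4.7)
The plan is to derive Theorem~\ref{th:variational} from the classical Bou\'e--Dupuis representation~\cite{boue_variational_1998} applied to the Wiener functional $F(X) \assign V_T^f(Y_T)/|\Lambda|$, where $Y_T = \int_0^T J_s\, \mathd X_s$ is viewed as a measurable functional of the cylindrical Brownian motion $X$. A useful preliminary observation is that, since $\rho$ is compactly supported, for each fixed $T$ the process $Y_T$ depends only on the finitely many complex Brownian motions $B^n$ with $n$ in the support of $\rho(\cdot/T)$, so $F$ is effectively a finite-dimensional Wiener functional. Because $\lambda \geqslant 0$ and $f$ has at most linear growth, $V_T^f$ is bounded below and polynomially growing, hence $e^{-F} \in L^\infty(\mathbb{P})$ and $F$ admits moments of every order under $\mathbb{P}$ and under every absolutely continuous perturbation with finite entropy.

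The algebraic bridge between the BD formula and the statement is the identity, for any $v \in \mathbb{H}_a$ with primitive $V_t \assign \int_0^t v_s\, \mathd s$,
\[ Y_T(X + V) = \int_0^T J_s\, \mathd(X_s + V_s) = Y_T + \int_0^T J_s v_s\, \mathd s = Y_T + I_T(v), \]
which converts the standard translation $X \mapsto X + V$ on Wiener space into the desired shift $Y_T \mapsto Y_T + I_T(v)$ on the level of the field.

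For the upper bound $\mathcal{W}_T(f) \leqslant \mathbb{E}[\,\cdots\,]$, I would argue directly via the entropy/Fenchel inequality: for any $v \in \mathbb{H}_c$, $-\log \mathbb{E}[e^{-F}] \leqslant \mathbb{E}_{\mathbb{Q}^v}[F] + H(\mathbb{Q}^v|\mathbb{P})$, and Girsanov together with the fact that the law of $(W^v, I(v))$ under $\mathbb{Q}^v$ coincides with the law of $(Y, I(v))$ under $\mathbb{P}$ (by the adaptedness of $v$) converts this into the stated bound. For a general $v \in \mathbb{H}_a$ one approximates by cutting off the energy $\int_0^\cdot \|v_s\|^2\mathd s$ at stopping times, which produces a sequence in $\mathbb{H}_c$, and passes to the limit using the lower bound on $V_T^f$ (Fatou) and dominated convergence for the quadratic cost.

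For the lower bound, Lemma~\ref{lemma:variational-easy} already yields an optimal drift $u^T \in \mathbb{H}_c$ under the reference measure $\mathbb{Q}_T$. To transport it to $\mathbb{P}$, I would use that $\tilde X_t = X_t - \int_0^t u^T_s\,\mathd s$ is a cylindrical $\mathbb{Q}_T$-Brownian motion: defining $\tilde u^T$ on $(\Omega,\mathbb{P})$ as the same measurable functional of $X$ that $u^T$ is of $\tilde X$ (i.e.\ pulling back along the canonical Wiener-space isomorphism between the filtrations of $X$ under $\mathbb{P}$ and of $\tilde X$ under $\mathbb{Q}_T$) gives a progressively measurable drift with $\mathrm{Law}_\mathbb{P}(\tilde u^T, X) = \mathrm{Law}_{\mathbb{Q}_T}(u^T, \tilde X)$. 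Under this identification the right-hand side of Lemma~\ref{lemma:variational-easy} equals $\mathbb{E}[\,|\Lambda|^{-1}V_T^f(Y_T + I_T(\tilde u^T)) + \tfrac12\|\tilde u^T\|_{L^2}^2\,]$, saturating the BD bound. The main obstacle is making this change-of-drift identification precise and handling the adaptedness/Novikov issues cleanly; once the finite-dimensional reduction from the compact spectral support of $Y_T$ is exploited, these are standard manipulations fully worked out in~\cite{boue_variational_1998,ustunel_variational_2014,lehec_representation_2013}.
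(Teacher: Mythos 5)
Your overall strategy---reduce to the Bou\'e--Dupuis/\"Ust\"unel representation for the Wiener functional $F=V_T^f(Y_T)/|\Lambda|$ after checking integrability---is the same as the paper's, whose entire proof is a citation of \"Ust\"unel's theorem together with the verification that $V_T^f(Y_T)$ is \emph{tame}, i.e.\ $\mathbb{E}[|V_T^f(Y_T)|^p]+\mathbb{E}[e^{-qV_T^f(Y_T)}]<\infty$. On that preliminary point your justification is slightly off: when $f$ has linear growth in $\VV^{-1/2-\kappa}$, the functional $V_T^f$ is \emph{not} bounded below on $\CS'(\Lambda)$ (the quartic term only controls $\|\phi_T\|_{L^4}$, not $\|\phi\|_{\VV^{-1/2-\kappa}}$), so $e^{-F}\notin L^\infty(\mathbb{P})$; what one actually uses is that $\|Y_T\|_{\VV^{-1/2-\kappa}}$ has Gaussian tails, whence $e^{-qF}\in L^1(\mathbb{P})$ for every $q$. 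This is harmless but should be stated correctly, since it is the only thing the paper asks you to check.

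The genuine gap is in your attempt to derive the theorem from Lemma~\ref{lemma:variational-easy} by identifying laws. The assertion that ``the law of $(W^v,I(v))$ under $\mathbb{Q}^v$ coincides with the law of $(Y,I(v))$ under $\mathbb{P}$ by the adaptedness of $v$'' is false in general: $v$ is a fixed functional of $X$, and under $\mathbb{Q}^v$ one has $X=\tilde X+\int_0^\cdot v_s\,\mathd s$, so the joint law of $(\tilde X,v)$ under $\mathbb{Q}^v$ is the law of $(B,v^*)$ where $v^*$ solves the drift equation $v^*=v(B+\int v^*)$---not the law of $(B,v(B))$---unless that SDE admits a strong solution and the filtration of $\tilde X$ under $\mathbb{Q}^v$ recovers that of $X$. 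The same innovation problem undermines the ``canonical Wiener-space isomorphism between the filtrations'' invoked for the lower bound; Tsirelson's example shows this can fail even in one dimension, and the finite spectral support of $Y_T$ does not help, since the obstruction lives in the time variable. This is precisely the difficulty the paper flags in Remark~1(e) as the reason Lemma~\ref{lemma:variational-easy} and Theorem~\ref{th:variational} are not interchangeable. The upper bound can be rescued without any law identification by applying Girsanov in the opposite direction (choose $\mathbb{Q}$ so that $X+\int v$ becomes a $\mathbb{Q}$-Brownian motion and use the entropy inequality with $H(\mathbb{P}|\mathbb{Q})=\tfrac12\mathbb{E}\int\|v\|^2$), but the lower bound genuinely requires the approximation machinery of Bou\'e--Dupuis/\"Ust\"unel/Lehec, which is why the paper does not attempt to reprove it and simply verifies the tameness hypothesis.
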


\begin{proof}
  The original proof can be found in
  Bou{\'e}--Dupuis~{\cite{boue_variational_1998}} for functionals bounded
  above. In our setting the formula can be proved using the result of
  {\"U}st{\"u}nel~{\cite{ustunel_variational_2014}} by observing that $V_T^f
  (Y_T)$ is a \tmtextit{tame} functional, according to his definitions.
  Namely, for some $p, q \geqslant 1$ such that $1 / p + 1 / q = 1$ we have
  \[ \mathbb{E} [| V_T^f (Y_T) |^p] +\mathbb{E} [e^{- q V_T^f (Y_T)}] < +
     \infty . \]
\end{proof}

\begin{remark}
  Some observations on these variational formulas.
  \begin{enumeratealpha}
    \item They originates directly from the variational formula for the free
    energy of a statistical mechanical systems: $V_T^f$ playing the role of
    the internal energy and the quadratic term playing the role of the
    entropy.
    
    \item The infimum could not be attained in Theorem~\ref{th:variational}
    while it is attained in Lemma~\ref{lemma:variational-easy}.
    
    \item The drift generated by absolutely continuous perturbations of the
    Wiener measure has been introduced and studied by
    F{\"o}llmer~{\cite{follmer_entropy_1985}}.
    
    \item They are a non--Markovian and infinite dimensional extension of the
    well known stochastic control problem representation of the
    Hamilton--Jacobi--Bellman equation in finite
    dimensions~{\cite{fleming_controlled_2005}}.
    
    \item The BD formula is easier to use than the formula in
    Lemma~\ref{lemma:variational-easy} since the probability do not depend on
    the drift $v$. Going from one formulation to the other requires proving
    that certain SDEs with functional drift admits strong solutions and that
    one is able to approximate unbounded functionals $V_T$ by bounded ones.
    See {\"U}st{\"u}nel~{\cite{ustunel_variational_2014}} and
    Lehec~{\cite{lehec_representation_2013}} for a streamlined proof of the BD
    formula and for applications of the formula to functional inequalities on
    Gaussian measures. For example, from this formula is not difficult to
    prove integrability of functionals which are Lipshitz in the
    Cameron--Martin directions.
  \end{enumeratealpha}
\end{remark}

The next lemma provides a deterministic regularity result for $I (v)$ which
will be useful below. It says that the drift $v$ generates shifts of the
Gaussian free field in directions which belong to $H^1$ uniformly in the scale
parameter up to $\infty$. The space $H^1$ is the Cameron--Martin space of the
free field~{\cite{janson_gaussian_1997}}.

\begin{lemma}
  \label{lemma:impr-reg-drift}Let $\alpha \in \mathbb{R}$. For any $v \in L^2
  ([0, \infty), H^{\alpha})$ we have
  \[ \sup_{0 \leqslant t \leqslant T} \| I_t (v) \|_{H^{\alpha + 1}}^2 +
     \sup_{0 \leqslant s < t \leqslant T} \frac{\| I_t (v) - I_s (v)
     \|_{H^{\alpha + 1}}^2}{1 \wedge (t - s)} \lesssim \int_0^T \| v_r
     \|^2_{H^{\alpha}} \mathd r. \]
\end{lemma}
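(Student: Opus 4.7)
My plan is to reduce everything to a mode-by-mode estimate in Fourier variables and then combine Cauchy--Schwarz with a uniform bound on the integrated symbol $\sigma_r(n)^2$. Writing $I_t(v) - I_s(v) = \int_s^t J_r v_r \, \mathd r$ and passing to Fourier modes $n \in (L^{-1}\mathbb{Z})^d$ on $\Lambda$, each mode satisfies
\[ \widehat{I_t(v) - I_s(v)}(n) = \langle n \rangle^{-1} \int_s^t \sigma_r(n) \, \hat v_r(n) \, \mathd r, \]
so Cauchy--Schwarz in $r$ gives
\[ |\widehat{I_t(v) - I_s(v)}(n)|^2 \leq \langle n \rangle^{-2} \left( \int_s^t \sigma_r(n)^2 \, \mathd r \right) \left( \int_s^t |\hat v_r(n)|^2 \, \mathd r \right). \]
Summing against $\langle n \rangle^{2\alpha + 2}$ will reconstruct the $H^{\alpha + 1}$ norm and produce a clean $\int_s^t \| v_r \|_{H^\alpha}^2 \, \mathd r$ factor, provided I can control the first time-integral uniformly in $n$.

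The crux is therefore to show $\int_s^t \sigma_r(n)^2 \, \mathd r \lesssim 1 \wedge (t - s)$ uniformly in $n$. From the definition, $\sigma_r^2 = \partial_r(\rho_r^2)$, so $\int_s^t \sigma_r(n)^2 \, \mathd r = \rho_t(n)^2 - \rho_s(n)^2 \in [0, 1]$, which already yields the bound by $1$. For the bound by $t - s$, I rewrite $\sigma_r(n)^2 = r^{-1} \psi(n/r)$ with $\psi(y) := -2 y \, \rho(y) \, \rho'(y)$, a smooth, bounded, compactly supported function; on the support of $\psi(n / r)$ we have $|n|/r \lesssim 1$, so $r$ is bounded below by a positive constant uniformly on the non-zero modes of the torus, while the zero mode contributes nothing because $\psi(0) = 0$. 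Consequently $\sup_{r} \sigma_r(n)^2 \lesssim 1$ uniformly in $n$, giving $\int_s^t \sigma_r(n)^2 \, \mathd r \leq (t - s) \sup_r \sigma_r(n)^2 \lesssim (t - s)$.

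Combining the two bounds and summing over $n$ yields
\[ \| I_t(v) - I_s(v) \|_{H^{\alpha + 1}}^2 \lesssim (1 \wedge (t - s)) \int_s^t \| v_r \|_{H^\alpha}^2 \, \mathd r \leq (1 \wedge (t - s)) \int_0^T \| v_r \|_{H^\alpha}^2 \, \mathd r. \]
Dividing by $1 \wedge (t - s)$ and taking the supremum over $0 \leq s < t \leq T$ handles the second summand in the lemma, while the first is the special case $s = 0$, using $I_0(v) = 0$. The only real nuance is the uniform control of $\sigma_r(n)^2$, which rests on the scale-covariant form of $\sigma_r$ together with the compact support of $\rho$; once this is in hand, both halves of the inequality follow by purely elementary manipulations.
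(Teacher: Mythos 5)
Your argument is correct and, for the bound by $1$, is exactly the paper's: diagonalize in Fourier, apply Cauchy--Schwarz in the time variable against the weight $\sigma_r(n)^2$, and use $\sigma_r^2=\partial_r(\rho_r^2)$ so that $\int_s^t\sigma_r(n)^2\,\mathd r=\rho_t(n)^2-\rho_s(n)^2\leqslant 1$. Where you diverge is the bound by $t-s$: the paper first applies Minkowski's inequality, then the operator estimate $\|\sigma_r(\mathD)f\|_{H^{\alpha}}\lesssim\langle r\rangle^{-1/2}\|f\|_{H^{\alpha}}$ from Proposition~\ref{multiplierestimate}, and then Cauchy--Schwarz again, whereas you stay inside the single mode-by-mode Cauchy--Schwarz and bound $\int_s^t\sigma_r(n)^2\,\mathd r\leqslant(t-s)\sup_r\sigma_r(n)^2$. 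Your route is more unified and arguably cleaner, but note that the two arguments share the same hidden cost: writing $\sigma_r(n)^2=r^{-1}\psi(n/r)$, one has $\sup_r\sigma_r(n)^2\sim|n|^{-1}\sup_y y\,\psi(y)$, so the constant you obtain for the nonzero lattice modes is of order $L$ (your ``positive constant'' lower bound on $r$ is $\gtrsim|n|\geqslant L^{-1}$); the paper's multiplier bound $\langle r\rangle^{1/2}\sigma_r(\mathD)=O(1)$ suffers the identical degeneration for small $r$, so neither proof yields a constant uniform in $|\Lambda|$ for the H\"older-in-time half of the statement. Your version at least makes this $L$-dependence visible rather than burying it in the multiplier proposition, and since the volume-uniform applications of the lemma only ever invoke the $\sup_t\|I_t(v)\|_{H^{\alpha+1}}$ part (which both proofs control with the clean bound $\leqslant 1$), this is a presentational rather than a substantive defect.
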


\begin{proof}
  Using the fact that $\sigma_s (\mathD)$ is diagonal in Fourier space, and
  denoting with $(e_k)_{k \in \mathbb{Z}^d}$ the basis of trigonometric
  polynomials, we have
  \[ \begin{array}{lll}
       \left\| \int_r^t \sigma_s (\mathD) v_s \mathd s \right\|^2_{H^{\alpha}}
       & = & \frac{1}{| \Lambda |} \sum_k \langle k \rangle^{2 \alpha} \left|
       \int_r^t \langle \sigma_s (\mathD) e_k, v_s \rangle \mathd s
       \right|^2\\
       & \leqslant & \frac{1}{| \Lambda |} \sum_k \langle k \rangle^{2
       \alpha} \left( \int_r^t | \langle \sigma_s (\mathD) e_k, e_k \rangle
       |^2 \mathd s \right) \left( \int_r^t | \langle e_k, v_s \rangle |^2
       \mathd s \right)\\
       & \leqslant & \int_r^t \| v_s \|^2_{H^{\alpha}} \mathd s \sup_k
       \int_r^t \langle e_k, \sigma_s (\mathD)^2 e_k \rangle \mathd s\\
       & \leqslant & \int_r^t \| v_s \|^2_{H^{\alpha}} \mathd s \sup_k
       \langle e_k, \rho_t^2 (\mathD) e_k \rangle \leqslant \int_0^T \| v_s
       \|^2_{H^{\alpha}} \mathd s.
     \end{array} \]
  On the other hand $\sigma_s (D)$ is a smooth Fourier multiplier and using
  Proposition~\ref{multiplierestimate} we have the estimate $\| \sigma_s
  (\mathD) f \|_{H^{\alpha}} \lesssim \| f \|_{H^{\alpha}} / \langle s
  \rangle^{1 / 2}$ uniformly in $s \geqslant 0$, therefore, for all $0
  \leqslant r \leqslant t \leqslant T$, we have
  \[ \begin{array}{lll}
       \left\| \int_r^t \sigma_s (\mathD) v_s \mathd s \right\|^2_{H^{\alpha}}
       & \leqslant & \left( \int_r^t \| \sigma_s (\mathD) v_s \|_{H^{\alpha}}
       \mathd s \right)^2 \leqslant (t - r) \int_r^t \| \sigma_s (\mathD) v_s
       \|^2_{H^{\alpha}} \mathd s\\
       & \lesssim & (t - r) \left( \int_0^T \| v_s \|^2_{H^{\alpha}} \mathd s
       \right) .
     \end{array} \]
  We conclude that
  \[ \| I_t (v) - I_r (v) \|_{H^{\alpha + 1}}^2 \lesssim \left\| \int_r^t
     \sigma_s (\mathD) v_s \mathd s \right\|^2_{H^{\alpha}} \leqslant [1
     \wedge (t - r)] \int_0^T \| v_s \|^2_{H^{\alpha}} \mathd s. \]
  
\end{proof}

\begin{notation}
  In the estimates below the symbol $E (\lambda)$ will denote a generic
  positive deterministic quantity, not depending on $| \Lambda |$ and such
  that $E (\lambda) / \lambda^3 \rightarrow 0$ as $\lambda \rightarrow 0$.
  Moreover the symbol $Q_T$ will denote a generic random variable measurable
  wrt. $\sigma ((W_t)_{t \in [0, T]})$ and belonging to $L^1 (\mathbb{P})$
  uniformly in $T$ and $| \Lambda |$.
\end{notation}

\section{Two dimensions\label{sec:two-d}}

As a warm up we consider here the case $d = 2$ setting $f = 0$ for simplicity.
From Theorem~\ref{th:variational} we see that the relevant quantity to bound
is of the form
\begin{equation}
  F_T (u) \assign \mathbb{E} \left[ \frac{1}{| \Lambda |} V_T (W_T + I_T (u))
  + \frac{1}{2} \| u \|_{\mathcal{H}}^2 \right], \label{eq:var-energy}
\end{equation}
for $u \in \mathbb{H}_a$. Let $Z_t = I_t (u)$ and write $W_t = Y_t$ as a
mnemonic of the fact that under $\mathbb{P}$ the process $W$ is a martingale.
From now on we leave implicit the integration variable over the spatial domain
$\Lambda$. Choosing
\begin{equation}
  a_T = 6\mathbb{E} [W_T (0)^2], \qquad b_T = 3\mathbb{E} [W_T (0)^2]^2,
  \label{eq:renorm-const-2d}
\end{equation}
we have
\[ V_T (W_T + Z_T) = \lambda \bint \llbracket W_T^4 \rrbracket + 4 \lambda
   \bint \llbracket W_T^3 \rrbracket Z_T + 6 \lambda \bint \llbracket W_T^2
   \rrbracket Z_T^2 + 4 \lambda \bint W_T Z_T^3 + \lambda \bint Z_T^4, \]
where
\[ \begin{array}{lll}
     \llbracket W_T^4 \rrbracket & \assign & W_T^4 - 6\mathbb{E} [W_T^2] W_T^2
     + 3\mathbb{E} [W_T^2]^2,\\
     \llbracket W_T^3 \rrbracket & \assign & W_T^3 - 3\mathbb{E} [W_T^2]
     W_T,\\
     \llbracket W_T^2 \rrbracket & \assign & W_T^2 -\mathbb{E} [W_T^2],
   \end{array} \]
denote the Wick powers of the Gaussian r.v.
$W_T$~{\cite{janson_gaussian_1997}}. These polynomials, when seen as
stochastic processes in $T$, are $\mathbb{P}$--martingales wrt. the filtration
of $(W_t)_t$. In particular they have an expression as iterated stochastic
integrals wrt. the Brownian motions $(B^n_t)_{t, n}$ introduced in
eq.~{\eqref{eq:rep-Y}}. Using Theorem~\ref{th:variational} with $u = 0$ we
readily have an upper bound for the free energy:
\[ - \frac{1}{| \Lambda |} \log \CZ_T \leqslant \lambda \mathbb{E} \left[
   \bint \llbracket W_T^4 \rrbracket \right] = 0. \]
For a lower bound we need to estimate from below the average under
$\mathbb{P}$ of the variational expression
\[ \lambda \bint \llbracket W_T^4 \rrbracket + 4 \lambda \bint \llbracket
   W_T^3 \rrbracket Z_T + 6 \lambda \bint \llbracket W_T^2 \rrbracket Z_T^2 +
   4 \lambda \bint W_T Z_T^3 + \lambda \bint Z_T^4 + \frac{1}{2} \| u
   \|_{\mathcal{H}}^2 . \]
The strategy we adopt is to bound \tmtextit{pathwise}, and for a generic drift
$u$, the contributions
\[ \Phi_T (Z) \assign \underbrace{4 \lambda \bint \llbracket W_T^3 \rrbracket
   Z_T}_{\Iota} + \underbrace{6 \lambda \bint \llbracket W_T^2 \rrbracket
   Z_T^2}_{\Iota \Iota} + \underbrace{4 \lambda \bint W_T Z_T^3}_{\Iota \Iota
   \Iota}, \]
in term of quantities involving only the Wick powers of $W$ which we can
control in expectation and the last two \tmtextit{positive terms}
\[ \frac{1}{2} \| u \|_{\mathcal{H}}^2 + \lambda \bint Z_T^4 . \]
Any residual positive contribution depending on $u$ can be dropped in the
lower bound making the dependence on the drift disappear. To control term
$\Iota$ we see that by duality and Young's inequality, for any $\delta > 0$,
\begin{equation}
  \left| 4 \lambda \bint \llbracket W_T^3 \rrbracket Z_T \right| \leqslant 4
  \lambda \| \llbracket W_T^3 \rrbracket \|_{H^{- 1}} \| Z_T \|_{H^1}
  \leqslant C (\delta, d) \lambda^2 \| \llbracket W_T^3 \rrbracket \|_{H^{-
  1}}^2 + \delta \int_0^T \| u_s \|^2_{L^2} \mathd s. \label{eq:term-i}
\end{equation}
For the term $\Iota \Iota$ the following fractional Leibniz rule is of help:

\begin{proposition}
  \label{FractionalLeibniz}Let $1 < p < \infty$ and $p_1, p_2, p_1', p_2' > 1$
  such that $\frac{1}{p_1} + \frac{1}{p_2} = \frac{1}{p_1'} + \frac{1}{p_2'} =
  \frac{1}{p}$. Then for every $s, \alpha \geqslant 0$ there exists a constant
  C such that
  \[ \| \langle \mathD \rangle^s (f g) \|_{L^p} \leq \| \langle \mathD
     \rangle^{s + \alpha} f \|_{L^{p_2}}  \| \langle \mathD \rangle^{- \alpha}
     g \|_{L^{p_1}} + \| \langle \mathD \rangle^{s + \alpha} g \|_{L^{p_1'}} 
     \| \langle \mathD \rangle^{- \alpha} f \|_{L^{p_2'}} . \]
\end{proposition}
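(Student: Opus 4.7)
The plan is to apply Bony's paraproduct decomposition $fg = T_f g + R(f,g) + T_g f$, where $T_f g \assign \sum_j S_{j-1} f \, \Delta_j g$ and $R(f,g) \assign \sum_{|i-j|\leq 1} \Delta_i f \, \Delta_j g$ with $(\Delta_j)_j$, $(S_j)_j$ denoting the Littlewood--Paley projectors, and to match each piece against one of the two terms on the right-hand side. By the symmetry of the statement under swapping the roles of $f$ and $g$ (together with the corresponding swap of the Hölder exponents), it suffices to bound $T_g f + R(f,g)$ by $\|\langle\mathD\rangle^{s+\alpha} f\|_{L^{p_2}} \|\langle\mathD\rangle^{-\alpha} g\|_{L^{p_1}}$, after which the complementary estimate for $T_f g$ follows from the same argument with $(p_1,p_2)$ replaced by $(p_1',p_2')$.

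For the paraproduct $T_g f$, each block $\Delta_k(S_{j-1} g \cdot \Delta_j f)$ vanishes unless $|k-j|\leq 2$, so the Triebel--Lizorkin characterization $\|\langle\mathD\rangle^s h\|_{L^p} \sim \bigl\|\bigl(\sum_j 4^{js}|\Delta_j h|^2\bigr)^{1/2}\bigr\|_{L^p}$ reduces the problem to controlling the $L^p(\ell^2_j)$ norm of $2^{js} S_{j-1} g \cdot \Delta_j f$. I would split the weight as $2^{js} = 2^{-j\alpha} \cdot 2^{j(s+\alpha)}$, absorb $2^{j(s+\alpha)}\Delta_j$ into $\Delta_j \langle\mathD\rangle^{s+\alpha}$ (up to a $j$-uniform Fourier multiplier), and dominate the low-frequency factor pointwise by $2^{-j\alpha}|S_{j-1} g(x)| \lesssim \mathcal{M}(\langle\mathD\rangle^{-\alpha} g)(x)$ via a Bernstein-type kernel bound. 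Pulling this $j$-independent maximal function outside the $\ell^2$-sum, Hölder's inequality combined with the Hardy--Littlewood maximal theorem and the Littlewood--Paley square function theorem closes the estimate.

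The resonant piece $R(f,g)$ is treated by the same kind of splitting, with the only additional feature being that $\Delta_k R(f,g)$ collects contributions from all scales $j \geq k - O(1)$; the net weight produces a geometric factor $\sum_{k \leq j+O(1)} 2^{(k-j)s}$ which converges for $s > 0$ and is absorbed by finite overlap for $s = 0$. The Fefferman--Stein vector-valued maximal inequality then closes the resulting $L^p(\ell^2)$ bound as before.

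The main technical obstacle I anticipate is validating the pointwise Bernstein bound $2^{-j\alpha}|S_{j-1} g(x)| \lesssim \mathcal{M}(\langle\mathD\rangle^{-\alpha} g)(x)$: it reduces to showing that $2^{-j\alpha}\chi(\cdot/2^{j-1})(1+|\cdot|^2)^{\alpha/2}$ is the symbol of a convolution operator whose kernel is a uniformly $L^1$-normalized approximate identity in $j$, which is precisely the step where the hypothesis $\alpha \geq 0$ enters. The range $1 < p, p_i, p_i' < \infty$ appearing in the hypothesis is exactly what is needed for the square function theorem and the Fefferman--Stein inequality to be available.
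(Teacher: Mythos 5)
The paper offers no proof of this proposition beyond the citation to Gulisashvili--Kon, so your Bony-decomposition argument is a genuinely independent, self-contained route, and its main line is sound. Splitting $fg$ into two paraproducts and a resonant part, distributing the weight as $2^{js}=2^{-j\alpha}\cdot 2^{j(s+\alpha)}$, dominating the low-frequency factor $2^{-j\alpha}|S_{j-1}g|$ pointwise by the maximal function of $\langle\mathD\rangle^{-\alpha}g$ (you correctly locate this as the step using $\alpha\geqslant 0$: the symbol $2^{-j\alpha}\chi(\xi/2^{j-1})\langle\xi\rangle^{\alpha}$ is a uniformly $L^1$-normalized bump precisely because $\langle\xi\rangle^{\alpha}\lesssim 2^{j\alpha}$ on its support), and closing with H\"older, Hardy--Littlewood and the square-function characterization of $\|\langle\mathD\rangle^{\sigma}\cdot\|_{L^q}$ is the standard Coifman--Meyer/Kato--Ponce machinery; it handles both paraproduct pieces for all $s\geqslant 0$ (there the outputs really do have finite overlap in $k$) and the resonant piece for $s>0$. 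What your approach buys over the paper's is an actual proof; what it costs is precisely the borderline case below.

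The one step that fails as written is the resonant term at $s=0$. There is no finite overlap there: since $\Delta_jf\,\widetilde{\Delta}_jg$ (with $\widetilde{\Delta}_j:=\sum_{|i-j|\leqslant 1}\Delta_i$) is supported in a \emph{ball} of radius $\sim 2^j$, the block $\Delta_kR(f,g)$ receives contributions from every $j\geqslant k-O(1)$, and $\sum_{j\geqslant k-O(1)}2^{(k-j)s}$ genuinely diverges at $s=0$; this is the familiar obstruction that the resonant product maps $F^{s_1}_{p_2,2}\times F^{s_2}_{p_1,2}\to F^{s_1+s_2}_{p,2}$ only for $s_1+s_2>0$. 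The fix is to not re-decompose $R(f,g)$ in frequency at all when $s=0$: bound pointwise $|R(f,g)|\leqslant\bigl(\sum_j4^{j\alpha}|\Delta_jf|^2\bigr)^{1/2}\bigl(\sum_j4^{-j\alpha}|\widetilde{\Delta}_jg|^2\bigr)^{1/2}$ by Cauchy--Schwarz in $j$, apply H\"older with $1/p=1/p_1+1/p_2$, identify the first factor in $L^{p_2}$ with $\|\langle\mathD\rangle^{\alpha}f\|_{L^{p_2}}$ by the square-function theorem, and control the second in $L^{p_1}$ by writing $2^{-j\alpha}\widetilde{\Delta}_jg=m_j(\mathD)\widetilde{\Delta}_j(\langle\mathD\rangle^{-\alpha}g)$ with uniformly bounded annulus multipliers $m_j$ and invoking the Fefferman--Stein vector-valued maximal inequality. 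With that replacement (or by simply restricting the claim to the $s>0$ range actually used in the paper) your proof is complete.
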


\begin{proof}
  See~{\cite{gulisashvili_exact_1996}}.
\end{proof}

Using Proposition~\ref{FractionalLeibniz} we get, for any $\delta > 0$,
\begin{equation}
  \begin{array}{lll}
    \left| 6 \lambda \bint \llbracket W_T^2 \rrbracket Z_T^2  \right| &
    \lesssim & \lambda \| \llbracket W_T^2 \rrbracket \|_{W^{- \varepsilon,
    5}} \| Z_T^2 \|_{W^{\varepsilon, \frac{5}{4}}}\\
    & \lesssim & \lambda \| \llbracket W_T^2 \rrbracket \|_{W^{- \varepsilon,
    5}} \| Z_T \|_{W^{\varepsilon, 2}} \| Z_T \|_{L^{\frac{10}{3}}}\\
    & \lesssim & \lambda \| \llbracket W_T^2 \rrbracket \|_{W^{- \varepsilon,
    5}} \| Z_T \|_{W^{1, 2}} \| Z_T \|_{L^4}\\
    & \leqslant & \frac{C^2 \lambda^3}{2 \delta} \| \llbracket W_T^2
    \rrbracket \|^4_{W^{- \varepsilon, 5}} + \frac{\delta}{4} \| Z_T
    \|^2_{W^{1, 2}} + \frac{\delta \lambda}{4} \| Z_T \|_{L^4}^4 .
  \end{array} \label{eq:term-ii}
\end{equation}

In order to bound the term $\Iota \Iota \Iota$ we observe the following:

\begin{lemma}
  \label{lemma:bound-cubic}Let $f \in W^{- 1 / 2 - \varepsilon, p}$ for every
  $1 \leqslant p < \infty$ and $\varepsilon > 0$. We claim that for any
  $\delta > 0$ there exists a constant $C = C (\delta, d)$, and an exponent $K
  < \infty$ such that for any $g \in W^{1, 2}$
  \[ \lambda \left| \bint f g^3 \right| \leqslant E (\lambda) \| f \|^K_{W^{-
     1 / 2 - \varepsilon, p} } + \delta (\| g \|^2_{W^{1 - \varepsilon, 2}} +
     \lambda \| g \|^4_{L^4 }) . \]
\end{lemma}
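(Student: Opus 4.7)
My plan is to reduce $|\bint f g^3|$ to a product of a negative-order norm of $f$ and positive norms of $g$ using duality and the fractional Leibniz rule, and then absorb the resulting trilinear expression into the positive terms $\|g\|^2_{W^{1-\varepsilon,2}}$ and $\lambda\|g\|^4_{L^4}$ via Young's inequality. First I would invoke the duality between $W^{-1/2-\varepsilon,p}$ and $W^{1/2+\varepsilon,p'}$ (with $1/p+1/p'=1$) to obtain
\[
\Bigl|\bint f g^3\Bigr| \leq \|f\|_{W^{-1/2-\varepsilon,p}}\,\|g^3\|_{W^{1/2+\varepsilon,p'}},
\]
leaving the choice of $p$ open. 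Then I apply Proposition~\ref{FractionalLeibniz} to $g\cdot g^2$ with $s=1/2+\varepsilon$, $\alpha=1/2-2\varepsilon$ (so $s+\alpha=1-\varepsilon$), and $p_2=2$; the first factor produced is $\|g\|_{W^{1-\varepsilon,2}}$ and the second is $\|g^2\|_{W^{-(1/2-2\varepsilon),p_1}}$, where $1/p_1+1/2=1/p'$. Fixing $p_1$ so that the Sobolev embedding $L^2\hookrightarrow W^{-(1/2-2\varepsilon),p_1}$ holds in $d=2$, namely $1/p_1=1/4+\varepsilon$, determines $p=4/(1-4\varepsilon)$, which is admissible since the hypothesis provides $f$ in $W^{-1/2-\varepsilon,p}$ for every finite $p$; the embedding then yields $\|g^2\|_{W^{-(1/2-2\varepsilon),p_1}}\lesssim\|g^2\|_{L^2}=\|g\|_{L^4}^2$. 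The symmetric term produced by Proposition~\ref{FractionalLeibniz} is handled by a further application of the Leibniz rule to $g^2=g\cdot g$, giving an estimate of the same shape.

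Putting the steps together I obtain the trilinear bound
\[
\Bigl|\bint f g^3\Bigr| \lesssim \|f\|_{W^{-1/2-\varepsilon,p}}\,\|g\|_{W^{1-\varepsilon,2}}\,\|g\|_{L^4}^2.
\]
Multiplying by $\lambda$, I then apply Young's inequality twice. The first application, with $xy\leq \delta x^2+y^2/(4\delta)$ on $x=\|g\|_{W^{1-\varepsilon,2}}$ and $y=\lambda\|f\|\,\|g\|_{L^4}^2$, produces the target term $\delta\|g\|_{W^{1-\varepsilon,2}}^2$ and a residual proportional to $\lambda^2\|f\|^2\|g\|_{L^4}^4/\delta$. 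The second application, a weighted Young's inequality whose weight is tuned so that one factor of $\lambda$ is peeled off and attached to the $\|g\|_{L^4}^4$ piece, splits the residual into $\delta\lambda\|g\|_{L^4}^4$ and a term of the form $E(\lambda)\|f\|_{W^{-1/2-\varepsilon,p}}^K$ for some finite $K$ and a positive power of $\lambda$.

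The main technical difficulty lies in this last step: the trilinear shape $\|f\|\,\|g\|_{W^{1-\varepsilon,2}}\,\|g\|_{L^4}^2$ sits exactly at the boundary of what a naive three-factor Young's inequality can split into the prescribed targets, because enforcing the exponents $2$ on $\|g\|_{W^{1-\varepsilon,2}}$ and $4$ on $\|g\|_{L^4}$ in a single three-way Young forces the complementary exponent on $\|f\|$ to be infinite. To close the argument one must either iterate Young's inequality twice with a $\lambda$-dependent intermediate weight, as sketched above, or combine it with the $2$-dimensional Sobolev embedding $W^{1-\varepsilon,2}\hookrightarrow L^4$ in order to redistribute the powers among $\|g\|_{W^{1-\varepsilon,2}}$ and $\|g\|_{L^4}$ before applying Young's. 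Either route is routine bookkeeping, and the resulting constant $K$ is a finite integer whose size depends on the chosen weights.
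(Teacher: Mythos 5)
Your opening steps (duality, then the fractional Leibniz rule) follow the same skeleton as the paper's proof, and you correctly diagnose the central difficulty: your trilinear bound $\bigl|\bint f g^3\bigr|\lesssim \|f\|\,\|g\|_{W^{1-\varepsilon,2}}\,\|g\|_{L^4}^2$ is \emph{exactly} critical, since the Young weights $\tfrac{1}{2}\cdot 1+\tfrac{1}{4}\cdot 2=1$ leave no room for the $\|f\|$ factor. The problem is that neither of your proposed escapes actually closes the gap. For the first route, after the step $xy\leqslant\delta x^2+y^2/(4\delta)$ the residual is $C\,\lambda^2\|f\|^2\|g\|_{L^4}^4/\delta$: the power of $\|g\|_{L^4}$ is already saturated at $4$, so no further application of Young's inequality can trade any part of it against the unbounded coefficient $\|f\|^2$ — one would need $\lambda^2\|f\|^2/\delta\leqslant\delta\lambda$, which fails for large $\|f\|$. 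For the second route, the embedding $W^{1-\varepsilon,2}\hookrightarrow L^4$ only lets you replace $\|g\|_{L^4}$ by $\|g\|_{W^{1-\varepsilon,2}}$, which moves weight from the $1/4$-per-power factor to the $1/2$-per-power factor and thus pushes you \emph{further above} criticality, not below it.

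The missing ingredient is Gagliardo--Nirenberg interpolation (Proposition~\ref{gagliardo-nirenberg}) applied \emph{inside} the product estimate, before Young's inequality, so that the resulting powers are strictly subcritical. The paper arranges the Leibniz step so that the high-regularity factor is $\|\langle\mathD\rangle^{5/8}g\|_{L^{14/6}}$, i.e.\ carries only $5/8$ derivatives, and then interpolates $\|g\|_{W^{5/8,7/3}}\lesssim\|g\|_{H^{7/8}}^{5/7}\|g\|_{L^4}^{2/7}$, yielding $\|g^3\|_{W^{1/2+\varepsilon,14/13}}\lesssim\|g\|_{H^{7/8}}^{5/7}\|g\|_{L^4}^{17/7}$. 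Now the Young weights are $\tfrac{5/7}{2}+\tfrac{17/7}{4}=\tfrac{27}{28}<1$, leaving the exponent $28$ for $\|f\|_{W^{-1/2-\varepsilon,14}}$ and producing $\lambda^{11}\|f\|^{28}+\delta(\|g\|_{H^{7/8}}^2+\lambda\|g\|_{L^4}^4)$ after distributing the powers of $\lambda$. To repair your argument you would need to restructure your Leibniz step analogously — taking strictly fewer than $1-\varepsilon$ derivatives on the single-$g$ factor and interpolating between $H^{1-\varepsilon}$ (or $H^{7/8}$) and $L^4$ — rather than trying to fix things after the critical trilinear bound has already been obtained.
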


\begin{proof}
  By duality $\left| \bint f g^3 \right| \leqslant \| f \|_{W^{- 1 / 2 -
  \varepsilon, p}} \| g^3 \|_{W^{1 / 2 + \varepsilon, p'}}$. Applying again
  Proposition \ref{FractionalLeibniz} and Proposition
  \ref{gagliardo-nirenberg}, we get
  \[ \begin{array}{lll}
       \| g^3 \|_{W^{1 / 2 + \varepsilon, 14 / 13}} & \lesssim & \| \langle
       \mathD \rangle^{1 / 2 + \delta} g^3 \|_{L^{14 / 13}} \lesssim \|
       \langle \mathD \rangle^{5 / 8} g\|_{L^{14 / 6}} \| g \|_{L^4}^2\\
       & \lesssim & \|g\|^{5 / 7}_{H^{7 / 8}} \| g \|_{L^4}^{17 / 7} .
     \end{array} \]
  So
  \[ \begin{array}{lll}
       \lambda \left| \bint f g^3 \right| & \lesssim & \lambda \| f \|_{W^{- 1
       / 2 - \varepsilon, 14}} \|g\|^{5 / 7}_{H^{7 / 8}} \| g \|_{L^4}^{17 /
       7}\\
       & \lesssim & \lambda^{11} \| f \|^{28}_{W^{- 1 / 2 - \varepsilon, 14}}
       + \delta (\|g\|^2_{H^{7 / 8}} + \lambda \| g \|_{L^4}^4) .
     \end{array} \]
  
\end{proof}

\begin{remark}
  For the $d = 2$ case it would have been enough to estimate $\| g^3
  \|_{W^{\varepsilon, p}}$. The stronger estimate will be useful below for $d
  = 3$ since there we will only have $W_T \in W^{- 1 / 2 - \varepsilon, p}$
  for any large $p$.
\end{remark}

Then
\begin{equation}
  \left| 4 \lambda \bint W_T Z_T^3 \right| \leqslant C E (\lambda) \| W_T
  \|^K_{W^{- 1 / 2 - \varepsilon, p} } + \delta (\| Z_T \|^2_{W^{1 -
  \varepsilon, 2}} + \lambda \| Z_T \|^4_{L^4 }) . \label{eq:term-iii}
\end{equation}
Using eqs.~{\eqref{eq:term-i}},~{\eqref{eq:term-ii}} and~{\eqref{eq:term-iii}}
we obtain, for $\delta$ small enough,
\begin{equation}
  | \Phi_T (Z) | \leqslant Q_T + \delta \left[ \frac{1}{2} \| u
  \|_{\mathcal{H}}^2 + \lambda \bint Z_T^4 \right], \label{eq:bound-Psi}
\end{equation}
where
\[ Q_T = O (\lambda^2) [1 + \| \llbracket W_T^3 \rrbracket \|_{H^{- 1}}^2 + \|
   \llbracket W_T^2 \rrbracket \|^4_{W^{- \varepsilon, 5}} + \| W_T \|^K_{W^{-
   1 / 2 - \varepsilon, p} }] . \]
Therefore
\[ F_T (u) \geqslant -\mathbb{E} [Q_T] + (1 - \delta) \left[ \frac{1}{2} \| u
   \|_{\mathcal{H}}^2 + \lambda \bint Z_T^4 \right] \geqslant -\mathbb{E}
   [Q_T] . \]
This last average do not depends anymore on the drift and we are only left to
show that
\[ \sup_T \mathbb{E} [Q_T] < \infty . \]
However, it is well known that the Wick powers of the two dimensional Gaussian
free field are distributions belonging to $L^a (\Omega, W^{- \varepsilon, b})$
for any $a \geqslant 1$ and $b \geqslant 1$ and hypercontractivity plus an
easy argument gives the uniform boundedness of the above averages, see
e.g.~{\cite{mourrat_construction_2016}}. We have established:

\begin{theorem}
  \label{th:twod-bound}For any $\lambda > 0$ we have
  \[ \sup_T \frac{1}{| \Lambda |} \left| \log \CZ_T \right| \lesssim O
     (\lambda^2), \]
  where the constant in the r.h.s. is independent of $\Lambda$. 
\end{theorem}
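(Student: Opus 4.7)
The plan is to produce matching upper and lower bounds on $-|\Lambda|^{-1}\log\CZ_T$ by evaluating and estimating the Bou\'e--Dupuis variational formula of Theorem~\ref{th:variational} applied with $f=0$.

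For the upper bound I would plug in the trivial drift $u\equiv 0$. Since the Wick renormalization constants $a_T,b_T$ in~\eqref{eq:renorm-const-2d} are designed exactly so that $V_T(W_T)=\lambda\bint\llbracket W_T^4\rrbracket$ and $\mathbb{E}[\llbracket W_T^4\rrbracket]=0$, one immediately gets $-|\Lambda|^{-1}\log\CZ_T\leq 0$.

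For the lower bound, fix an arbitrary $u\in\mathbb{H}_a$ and set $Z_t=I_t(u)$. Expanding $V_T(W_T+Z_T)$ in Wick powers decomposes the integrand of $F_T(u)$ as
\[ \lambda\bint\llbracket W_T^4\rrbracket + \Phi_T(Z) + \lambda\bint Z_T^4 + \tfrac12\|u\|^2_{\mathcal{H}}, \]
with $\Phi_T(Z)$ the three mixed terms $\mathrm{I}$, $\mathrm{II}$, $\mathrm{III}$ already isolated in the excerpt. The pathwise estimates~\eqref{eq:term-i},~\eqref{eq:term-ii},~\eqref{eq:term-iii} combine to give the key inequality~\eqref{eq:bound-Psi}, i.e.
\[ |\Phi_T(Z)|\leq Q_T+\delta\Bigl[\tfrac12\|u\|^2_{\mathcal H}+\lambda\bint Z_T^4\Bigr], \]
where I also use Lemma~\ref{lemma:impr-reg-drift} to absorb $\|Z_T\|_{W^{1-\varepsilon,2}}$-type norms into $\|u\|^2_{\mathcal H}$. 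Choosing $\delta<1$, taking expectations, and recalling that $\mathbb{E}[\bint\llbracket W_T^4\rrbracket]=0$ then yields
\[ F_T(u)\geq -\mathbb{E}[Q_T]+(1-\delta)\Bigl[\tfrac12\mathbb{E}\|u\|^2_{\mathcal H}+\lambda\mathbb{E}\bint Z_T^4\Bigr]\geq -\mathbb{E}[Q_T], \]
a bound which is uniform in $u$. Taking the infimum over $u$ and invoking Theorem~\ref{th:variational} gives $-|\Lambda|^{-1}\log\CZ_T\geq -\mathbb{E}[Q_T]$.

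It remains to check $\sup_T\mathbb{E}[Q_T]<\infty$ with a bound of order $\lambda^2$ uniform in $|\Lambda|$. This is the only probabilistic input and is the step I expect to do the real work, though in $d=2$ it is classical. The quantities $\llbracket W_T^2\rrbracket$, $\llbracket W_T^3\rrbracket$ and $W_T$ live in a fixed Wiener chaos, so hypercontractivity reduces all $L^a(\mathbb P;W^{s,b})$ norms to $L^2$ norms of the kernels, which are bounded uniformly in $T$ as soon as the corresponding covariance sums converge in the chosen negative Sobolev space. For the $d=2$ Gaussian free field these sums are well-known to converge for any $s<0$ and any $b<\infty$ (see e.g.~\cite{mourrat_construction_2016}), giving uniform bounds on $\mathbb{E}\|\llbracket W_T^3\rrbracket\|_{H^{-1}}^2$, $\mathbb{E}\|\llbracket W_T^2\rrbracket\|_{W^{-\varepsilon,5}}^4$ and $\mathbb{E}\|W_T\|_{W^{-1/2-\varepsilon,p}}^K$, each with a prefactor that a careful bookkeeping of the Young inequalities yields to be $O(\lambda^2)$. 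The main obstacle, such as it is, will be choosing the exponents in Lemma~\ref{lemma:bound-cubic} and in~\eqref{eq:term-ii} so that the $\lambda$-powers of the deterministic prefactors compound to exactly $\lambda^2$; the remaining stochastic estimates are standard.
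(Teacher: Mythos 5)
Your proposal follows the paper's own argument essentially verbatim: upper bound from the Bou\'e--Dupuis formula with the zero drift and the vanishing expectation of $\bint\llbracket W_T^4\rrbracket$, lower bound from the pathwise estimates~\eqref{eq:term-i}--\eqref{eq:term-iii} assembled into~\eqref{eq:bound-Psi} (with Lemma~\ref{lemma:impr-reg-drift} controlling $\|Z_T\|_{H^1}$ by the quadratic cost), and finally $\sup_T\mathbb{E}[Q_T]<\infty$ by hypercontractivity for the 2d Wick powers. This is correct and is the same route the paper takes.
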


\begin{remark}
  Observe that the argument above remains valid upon replacing $\lambda$ with
  $\lambda p$ with $p \geqslant 1$. This implies that $e^{- V_T (Y_T)}$ is in
  all the $L^p$ spaces wrt. the measure $\mathbb{P}$ uniformly in $T$ and for
  any $p \geqslant 1$.
\end{remark}

\section{Three dimensions\label{sec:three-d}}

In three dimensions the strategy we used in two dimensions fails. Indeed here
the Wick products are less regular: $\llbracket W_T^2 \rrbracket \in \VV^{- 1
- \kappa}$ uniformly in $T$ for any small $\kappa > 0$ and $\llbracket W_T^3
\rrbracket$ does not even converge to a well-defined random distribution. This
implies that there is no straightforward approach to control the terms
\begin{equation}
  \bint \llbracket W_T^3 \rrbracket Z_T, \qquad \text{and} \qquad \bint
  \llbracket W_T^2 \rrbracket Z_T^2, \label{eq:mixed-terms}
\end{equation}
like we did in Section~\ref{sec:two-d}. The only apriori estimate on the
regularity of $Z_T = I_T (u)$ is in $H^1$, coming from
Lemma~\ref{lemma:impr-reg-drift} and the quadratic term in the variational
functional $F_T (u)$. It is also well known that in three dimensions there are
further divergences beyond the Wick ordering which have to be subtracted in
order for the limiting measure to be non-trivial. For these reasons we
introduce in the energy $V_T$ further scale dependent renormalization
constants $\gamma_T, \delta_T$ \ beyond Wick ordering to have
\begin{equation}
  \frac{1}{| \Lambda |} V_T^f (Y_T) = f (Y_T) + \bint (\lambda \llbracket
  Y_T^4 \rrbracket - \lambda^2 \gamma_T \llbracket Y_T^2 \rrbracket -
  \delta_T) . \label{eq:additional-renorm}
\end{equation}
Repeating the computation from Section~\ref{sec:two-d} we arrive at
\begin{equation}
  \begin{array}{lll}
    F_T (u) & = & \mathbb{E} \left[ f (W_T + Z_T) + \lambda \bint
    \mathbb{W}_T^3 Z_T + \frac{\lambda}{2} \bint \mathbb{W}^2_T Z_T^2 + 4
    \lambda \bint W_T Z_T^3 \right]\\
    &  & -\mathbb{E} \left[ 2 \lambda^2 \gamma_T \bint W_T Z_T + \lambda^2
    \gamma_T \bint Z_T^2 + \lambda^2 \delta_T \right] +\mathbb{E} \left[
    \lambda \bint Z_T^4 + \frac{1}{2} \| u \|^2_{\mathcal{H}} \right] .
  \end{array} \label{eq:3d-FT}
\end{equation}
where we introduced the convenient notations
\[ \mathbb{W}_t^3 \assign 4 \llbracket W_t^3 \rrbracket, \qquad \mathbb{W}^2_t
   \assign 12 \llbracket W_t^2 \rrbracket, \qquad t \geqslant 0, \]
and we recall that $f$ is a fixed function belonging to $C \left( \VV^{- 1 / 2
- \kappa} ; \mathbb{R} \right)$ with linear growth.

This form of the functional is not very useful in the limit $T \rightarrow
\infty$ since some of the terms, taken individually, are not expected to
behave well. We will perform a change of variables in the variational
functional in order to obtain some explicit cancellations which will leave
well behaved quantities of $T$. The main drawback is that the functional will
have a less compact and canonical form.

Some care has to be taken in order for the resulting quantities to be still
controlled by the coercive terms. We will need some regularization which will
make compatible Fourier cutoffs with $L^4$ estimates. To introduce such a
regularization fix a smooth function $\theta : \mathbb{R}_+ \rightarrow
\mathbb{R}_+$ such that $\theta (\xi) = 1$ if $\xi \leqslant 1 / 4$ and
$\theta (\xi) = 0$ if $\xi \geqslant 1 / 2$. Then \
\begin{equation}
  \begin{array}{lll}
    \theta_t (\xi) \sigma_s (\xi) & = & \text{$0$ for $s \geqslant t$,}\\
    \theta_t (\xi) & = & \text{$1$ for $\xi \leqslant c t$ \text{for some} $c
    > 0$.}
  \end{array} \label{theta}
\end{equation}
By the Mihlin-H{\"o}rmander theorem we deduce that the operator $\theta_t =
\theta_t (\mathD)$ is bounded on $L^p$ for any $1 < p < \infty$, see
Proposition~\ref{multiplierestimate}. In the following, for any $f \in C
\left( [0, \infty], \CS' (\Lambda) \right)$ we define $f^{\flat}_t \assign
\theta_t f_t$ then \
\[ Z_t^{\flat} = \theta_t Z_t = \int_0^t \theta_t \langle \mathD \rangle^{-
   1} \sigma_s (\mathD) u_s \mathd s = \int_0^T \theta_t \langle \mathD
   \rangle^{- 1} \sigma_s (\mathD) u_s \mathd s = \theta_t Z_T . \]
In this way we have $\| Z_t^{\flat} \|_{L^p} \lesssim \| Z_T \|_{L^p}$ for all
$t \leqslant T$. The renormalized functional will depend on some specific
renormalized combinations of the martingales $(\llbracket \mathbb{W}^k_t
\rrbracket)_{t, k}$. Therefore it will be also convenient to introduce a
collective notation for all the stochastic objects appearing in the
functionals and specify the topologies in which they are expected to be well
behaved. Let
\[ \mathbb{W} \assign (\mathbb{W}^1_{}, \mathbb{W}^2_{}, \mathbb{W}^{\langle 3
   \rangle}_{}, \mathbb{W}^{[3] \circ 1}, \mathbb{W}^{2 \diamond [3]},
   \mathbb{W}^{\langle 2 \rangle \diamond \langle 2 \rangle}_{}), \]
with $\mathbb{W}^1 \assign W$,
\[ \mathbb{W}^{\langle 3 \rangle}_t \assign J_t \mathbb{W}^3, \quad
   \mathbb{W}_t^{[3]} \assign \int_0^t J_s \mathbb{W}_s^{\langle 3 \rangle}
   \mathd s, \quad \mathbb{W}_t^{[3] \circ 1} \assign \mathbb{W}^1_t \circ
   \mathbb{W}_t^{[3]}, \]
\[ \quad \mathbb{W}^{2 \diamond [3]}_t \assign \mathbb{W}^2_t \circ
   \mathbb{W}_t^{[3]} + 2 \gamma_t \mathbb{W}^1_t, \quad \mathbb{W}^{\langle 2
   \rangle \diamond \langle 2 \rangle}_t \assign (J_t \mathbb{W}^2_t) \circ
   (J_t \mathbb{W}^2_t) + 2 \dot{\gamma}_t . \]
We not not need to include $\mathbb{W}^{[3]}$ since it can be obtained as a
function of $\mathbb{W}^{\langle 3 \rangle}$ thanks to the bound
\[ \|\mathbb{W}^{[3]}_t -\mathbb{W}^{[3]}_s \|_{\VV^{1 / 2 - 2 \kappa}}
   \leqslant \int_s^t \|J_r \mathbb{W}^{\langle 3 \rangle}_r \|_{\VV^{1 / 2 -
   2 \kappa}} \mathd r \leqslant \left[ \int_0^T \|J_r \mathbb{W}^{\langle 3
   \rangle}_r \|_{\VV^{1 / 2 - 2 \kappa}}^2 \mathd r \right]^{1 / 2} | t - s
   |^{1 / 2} \]
\[ \leqslant \left[ \int_0^T \|\mathbb{W}^{\langle 3 \rangle}_r \|_{\VV^{- 1 /
   2 - \kappa}}^2 \frac{\mathd r}{\langle r \rangle^{1 + 2 \kappa}} \right]^{1
   / 2} | t - s |^{1 / 2} \lesssim \sup_{r \in [0, T]} \|\mathbb{W}^{\langle 3
   \rangle}_r \|_{\VV^{- 1 / 2 - \kappa}}^2 | t - s |^{1 / 2}, \]
valid for all $0 \leqslant s \leqslant t \leqslant T$ which shows that the
deterministic map $\mathbb{W}^{\langle 3 \rangle} \mapsto \mathbb{W}^{[3]}$ is
continuous from $C \left( [0, \infty], \VV^{- 1 / 2 - \kappa} \right)$ to
$C^{1 / 2} \left( [0, \infty], \VV^{1 / 2 - 2 \kappa} \right)$. The pathwise
regularity of all the other stochastic objects follows from the next Lemma,
provided the function $\gamma$ is chosen appropriately.

\begin{lemma}
  \label{lemma:stoch-reg}There exists a function $\gamma_t \in C^1
  (\mathbb{R}_+, \mathbb{R})$ such that
  \begin{equation}
    | \gamma_t | + \langle t \rangle | \dot{\gamma}_t | \lesssim \log \langle
    t \rangle, \qquad t \geqslant 0. \label{eq:bounds-gamma-main}
  \end{equation}
  and such that the vector $\mathbb{W}$ is almost surely in $\mathfrak{S}$
  where $\mathfrak{S}$ is the Banach space
  \[ \mathfrak{S}= C ([0, \infty], \mathfrak{W}) \cap \left\{
     \mathbb{W}^{\langle 3 \rangle}_{} \in L^2 \left( \mathbb{R}_+, \VV^{- 1 /
     2 - \kappa} \right), \mathbb{W}^{\langle 2 \rangle \diamond \langle 2
     \rangle}_{} \in L^1 \left( \mathbb{R}_+, \VV^{- \kappa} \right) \right\}
  \]
  with
  \[ \mathfrak{W}=\mathfrak{W}_{\kappa} \assign \VV^{- 1 / 2 - \kappa} \times
     \VV^{- 1 - \kappa} \times \VV^{- 1 - \kappa} \times \VV^{- \kappa} \times
     \VV^{- 1 / 2 - \kappa} \times \VV^{- \kappa}, \]
  and equipped with the norm
  \[ \| \mathbb{W} \|_{\mathfrak{S}} \assign \| \mathbb{W} \|_{C ([0,
     \infty], \mathfrak{W})} + \|\mathbb{W}^{\langle 3 \rangle}_{} \|_{L^2
     \left( \mathbb{R}_+, \VV^{- 1 / 2 - \kappa} \right)} +
     \|\mathbb{W}^{\langle 2 \rangle \diamond \langle 2 \rangle}_{} \|_{L^1
     \left( \mathbb{R}_+, \VV^{- \kappa} \right)} . \]
  The norm $\| \mathbb{W} \|_{\mathfrak{S}}$ belongs to all $L^p$ spaces.
  Moreover the averages of the Besov norms $B^{\alpha}_{p, r}$ of the
  components of $\mathbb{W}$ of regularity $\alpha$ are uniformly bounded in
  the volume $| \Lambda |$ if $p < \infty$.
\end{lemma}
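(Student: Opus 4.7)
The plan is to exploit the fact that every component of $\mathbb{W}$ lives in a fixed finite Wiener chaos (of order at most 6 for the resonant products of Wick cubes). By Nelson's hypercontractivity, all $L^p(\Omega)$ moments with $p \geq 2$ are controlled by the $L^2(\Omega)$ moment, up to a constant depending only on $p$ and the chaos order. Thus, for each component $X_t$ and each Littlewood--Paley block $\Delta_j$, it suffices to compute the second moment $\mathbb{E}[|\Delta_j X_t(x)|^2]$ in Fourier space. A Kolmogorov-type argument then upgrades the frequency bound to a statement about almost-sure Besov regularity, and an analogous bound on $\mathbb{E}[|\Delta_j (X_t - X_s)(x)|^2]$ yields time regularity and hence continuity of the path $t \mapsto X_t$ in the stated Besov space.

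For the linear and pure-Wick components $\mathbb{W}^1, \mathbb{W}^2, \mathbb{W}^3$ and for $\mathbb{W}^{\langle 3\rangle} = J_t \mathbb{W}^3$, the relevant Fourier-space second moments can be written as explicit sums over momenta involving products of $\rho_t(n)^2/\langle n\rangle^2$, and the regularity follows from a direct counting of powers. The $L^2(\mathbb{R}_+, \VV^{-1/2-\kappa})$ regularity of $\mathbb{W}^{\langle 3\rangle}$ uses in addition the gain in $t$ coming from the factor $\sigma_s(D)/\langle D\rangle$, which produces an extra $1/\langle s\rangle$ decay via Proposition \ref{multiplierestimate}. The regularity of $\mathbb{W}^{[3]}$ that I already know from the deterministic bound in the text then yields, via a paraproduct estimate, the regularity of the resonant product $\mathbb{W}^{[3]\circ 1} = \mathbb{W}^1 \circ \mathbb{W}^{[3]}$ without any renormalization, since the sum of the regularities $(-1/2-\kappa) + (1/2-2\kappa)$ is positive.

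The genuinely delicate components are $\mathbb{W}^{2\diamond [3]}$ and $\mathbb{W}^{\langle 2\rangle\diamond\langle 2\rangle}$, where the sum of regularities of the two factors in the resonant product is negative. Using the Wick product formula to expand each product into its Wiener-chaos components, one identifies in $\mathbb{W}^2_t \circ \mathbb{W}^{[3]}_t$ a first-chaos component that is linear in $W_t$ with a deterministic coefficient that diverges as $t \to \infty$; this coefficient is precisely $-2\gamma_t$ for a function $\gamma_t$ given by an explicit double Fourier integral involving $\rho_s^2/\langle \cdot\rangle^2$. In three dimensions this integral is logarithmically divergent, which yields the bound $|\gamma_t| \lesssim \log\langle t\rangle$ claimed in \eqref{eq:bounds-gamma-main}; differentiating in $t$ introduces an extra $1/\langle t\rangle$ and produces the matching bound on $\dot\gamma_t$. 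An analogous computation for $(J_t\mathbb{W}^2_t) \circ (J_t\mathbb{W}^2_t)$ shows that the divergent zeroth-chaos part is exactly $-2\dot\gamma_t$. With $\gamma$ thus defined, the counterterms $2\gamma_t\mathbb{W}^1_t$ and $2\dot\gamma_t$ exactly cancel the divergent chaoses, and the remaining higher chaoses have uniformly bounded Fourier-space second moments, yielding regularity $\VV^{-1/2-\kappa}$ in space (with $L^2_t$) and $\VV^{-\kappa}$ (with $L^1_t$) respectively.

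The main obstacle is this renormalization step for $\mathbb{W}^{2\diamond [3]}$: one must carefully track the chaos decomposition of the triple iterated stochastic integral defining $\mathbb{W}^{[3]}$, identify exactly which pairings under resonant multiplication with $\mathbb{W}^2$ produce the divergent first-chaos contribution, and verify that the subtraction by $\gamma_t$ leaves a uniformly controlled kernel. Once these stochastic estimates are in place, the $L^p(\Omega)$ bounds on $\|\mathbb{W}\|_{\mathfrak{S}}$ follow from hypercontractivity and Fernique-type integrability for polynomial functionals in a fixed chaos. Finally, volume-uniformity of the Besov norms $B^\alpha_{p,r}$ for $p<\infty$ is obtained by converting the normalized Fourier sums over the discrete lattice $(L^{-1}\mathbb{Z})^d$ into Riemann sums with the $|\Lambda|^{-1}$ normalization fixed in the Conventions; the resulting bounds are expressible purely in terms of integrals over $\mathbb{R}^d$ and thus independent of $|\Lambda|$, while for $p=\infty$ the logarithmic factor from the number of frequencies obstructs this cancellation.
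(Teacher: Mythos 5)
Your overall strategy --- chaos decomposition of each component, reduction of $L^p$ moments to second moments of Littlewood--Paley blocks via hypercontractivity/BDG, explicit Fourier--kernel estimates, and identification of $\gamma_t$ as the divergent low--chaos coefficient of the two delicate resonant products (with $\dot\gamma_t$ renormalizing $(J_t\mathbb{W}^2_t)\circ(J_t\mathbb{W}^2_t)$) --- is essentially the one the paper carries out in Lemma~\ref{lemma:renormalized-terms}. But there is one genuine error: you dispose of $\mathbb{W}^{[3]\circ 1}=\mathbb{W}^1\circ\mathbb{W}^{[3]}$ by a deterministic paraproduct estimate on the grounds that the sum of regularities $(-1/2-\kappa)+(1/2-2\kappa)$ is positive. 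It is not: it equals $-3\kappa<0$, so the resonant--product estimate of Proposition~\ref{paraproductestimate} (which requires $\alpha+\beta>0$) does not apply. This component needs no renormalization \emph{constant}, but it does need a probabilistic second--moment estimate of exactly the kind you deploy for $\mathbb{W}^{2\diamond[3]}$ --- this is the bound \eqref{eq:stoch-est-w-w3} in the paper --- and the resulting regularity $\VV^{0-}$ is strictly better than anything a deterministic product bound could yield even in the borderline case. The gap is repairable with machinery you already use elsewhere, but as written the justification for this component is wrong.

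Two secondary points. First, for path continuity on the compactified interval $[0,\infty]$ the paper does not use a Kolmogorov increment argument: it observes that, with $\gamma$ chosen as above, each block $(\Delta_q\mathbb{W}^{(i)}_t(x))_{t\geqslant 0}$ is a martingale whose quadratic variation is controlled uniformly up to $t=\infty$, so a continuous version on $[0,\infty]$ (including the endpoint) exists by martingale convergence plus BDG. Your Kolmogorov route can be made to work, but continuity \emph{at} $t=\infty$ requires increment bounds with integrable decay in $t$ after reparametrizing $[0,\infty]$ to a compact interval, and you do not address this. Second, in your last paragraph you attach the $L^2_t$ integrability to $\mathbb{W}^{2\diamond[3]}$; in fact $\mathbb{W}^{2\diamond[3]}$ must lie in $C([0,\infty],\VV^{-1/2-\kappa})$, while the $L^2(\mathbb{R}_+,\VV^{-1/2-\kappa})$ membership is required of $\mathbb{W}^{\langle 3\rangle}$ (which you treat correctly earlier). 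The volume--uniformity argument via stationarity and comparison of normalized lattice sums with integrals matches the paper.
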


\begin{proof}
  The proof is based on the observation that one can choose $\gamma$ in such a
  way that every component $\mathbb{W}^{(i)}$ of the vector $\mathbb{W}$ is
  such that $(\Delta_q \mathbb{W}_t^{(i)} (x))_{t \geqslant 0}$ for $q
  \geqslant - 1$ and $x \in \Lambda$ is a martingale wrt. the Brownian
  filtration. The reader can find details for this statement in the proof of
  Lemma~\ref{lemma:renormalized-terms} below, in particular
  eq.~{\eqref{eq:W2o3}} and~{\eqref{eq:W2o2}} give the stochastic integral
  representations of the most difficult terms which require renormalization by
  $\gamma$. The quadratic variation of these martingales can be controlled
  uniformly up to $t = \infty$. As a consequence, there exists a version of
  $(\Delta_q \mathbb{W}_t^{(i)} (x))_{t \geqslant 0}$ which is continuous in
  $t \in [0, \infty]$. Since each Littlewood--Paley block is a smooth function
  of $x$ is not difficult from this to deduce that $((t, x) \mapsto \Delta_q
  \mathbb{W}_t^{(i)} (x)) \in C ([0, \infty] \times \Lambda ; \mathbb{R})$. By
  Burkholder--David--Gundy (BDG) estimates and $L^p$ estimates on the the
  quadratic variations one can conclude that $\| \mathbb{W} \|_{\mathfrak{S}}$
  is in all $L^p$ spaces provided we can control the appropriate moments of
  the norm of $\mathbb{W}_T$ uniformly in $T$. This is achieved in
  Lemma~\ref{lemma:renormalized-terms} for the more difficult resonant
  products where we show also the claimed uniformly of the finite $r$ Besov
  norms $B^{\alpha}_{p, r}$.
\end{proof}

For convenience of the reader we summarize the probabilistic estimates in
Table~\ref{table:reg}.

\begin{table}[h]
  \begin{tabular}{lllllll}
    $\mathbb{W}^1_{}$ & $\mathbb{W}^2_{}$ & $\mathbb{W}^{\langle 3
    \rangle}_{}$ & $\mathbb{W}^{[3]}$ & $\mathbb{W}^{[3] \circ 1}$ &
    $\mathbb{W}^{2 \diamond [3]}$ & $\mathbb{W}^{\langle 2 \rangle \diamond
    \langle 2 \rangle}_{}$\\
    \hline
    $C \VV^{- 1 / 2 -}$ & $C \VV^{- 1 -}$ & $C \VV^{- 1 / 2 -} \cap L^2 \VV^{-
    1 / 2 -}$ & $C \VV^{1 / 2 -}$ & $C \VV^{0 -}$ & $C \VV^{- 1 / 2 -}$ & $C
    \VV^{0 -}{\cap}L^1 \VV^{0 -}$
  \end{tabular}
  \caption{\label{table:reg}Regularities of the various stochastic objects,
  the domain of the time variable is understood to be $[0, \infty]$. Estimates
  in these norms holds a.s. and in $L^p (\mathbb{P})$ for all $p \geqslant 1$
  (see Lemma~\ref{lemma:stoch-reg}).}
\end{table}

\begin{remark}
  The requirement that $\mathbb{W}^{\langle 3 \rangle} \in L^2 \VV^{- 1 / 2
  -}$ will be used in Section \ref{sec:gamma-convergence} to establish
  equicoercivity and to relax the variational problem to a suitable space of
  measures. 
\end{remark}

We are now ready to perform a change of variables which renormalizes the
variational functional.

\begin{lemma}
  \label{lemma:change-of-variables}Define $l = l^T (u) \in \mathbb{H}_a$, $Z =
  Z (u) \in C ([0, \infty], H^{1 / 2 - \kappa})$, $K = K (u) \in C ([0,
  \infty], H^{1 - \kappa})$ such that
  \begin{equation}
    \begin{array}{l}
      l_t^T (u) \assign u_t + \lambda \mathbb{1}_{t \leqslant T}
      \mathbb{W}^{\langle 3 \rangle}_t + \lambda \mathbb{1}_{t \leqslant T}
      J_t (\mathbb{W}_t^2 \succ Z_t^{\flat}),\\
      Z_t (u) \assign I_t (u), \quad K_t (u) \assign I_t (w), \quad w_t
      \assign - \lambda \mathbb{1}_{t \leqslant T} J_t (\mathbb{W}_t^2 \succ
      Z_t^{\flat}) + l_t,
    \end{array} \quad t \geqslant 0, \label{eq:full-ansatz}
  \end{equation}
  Then the functional $F_T (u)$ defined in eq.~{\eqref{eq:3d-FT}} has the form
  \begin{eqnarray*}
    F_T (u) & = & \mathbb{E} \left[ \Phi_T (\mathbb{W}, Z (u), K (u)) +
    \lambda \bint (Z_T (u))^4 + \frac{1}{2} \| l^T (u) \|_{\mathcal{H}}^2
    \right],
  \end{eqnarray*}
  where
  \[ \Phi_T (\mathbb{W}, Z, K) \assign f (W_T + Z_T) + \sum_{i = 1}^6
     \Upsilon^{(i)}, \]
  \begin{eqnarray*}
    \Upsilon^{(1)}_T & \assign & - \frac{\lambda}{2} \mathfrak{K}_2
    (\mathbb{W}^2_T, K_T, K_T) + \frac{\lambda}{2} \bint (\mathbb{W}^2_T \prec
    K_T) K_T - \lambda^2 \bint (\mathbb{W}^2_T \prec \mathbb{W}_T^{[3]}) K_T\\
    \Upsilon^{(2)}_T & \assign & \lambda \bint (\mathbb{W}^2_T \succ (Z_T -
    Z^{\flat}_T)) K_T\\
    \Upsilon^{(3)}_T & \assign & \lambda \int_0^T \bint (\mathbb{W}^2_t \succ
    \dot{Z}^{\flat}_t) K_t \mathd t\\
    \Upsilon^{(4)}_T & \assign & 4 \lambda \bint W_T K^3_T - 12 \lambda^2
    \bint W_T \mathbb{W}_T^{[3]} K^2_T + 12 \lambda^3 \bint W_T
    (\mathbb{W}_T^{[3]})^2 K_T\\
    \Upsilon^{(5)}_T & \assign & - \lambda^2 \bint \gamma_T Z^{\flat}_T (Z_T -
    Z^{\flat}_T) - \lambda^2 \bint \gamma_T (Z_T - Z^{\flat}_T)^2 - 2
    \lambda^2 \int^T_{ 0} \bint \gamma_t Z^{\flat}_t \dot{Z}_t^{\flat}
    \mathd t\\
    \Upsilon^{(6)}_T & \assign & - \lambda^2 \bint \mathbb{W}^{2 \diamond
    [3]}_T K_T - \frac{\lambda^2}{2} \int_0^T \bint \mathbb{W}_t^{\langle 2
    \rangle \diamond \langle 2 \rangle} (Z^{\flat}_t)^2 \mathd t -
    \frac{\lambda^2}{2} \int_0^T \mathfrak{K}_{3, t} (\mathbb{W}^2_t,
    \mathbb{W}^2_t, Z^{\flat}_t, Z^{\flat}_t) \mathd t
  \end{eqnarray*}
  where $\mathfrak{K}_2$ and $\mathfrak{K}_{3, t}$ are linear forms defined in
  the Appendix (and recalled in the proof below) and we have chosen
  \begin{equation}
    \begin{array}{lll}
      \delta_T & \assign & \frac{\lambda^2}{2} \mathbb{E} \int_0^T \bint
      (\mathbb{W}_t^{\langle 3 \rangle})^2 \mathd t + \frac{\lambda^3}{2}
      \mathbb{E} \bint \mathbb{W}^2_T (\mathbb{W}_T^{[3]})^2\\
      &  & + 2 \lambda^3 \gamma_T \mathbb{E} \bint W_T \mathbb{W}_T^{[3]} - 4
      \lambda^4 \mathbb{E} \bint W_T (\mathbb{W}_T^{[3]})^3 .
    \end{array} \label{eq:choice-delta}
  \end{equation}
\end{lemma}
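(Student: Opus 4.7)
Starting from the expansion \eqref{eq:3d-FT} of $F_T(u)$, the proof is a purely algebraic rearrangement guided by three coordinated operations: (i) completing the square in the entropy via the affine change of drift $u\mapsto l^T(u)$ from \eqref{eq:full-ansatz}; (ii) substituting $Z_T = K_T - \lambda\mathbb{W}_T^{[3]}$ (valid on $[0,T]$ by the definition of $w$, since $K_t-Z_t = \lambda I_t(\mathbb{W}^{\langle 3\rangle}) = \lambda\mathbb{W}^{[3]}_t$) in the polynomial products; and (iii) paracontrolled decompositions of the products involving $\mathbb{W}^2$, whose resonances are renormalised by $\gamma_t$ and $\dot\gamma_t$ to produce the well-defined objects $\mathbb{W}^{2\diamond[3]}$ and $\mathbb{W}^{\langle 2\rangle\diamond\langle 2\rangle}$. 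The constant $\delta_T$ in \eqref{eq:choice-delta} is defined precisely so as to absorb, term by term, the deterministic residues these three operations produce.

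\textbf{The entropy and the $\mathbb{W}^3 Z$ cancellation.} Expanding the square of the ansatz yields
\[\tfrac{1}{2}\|l^T(u)\|_{\mathcal{H}}^2 = \tfrac{1}{2}\|u\|_{\mathcal{H}}^2 + \lambda\!\int_0^T\!\langle u_t,\mathbb{W}^{\langle 3\rangle}_t\rangle\,dt + \lambda\!\int_0^T\!\langle u_t,J_t(\mathbb{W}^2_t\succ Z^\flat_t)\rangle\,dt + \tfrac{\lambda^2}{2}\!\int_0^T\!\bigl\|\mathbb{W}^{\langle 3\rangle}_t + J_t(\mathbb{W}^2_t\succ Z^\flat_t)\bigr\|_{L^2}^2\,dt.\]
Since $\mathbb{W}^3$ is a $\mathbb{P}$-martingale, $dZ_t = J_t u_t\,dt$ is of bounded variation, and $J_t$ is self-adjoint with $J_t\mathbb{W}^3_t = \mathbb{W}^{\langle 3\rangle}_t$, Itô's product formula gives the identity $\mathbb{E}\bint\mathbb{W}^3_T Z_T = \mathbb{E}\int_0^T\langle u_t,\mathbb{W}^{\langle 3\rangle}_t\rangle\,dt$. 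This makes $\lambda\mathbb{E}\bint\mathbb{W}^3_T Z_T$ in $F_T(u)$ cancel against the first cross term above. The deterministic piece $\tfrac{\lambda^2}{2}\int_0^T\|\mathbb{W}^{\langle 3\rangle}_t\|_{L^2}^2\,dt$ coming from the expanded square provides, on expectation, the first contribution to $\delta_T$.

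\textbf{Polynomial substitution and paraproducts.} Inserting $Z_T = K_T - \lambda\mathbb{W}_T^{[3]}$ into the cubic $4\lambda\bint W_T Z_T^3$ and the counterterm $-2\lambda^2\gamma_T\bint W_T Z_T$ and expanding binomially produces $\Upsilon^{(4)}_T$, a residual $-2\lambda^2\gamma_T\bint W_T K_T$, and the deterministic contributions to $\delta_T$ involving $\mathbb{E}\bint W_T(\mathbb{W}_T^{[3]})^3$ and $\mathbb{E}\bint W_T\mathbb{W}_T^{[3]}$. For $\tfrac{\lambda}{2}\bint\mathbb{W}^2_T Z_T^2$ I decompose $\mathbb{W}^2_T\cdot(\cdot) = \mathbb{W}^2_T\prec(\cdot) + \mathbb{W}^2_T\succ(\cdot) + \mathbb{W}^2_T\circ(\cdot)$: the $\circ\mathbb{W}^{[3]}_T$ resonance absorbs the residual $-2\lambda^2\gamma_T\bint W_T K_T$ via the definition $\mathbb{W}^{2\diamond[3]}_T = \mathbb{W}^2_T\circ\mathbb{W}^{[3]}_T + 2\gamma_T W_T$ to give the leading term of $\Upsilon^{(6)}_T$ and the remaining $\tfrac{\lambda^3}{2}\mathbb{E}\bint\mathbb{W}^2_T(\mathbb{W}_T^{[3]})^2$ piece of $\delta_T$; the $\prec$ piece yields the $\prec$-term of $\Upsilon^{(1)}_T$; and the $\succ$ piece is paired, after a paracontrolled rewriting of $J_t(\mathbb{W}^2_t\succ Z^\flat_t)$ and an Itô-in-$t$ argument applied to the remaining entropy cross-term and to the square $\tfrac{\lambda^2}{2}\int_0^T\|J_t(\mathbb{W}^2_t\succ Z^\flat_t)\|_{L^2}^2\,dt$ (using that $\mathbb{W}^2$ is a martingale while $Z^\flat$ has bounded variation), with the terms $\Upsilon^{(2)}_T, \Upsilon^{(3)}_T$, the $\mathbb{W}^{\langle 2\rangle\diamond\langle 2\rangle}(Z^\flat)^2$ integral (where the resonance $(J_t\mathbb{W}^2_t)\circ(J_t\mathbb{W}^2_t)$ is renormalised by $2\dot\gamma_t$), the commutator bilinears $\mathfrak{K}_2$, $\mathfrak{K}_{3,t}$, and a leftover $\dot\gamma_t$ contribution. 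Integrating this last piece by parts in $t$ against $(Z^\flat_t)^2$ and combining with $-\lambda^2\gamma_T\bint Z_T^2$ after splitting $Z_T = Z^\flat_T + (Z_T - Z^\flat_T)$ assembles the three pieces of $\Upsilon^{(5)}_T$.

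\textbf{Main obstacle.} The entire derivation is essentially bookkeeping once the correct substitutions and decompositions have been identified, so the central difficulty is organisational: one must verify that every numerical coefficient, sign, and $\gamma_t$-counterterm aligns precisely so that all the singular $\circ$-resonances are compensated, every paraproduct commutator assembles into the prescribed bilinear forms $\mathfrak{K}_2$ and $\mathfrak{K}_{3,t}$ from the appendix, and every surviving deterministic average matches \eqref{eq:choice-delta} term by term.
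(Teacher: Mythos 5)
Your proposal is correct and follows essentially the same route as the paper's proof: the identity $\mathbb{E}\bint \mathbb{W}^3_T Z_T = \mathbb{E}\int_0^T\langle u_t,\mathbb{W}^{\langle 3\rangle}_t\rangle\,\mathd t$ via It\^o and the martingale property of $\mathbb{W}^3$, the substitution $Z_T=K_T-\lambda\mathbb{W}^{[3]}_T$, the paraproduct splitting with the $Z^{\flat}$ truncation, and the renormalization of the two resonant products by $\gamma_T$ and $\dot\gamma_t$ are exactly the paper's Steps 1--5. The only difference is organizational — you complete the square $\frac12\|l^T(u)\|^2_{\mathcal{H}}$ in one shot and then match cross terms, whereas the paper performs the change of drift in two stages ($u\mapsto w\mapsto l$) interleaved with the It\^o developments — and the two bookkeepings are algebraically equivalent.
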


\begin{proof}
  \tmtextbf{Step 1.} We are going to absorb the mixed
  terms~{\eqref{eq:mixed-terms}} via the quadratic cost function. To do so we
  develop them along the flow of the scale parameter via Ito formula. For the
  first we have
  \[ \lambda \bint \mathbb{W}_T^3 Z_T = \lambda \int_0^T \bint \mathbb{W}_t^3
     \dot{Z}_t \mathd t + \tmop{martingale}, \]
  and we can cancel the first term on the r.h.s. by making the change of
  variables
  \begin{equation}
    w_t \assign u_t + \lambda \mathbb{1}_{t \leqslant T} \mathbb{W}_t^{\langle
    3 \rangle}, \qquad t \geqslant 0, \label{eq:first-ansatz}
  \end{equation}
  into the cost functional to get
  \[ \lambda \bint \mathbb{W}_T^3 Z_T + \frac{1}{2} \int_0^{\infty} \| u_s
     \|^2_{L^2} \mathd s = - \frac{\lambda^2}{2} \int_0^T \bint
     (\mathbb{W}_t^{\langle 3 \rangle})^2 \mathd t + \frac{1}{2}
     \int_0^{\infty} \| w_s \|^2_{L^2} \mathd s + \tmop{martingale}, \]
  where we used that $J_t$ is self-adjoint. The divergent term $\bint
  \mathbb{W}_T^3 Z_T$ has been replaced with a divergent but purely stochastic
  term $\int_0^T \bint (\mathbb{W}_t^{\langle 3 \rangle})^2 \mathd t$ which
  does not affect anymore the variational problem and can be explicitly
  removed by adding its average to $\delta_T$. As a consequence, we are not
  able to control $(Z_t)_t$ in $H^1$ anymore and we should rely on the
  relation~{\eqref{eq:full-ansatz}} and on a control over the $H^1$ norm of
  $(K_t)_t$ coming from the residual quadratic term $\| w \|^2_{\mathcal{H}}$.
  
  \tmtextbf{Step 2.} From~{\eqref{eq:first-ansatz}} we have the relation
  \[ Z_T = - \lambda \mathbb{W}_T^{[3]} + K_T, \]
  which can be used to expand the second mixed divergent term
  in~{\eqref{eq:mixed-terms}} as
  \begin{equation}
    \frac{\lambda}{2} \bint \mathbb{W}^2_T Z_T^2 = \frac{\lambda^3}{2} \bint
    \mathbb{W}^2_T (\mathbb{W}_T^{[3]})^2 - \lambda^2 \bint \mathbb{W}^2_T
    \mathbb{W}_T^{[3]} K_T + \frac{\lambda}{2} \bint \mathbb{W}^2_T K_T^2 .
    \label{eq:r2}
  \end{equation}
  Again, the first term on the r.h.s. a purely stochastic object and will give
  a contribution independent of the drift $u$ absorbed in $\delta_T$. We are
  still not done since this operation has left two new divergent terms on the
  r.h.s. of eq.~{\eqref{eq:r2}}: the $H^1$ regularity of $K_T$ is not enough
  to control the products with $\mathbb{W}^2$ which has regularity $\VV^{- 1 -
  \kappa}$, a bit below $- 1$. In order to proceed further we will isolate the
  divergent parts of these products via a paraproduct decomposition (see
  Appendix~\ref{sec:appendix-para} for details) and expand
  \begin{eqnarray*}
    - \lambda^2 \bint \mathbb{W}^2_T \mathbb{W}_T^{[3]} K_T +
    \frac{\lambda}{2} \bint \mathbb{W}^2_T K_T^2 & = & \lambda \bint
    (\mathbb{W}^2_T \succ Z_T) K_T - \lambda^2 \bint (\mathbb{W}^2_T \circ
    \mathbb{W}_T^{[3]}) K_T\\
    &  & - \lambda^2 \bint (\mathbb{W}^2_T \prec \mathbb{W}_T^{[3]}) K_T +
    \frac{\lambda}{2} \bint (\mathbb{W}^2_T \prec K_T) K_T\\
    &  & + \frac{\lambda}{2} \left( \bint (\mathbb{W}^2_T \circ K_T) K_T -
    \bint (\mathbb{W}^2_T \succ K_T) K_T \right) .
  \end{eqnarray*}
  The first two terms will require renormalizations which we put in place in
  Step~3 below. All the other terms will be well behaved and we collect them
  in $\Upsilon^{(1)}_T$. In particular we observe that the last one can be
  rewritten as
  \[ \frac{\lambda}{2} \left( \bint (\mathbb{W}^2_T \circ K_T) K_T - \bint
     (\mathbb{W}^2_T \succ K_T) K_T \right) = - \frac{\lambda}{2}
     \mathfrak{K}_2 (\mathbb{W}^2_T, K_T, K_T) \]
  using the trilinear form $\mathfrak{K}_2$ defined in
  Proposition~\ref{adjointparaproduct}.
  
  \tmtextbf{Step 3.} As we anticipated, the resonant term $\mathbb{W}^2_T
  \circ \mathbb{W}_T^{[3]}$ needs renormalization. In the expression of $F_T$
  in~{\eqref{eq:3d-FT}} we have the counterterm $- 2 \lambda^2 \gamma_T \bint
  W_T Z_T$ available, which we put in use now by writing
  \[ - \lambda^2 \bint (\mathbb{W}^2_T \circ \mathbb{W}_T^{[3]}) K_T - 2
     \lambda^2 \gamma_T \bint W_T Z_T = - \lambda^2 \bint
     \underbrace{(\mathbb{W}^2_T \circ \mathbb{W}_T^{[3]} + 2 \gamma_T
     W_T)}_{\mathbb{W}^{2 \diamond [3]}_T} K_T + 2 \lambda^3 \gamma_T \bint
     W_T \mathbb{W}_T^{[3]} . \]
  The first contribution is collected in $\Upsilon^{(6)}_T$ and the
  expectation of the second will contribute to $\delta_T$.
  
  As far as the term $\lambda \bint (\mathbb{W}^2_T \succ Z_T) K_T$ is
  concerned, we want to absorb it into $\int \| w_s \|^2 \mathd s$ like we did
  with the linear term in Step~2. Before we can do this we must be sure that,
  after applying Ito's formula, it will be still possible to use $\bint Z^4_T$
  to control some of the growth of this term. Indeed the quadratic dependence
  in $K_T$ (via $Z_T$) cannot be fully taken care of by the quadratic cost
  $\int \| w_s \|^2 \mathd s$.
  
  We decompose
  \[ \lambda \bint (\mathbb{W}^2_T \succ Z_T) K_T = \lambda \bint
     (\mathbb{W}^2_T \succ Z^{\flat}_T) K_T + \lambda \bint (\mathbb{W}^2_T
     \succ (Z_T - Z^{\flat}_T)) K_T \]
  and using the fact that the functions $Z_T - Z_T^{\flat}$ and $K_T -
  K_T^{\flat}$ are spectrally supported outside of a ball or radius $c T$ we
  will be able to show that the second term is nice enough as $T \rightarrow
  \infty$ to not require further analysis and we collect it in
  $\Upsilon^{(2)}_T$. For the first we apply Ito's formula to decompose it
  along the flow of scales as
  \[ \lambda \bint (\mathbb{W}^2_T \succ Z^{\flat}_T) K_T = \lambda \int_0^T
     \bint (\mathbb{W}^2_t \succ Z^{\flat}_t) \dot{K_t} \mathd t + \lambda
     \int_0^T \bint (\mathbb{W}^2_t \succ \dot{Z}^{\flat}_t) K_t \mathd t +
     \text{martingale} . \]
  The second term will be fine and we collect it in $\Upsilon^{(3)}_T$.
  
  \tmtextbf{Step 4. }We are left with the singular term $\int_0^T \bint
  (\mathbb{W}^2_t \succ Z^{\flat}_t) \dot{K_t} \mathd t$. Using
  eq.~{\eqref{eq:full-ansatz}} and expanding $w$ in the residual quadratic
  cost function obtained in Step~1, we compute
  \[ \lambda \int_0^T \bint (\mathbb{W}^2_t \succ Z^{\flat}_t) \dot{K_t}
     \mathd t + \frac{1}{2} \int_0^{\infty} \| w_t \|^2_{L^2} \mathd t = -
     \frac{\lambda^2}{2} \int_0^T \bint (J_t (\mathbb{W}^2_t \succ
     Z^{\flat}_t))^2 \mathd t + \frac{1}{2} \int_0^{\infty} \| l_t \|^2_{L^2}
     \mathd t \]
  \begin{equation}
    = - \frac{\lambda^2}{2} \int_0^T \bint (J_t (\mathbb{W}^2_t \succ
    Z^{\flat}_t)) (J_t (\mathbb{W}^2_t \succ Z^{\flat}_t)) \mathd t +
    \frac{1}{2} \| l \|^2_{\mathcal{H}} \label{eq:res-dec}
  \end{equation}
  To renormalize the first term on the r.h.s. we observe that the remaining
  couterterm can be rewritten as
  \begin{equation}
    - \lambda^2 \gamma_T \bint Z^2_T = - \lambda^2 \gamma_T \bint
    (Z^{\flat}_T)^2 - \lambda^2 \gamma_T \bint Z^{\flat}_T (Z_T - Z^{\flat}_T)
    - \lambda^2 \gamma_T \bint (Z_T - Z^{\flat}_T)^2 .
    \label{eq:decomp-square}
  \end{equation}
  Differentiating in $T$ the first term in the r.h.s. of
  eq.~{\eqref{eq:decomp-square}} we get
  \begin{equation}
    - \lambda^2 \gamma_T \bint (Z^{\flat}_T)^2 = - \lambda^2 \int_0^T \bint
    \dot{\gamma}_t (Z^{\flat}_t)^2 \mathd t - 2 \lambda^2 \int^T_{ 0} \bint
    \gamma_t Z^{\flat}_t \dot{Z}_t^{\flat} \mathd t. \label{eq:res-Z}
  \end{equation}
  The last term in eq.~{\eqref{eq:decomp-square}} and the last two
  contributions in~{\eqref{eq:res-Z}} are collected in $\Upsilon^{(5)}_T$. The
  first contribution in eq.~{\eqref{eq:decomp-square}} has the right form to
  be used as a counterterm for the resonant product in~{\eqref{eq:res-dec}}.
  Using the commutator $\mathfrak{K}_{3, t}$ introduced in
  Proposition~\ref{prop:squarecomm} we have
  \[ - \frac{\lambda^2}{2} \int_0^T \bint [(J_t (\mathbb{W}^2_t \succ
     Z^{\flat}_t)) (J_t (\mathbb{W}^2_t \succ Z^{\flat}_t)) + 2 \dot{\gamma}_t
     (Z^{\flat}_t)^2] \mathd t \]
  \[ = - \frac{\lambda^2}{2} \int_0^T \bint \underbrace{[(J_t \mathbb{W}^2_t)
     \circ (J_t \mathbb{W}^2_t) + 2 \dot{\gamma}_t]}_{\mathbb{W}^{\langle 2
     \rangle \diamond \langle 2 \rangle}} (Z^{\flat}_t)^2 \mathd t -
     \frac{\lambda^2}{2} \int_0^T \mathfrak{K}_{3, t} (\mathbb{W}^2_t,
     \mathbb{W}^2_t, Z^{\flat}_t, Z^{\flat}_t) \mathd t \]
  and collect both terms in $\Upsilon^{(6)}_T$.
  
  \tmtextbf{Step 5.} We are now left with the cubic term which we rewrite as
  \[ 4 \lambda \bint W_T Z^3_T = - 4 \lambda^4 \bint W_T
     (\mathbb{W}_T^{[3]})^3 + 12 \lambda^3 \bint W_T (\mathbb{W}_T^{[3]})^2
     K_T - 12 \lambda^2 \bint W_T \mathbb{W}_T^{[3]} K^2_T + 4 \lambda \bint
     W_T K^3_T . \]
  The average of the first term is collected in $\delta_T$ while all the
  remaining terms in $\Upsilon^{(4)}_T$. At last we have established the
  claimed decomposition since the residual cost functional, from
  eq.~{\eqref{eq:res-dec}} has the form $\| l \|^2_{\mathcal{H}}$.
\end{proof}

\section{Bounds}\label{sec:bounds}

The aim of this section is to give upper and lower bounds on $\mathcal{W}_T
(f)$ uniformly on $T$ and $| \Lambda |$. In particular we will prove the
bounds of Corollary~\ref{corollary:energy-bounds-3d} taking the explicit
dependence on the coupling constant $\lambda$ into account.

\begin{lemma}
  \label{lemma:bounds}There exists a finite constant $C$, which does not
  depend on $| \Lambda |$, such that
  \[ \sup_T | \mathcal{W}_T (f) | \leqslant C. \]
\end{lemma}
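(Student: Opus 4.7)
The plan is to use the renormalized variational representation of Lemma~\ref{lemma:change-of-variables}, estimating each piece $\Upsilon^{(i)}$ of $\Phi_T$ so that the dependence on the drift $u$ is absorbed into the two coercive, fixed-sign terms $\lambda \bint Z_T^4$ and $\tfrac{1}{2}\|l^T(u)\|^2_{\mathcal{H}}$, with everything else controlled by the stochastic enhancement $\mathbb{W} \in \mathfrak{S}$, which by Lemma~\ref{lemma:stoch-reg} has moments of all orders uniformly in $T$ and $|\Lambda|$.

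For the upper bound on $\mathcal{W}_T(f)$, one plugs a concrete ansatz into Theorem~\ref{th:variational}. A convenient choice is $u_t = -\lambda \mathbb{W}_t^{\langle 3\rangle}\mathbb{1}_{t\le T}$, which forces $w \equiv 0$ and hence $K \equiv 0$: with this choice $\Upsilon^{(1)}$--$\Upsilon^{(4)}$ collapse to explicit polynomial expressions in $\mathbb{W}$, $Z = -\lambda \mathbb{W}^{[3]}$ and $Z^{\flat}$, all in $L^1(\mathbb{P})$ by Lemma~\ref{lemma:stoch-reg}; the residual cost $\tfrac{1}{2}\|l^T(u)\|^2_{\mathcal{H}}$ reduces to $\tfrac{\lambda^2}{2}\int_0^T \|J_t(\mathbb{W}^2_t \succ Z_t^{\flat})\|_{L^2}^2\,\mathd t$, which is controlled using the $\sigma_t$-decay built into $J_t$ (Proposition~\ref{multiplierestimate}) and a paraproduct estimate between $\mathbb{W}^2_t$ and the smooth $Z^{\flat}_t$. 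The term $\mathbb{E}[f(W_T + Z_T)]$ is bounded using the linear growth of $f$ and $L^p$-moments of $W_T$ and $\mathbb{W}_T^{[3]}$.

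The lower bound is the substantive part. For arbitrary $u \in \mathbb{H}_a$ the regularity of $K = K(u)$ in $H^{1-\kappa}$ follows from Lemma~\ref{lemma:impr-reg-drift} applied to $w = l + \lambda\mathbb{1}_{t\le T} J(\mathbb{W}^2 \succ Z^{\flat})$, and a bootstrap gives $Z \in H^{1/2-\kappa}$. Each $\Upsilon^{(i)}$ is estimated pathwise with the target $|\Upsilon^{(i)}| \leq Q_T + \delta\bigl(\lambda\bint Z_T^4 + \tfrac{1}{2}\|l^T(u)\|^2_{\mathcal{H}}\bigr)$ for any small $\delta > 0$. The cubic piece $\Upsilon^{(4)}$ is handled by Lemma~\ref{lemma:bound-cubic}; $\Upsilon^{(5)}$ by the logarithmic bound on $\gamma_t, \dot{\gamma}_t$ from Lemma~\ref{lemma:stoch-reg} combined with Young's inequality after Gagliardo--Nirenberg interpolation of $\|Z^{\flat}_T\|_{L^4}$ between $\|K_T\|_{H^{1-\kappa}}$ and $\|Z_T\|_{L^4}$; $\Upsilon^{(2)}$ and $\Upsilon^{(3)}$ by exploiting that $Z_T - Z_T^{\flat}$ is spectrally supported on frequencies $\gtrsim T$ and that $\dot{Z}_t^{\flat}$ inherits the $t^{-1/2}$ decay of $\sigma_t$; and $\Upsilon^{(6)}$ by Besov duality against the renormalized objects $\mathbb{W}^{2\diamond[3]} \in \VV^{-1/2-\kappa}$ and $\mathbb{W}^{\langle 2\rangle\diamond\langle 2\rangle} \in L^1_t \VV^{-\kappa}$, both finite by Table~\ref{table:reg}.

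The main obstacle is $\Upsilon^{(1)}$, where one must pair $\mathbb{W}^2_T \in \VV^{-1-\kappa}$ against products involving $K_T$, which has regularity only $H^{1-\kappa}$: naive Besov duality places the paraproducts in $\VV^{-2\kappa}$ and leaves no derivative to spare. This is precisely why the trilinear commutator $\mathfrak{K}_2$ of Proposition~\ref{adjointparaproduct} is introduced in Step~2 of the proof of Lemma~\ref{lemma:change-of-variables}: it gains the missing derivative so that the singular piece $-\tfrac{\lambda}{2}\mathfrak{K}_2(\mathbb{W}^2_T, K_T, K_T)$ is estimated by $\|\mathbb{W}^2_T\|_{\VV^{-1-\kappa}}\|K_T\|_{H^{1-\kappa}}^2$ up to lower-order corrections. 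The remaining paraproduct terms are split via Proposition~\ref{FractionalLeibniz} into an $H^{1-\kappa}$-piece of $K_T$ (absorbable into $\|l^T(u)\|^2_{\mathcal{H}}$) and a piece absorbable into $\lambda\bint Z_T^4$, with constants of order $E(\lambda)$. Summing the six estimates and choosing $\delta$ small yields $F_T(u) \geq \mathbb{E}[f(W_T + Z_T)] - \mathbb{E}[Q_T] \geq -C$, uniformly in $u$, $T$ and $|\Lambda|$.
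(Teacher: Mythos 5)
Your lower bound is the paper's: the pathwise estimates of Section~\ref{section:analytic} give $\Phi_T \leqslant Q_T + \varepsilon(\lambda \| Z_T\|_{L^4}^4 + \tfrac12 \| l^T(u)\|^2_{\mathcal H})$, the coercive terms are dropped, and $-\mathbb{E}[Q_T]$ is uniformly bounded. The gap is in the upper bound. With $u_t = -\lambda \mathbb{1}_{t\leqslant T}\mathbb{W}^{\langle 3\rangle}_t$ you indeed get $w\equiv 0$, $K\equiv 0$, $Z = -\lambda \mathbb{W}^{[3]}$, but the residual cost $l^T_t(u) = \lambda\mathbb{1}_{t\leqslant T} J_t(\mathbb{W}^2_t \succ Z^{\flat}_t)$ is \emph{not} controllable by "the $\sigma_t$-decay of $J_t$ and a paraproduct estimate": the quantity $\mathbb{E}\int_0^T \bint (J_t(\mathbb{W}^2_t\succ Z^{\flat}_t))^2\,\mathd t$ diverges logarithmically in $T$. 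Expanding the square, its leading part is $\int_0^T \bint (J_t\mathbb{W}^2_t\circ J_t\mathbb{W}^2_t)(Z^{\flat}_t)^2\,\mathd t$, whose expectation behaves like $\int_0^T \dot{\gamma}_t\,\mathd t = \gamma_T \sim \log T$ (cf.\ eq.~\eqref{eq:choice-gamma}). This is precisely the divergence that the second counterterm $\gamma_T$ and the renormalized object $\mathbb{W}^{\langle 2\rangle\diamond\langle 2\rangle}$ were introduced to remove in Step~4 of Lemma~\ref{lemma:change-of-variables}; no deterministic estimate can make it finite, and quantitatively the best one gets is $\| J_t(\mathbb{W}^2_t\succ Z^{\flat}_t)\|_{L^2}\lesssim \langle t\rangle^{-1/2+\kappa}\|\mathbb{W}^2_t\|_{\VV^{-1-\kappa}}\| Z^{\flat}_t\|_{L^4}$, whose square fails to be integrable on $[0,\infty)$. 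A second, independent problem is that this estimate uses the H\"older norm of $\mathbb{W}^2_t$, whose moments are not bounded uniformly in $|\Lambda|$ (Lemma~\ref{lemma:stoch-reg} only gives volume-uniformity for finite-$p$ Besov norms), so even a convergent bound obtained this way would not be volume-uniform.

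This is why the paper's proof of the upper bound is more elaborate. It decomposes $\mathbb{W}^2 = \mathcal{U}_{>}\mathbb{W}^2 + \mathcal{U}_{\leqslant}\mathbb{W}^2$ with a translation-invariant, noise-adapted frequency cutoff, and defines $\check{u}$ through the fixed-point equation~\eqref{eq:int-ub}, which absorbs \emph{also} the high-frequency paraproduct $J_t\,\mathcal{U}_{>}\mathbb{W}^2_t\succ\theta_t I_t(\check u)$ into the drift (solvable by Gronwall because $\|\mathcal{U}_{>}\mathbb{W}^2_s\|_{\VV^{-1-3\delta}}\lesssim 1$ deterministically). What remains in $l^T(\check u)$ involves only $\mathcal{U}_{\leqslant}\mathbb{W}^2 \in \VV^{-1+\delta}(\rho^2)$: the regularity gain above $-1$ produces the extra decay $\langle s\rangle^{-1/2-\delta/2}$ that makes the time integral converge, and translation invariance together with weighted norms restores uniformity in $|\Lambda|$. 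Your competitor skips exactly the two mechanisms (renormalization of the resonant square, and the $\mathcal{U}_{\leqslant}/\mathcal{U}_{>}$ splitting) that make the upper bound work, so as written the proof does not close.
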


\begin{proof}
  Observe that from Lemma~\ref{lemma:change-of-variables} and
  Section~\ref{section:analytic} we have that
  \[ \Phi_T (\mathbb{W}, Z, K) \leqslant Q_T + \varepsilon \left( \| Z_T
     \|^4_{L^4} + \int^{\infty}_0 \| l_t \|^2_{L^2} \mathd t \right), \]
  which immediately gives \
  \begin{equation}
    -\mathbb{E} [Q_T] \leqslant -\mathbb{E} [Q_T] + (1 - \varepsilon)
    \mathbb{E} \left( \| Z_T \|^4_{L^4} + \int^{\infty}_0 \| l_t \|^2_{L^2}
    \mathd t \right) \leqslant \mathcal{W}_T (f) . \label{eq:lower-bound}
  \end{equation}
  On the other hand for any suitable drift $\check{u} \in \mathbb{H}_a$ we get
  the bound
  \begin{equation}
    \mathcal{W}_T (f) \leqslant \mathbb{E} [Q_T] + (1 + \varepsilon)
    \mathbb{E} \left( \| I_T (\check{u}) \|^4_{L^4} + \int^{\infty}_0 \| l_t^T
    (\check{u}) \|^2_{L^2} \mathd t \right), \label{eq:upper-bound}
  \end{equation}
  where
  \begin{equation}
    l_t^T (\check{u}) = \check{u}_t + \lambda \mathbb{1}_{t \leqslant T} J_t
    (\mathbb{W}_t^3 +\mathbb{W}_t^2 \succ (I_t (\check{u}))^{\flat}) .
    \label{eq:elle-ub}
  \end{equation}
  Therefore it remains to produce an appropriate drift $\check{u}$ for which
  the r.h.s. in eq.~{\eqref{eq:upper-bound}} is finite (and so uniformly in $|
  \Lambda |$ and of $o (\lambda^3)$).
  
  \
  
  One possible strategy is to try and choose $\check{u}$ such that $l
  (\check{u}) = 0$, however this fails since estimates on this $u$ via
  Gronwall's inequality would rely on the H{\"o}lder norm of $\mathbb{W}_t^2$
  for which we do not have uniform control in the volume. In order to overcome
  this problem we decompose $\mathbb{W}^2$ and use weighted estimates
  similarly as done in~{\cite{gubinelli_global_2018}}.
  
  \
  
  Consider the decomposition
  \[ \mathbb{W}_s^2 = \mathcal{U}_{\geqslant} \mathbb{W}_s^2 +
     \mathcal{U}_{\leqslant} \mathbb{W}_s^2, \]
  where the random field $\mathcal{U}_{\geqslant} \mathbb{W}_s^2$ is
  constructed as follows. Let $\varphi$ be smooth function, positive and
  supported on $[- 2, 2]^3$ and such that $\sum_{m \in \Lambda \cap
  \mathbb{Z}^d} \varphi^2 (\bullet - m) = 1$. Denote by $\varphi_m = \varphi
  (\bullet - m)$. Let $\tilde{\chi}$ be a smooth function supported in $B (0,
  1)$. Denote by $\mathcal{X}_{> N} f$ the Fourier multiplier operator
  $\mathcal{F}^{- 1} \tilde{\chi} (k / N) \hat{f}$ and similarly
  $\mathcal{X}_{\leqslant N} f = \mathcal{F}^{- 1} (1 - \tilde{\chi} (k / N))
  \hat{f}$. Set $L_m (s) \assign (1 + \| \varphi_m \mathbb{W}_s^2
  \|)_{\mathcal{\VV}^{- 1 - \delta}}^{\frac{1}{2 \delta}}$ and let
  \[ \mathcal{U}_{>} \mathbb{W}_s^2 \assign \sum_{m \in \Lambda \cap
     \mathbb{Z}^d} \varphi_m \mathcal{X}_{> L_m (s)} (\varphi_m
     \mathbb{W}_s^2), \]
  and
  \[ \mathcal{U}_{\leqslant} \mathbb{W}_s^2 \assign \sum_{m \in \Lambda \cap
     \mathbb{Z}^d} \varphi_m \mathcal{X}_{\leqslant L_m (s)} (\varphi_m
     \mathbb{W}_s^2) . \]
  (with slight abuse of notation we drop the time dependence of the operators
  $\mathcal{U}_{\leqslant}$, $\mathcal{U}_{>}$).
  
  Observe that the laws of both $\mathcal{U}_{>} \mathbb{W}_s^2$ and
  $\mathcal{U}_{\leqslant} \mathbb{W}_s^2$ are translation invariant w.r.t to
  translations by $m \in \Lambda \cap \mathbb{Z}^d$. By {\cite{Triebel_1992}},
  Theorem 2.4.7 and Bernstein inequality
  \begin{eqnarray*}
    \| \mathcal{U}_{>} \mathbb{W}_s^2 \|_{\mathcal{\VV}^{- 1 - 3 \delta}} &
    \lesssim & \sup_m \| \mathcal{X}_{> L_m (s)} (\varphi_m \mathbb{W}_s^2)
    \|_{\mathcal{\VV}^{- 1 - 3 \delta}}\\
    & \lesssim & \sup_m \frac{1}{1 + \| \varphi_m \mathbb{W}_s^2 \|_{\VV^{- 1
    - \delta}}} \| \varphi_m \mathbb{W}_s^2 \|_{\mathcal{\VV}^{- 1 - \delta}}
    \lesssim 1
  \end{eqnarray*}
  Furthermore for a weight $\rho$ (see Appendix~\ref{sec:appendix-para} for
  precisions on the weighted spaces $L^p (\rho)$, $\VV^{\alpha} (\rho)$ and
  $B^{\alpha}_{p, q} (\rho)$ used below):
  \begin{equation}
    \begin{array}{lll}
      \| \mathcal{U}_{\leqslant} \mathbb{W}_s^2 \|_{\mathcal{\VV}^{- 1 +
      \delta} (\rho^2)} & \lesssim & \sup_m \| \varphi_m
      \mathcal{U}_{\leqslant} \mathbb{W}_s^2 \|_{\mathcal{\VV}^{- 1 + \delta}
      (\rho^2)}\\
      & \lesssim & \sup_m \left( 1 + \| \varphi_m \mathbb{W}_s^2
      \|_{\mathcal{\VV}^{- 1 - \delta}} \right) \| \varphi_m \mathbb{W}_s^2
      \|_{\VV^{- 1 - \delta} (\rho^2)}\\
      & \lesssim & \sup_m \rho (m)  \left( 1 + \| \varphi_m \mathbb{W}_s^2
      \|_{\mathcal{\VV}^{- 1 - \delta}} \right) \| \varphi_m \mathbb{W}_s^2
      \|_{\VV^{- 1 - \delta} (\rho)}\\
      & \lesssim & \sup_m  \left( 1 + \| \varphi_m \mathbb{W}_s^2
      \|_{\mathcal{\VV}^{- 1 - \delta} (\rho)} \right)_{} \| \varphi_m
      \mathbb{W}_s^2 \|_{\mathcal{\VV}^{- 1 - \delta} (\rho)}\\
      & \lesssim & 1 + \| \mathbb{W}_s^2 \|_{\mathcal{\VV}^{- 1 - \delta}
      (\rho)}^2,
    \end{array} \label{boundsmooth}
  \end{equation}
  where we used the possibility to compare weighted and unweighted norms once
  localized via $\varphi_m$. We now let $\check{u}$ be the solution to the
  (integral) equation
  \begin{equation}
    \check{u}_t = - \lambda \mathbb{1}_{t \leqslant T} [\mathbb{W}^{\langle 3
    \rangle}_t + J_t  \mathcal{U}_{>} \mathbb{W}^2_t \succ \theta_t (I_t
    (\check{u}))], \qquad t \geqslant 0, \label{eq:int-ub}
  \end{equation}
  which can be solved globally. For $3 \delta < 1 / 2$ and $p \geqslant 1$, we
  have, for $t \in [0, T]$,
  \[ \| I_t (\check{u}) \|_{B_{p, p}^{1 / 2 - 3 \delta} (\rho)} \lesssim
     \lambda \int^t_0 [\| J^2_s \mathbb{W}_s^3 \|_{B_{p, p}^{1 / 2 - \delta}
     (\rho)} + \lambda \| J^2_s \mathcal{U}_{>} \mathbb{W}_s^2 \succ \theta_s
     (I_s (\check{u})) \|_{B_{p, p}^{- 3 / 2 - \delta} (\rho)}] \mathd s \]
  \[ \lesssim \lambda \int^t_0 \frac{\mathd s}{\langle s \rangle^{1 / 2 +
     \delta}} \| J_s \mathbb{W}_s^3 \|_{B_{p, p}^{- 1 / 2 - \delta} (\rho)} +
     \lambda \int^t_0 \frac{\mathd s}{\langle s \rangle^{1 + \delta / 2}} \|
     \mathcal{U}_{>} \mathbb{W}_s^2 \|_{\mathcal{\VV}^{- 1 - \delta / 2}} \|
     I_s (\check{u}) \|_{B_{p, p}^{1 / 2 - 3 \delta} (\rho)} . \]

  Therefore Gronwall's lemma implies that, for $t \in [0, T]$:
  \begin{equation}
    \begin{array}{lll}
      \| I_t (\check{u}) \|_{B_{p, p}^{1 / 2 - \delta} (\rho)} & \lesssim &
      \left( \lambda \int^T_0 \frac{\mathd s}{\langle s \rangle^{1 / 2 +
      \delta}} \| J_s \mathbb{W}_s^3 \|_{B_{p, p}^{- 1 / 2 - \delta} (\rho)}
      \right) \exp \left( \lambda \int^T_0 \frac{\| \mathcal{U}_{>}
      \mathbb{W}_s^2 \|_{\mathcal{\VV}^{- 1 - \varepsilon}} \mathd s}{\langle
      s \rangle^{1 + \varepsilon}} \right)\\
      & \lesssim & \left( \lambda \int^T_0 \frac{\mathd s}{\langle s
      \rangle^{1 / 2 + \delta}} \| J_s \mathbb{W}_s^3 \|_{B_{p, p}^{- 1 / 2 -
      \delta} (\rho)} \right)\\
      & \lesssim & \lambda \| \mathbb{W}^{\langle 3 \rangle} \|_{L^2
      (\mathbb{R}_+, B_{p, p}^{- 1 / 2 - \delta} (\rho))} .
    \end{array} \label{eq:ub-I}
  \end{equation}
  Taking $\rho = \frac{\mathbb{1}_{\Lambda}}{| \Lambda |}$ and using Besov
  embedding we deduce from~{\eqref{eq:ub-I}}:
  \[ \sup_T \mathbb{E} \| I_T (\check{u}) \|^4_{L^4} \lesssim \lambda^4
     \mathbb{E} \left( \int^{\infty}_0 \frac{\mathd s}{\langle s \rangle^{1 /
     2 + \delta}} \| J_s \mathbb{W}_s^3 \|_{B_{4, 4}^{- 1 / 2 - \delta}}
     \right)^4 \lesssim \lambda^4 . \]
  Now computing $l^T (\check{u})$ from eq.~{\eqref{eq:elle-ub}}
  and~{\eqref{eq:int-ub}}, we obtain
  \[ l_t^T (\check{u}) = \lambda \mathbb{1}_{t \leqslant T} J_t 
     \mathcal{U}_{\leqslant} \mathbb{W}_t^2 \succ \theta_t (I_t (\check{u})),
     \qquad t \geqslant 0. \]
  It now remains to prove that $\mathbb{E} [\| l^T (\check{u})
  \|^2_{\mathcal{H}}] \lesssim O (\lambda^3)$ uniformly in $T > 0$. Note that,
  for $s \in [0, T]$,
  \begin{equation}
    \begin{array}{lll}
      \| J_s  \mathcal{U}_{\leqslant} \mathbb{W}_s^2 \succ \theta_s (I_s
      (\check{u})) \|_{L^2 (\rho^2)} & \lesssim & \frac{1}{\langle s
      \rangle^{1 / 2 + \delta / 2}} \| \mathcal{U}_{\leqslant} \mathbb{W}_s^2
      \succ \theta_s (I_s (\check{u})) \|_{B_{2, 2}^{- 1 + \delta / 2}
      (\rho^2)}\\
      & \lesssim & \frac{1}{\langle s \rangle^{1 / 2 + \delta / 2}} \|
      \mathcal{U}_{\leqslant} \mathbb{W}_s^2 \|_{\mathcal{\VV}^{- 1 + \delta /
      2} (\rho)} \| I_s (\check{u}) \|_{B_{2, 2}^{- 1 + \delta / 2} (\rho)} .
    \end{array} \label{eq:ub3}
  \end{equation}
  We know that the distribution of $\check{u}$ is invariant under translation
  by $m \in \Lambda \cap \mathbb{Z}^d$. Recalling that $\sum_{m \in \Lambda
  \cap \mathbb{Z}^d} \varphi^2 (\bullet - m) = 1$ and letting $\rho$ be a
  polynomial weight with sufficient decay and such that $\rho^3 \geqslant
  \varphi^2$, we have
  \begin{eqnarray*}
    \mathbb{E} [\| l^T (\check{u}) \|^2_{\mathcal{H}}] & = & \lambda^2
    \mathbb{E} [\| s \mapsto \mathbb{1}_{s \leqslant T} J_s
    \mathcal{U}_{\leqslant} \mathbb{W}_s^2 \succ \theta_s (I_s (\check{u}))
    \|^2_{\mathcal{H}}]\\
    & \leqslant & \lambda^2 \sum_{m \in \Lambda \cap \mathbb{Z}^d} \mathbb{E}
    [\| s \mapsto \varphi^2 (\bullet - m) J_s \mathcal{U}_{\leqslant}
    \mathbb{W}_s^2 \succ \theta_s (I_s (\check{u})) \|^2_{\mathcal{H}}]\\
    \text{(by trans. inv)} & \lesssim & \lambda^2 | \Lambda | \mathbb{E} [\| s
    \mapsto \mathbb{1}_{s \leqslant T} \varphi^2 J_s \mathcal{U}_{\leqslant}
    \mathbb{W}_s^2 \succ \theta_s (I_s (\check{u})) \|^2_{\mathcal{H}}]\\
    \text{(using $\rho^3 \geqslant \varphi^2$)} & \lesssim & \lambda^2
    \int^T_0 \mathd s\mathbb{E} [\| J_s \mathcal{U}_{\leqslant} \mathbb{W}_s^2
    \succ \theta_s (I_s (\check{u})) \|^2_{L^2 (\rho^3)}]\\
    \text{(by eq.~{\eqref{eq:ub3}})} & \lesssim & \lambda^2 \int^T_0
    \frac{\mathd s}{\langle s \rangle^{1 + \delta}} \mathbb{E} \left[ \|
    \mathcal{U}_{\leqslant} \mathbb{W}_s^2 \|_{\mathcal{\VV}^{- 1 + \delta /
    2} (\rho^2)}^2 \| I_s (\check{u}) \|_{B_{2, 2}^{- 1 + \delta / 2}
    (\rho)}^2 \right]\\
    & \lesssim & \lambda^2 \int^T_0 \frac{\mathd s}{\langle s \rangle^{1 +
    \delta}} \mathbb{E} \left[ \lambda^2 \| \mathcal{U}_{\leqslant}
    \mathbb{W}_s^2 \|^4_{\mathcal{\VV}^{- 1 + \delta / 2} (\rho^2)} +
    \lambda^{- 2} \| I_s (\check{u}) \|_{B_{2, 2}^{- 1 + \delta / 2} (\rho)}^4
    \right]\\
    \text{(by eqs.~{\eqref{eq:ub-I}}, {\eqref{boundsmooth}})} & \lesssim &
    \lambda^4 \int^{\infty}_0 \frac{\mathd s}{\langle s \rangle^{1 + \delta}}
    \left[ 1 +\mathbb{E} \| \mathbb{W}_s^2 \|^8_{\mathcal{\VV}^{- 1 - \delta /
    2} (\rho)} +\mathbb{E} \lambda \| \mathbb{W}^{\langle 3 \rangle} \|^4_{L^2
    (\mathbb{R}_+, B_{p, p}^{- 1 / 2 - \delta} (\rho))} \right]\\
    & \lesssim & \lambda^4 .
  \end{eqnarray*}
  This last quantities are bounded since standard arguments allow to bound,
  for $p$ sufficiently large,
  \[ \left[ \mathbb{E} \| \mathbb{W}_s^2 \|^8_{\mathcal{\VV}^{- 1 - \delta /
     2} (\rho)} \right]^{p / 8} \leqslant \mathbb{E} \| \mathbb{W}_s^2
     \|^p_{\mathcal{\VV}^{- 1 - \delta / 2} (\rho)} \leqslant \mathbb{E} \|
     \mathbb{W}_s^2 \|^p_{B_{p, p}^{- 1 - \delta} (\rho)} \]
  \[ = \sum_{i \geqslant - 1} 2^{i (- 1 - \delta / 2) p} \int_{\Lambda} \mathd
     x \rho (x) \mathbb{E} | \Delta_i \mathbb{W}_s^2 (x) |^p \lesssim \sum_{i
     \geqslant - 1} 2^{i (- 1 - \delta / 2) p} \mathbb{E} | \Delta_i
     \mathbb{W}_s^2 (0) |^p \lesssim 1 \]
  uniformly in $s \geqslant 0$. Similarly, we have
  \[ [\mathbb{E} \| \mathbb{W}_s^3 \|^4_{\nobracket B_{p, p} (\rho))}]^{p / 4}
     \leqslant \mathbb{E} \| \mathbb{W}_s^3 \|_{B_{p, p} (\rho)}^p =\mathbb{E}
     | \mathbb{W}_s^3 (0) |^p, \]
  Now
  \[ \sup_{s \leqslant T} \mathbb{E} | \mathbb{W}_s^3 (0) |^p \lesssim
     \sup_{s \leqslant T} (\mathbb{E} | \mathbb{W}_s^3 (0) |^2)^{p / 2}
     \lesssim T^{3 p / 2} \]
  and
  \[ \| \mathbb{W}^{\langle 3 \rangle} \|_{L^2 (\mathbb{R}_+, B_{p, p}^{- 1 /
     2 - \delta} (\rho))} \lesssim \int^{\infty}_0 \| J_s \mathbb{W}^3
     \|_{B_{p, p}^{- 1 / 2 - \delta} (\rho)} \]
  \begin{eqnarray*}
    \| \mathbb{W}^{\langle 3 \rangle} \|^2_{L^2 (\mathbb{R}_+, B_{p, p}^{- 1 /
    2 - \delta} (\rho))} & \lesssim & \int^{\infty}_0 \| J_s \mathbb{W}_s^3
    \|^2_{B_{p, p}^{- 1 / 2 - \delta} (\rho)} \mathd s\\
    & \lesssim & \int^{\infty}_0 \left\| \frac{\sigma_t (D)}{\langle D
    \rangle} \mathbb{W}_t^3 \right\|^2_{B_{p, p}^{- 1 / 2 - \delta} (\rho)}
    \mathd s\\
    & \lesssim & \int^{\infty}_0 \| \sigma_t (D) \mathbb{W}_t^3 \|^{2
    m}_{B_{p, p}^{- 3 / 2 - \kappa}} \mathd s\\
    & \lesssim & \int^{\infty}_0 \langle t \rangle^{- 1 - \kappa} (\langle t
    \rangle^{- 3} \| \mathbb{W}_t^3 \|^2_{L^p}) \mathd s
  \end{eqnarray*}
  This concludes the proof of the lemma.
  
  \ 
\end{proof}

\begin{remark}
  The decomposition of the noise is similar to the one given
  in~{\cite{gubinelli_global_2018}} but differs in the fact that we choose the
  frequency cutoff dependent on the size of the noise instead of the point, to
  preserve translation invariance. The price to pay is that the decomposition
  is nonlinear, however this does not present any inconvenience in our
  context.
\end{remark}

\section{Gamma convergence\label{sec:gamma-convergence}}

In this section we establish the $\Gamma$-convergence of the variational
functional obtained in Lemma~\ref{lemma:change-of-variables} as $T \rightarrow
\infty$. $\Gamma$-convergence is a notion of convergence introduced by
De~Giorgi which is well suited for the study of variational problems. The
book~{\cite{braides_gamma_convergence_2002}} is a nice introduction to
$\Gamma$-convergence in the context of the calculus of variations. For the
convenience of the reader we recall here the basic definitions and results.

\begin{definition}
  \label{Gammaconv}Let $\mathcal{T}$ be a topological space and let $F, F_n :
  \mathcal{T} \rightarrow (- \infty, \infty]$. We say that the sequence of
  functionals $(F_n)_n$ $\Gamma$-converges to $F$ iff
  \begin{enumerateroman}
    \item For every sequence $x_n \rightarrow x$ in $\mathcal{T}$
    \[ F (x) \leqslant \liminf_{n \rightarrow \infty} F_n (x_n) ; \]
    \item For every point $x$ there exists a sequence $x_n \rightarrow x$
    (called a recovery sequence) such that
    \[ F (x) \geqslant \limsup_{n \rightarrow \infty} F_n (x_n) . \]
  \end{enumerateroman}
\end{definition}

\begin{definition}
  \label{equicoercive}A sequence of functionals $F_n : \mathcal{T} \rightarrow
  (- \infty, \infty]$ is called equicoercive if there exists a compact set
  $\mathcal{K} \subseteq \mathcal{T}$ such that for all $n \in \mathbb{N}$
  \[ \inf_{x \in \mathcal{K}} F_n (x) = \inf_{x \in \mathcal{T}} F_n (x) . \]
\end{definition}

A fundamental consequence of $\Gamma$-convergence is the convergence of
minima.

\begin{theorem}
  \label{fundamentallemma}If $(F_n)_n$ $\Gamma$-converges to $F$ and $(F_n)_n$
  is equicoercive, then $F$ admits a minimum and
  \[ \min_{\mathcal{T}} F = \lim_{n \rightarrow \infty} \inf_{\mathcal{T}}
     F_n . \]
\end{theorem}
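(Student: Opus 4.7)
The plan is to prove the theorem by constructing an explicit minimizer of $F$ as a cluster point of almost--minimizers of the approximating functionals, and then squeezing the sequence $(\inf_{\mathcal{T}} F_n)_n$ between $F(x^\ast)$ and $\inf_{\mathcal{T}} F$ using the two defining inequalities of $\Gamma$--convergence.

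First I would use equicoercivity to pick, for each $n$, a point $x_n \in \mathcal{K}$ with
\[
F_n(x_n) \leqslant \inf_{\mathcal{T}} F_n + \tfrac{1}{n}
\]
(taking some large negative value if the infimum is $-\infty$). Compactness of $\mathcal{K}$ yields a subsequence $x_{n_k} \to x^\ast \in \mathcal{K}$. The $\liminf$ part (i) of Definition~\ref{Gammaconv} applied to this subsequence gives
\[
F(x^\ast) \;\leqslant\; \liminf_{k \to \infty} F_{n_k}(x_{n_k}) \;\leqslant\; \liminf_{k \to \infty}\bigl(\inf_{\mathcal{T}} F_{n_k} + \tfrac{1}{n_k}\bigr) \;=\; \liminf_{k \to \infty} \inf_{\mathcal{T}} F_{n_k}.
\]

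Next I would establish a matching upper bound on $\limsup_n \inf_{\mathcal{T}} F_n$ using the recovery sequence. For any $y \in \mathcal{T}$, part (ii) of Definition~\ref{Gammaconv} supplies $y_n \to y$ with $\limsup_n F_n(y_n) \leqslant F(y)$, and since $\inf_{\mathcal{T}} F_n \leqslant F_n(y_n)$ we obtain
\[
\limsup_{n \to \infty} \inf_{\mathcal{T}} F_n \;\leqslant\; F(y).
\]
Taking the infimum over $y \in \mathcal{T}$ gives $\limsup_n \inf_{\mathcal{T}} F_n \leqslant \inf_{\mathcal{T}} F$. Chaining this with the previous display (applied first to the full sequence, which works since the argument specialises to any subsequence) yields
\[
\inf_{\mathcal{T}} F \;\leqslant\; F(x^\ast) \;\leqslant\; \liminf_{n} \inf_{\mathcal{T}} F_n \;\leqslant\; \limsup_{n} \inf_{\mathcal{T}} F_n \;\leqslant\; \inf_{\mathcal{T}} F,
\]
so equality holds throughout, $x^\ast$ attains the minimum of $F$, and $\lim_n \inf_{\mathcal{T}} F_n = \min_{\mathcal{T}} F$.

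There is no real obstacle here; the only mild subtleties are (a) the $\liminf$ inequality only gives information along convergent sequences, which is exactly why equicoercivity is needed to force the almost--minimizers into a compact set, and (b) one must select $x_n$ as near--minimizers (minimizers need not exist for $F_n$ a priori), which is why the $1/n$ slack is inserted. Once the extraction of a convergent subsequence is in hand, the two $\Gamma$--convergence inequalities plug in directly and the conclusion is a three--line squeeze.
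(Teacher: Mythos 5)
Your argument is correct and is exactly the standard proof of the fundamental theorem of $\Gamma$-convergence; the paper itself gives no proof, deferring to Dal Maso, and that reference proceeds in the same way (near-minimizers forced into the compact set by equicoercivity, a limit point handled by the liminf inequality, and recovery sequences for the upper bound). The one piece of bookkeeping worth making explicit is the step $F(x^\ast)\leqslant\liminf_n\inf_{\mathcal{T}}F_n$: the extraction only yields $F(x^\ast)\leqslant\liminf_k\inf_{\mathcal{T}}F_{n_k}$ along the chosen subsequence, which is an upper bound for the wrong quantity, so one should first pass to a subsequence along which $\inf_{\mathcal{T}}F_{n}$ tends to $\liminf_n\inf_{\mathcal{T}}F_n$ and then extract the convergent near-minimizers from it (this is what your parenthetical remark gestures at, and it also requires the routine observation that the liminf inequality of Definition~\ref{Gammaconv} passes to subsequences).
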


For a proof see~{\cite{DalMaso1993}}.

\

In this section we allow all constants to depend on the volume $| \Lambda |$:
this is not critical since, at this point, the aim is to obtain explicit
formulas at fixed $\Lambda$.

\

We denote
\[ \mathcal{H}_{}^{\alpha, p} \assign L^2 ([0, \infty) ; W^{\alpha, p}),
   \qquad \alpha \in \mathbb{R}, \]
and by $\mathcal{H}_w^{\alpha, p}$ the reflexive Banach space
$\mathcal{H}^{\alpha, p}$ endowed with the weak topology. We will write
$\mathcal{H}^{\alpha}$ for $\mathcal{H}^{\alpha, 2}$, and $\mathcal{H}$ for
$\mathcal{H}^0$ and let $\mathcal{L} \assign \mathcal{H}^{- 1 / 2 - \kappa,
3}$, this space will be useful as it gives sufficient control over $Z$:

\begin{lemma}
  \label{lemma-Zcompact}For $\kappa$ small enough $u \mapsto Z (u)$ is a
  compact map $\mathcal{L} \rightarrow C ( [0, \infty], L^4)$.
\end{lemma}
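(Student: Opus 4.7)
The map $u\mapsto Z(u)=I(u)=\int_0^{\cdot} J_s u_s\,\mathd s$ is linear, so compactness is equivalent to showing that bounded sets of $\mathcal{L}$ are sent to relatively compact sets in $C([0,\infty],L^4)$. I would factorize through an intermediate space $V$ compactly embedded in $L^4$: for $d=3$, Rellich--Kondrachov gives the compact embedding $W^{1/2-\kappa,3}\hookrightarrow L^4$ whenever $1/2-\kappa>3(1/3-1/4)=1/4$, i.e.\ $\kappa<1/4$, which motivates the choice $V=W^{1/2-\kappa,3}$.

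The core step is the $L^3$-scale analogue of Lemma~\ref{lemma:impr-reg-drift}:
\[
\sup_{t\in[0,\infty]}\|I_t(u)\|_V^2+\sup_{0\leq s<t\leq\infty}\frac{\|I_t(u)-I_s(u)\|_V^2}{|t-s|\wedge 1}\lesssim \|u\|_{\mathcal{L}}^2 .
\]
I would prove this by a Littlewood--Paley decomposition: the symbol $\sigma_r$ is spectrally localized in a shell $|\xi|\sim r$ and $\sigma_r(\mathD)$ is bounded on $L^3$ with operator norm $\lesssim r^{-1/2}$ by Mihlin--H\"ormander (Proposition~\ref{multiplierestimate}), while $\langle\mathD\rangle^{-1}$ gains a factor $2^{-q}$ on frequencies $\sim 2^q$. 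Hence in $\Delta_q I_t(u)$ only times $s\sim 2^q$ contribute, and Cauchy--Schwarz in time over an effective interval of length $\sim 2^q$ yields a bound on $\|\Delta_q I_t(u)\|_{L^3}^2$ by an integral against $\|\widetilde{\Delta}_q u_s\|_{L^3}^2$; summing in $q$ with weight $2^{q(1-2\kappa)}$ gives the claimed estimate, and the time modulus follows by restricting the integration to the interval $[s_1,s_2]$.

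With these bounds the family $\{I_\cdot(u):\|u\|_{\mathcal{L}}\leq 1\}$ is pointwise bounded in $V$ (hence pointwise precompact in $L^4$ by Rellich) and uniformly H\"older-$1/2$ in time in the $V$-topology; Arzel\`a--Ascoli yields precompactness in $C([0,T],L^4)$ for every finite $T$. For the point at infinity, $I_\infty(u)-I_T(u)=\int_T^\infty J_s u_s\,\mathd s$ is spectrally supported on frequencies $\gtrsim T$, so the same dyadic computation produces, for any small $\epsilon>0$,
\[
\|I_\infty(u)-I_T(u)\|_{W^{1/2-\kappa-\epsilon,3}}^2\lesssim T^{-2\epsilon}\|u\|_{\mathcal{L}}^2 ,
\]
which provides uniform equicontinuity at $\infty$ over bounded subsets of $\mathcal{L}$; choosing $\epsilon$ small enough to preserve $1/2-\kappa-\epsilon>1/4$ keeps the compact embedding into $L^4$ and Arzel\`a--Ascoli on the compactification $[0,\infty]$ concludes. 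The principal obstacle is the $L^3$ adaptation of Lemma~\ref{lemma:impr-reg-drift}: its $L^2$ proof rests on the modewise telescoping identity $\int_0^t\sigma_s^2(\mathD)\,\mathd s=\rho_t^2(\mathD)-\rho_0^2(\mathD)$, which has no Hilbert-free substitute; the dyadic localization of $\sigma_r$ restores a ``time length $\sim$ frequency scale'' matching block-by-block, letting one run the $L^2$ argument on each Littlewood--Paley piece before summing.
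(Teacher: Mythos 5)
Your argument is correct and rests on the same mechanism as the paper's proof --- the annulus support and $\langle s\rangle^{-1/2}$ size of $\sigma_s$, Cauchy--Schwarz in time against the $\mathcal{L}$-norm, a compact Sobolev embedding into $L^4$, and Arzel\`a--Ascoli --- but the key technical step is organized differently. The paper never decomposes into Littlewood--Paley blocks: it estimates $\| J_s u_s \|_{W^{\varepsilon,4}} \lesssim \langle s\rangle^{-1/2-\varepsilon} \| \langle \mathD\rangle^{-1+\varepsilon} u_s \|_{W^{\varepsilon,4}}$ by observing that $\sigma_s(\mathD)\langle \mathD\rangle^{-\varepsilon}$ is a multiplier of size $\langle s\rangle^{-1/2-\varepsilon}$ (the annulus support converts an $\varepsilon$ of regularity into integrable time decay), then applies the Sobolev embedding $W^{1/4+\varepsilon,3}\hookrightarrow W^{\varepsilon,4}$ and a single global Cauchy--Schwarz in time; the factor $\int_{t_1}^{t_2}\langle s\rangle^{-1-2\varepsilon}\mathd s$ simultaneously gives equicontinuity on compacts and at $t=\infty$, so no separate tail/spectral-support argument is needed, and the target $W^{\varepsilon,4}\hookrightarrow\hookrightarrow L^4$ finishes the job. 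Your blockwise route proves more --- the full $L^3$ analogue of Lemma~\ref{lemma:impr-reg-drift}, i.e.\ a uniform $C^{1/2}([0,\infty],W^{1/2-\kappa,3})$ bound --- which may be useful elsewhere, at the cost of a longer argument and one bookkeeping caveat you should make explicit: summing $2^{q(1-2\kappa)}\|\Delta_q\cdot\|_{L^3}^2$ over $q$ produces the $B^{1/2-\kappa}_{3,2}$ norm, and on the input side the analogous quantity $\bigl(\sum_q 2^{-q(1+2\kappa)}\|\Delta_q u_s\|_{L^3}^2\bigr)^{1/2}=\|u_s\|_{B^{-1/2-\kappa}_{3,2}}$ is \emph{not} controlled by $\|u_s\|_{W^{-1/2-\kappa,3}}$ (only $W^{s,3}\hookrightarrow B^s_{3,3}$ holds); this is repaired by sacrificing an $\varepsilon$ of regularity via $W^{s,3}\hookrightarrow B^{s-\varepsilon}_{3,2}$, which is harmless since your Rellich condition $1/2-\kappa>1/4$ has room to spare. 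With that adjustment the proof is complete.
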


\begin{proof}
  By definition of $Z$ we have for any $0 < \varepsilon < 1 / 8 - \kappa / 2$,
  \begin{eqnarray*}
    \| Z_{t_2} (u) - Z_{t_1} (u) \|_{W^{4, \varepsilon}} & = & \left\|
    \int^{t_2}_{t_1} J_s u_s \mathd s \right\|_{W^{4, \varepsilon}} \leqslant
    \int^{t_2}_{t_1} \left\| \frac{\sigma_s (D)}{\langle D \rangle} u_s
    \right\|_{W^{4, \varepsilon}} \mathd s\\
    & \lesssim & \int^{t_2}_{t_1} \frac{1}{\langle s \rangle^{1 / 2 +
    \varepsilon}} \| \langle D \rangle^{- 1 + \varepsilon} u_s \|_{W^{4,
    \varepsilon}} \mathd s\\
    & \lesssim & \int^{t_2}_{t_1} \frac{1}{\langle s \rangle^{1 / 2 +
    \varepsilon}} \| \langle D \rangle^{- 1 + \varepsilon} u_s \|_{W^{1 / 4 +
    \varepsilon, 3}} \mathd s\\
    & \lesssim & \int^{t_2}_{t_1} \frac{1}{\langle s \rangle^{1 + 2
    \varepsilon}} \mathd s \int \| u_s \|^2_{W^{- 1 / 2 - \kappa, 3}} \mathd s
    \lesssim \| u \|_{\mathcal{L}}^2 \int^{t_2}_{t_1} \frac{1}{\langle s
    \rangle^{1 + 2 \varepsilon}} \mathd s,
  \end{eqnarray*}
  where we have used a Sobolev embedding in the second to last line. Since
  \[ \lim_{t_1 \rightarrow t_2} \int^{t_2}_{t_1} \frac{1}{\langle s
     \rangle^{1 + 2 \varepsilon}} \mathd s = 0, \int^{\infty}_0
     \frac{1}{\langle s \rangle^{1 + 2 \kappa}} \mathd s < \infty, \]
  for any $t_2 \in [0, \infty]$, we can conclude by the Rellich--Kondrachov
  embedding theorem and the Arzela--Ascoli theorem, that bounded sets in
  $\mathcal{L}$ are mapped to compact sets in $C ( [0, \infty], L^4)$,
  proving the claim.
\end{proof}

We will need the following lemma, which establishes pointwise convergence for
the functional $\Phi_T$ defined in Lemma~\ref{lemma:change-of-variables}. In
the sequel, by an abuse of notation, we will denote both a generic element of
$\mathfrak{S}$ and the canonical random variable on $\mathfrak{S}$ by
\[ \mathbb{X}= (\mathbb{X}^1, \mathbb{X}^2, \mathbb{X}^{\langle 3 \rangle},
   \mathbb{X}^{[3] \circ 1}, \mathbb{X}^{\langle 2 \rangle \diamond \langle 2
   \rangle}_{}, \mathbb{X}^{2 \diamond [3]}) \]
\begin{lemma}
  \label{pointwiseconv}Define $l^{\infty} (u) = l^{\infty} (\mathbb{X}, u) \in
  \mathbb{H}_a$ such that
  \begin{equation}
    l_t^{\infty} (u) \assign u_t + \lambda \mathbb{W}^{\langle 3 \rangle}_t +
    \lambda J_t (\mathbb{W}_t^2 \succ Z_t^{\flat}), \qquad t \geqslant 0.
    \label{eq:full-ansatz-infty}
  \end{equation}
  For any sequence $(u^T, \mathbb{X}^T)_T$ such that $u^T \rightarrow u$ in
  $\mathcal{L}_w$, $l^T = l^T (\mathbb{X}^T, u^T) \rightarrow l = l^{\infty}
  (\mathbb{X}, u)$ in $\mathcal{H}_w$ and
  \[ \begin{array}{cll}
       \mathbb{X}^T & = & (\mathbb{X}^{T, 1}, \mathbb{X}^{T, 2},
       \mathbb{X}^{T, \langle 3 \rangle}, \mathbb{X}^{T, [3] \circ 1},
       \mathbb{X}^{T, \langle 2 \rangle \diamond \langle 2 \rangle}_{},
       \mathbb{X}^{T, 2 \diamond [3]})\\
       \downarrow &  & \\
       \mathbb{X} & = & (\mathbb{X}^1, \mathbb{X}^2, \mathbb{X}^{\langle 3
       \rangle}, \mathbb{X}^{[3] \circ 1}, \mathbb{X}^{2 \diamond [3]},
       \mathbb{X}^{\langle 2 \rangle \diamond \langle 2 \rangle}_{})
     \end{array} \]
  in $\mathfrak{S}$ we have
  \[ \lim_{T \rightarrow \infty} \Phi_T (\mathbb{X}^T, Z (u^T), K (u^T)) =
     \Phi_{\infty} (\mathbb{X}, Z (u), K (u)), \]
  where $\Phi_{\infty}$ is defined by
  \[ \Phi_{\infty} (\mathbb{X}, Z (u), K (u)) \assign f
     (\mathbb{X}^1_{\infty} + Z_{\infty} (u)) + \sum_{i = 1}^6
     \Upsilon_{\infty}^{(i)} (\mathbb{X}, Z (u), K (u)), \]
  with $\Upsilon_{\infty}^{(i)} (\mathbb{X}, Z, K) = \Upsilon_{\infty}^{(i)}$
  given by
  \begin{eqnarray*}
    \Upsilon^{(1)}_{\infty} & \assign & \frac{\lambda}{2} \mathfrak{K}_2
    (\mathbb{X}_{\infty}^2, K_{\infty}, K_{\infty}) + \frac{\lambda}{2} \bint
    (\mathbb{X}^2_{\infty} \prec K_{\infty}) K_{\infty} - \lambda^2 \bint
    (\mathbb{X}^2_{\infty} \prec \mathbb{X}_{\infty}^{[3]}) K_{\infty},\\
    \Upsilon^{(2)}_{\infty} & = & 0,\\
    \Upsilon^{(3)}_{\infty} & \assign & \lambda \int_0^{\infty} \bint
    (\mathbb{X}^2_t \succ \dot{Z}^{\flat}_t) K_t \mathd t,\\
    \Upsilon^{(4)}_{\infty} & \assign & 4 \lambda \bint \mathbb{X}^1_{\infty}
    K^3_{\infty} - 12 \lambda^2 \bint (\mathbb{X}^1_{\infty}
    \mathbb{X}_{\infty}^{[3]}) K^2_{\infty} + 12 \lambda^3 \bint
    \mathbb{X}^1_{\infty} (\mathbb{X}_{\infty}^{[3]})^2 K_{\infty},\\
    \Upsilon^{(5)}_{\infty} & \assign & - 2 \lambda^2 \int^{\infty}_{ 0}
    \bint \gamma_t Z^{\flat}_t \dot{Z}_t^{\flat} \mathd t,\\
    \Upsilon^{(6)}_{\infty} & \assign & - \lambda^2 \bint
    \mathbb{X}_{\infty}^{2 \diamond [3]} K_{\infty} - \lambda^2
    \int_0^{\infty} \bint \mathbb{X}^{\langle 2 \rangle \diamond \langle 2
    \rangle}_t (Z^{\flat}_t)^2 \mathd t + \frac{\lambda^2}{2} \int^{\infty}_0
    \mathfrak{K}_{3, t} (\mathbb{X}_t^2, \mathbb{X}_t^2, Z^{\flat}_t,
    Z^{\flat}_t) \mathd t,
  \end{eqnarray*}
  where, with abuse of notation, we set
  \begin{equation}
    \begin{array}{lll}
      \mathbb{X}^1_{\infty} \mathbb{X}_{\infty}^{[3]} & \assign &
      \mathbb{X}^1_{\infty} \succ \mathbb{X}_{\infty}^{[3]}
      +\mathbb{X}^1_{\infty} \prec \mathbb{X}_{\infty}^{[3]}
      +\mathbb{X}_{\infty}^{[3] \circ 1},\\
      \mathbb{X}^1_{\infty} (\mathbb{X}_{\infty}^{[3]})^2 & \assign &
      \mathbb{X}^1_{\infty} (\mathbb{X}_{\infty}^{[3]} \circ
      \mathbb{X}_{\infty}^{[3]}) + 2\mathbb{X}_{\infty}^{[3] \circ 1}
      \mathbb{X}_{\infty}^{[3]} + 2 \mathfrak{K}_1 (\mathbb{X}_{\infty}^{[3]},
      \mathbb{X}_{\infty}^{[3]}, \mathbb{X}^1_{\infty})\\
      &  & + 2\mathbb{X}^1_{\infty} \succ (\mathbb{X}_{\infty}^{[3]} \succ
      \mathbb{X}_{\infty}^{[3]}) + 2\mathbb{X}^1_{\infty} \prec
      (\mathbb{X}_{\infty}^{[3]} \succ \mathbb{X}_{\infty}^{[3]}),
    \end{array} \label{eq:prod-infty}
  \end{equation}
  and where $\mathfrak{K}_1, \mathfrak{K}_2, \mathfrak{K}_3$ are the trilinear
  forms defined in Proposition~\ref{commutatorestimate},
  Proposition~\ref{adjointparaproduct} and Proposition~\ref{prop:squarecomm}
  respectively.
\end{lemma}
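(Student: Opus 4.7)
The plan is to analyze each of the seven summands of $\Phi_T$ separately, exploiting three key compactness/continuity ingredients: (i) by Lemma~\ref{lemma-Zcompact} the weak convergence $u^T \rightharpoonup u$ in $\mathcal{L}$ is upgraded to \emph{strong} convergence $Z(u^T) \to Z(u)$ in $C([0,\infty],L^4)$; (ii) the strong convergence $\mathbb{X}^T \to \mathbb{X}$ in $\mathfrak{S}$ provides strong convergence of each Wick object in the Besov norms listed in Table~\ref{table:reg}; (iii) the weak convergence $l^T \rightharpoonup l$ in $\mathcal{H}$ combined with ingredient (ii) yields, via Lemma~\ref{lemma:impr-reg-drift} applied to $w^T = -\lambda \mathbb{1}_{t\le T} J_t(\mathbb{W}^{T,2}_t \succ Z^{T,\flat}_t) + l^T$, convergence of $K^T = I(w^T) \to K = I(w^\infty)$, strongly in $C([0,\infty], H^{1-\kappa'})$ for any $\kappa' > \kappa$ after invoking the Arzel\`a--Ascoli argument of Lemma~\ref{lemma-Zcompact} (the drift of $w$ is bounded in $\mathcal{H}^{-\kappa'}$, giving the equicontinuity in $t$).

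Once these three regularities are in place, the terms $\Upsilon^{(1)}_T$, $\Upsilon^{(3)}_T$, $\Upsilon^{(5)}_T$ (remaining piece), and $\Upsilon^{(6)}_T$ are each a fixed continuous multilinear form (paraproduct, resonant product, or commutator $\mathfrak{K}_2$ or $\mathfrak{K}_{3,t}$) paired with the converging arguments $(\mathbb{X}^T, K^T, Z^{T,\flat})$. For each one I would invoke the corresponding estimate in Section~\ref{section:analytic} (plus dominated convergence in $t$ for the time integrals, using the $L^2_t$ and $L^1_t$ bounds built into the norm of $\mathfrak{S}$) to pass to the limit. For $\Upsilon^{(4)}_T$ the products $W_T \mathbb{W}_T^{[3]}$ and $W_T (\mathbb{W}_T^{[3]})^2$ are below threshold, so they are \emph{defined} in the limit by the decompositions~\eqref{eq:prod-infty}: the corresponding pieces of $\mathbb{X}^T$ (namely $\mathbb{X}^{T,[3]\circ 1}$) converge in $C\mathcal{C}^{0-}$, the paraproducts $\prec$, $\succ$ are continuous in the appropriate Besov scales, and the commutator $\mathfrak{K}_1$ is controlled by Proposition~\ref{commutatorestimate}, so again only a continuity argument is required.

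The two genuinely new effects are the cutoff-dependent pieces $\Upsilon^{(2)}_T$ and the first two contributions of $\Upsilon^{(5)}_T$. For these I would use that $Z_T - Z_T^{\flat} = (1-\theta_T(\mathD))Z_T$ is Fourier-supported in $\{|\xi| \ge cT/4\}$, so that by Bernstein's inequality
\[
\|Z_T - Z_T^\flat\|_{H^{s}} \lesssim T^{s-1} \|Z_T\|_{H^1}, \qquad s < 1,
\]
and the $H^1$ norm of $Z_T$ is uniformly controlled in $T$ (via the boundedness of $l^T$ in $\mathcal{H}$ and Lemma~\ref{lemma:impr-reg-drift}). Pairing this with the uniform bound on $\mathbb{W}^{T,2}_T$ in $\mathcal{C}^{-1-\kappa}$, resp.\ on $\gamma_T \lesssim \log T$, shows $\Upsilon^{(2)}_T \to 0$ and likewise the $\gamma_T Z_T^\flat(Z_T - Z_T^\flat)$ and $\gamma_T (Z_T - Z_T^\flat)^2$ terms of $\Upsilon^{(5)}_T$. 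The last term of $\Upsilon^{(5)}_T$ converges trivially since $\theta_t$ is unchanged as $T \to \infty$.

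The main obstacle I anticipate is the convergence of the quadratic-in-$K$ terms, chiefly $\mathfrak{K}_2(\mathbb{W}_T^2, K_T, K_T)$ in $\Upsilon^{(1)}_T$ and the terms involving $(Z_T^\flat)^2$ paired with $\mathbb{W}^{\langle 2\rangle\diamond\langle 2\rangle}_t$ under the time integral in $\Upsilon^{(6)}_T$. These require more than weak convergence: they demand strong convergence of $K_T$ (resp.\ $Z_T^\flat$) in a function space compatible with the regularity deficit of $\mathbb{W}^{\langle 2\rangle\diamond\langle 2\rangle}$. This is where step (iii) above is critical — once $K^T \to K$ strongly in $C([0,\infty], H^{1-\kappa'})$ and $Z^{T,\flat} \to Z^\flat$ strongly in $C([0,\infty], H^{1/2-\kappa'})$ (the latter via Lemma~\ref{lemma-Zcompact} plus boundedness of $\theta_t$ on these spaces), the bilinear/trilinear continuity of $\mathfrak{K}_2, \mathfrak{K}_{3,t}$ and of the resonant product on the appropriate Besov scale delivers the desired convergence, with dominated convergence in $t$ controlled by the $L^1_t \mathcal{C}^{-\kappa}$ assumption on $\mathbb{X}^{\langle 2\rangle\diamond\langle 2\rangle}$ built into $\mathfrak{S}$.
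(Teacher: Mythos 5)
Your proposal follows essentially the same route as the paper: strong convergence of $Z(u^T)$ via Lemma~\ref{lemma-Zcompact}, strong convergence of $K(u^T)$ in $C([0,\infty],H^{1-\kappa})$ deduced from the weak convergence of $l^T$, paraproduct preprocessing of $W\mathbb{W}^{[3]}$ and $W(\mathbb{W}^{[3]})^2$ into the components of $\mathbb{X}$, multilinearity plus dominated convergence for the stable terms, and the spectral-gap/Bernstein decay for $\Upsilon^{(2)}$ and the first two pieces of $\Upsilon^{(5)}$ (which is exactly what Lemmas~\ref{gamma2} and~\ref{gamma5} supply). The one inaccuracy is your claim that $\|Z_T\|_{H^1}$ is uniformly controlled: the $\mathcal{H}$-bound on $l^T$ controls $K_T$ in $H^{1-\kappa}$, while $Z_T=K_T-\lambda\mathbb{W}^{[3]}_T$ is only bounded in $H^{1/2-\kappa}$ since $\mathbb{W}^{[3]}$ has regularity $1/2-$; the Bernstein argument must therefore be run from $H^{1/2-\kappa}$ (or, as in Lemma~\ref{gamma2}, by splitting $Z_T-Z_T^{\flat}$ into its $K$ and $\mathbb{W}^{[3]}$ parts), which still produces the required $T^{-\delta}$ factor.
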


\begin{proof}
  Lemma~\ref{lemma-Zcompact} implies that for any $u^T \rightarrow u$ in
  $\mathcal{L}_w$ we have $Z (u^T) \rightarrow Z (u)$ in $C ([0, \infty],
  L^4)$ and by the convergence of $l^T \rightarrow l$ in $\mathcal{H}_w$ we
  have also $K (u^T) \rightarrow K (u)$ in $C ([0, \infty], H^{1 - \kappa})$.
  The products $\mathbb{X}^{T, 1}_T \mathbb{X}_T^{T, [3]}$ and $\mathbb{X}^{T,
  1}_T (\mathbb{X}_T^{T, [3]})^2$ can be decomposed using paraproducts and,
  after replacing the resonant products by the corresponding stochastic
  objects in $\mathbb{X}^T$, we obtain the finite $T$ analogs of the
  expressions in eq.~{\eqref{eq:prod-infty}}. After this preprocessing, it is
  easy to see by continuity that we have $\mathbb{X}^{T, 1}_T \mathbb{X}_T^{T,
  [3]} \rightarrow \mathbb{X}_{\infty}^1 \mathbb{X}_{\infty}^{[3]}$ and
  $\mathbb{X}^{T, 1}_T (\mathbb{X}_T^{T, [3]})^2 \rightarrow
  \mathbb{X}^1_{\infty} (\mathbb{X}_{\infty}^{[3]})^2$ in $\VV^{1 / 2 -
  \kappa}$. For $\Upsilon^{(1)}$ and $\Upsilon^{(4)}$ and the first term of
  $\Upsilon^{(6)}$ the statement follows from uniform bounds for
  $(\mathbb{X}^T, Z (u^T), K (u^T))$ on $\mathfrak{S} \times C ([0, \infty],
  H^{1 / 2 - \kappa}) \times C ([0, \infty], H^{1 - \kappa})$ and
  multilinearity. For $\Upsilon^{(2)}$ and the first two terms of
  $\Upsilon^{(5)}$ convergence to $0$ follows from the bounds established in
  Lemma~\ref{gamma2} and Lemma~\ref{gamma5}. For $\Upsilon^{(3)}$, the last
  term of $\Upsilon^{(5)}$ and the last two terms of $\Upsilon^{(6)}$ we can
  again use uniform bounds and multilinearity as well as dominated
  convergence, thanks to Proposition~\ref{prop:squarecomm}.
\end{proof}

Going back to our particular setting recall that from
Lemma~\ref{lemma:change-of-variables} we learned that
\[ \mathcal{W}_T (f) = \inf_{u \in \mathbb{H}_a} F_T (u), \]
with
\[ F_T (u) =\mathbb{E} \left[ \Phi_T (\mathbb{W}, Z (u), K (u)) + \lambda \|
   Z_T (u) \|_{L^4}^4 + \frac{1}{2} \| l^T (u) \|_{\mathcal{H}}^2 \right], \]
where $l^T (u), Z (u), K (u)$ are functions of $u$ according to
eq.~{\eqref{eq:full-ansatz}}. This form of the functional is appropriate to
analyze the limit $T \rightarrow \infty$ and obtain the main result of the
paper, stated precisely in the following theorem which is a simple restatement
of the basic consequence of Theorem~\ref{th:gamma-lim} below.

\begin{theorem}
  \label{th:main-exact}We have
  \[ \lim_{T \rightarrow \infty} \mathcal{W}_T (f) =\mathcal{W} (f) \assign
     \inf_{u \in \mathbb{H}_a} F_{\infty} (u), \]
  where
  \[ F_{\infty} (u) =\mathbb{E} \left[ \Phi_{\infty} (\mathbb{W}, Z (u), K
     (u)) + \lambda \| Z_{\infty} (u) \|_{L^4}^4 + \frac{1}{2} \| l^{\infty}
     (u) \|_{\mathcal{H}}^2 \right], \]
  and where $\Phi_{\infty}$ and $l^{\infty}$ are defined in
  Lemma~\ref{pointwiseconv}.
\end{theorem}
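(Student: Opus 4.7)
The strategy is to recognize the identity as the direct consequence of $\Gamma$-convergence plus equicoercivity of the family $(F_T)_T$, via Theorem~\ref{fundamentallemma}. Because the infimum in Theorem~\ref{th:variational} need not be attained and the drifts depend on the underlying Brownian motion, one cannot work directly on $\mathbb{H}_a$. Instead, I would reformulate the variational problem as an infimum over probability laws on the product space $\mathcal{T}\assign \mathfrak{S}\times \mathcal{L}_w$, where $\mathbb{X}$ plays the role of the stochastic data (with marginal fixed to $\mathrm{Law}(\mathbb{W})$) and $u$ is the drift. On $\mathcal{T}$ the functional $F_T$ extends naturally: one keeps the joint law constraint for $\mathbb{X}$ and integrates the pathwise expression $\Phi_T(\mathbb{X},Z(u),K(u))+\lambda\|Z_T(u)\|_{L^4}^4+\tfrac12\|l^T(u)\|_{\mathcal H}^2$.

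\textbf{Equicoercivity.} Using Lemma~\ref{lemma:bounds} together with the pathwise lower bound \eqref{eq:lower-bound}, any sequence $(u^T)_T$ with $F_T(u^T)\le \mathcal{W}_T(f)+1$ satisfies
\[
\mathbb{E}\Bigl[\|Z_T(u^T)\|_{L^4}^4+\|l^T(u^T)\|_{\mathcal H}^2\Bigr]\lesssim 1
\]
uniformly in $T$. Writing $u^T=l^T-\lambda\mathbb{1}_{t\le T}(\mathbb{W}^{\langle 3\rangle}+J_t(\mathbb{W}^2\succ Z^{\flat}(u^T)))$, the stochastic estimates of Lemma~\ref{lemma:stoch-reg} together with the paraproduct estimates from Section~\ref{section:analytic} give a uniform bound on $\mathbb{E}\|u^T\|_{\mathcal L}^2$. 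Since $\mathcal L_w$ is the weak topology of a reflexive Banach space, and since $\mathbb{W}^T\equiv \mathbb{W}$ a.s.\ has a single fixed law on $\mathfrak S$, Prokhorov's theorem yields tightness of the joint laws on $\mathcal T$, hence equicoercivity in the sense of Definition~\ref{equicoercive}.

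\textbf{The $\liminf$ inequality.} Given any sequence $(u^T,\mathbb{X}^T)\to (u,\mathbb{X})$ in $\mathcal T$ (equivalently, joint laws converging weakly), Lemma~\ref{pointwiseconv} together with Lemma~\ref{lemma-Zcompact} gives, for $\mathbb{P}$-a.e.\ realization along a subsequence obtained by Skorokhod representation, the pointwise convergence $\Phi_T(\mathbb{X}^T,Z(u^T),K(u^T))\to \Phi_\infty(\mathbb{X},Z(u),K(u))$. The coercive quadratic terms $\|l^T\|_{\mathcal H}^2$ and $\|Z_T\|_{L^4}^4$ are weakly lower semicontinuous on $\mathcal H_w$ and on $L^4$ respectively; combined with Fatou's lemma applied to the positive part (after using the pathwise bound from Section~\ref{section:analytic} to dominate the negative part by $\mathbb{E}[Q_T]+\varepsilon(\|Z_T\|_{L^4}^4+\|l^T\|_{\mathcal H}^2)$ and uniform integrability of $Q_T$), we conclude $F_\infty(u)\le \liminf_T F_T(u^T)$.

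\textbf{Recovery sequence.} For a fixed $u\in\mathbb{H}_a$ with $F_\infty(u)<\infty$, take $u^T\assign u\,\mathbb{1}_{t\le T}$. Then $u^T\to u$ strongly in $\mathcal H^{-1/2-\kappa}$, hence $Z(u^T)\to Z(u)$ in $C([0,\infty],L^4)$ by Lemma~\ref{lemma-Zcompact}, and the corresponding $l^T$ converges to $l^\infty(u)$ in $\mathcal H$. Pointwise convergence from Lemma~\ref{pointwiseconv} combined with dominated convergence (using again the $L^1(\mathbb{P})$ envelope $Q_T$ and the $L^p$ integrability asserted in Lemma~\ref{lemma:stoch-reg}) yields $F_T(u^T)\to F_\infty(u)$, so this is a recovery sequence in the sense of Definition~\ref{Gammaconv}(ii).

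\textbf{Conclusion and main difficulty.} The two inequalities together give the $\Gamma$-convergence of $F_T$ to $F_\infty$ on $\mathcal T$, and combined with equicoercivity, Theorem~\ref{fundamentallemma} yields $\inf F_T\to \inf F_\infty$, which is the claim since both infima equal $\mathcal W_T(f)$ and $\mathcal W(f)$ respectively by Lemma~\ref{lemma:change-of-variables}. The hardest step is the $\liminf$ inequality: one must justify passing to the limit in the renormalized mixed terms inside $\Upsilon^{(1)},\ldots,\Upsilon^{(6)}$ under only weak convergence of the drifts. Compactness of $u\mapsto Z(u)$ (Lemma~\ref{lemma-Zcompact}) is what turns weak convergence of drifts into strong convergence of the controlled paths $Z$ and $K$, which is exactly what is needed to pass to the limit in the paraproducts and resonant products paired with the stochastic objects in $\mathbb{X}^T$. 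Without that compactness the argument would break, and this is why the spaces $\mathcal L$ and $\mathfrak S$ in Lemma~\ref{lemma:stoch-reg} must be calibrated so precisely.
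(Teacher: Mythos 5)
Your skeleton (relaxation to joint laws, equicoercivity, $\Gamma$-liminf via Skorokhod representation and Fatou, recovery by truncating the drift at time $T$) is exactly the paper's strategy, and your identification of Lemma~\ref{lemma-Zcompact} as the mechanism turning weak convergence of drifts into strong convergence of $Z$ and $K$ is on target. However, there are two genuine gaps.

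First, and most seriously, you apply the $\Gamma$-convergence machinery on the closed space of measures but never show that passing to the closure does not change the infimum. The space $\mathcal{X}$ of laws $\tmop{Law}_{\mathbb{P}}(\mathbb{W},u)$ with $u$ \emph{predictable} is not closed under weak convergence on $\mathfrak{S}\times\mathcal{L}_w$: weak limits preserve the first marginal but not adaptedness, so $\overline{\mathcal{X}}$ contains ``generalized controls'' that are not of the admissible form. Theorem~\ref{fundamentallemma} then only gives $\lim_T\inf_{\overline{\mathcal{X}}}\breve F_T=\inf_{\overline{\mathcal{X}}}\breve F_\infty$, whereas the theorem to be proved concerns $\inf_{\mathcal{X}}$, i.e.\ $\inf_{u\in\mathbb{H}_a}$. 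One must prove $\inf_{\mathcal{X}}\breve F_T=\inf_{\overline{\mathcal{X}}}\breve F_T$ for finite $T$ (Lemma~\ref{closurefinite}) and for $T=\infty$, and this is where most of the work of the section lives: the approximation of a measure in $\overline{\mathcal{X}}$ by laws of adapted drifts with bounded support, built from conditional expectations at stopping times (Lemma~\ref{lemma-boundedapprox}), and, for $T=\infty$, the map $\mathrm{rem}_{\varepsilon}$ of Lemma~\ref{lemma:remreg}, which is needed because regularizing $u$ directly destroys the regularity of $l^{\infty}(u)$ --- at $T=\infty$ the remainder must be \emph{more} regular than the drift, so one has to regularize an augmented remainder and reconstruct $u$ by solving a fixed-point equation. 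None of this is addressed in your proposal, and without it the argument does not close.

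Second, your recovery sequence $u^T=u\,\mathbb{1}_{t\leqslant T}$ is the right choice, but the dominated-convergence step is not justified at the stated level of generality. The finiteness $F_{\infty}(u)<\infty$ only controls $\mathbb{E}[\|Z_{\infty}(u)\|^4_{L^4}]$ and $\mathbb{E}[\|l^{\infty}(u)\|^2_{\mathcal{H}}]$ (hence $\mathbb{E}[\|u\|^2_{\mathcal{L}}]$ by Lemma~\ref{lemma:boundcoercivity}), while dominating $\lambda\|Z_T(u^T)\|^4_{L^4}=\lambda\|Z_T(u)\|^4_{L^4}\leqslant\|u\|^4_{\mathcal{L}}$ uniformly in $T$ requires a \emph{fourth} moment of $\|u\|_{\mathcal{L}}$, which is not available. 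The paper resolves this by first replacing $\mu$ with an approximation $\mu_L$ satisfying the all-moments bound \eqref{inftymomentbound} of Lemma~\ref{infty-approximation}, applying the truncation to $\mu_L$, and extracting a diagonal sequence. Your liminf inequality, by contrast, is essentially correct as sketched and matches Theorem~\ref{th:gamma-lim}.
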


In order to use $\Gamma$-convergence, we need to properly modify the
variational setting in order to guarantee enough compactness and continuity
uniformly as $T \rightarrow \infty$.

\

The analytic estimates contained in Section~\ref{section:analytic} below
allow to infer that there exists a small $\delta \in (0, 1)$, and a finite
constant $Q_T > 0$ uniformly bounded in $T$ such that
\begin{equation}
  - Q_T + (1 - \delta) \mathbb{E} \left[ \lambda \| Z_T \|_{L^4}^4 +
  \tmcolor{red}{\tmcolor{black}{\frac{1}{2}}} \| l^T (u) \|_{\mathcal{H}}^2
  \right] \leqslant F_T (u), \label{eq:lower-bound-3d}
\end{equation}
and
\begin{equation}
  F_T (u) \leqslant Q_T + (1 + \delta) \mathbb{E} \left[ \lambda \| Z_T
  \|_{L^4}^4 + \tmcolor{red}{\tmcolor{black}{\frac{1}{2}}} \| l^T (u)
  \|_{\mathcal{H}}^2 \right] \label{eq:upper-bound-3d} .
\end{equation}
As long as $T$ is finite, the original potential $V_T$ is bounded below so in
particular we have
\begin{equation}
  - C_T +\mathbb{E} \left[ \tmcolor{red}{\tmcolor{black}{\frac{1}{2}}} \| u
  \|_{\mathcal{H}}^2 \right] \leqslant F_T (u) . \label{eq:bound-u-finite-T}
\end{equation}
From this we conclude that we can relax the optimization problem and ask that
$u \in \mathbb{L}_a$, once we have established Lemma
\ref{lemma:boundcoercivity} where $\mathbb{L}_a$ is the space of predictable
processes in $\mathcal{L}$:
\[ \mathcal{W}_T (f) = \inf_{u \in \mathbb{L}_a} F_T (u) . \]
The reason of this relaxation lies in the fact that the cost terms $\| l^T (u)
\|_{\mathcal{H}}, \tmop{and} \| Z_T \|_{L^4}$ control the $\mathcal{L}$ norm
of $u$ uniformly in $T$, modulo constants depending only on $\| \mathbb{W}
\|_{\mathfrak{S}}$ which are bounded in average uniformly in $T$.

Note that eq.~{\eqref{eq:lower-bound-3d}} implies that for any sequence
$(u^T)_T$ such that $F_T (u^T)$ remains bounded we must have that also
\begin{equation}
  \sup_T \mathbb{E} [\| l^T (u^T) \|^2_{\mathcal{H}}] < \infty .
  \label{eq:coerc}
\end{equation}
To prove $\Gamma$-convergence we need to find a space with a topology which,
on the one hand is strong enough to enable to prove the $\Gamma$-liminf
inequality, and on the other hand allows to obtain enough compactness from
$F_T$. Almost sure convergence on $\mathfrak{S} \times \mathcal{L}$ would
allow for the former but is to strong for the latter. For this reason we need
a setting based on convergence in law as precised in the following definition.

\begin{definition}
  Denote by $(\mathbb{X}, u)$ be the canonical variables on $\mathfrak{S}
  \times \mathcal{L}$ and consider the space of probability measures
  \[ \mathcal{X} \assign \left\{ \mu \in \mathcal{P} (\mathfrak{S} \times
     \mathcal{L}) | \nobracket  \text{$\mu = \tmop{Law}_{\mathbb{P}}
     (\mathbb{W}, u)$ for some $u \in \mathbb{L}_a$ with $\mathbb{E}_{\mu} [\|
     u \|^2_{\mathcal{L}}] < \infty$}  \right\} . \]
  Equip $\mathcal{X}$ with the following topology: $\mu_n \rightarrow \mu$ iff
  \begin{enumeratealpha}
    \item $\mu_n$ converges to $\mu$ weakly on $\mathfrak{S} \times
    \mathcal{L}_w$,
    
    \item $\sup_n \mathbb{E}_{\mu_n} [\| u \|^2_{\mathcal{L}}] < \infty$.
  \end{enumeratealpha}
  Denote by $\overline{\mathcal{X}}$ the closure of $\mathcal{X}$ in the space
  of probability measures $\mu$ on $\mathfrak{S} \times \mathcal{L}_w$ such
  that $\mathbb{E}_{\mu} [\| u \|^2_{\mathcal{L}}] < \infty$ .
\end{definition}

Condition (b) allows to exclude pathological points in
$\overline{\mathcal{X}}$ and makes possible Lemma~\ref{lemma-boundedapprox}
below. Then
\begin{equation}
  \mathcal{W}_T (f) = \inf_{\mu \in \mathcal{X}} \breve{F}_T (\mu),
  \label{eq:functionalrestricted}
\end{equation}
where
\[ \breve{F}_T (\mu) \assign \mathbb{E}_{\mu} \left[ \Phi_T (\mathbb{X}, Z
   (u), K (u)) + \lambda \| Z_T (u) \|_{L^4}^4 + \frac{1}{2} \| l^T (u)
   \|^2_{\mathcal{H}} \right] \]
and where $\mathbb{E}_{\mu}$ denotes the expectation on $\mathfrak{S} \times
\mathcal{L}$ wrt. the probability measure $\mu$. Our first aim will be to
prove that the family $(\breve{F}_T)_T$ is indeed equicoercive on
$\overline{\mathcal{X}}$.

\begin{lemma}
  \label{lemma:tightness}Let $(\mu^T)_T$ be a family of measures on
  $\mathfrak{S} \times \mathcal{L}$ such that $\sup_T \mathbb{E}_{\mu_T} [\| u
  \|^2_{\mathcal{L}}] < \infty$. Then $(\mu^T)_T$ is tight on $\mathfrak{S}
  \times \mathcal{L}_w$, in particular there exists a subsequence converging
  in $\overline{\mathcal{X}}$.
\end{lemma}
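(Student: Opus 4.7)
The plan is to establish tightness of the two marginals separately and then combine; the second marginal is the delicate one because the topology on $\mathcal{L}_w$ is the weak topology.

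First I would observe that the first marginal is the same for every $T$: by construction (and by Lemma~\ref{lemma:stoch-reg}) $\mathbb{W}$ is a random element of the Polish space $\mathfrak{S}$ with fixed law under $\mathbb{P}$, so tightness of this marginal is immediate. For the second marginal on $\mathcal{L}_w$, I would use that $\mathcal{L} = L^2([0,\infty); W^{-1/2-\kappa,3})$ is a separable reflexive Banach space, so by the Banach--Alaoglu--Kakutani theorem every closed ball $B_R \assign \{u \in \mathcal{L} : \|u\|_\mathcal{L} \leq R\}$ is weakly compact, and (since $\mathcal{L}$ is separable reflexive) its weak topology is metrizable, hence $B_R$ is a compact metrizable set in $\mathcal{L}_w$. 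By Markov's inequality
\[ \mu^T(\mathcal{L} \setminus B_R) \leqslant R^{-2}\, \mathbb{E}_{\mu^T}\bigl[\|u\|_\mathcal{L}^2\bigr] \leqslant C R^{-2}, \]
uniformly in $T$, so the second marginals are tight on $\mathcal{L}_w$. Joint tightness on $\mathfrak{S} \times \mathcal{L}_w$ then follows by taking products of the two marginal compact sets.

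Next, by Prokhorov's theorem (applied on the metrizable space obtained from a countable union of products $K_n \times B_n$ carrying all the mass up to $\varepsilon$), one can extract a subsequence $\mu^{T_k}$ converging weakly on $\mathfrak{S} \times \mathcal{L}_w$ to some probability measure $\mu$. It remains to verify that $\mu \in \overline{\mathcal{X}}$. The map $u \mapsto \|u\|_\mathcal{L}^2$ is convex, so lower semicontinuous for the weak topology on $\mathcal{L}$; extending it by $0$ on the first coordinate it is nonnegative and lower semicontinuous on $\mathfrak{S} \times \mathcal{L}_w$, and the Portmanteau theorem gives
\[ \mathbb{E}_\mu\bigl[\|u\|_\mathcal{L}^2\bigr] \leqslant \liminf_{k \to \infty} \mathbb{E}_{\mu^{T_k}}\bigl[\|u\|_\mathcal{L}^2\bigr] < \infty. \]
This shows that $\mu$ lies in the closure $\overline{\mathcal{X}}$ as claimed.

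The main obstacle is purely technical: justifying that Prokhorov's theorem is applicable even though $\mathcal{L}_w$ is not a Polish space globally. This is handled by the standard observation that bounded sets of a separable reflexive space, endowed with the weak topology, are Polish, so the argument only uses compactness on these bounded sets where everything is metrizable. The condition (b) in the definition of convergence in $\mathcal{X}$ is exactly what the hypothesis $\sup_T \mathbb{E}_{\mu_T}[\|u\|_\mathcal{L}^2]<\infty$ provides, which is also what guarantees through Fatou that the limit retains this integrability and therefore belongs to $\overline{\mathcal{X}}$.
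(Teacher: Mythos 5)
Your argument is correct and follows essentially the same route as the paper's: tightness of the (fixed) first marginal on the separable metric space $\mathfrak{S}$, a Chebyshev bound for the second marginal combined with weak compactness and metrizability of closed balls in the separable reflexive space $\mathcal{L}$, and then a product of compact sets. The paper's written proof stops at tightness, so your extra step --- extracting the subsequence via Prokhorov on the metrizable compacts and using weak lower semicontinuity of $\| u \|_{\mathcal{L}}^2$ to check that the limit satisfies the moment bound defining $\overline{\mathcal{X}}$ --- is a welcome completion of the ``in particular'' clause rather than a different approach.
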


\begin{proof}
  Observe that $\tmop{Law} (\mathbb{W})$ on $\mathfrak{S}$ is tight since
  $\mathfrak{S}$ is a separable metric space, so for any $\varepsilon > 0$, we
  can find a compact set $\mathcal{K}^1_{\varepsilon} \subset \mathfrak{S}$
  such that $\mu ((\mathfrak{S} \setminus \mathcal{K}_{\varepsilon}) \times
  \mathcal{L}) < \varepsilon / 2$. Now let $\mathcal{K}_{\varepsilon}^2
  \assign \mathcal{K}^1_{\varepsilon} \times B (0, C) \subset \mathfrak{S}
  \times \mathcal{L}$, for some large $C$ to be chosen later. Then
  $\mathcal{K}_{\varepsilon}^2$ is a compact subset of $\mathfrak{S} \times
  \mathcal{L}_w$ and
  \begin{eqnarray*}
    \mathbb{P}_{\mu^T} [(\mathbb{X}, u) \notin \mathcal{K}_{\varepsilon}^2] &
    \leqslant & \varepsilon + \frac{1}{C} \mathbb{E}_{\mu^T} [\| u
    \|^2_{\mathcal{L}}] .
  \end{eqnarray*}
  Choosing $C > \sup_T 2\mathbb{E}_{\mu^T} [\| u \|^2_{\mathcal{L}}] /
  \varepsilon$ gives tightness.
\end{proof}

\begin{lemma}
  \label{lemma:boundcoercivity}There exists a constant $C$, depending only on
  $\kappa$ and $\lambda$, such that
  \[ \mathbb{E}_{\mu^T} [\| u \|^2_{\mathcal{L}}] \lesssim C + 2 \lambda
     \mathbb{E}_{\mu^T} [\| Z_T (u) \|^4_{L^4}] +\mathbb{E}_{\mu^T} [\| l^T
     (u) \|^2_{\mathcal{H}}] . \]
\end{lemma}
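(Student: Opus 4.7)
The plan is to invert the defining equation \eqref{eq:full-ansatz} in order to express $u$ directly in terms of $l^T(u)$, the stochastic data, and $Z_T(u)$, and then bound each contribution separately in the norm of $\mathcal{L}=L^2(\mathbb{R}_+;W^{-1/2-\kappa,3})$. Explicitly,
\[
u_t = l_t^T(u) - \lambda\mathbb{1}_{t\leq T}\mathbb{W}^{\langle 3\rangle}_t - \lambda\mathbb{1}_{t\leq T}J_t(\mathbb{W}_t^2\succ Z_t^\flat),
\]
so by the triangle inequality in $\mathcal{L}$, squared and integrated against $\mu^T$, the estimate reduces to three separate bounds.

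For the first piece, the 3-dimensional Sobolev embedding $L^2\hookrightarrow W^{-1/2-\kappa,3}$ gives immediately $\|l^T(u)\|_\mathcal{L}\lesssim\|l^T(u)\|_\mathcal{H}$. For the second piece, Lemma~\ref{lemma:stoch-reg} ensures that $\mathbb{W}^{\langle 3\rangle}\in L^2(\mathbb{R}_+,\VV^{-1/2-\kappa})$ with all moments bounded uniformly in $|\Lambda|$; combined with the embedding of $\VV^{-1/2-\kappa}$ into $W^{-1/2-\kappa,3}$ on the fixed torus, this contributes a deterministic constant to be absorbed into $C$.

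The heart of the argument is the paraproduct term. I would apply a standard paraproduct estimate (with $\kappa'$ slightly larger than $\kappa$) to get
\[
\|\mathbb{W}_t^2\succ Z_t^\flat\|_{W^{-1-\kappa',3}}\lesssim \|\mathbb{W}_t^2\|_{\VV^{-1-\kappa}}\|Z_t^\flat\|_{L^3}.
\]
Next, using that $\sigma_t(D)$ is spectrally localized at $|\xi|\sim t$ with amplitude $\langle t\rangle^{-1/2}$, a Bernstein-type argument on the Fourier support of $\sigma_t(D)h$ combined with the $\langle D\rangle^{-1}$ factor yields
\[
\|J_t h\|_{W^{-1/2-\kappa',3}} \lesssim \langle t\rangle^{-1}\|h\|_{W^{-1-\kappa',3}}.
\]
The crucial identity $Z_t^\flat=\theta_t Z_T$ for $t\le T$, which holds because $\theta_t(\xi)\sigma_s(\xi)=0$ for $s\ge t$, together with the $L^p$-boundedness of $\theta_t$ and the normalization of Lebesgue measure on $\Lambda$, produces $\|Z_t^\flat\|_{L^3}\lesssim\|Z_T\|_{L^3}\le\|Z_T\|_{L^4}$. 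Assembling these estimates gives the pathwise bound
\[
\int_0^T\|J_t(\mathbb{W}_t^2\succ Z_t^\flat)\|_{W^{-1/2-\kappa,3}}^2\,dt \lesssim \|Z_T\|_{L^4}^2\int_0^\infty \frac{\|\mathbb{W}_t^2\|_{\VV^{-1-\kappa}}^2}{\langle t\rangle^2}\,dt \lesssim \|Z_T\|_{L^4}^2\,\|\mathbb{W}\|_\mathfrak{S}^2.
\]
Taking expectation under $\mu^T$ and applying Young's inequality $\lambda^2 ab\le \tfrac{\lambda}{4}b^2+\lambda^3 a^2$ with $a=\|\mathbb{W}\|_\mathfrak{S}^2$, $b=\|Z_T\|_{L^4}^2$ splits this product into a small multiple of $\lambda\,\mathbb{E}_{\mu^T}\|Z_T\|_{L^4}^4$ plus a deterministic constant coming from $\mathbb{E}\|\mathbb{W}\|_\mathfrak{S}^4$, which is finite by Lemma~\ref{lemma:stoch-reg}.

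The main obstacle is arranging the paraproduct and $J_t$ estimates so that the time integral converges: the decay $\langle t\rangle^{-2}$ is exactly at the edge of integrability and depends on combining the $\langle t\rangle^{-1/2}$ amplitude of $\sigma_t$ with the extra $\langle t\rangle^{-1}$ gain from $\langle D\rangle^{-1}$ on the Fourier support of $\sigma_t(D)$. The use of $Z_t^\flat=\theta_t Z_T$ to pull all intermediate $Z_t^\flat$-norms back to the terminal $\|Z_T\|_{L^4}$ appearing in the claim is equally essential; without it one would face an unwieldy time-dependent control of $Z_t$.
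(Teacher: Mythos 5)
Your proposal follows essentially the same route as the paper's proof: invert the change of variables \eqref{eq:full-ansatz} for $u$, use $\|l^T(u)\|_{\mathcal{L}}\lesssim\|l^T(u)\|_{\mathcal{H}}$, bound the $\mathbb{W}^{\langle 3\rangle}$ contribution by a constant via Lemma~\ref{lemma:stoch-reg}, estimate the paraproduct term pathwise using the time decay of $J_t$ together with $Z_t^{\flat}=\theta_t Z_T$, and finish with Young's inequality. The argument is correct (your only slip is cosmetic: $\langle t\rangle^{-2}$ is comfortably, not marginally, integrable — the paper in fact settles for the weaker decay $\langle t\rangle^{-1-\kappa}$).
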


\begin{proof}
  We use $\| l^T (u) \|_{\mathcal{L}} \lesssim \| l^T (u) \|_{\mathcal{H}}$ in
  the bound
  \[ \begin{array}{lll}
       \mathbb{E}_{\mu^T} [\| u \|^2_{\mathcal{L}}] & \lesssim & \lambda
       \mathbb{E}_{\mu^T} [\| \mathbb{X}^{\langle 3 \rangle}
       \|_{\mathcal{L}}^2] + \lambda \mathbb{E}_{\mu^T} [\| s \mapsto J_s
       (\mathbb{X}_s^2 \succ \theta_s Z_T (u)) \|^2_{\mathcal{L}}]
       +\mathbb{E}_{\mu^T} [\| l^T (u) \|^2_{\mathcal{H}}]\\
       & \lesssim & \lambda \mathbb{E}_{\mu^T} [\| \mathbb{X}^{\langle 3
       \rangle} \|^2_{\mathcal{L}}] + \lambda \mathbb{E}_{\mu^T} \left[
       \int^{\infty}_0 \frac{\| \mathbb{X}_s^2 \|^2_{\mathcal{\VV}^{- 1 -
       \kappa}}}{\langle s \rangle^{1 + \kappa}} \| Z_T (u) \|^2_{L^4} \mathd
       s \right]\\
       &  & +\mathbb{E}_{\mu^T} [\| l^T (u) \|^2_{\mathcal{H}}]\\
       & \lesssim & \lambda \mathbb{E}_{\mu^T} [\| \mathbb{X}^{\langle 3
       \rangle} \|^2_{\mathcal{L}}] + \frac{\lambda}{2} \mathbb{E}_{\mu^T}
       \left[ \int^{\infty}_0 \frac{\| \mathbb{X}_s^2 \|^4_{\mathcal{\VV}^{- 1
       - \kappa}}}{\langle s \rangle^{1 + \kappa}} \mathd s \right] + 2
       \lambda \mathbb{E}_{\mu^T} [\| Z_T (u) \|^4_{L^4}]\\
       &  & +\mathbb{E}_{\mu^T} [\| l^T (u) \|^2_{\mathcal{H}}] .
     \end{array} \]
  
\end{proof}

\begin{corollary}
  The family $(\breve{F}_T)_T$ is equicoercive on $\overline{\mathcal{X}}$.
\end{corollary}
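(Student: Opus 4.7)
The plan is to exhibit a single compact set $\mathcal{K}\subseteq\overline{\mathcal{X}}$ that captures approximate minimizers of $\breve{F}_T$ uniformly in $T$. The two ingredients already in hand are: (i) a uniform upper bound $\inf_{\overline{\mathcal{X}}}\breve{F}_T=\mathcal{W}_T(f)\leq C$ from Lemma~\ref{lemma:bounds}, which means we only need to worry about measures $\mu$ with $\breve{F}_T(\mu)\leq C+1$; and (ii) Lemma~\ref{lemma:tightness}, which says that a uniform second moment bound $\sup_T\mathbb{E}_{\mu_T}[\|u\|^2_{\mathcal{L}}]<\infty$ is precisely what produces tightness on $\mathfrak{S}\times\mathcal{L}_w$, hence precompactness in $\overline{\mathcal{X}}$.

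First, I would feed the upper bound $\breve{F}_T(\mu)\leq C+1$ into the lower bound \eqref{eq:lower-bound-3d}. Since $\mathbb{E}[Q_T]$ is uniformly bounded in $T$ by the $L^1(\mathbb{P})$ property recorded in the notation, this yields, for some constant $C'$ independent of $T$,
\[
\mathbb{E}_{\mu}\!\left[\lambda\|Z_T(u)\|_{L^4}^4+\tfrac{1}{2}\|l^T(u)\|_{\mathcal{H}}^2\right]\leq C'.
\]
Next, I would apply Lemma~\ref{lemma:boundcoercivity} to convert this into a uniform bound
\[
\mathbb{E}_{\mu}[\|u\|_{\mathcal{L}}^2]\leq R
\]
for some $R$ depending only on $\kappa$, $\lambda$, $C'$ and on the (uniformly bounded) moments of $\|\mathbb{W}\|_\mathfrak{S}$.

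Define then $\mathcal{K}$ to be the closure in $\overline{\mathcal{X}}$ of $\{\mu\in\mathcal{X}:\mathbb{E}_\mu[\|u\|^2_{\mathcal{L}}]\leq R\}$. Lemma~\ref{lemma:tightness} shows this set is tight in $\mathfrak{S}\times\mathcal{L}_w$, and condition (b) in the definition of $\overline{\mathcal{X}}$ ensures its closure remains inside $\overline{\mathcal{X}}$; hence $\mathcal{K}$ is compact. To conclude equicoercivity in the sense of Definition~\ref{equicoercive}, note that for every $T$ and every $\varepsilon>0$ any $\mu_\varepsilon$ with $\breve{F}_T(\mu_\varepsilon)\leq \mathcal{W}_T(f)+\varepsilon\leq C+\varepsilon$ satisfies the bound above and therefore lies in $\mathcal{K}$; letting $\varepsilon\to 0$ yields $\inf_{\mathcal{K}}\breve{F}_T=\inf_{\overline{\mathcal{X}}}\breve{F}_T=\mathcal{W}_T(f)$.

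The main point to get right is the uniformity in $T$ of the chain lower bound $\Rightarrow$ $L^4$/$\mathcal{H}$-control $\Rightarrow$ $\mathcal{L}$-control; this relies on the fact that the constants $Q_T$ and the norms of $\mathbb{W}$ entering Lemma~\ref{lemma:boundcoercivity} are $T$-independent in expectation, which has been established in Section~\ref{section:analytic} and Lemma~\ref{lemma:stoch-reg}. Everything else is a routine assembly of the already-proven estimates with the abstract compactness statement of Lemma~\ref{lemma:tightness}.
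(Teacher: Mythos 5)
Your proposal is correct and follows essentially the same route as the paper's proof: both combine the uniform upper bound on $\inf\breve{F}_T$ with the coercive lower bound~\eqref{eq:lower-bound-3d} to confine approximate minimizers to a set with uniformly bounded $\lambda\mathbb{E}_\mu[\|Z_T(u)\|^4_{L^4}]+\tfrac12\mathbb{E}_\mu[\|l^T(u)\|^2_{\mathcal{H}}]$, and then invoke Lemma~\ref{lemma:boundcoercivity} and Lemma~\ref{lemma:tightness} to conclude compactness. The only cosmetic difference is that the paper defines $\mathcal{K}$ directly by the cost functional while you define it by the resulting $\mathcal{L}$-moment bound and take a closure, which is equivalent.
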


\begin{proof}
  Define for some $K > 0$ large enough
  \[ \mathcal{K} = \left\{ \mu : \lambda \mathbb{E}_{\mu} [\| Z_T (u)
     \|^4_{L^4}] + \frac{1}{2} \mathbb{E}_{\mu} [\| l^T (u)
     \|^2_{\mathcal{H}}] \leqslant K \right\} \]
  From eq.~{\eqref{eq:lower-bound-3d}} we have
  \[ \lambda \mathbb{E}_{\mu} [\| Z_T (u) \|^4_{L^4}] + \frac{1}{2}
     \mathbb{E}_{\mu} [\| l^T (u) \|^2_{\mathcal{H}}] \leqslant C +
     \breve{F}_T (\mu), \]
  so
  \[ \inf_{\mu \in \mathcal{K}^c} \breve{F}_T (\mu) \geqslant K - C \]
  on the other hand from ~{\eqref{eq:upper-bound-3d}} it follows that
  \[ \sup_T \inf_{\mu \in \overline{\mathcal{X}}} \breve{F}_T (\mu) < \infty
     . \]
  So for $K$ large enough
  \[ \inf_{\mu \in \overline{\mathcal{X}}} \breve{F}_T (\mu) = \inf_{\mu \in
     \mathcal{K}} \breve{F}_T (\mu) \]
  And by \ Lemma \ref{lemma:boundcoercivity} and Lemma \ref{lemma:tightness}
  $\mathcal{K}$ is a compact set. 
\end{proof}

To be able to use the equicoercivity we will need to show that we can extend
the infimum in~{\eqref{eq:functionalrestricted}} to $\bar{\mathcal{X}}$. For
this we will first need some properties of the space $\bar{\mathcal{X}}$. In
particular we will need to show that measures with sufficiently high moments
are dense in $\overline{\mathcal{X}}$ in a way which behaves well with respect
to $\breve{F}_T$. For this we introduce some approximations which will be
useful in the sequel.

\begin{definition}
  Let $u \in \mathcal{L}$, $N \in \mathbb{N}$, and
  $(\eta_{\varepsilon})_{\varepsilon > 0}$ be a smooth Dirac sequence on
  $\Lambda$ and $(\varphi_{\varepsilon})_{\varepsilon > 0}$ be another smooth
  Dirac sequence compactly supported on $\mathbb{R}_+ \times \Lambda$. Denote
  by $\ast_{\Lambda}$ the convolution only wrt the space variable, and by
  $\ast$ the space-time convolution. Define the following approximations of
  the identity:
  \[ \begin{array}{lll}
       \left( \mathrm{reg}_{x, \varepsilon} (u) \right) & \assign & u
       \ast_{\Lambda} \eta_{\varepsilon},\\
       \left( \mathrm{reg}_{t : x, \varepsilon} (u) \right) (t) & \assign &
       e^{- \varepsilon t} u \ast \varphi_{\varepsilon} (t) = e^{- \varepsilon
       t} \int^t_0 u (t - s) \asterisk_{\Lambda} \varphi_{\varepsilon} (s)
       \mathd s.
     \end{array} \]
  Denote by
  \[ \tilde{T}^N (u) \assign \inf \left\{ t \geqslant 0 \middle| \int^t_0 \|
     u (s) \|^2_{W^{- 1 / 2 - \kappa, 3}} \mathd s \geqslant N \right\}, \]
  and
  \[ \left( \mathrm{cut}_N (u) \right) (t) \assign u (t) \mathbbm{1}_{\{t
     \leqslant \tilde{T}^N (u) \}} . \]
  Observe the following properties of these maps:
  \begin{itemize}
    \item $\mathrm{reg}_{x, \varepsilon}$ is a continuous map $\mathcal{L}_w
    \rightarrow \mathcal{H}_w$ and $\mathcal{L} \rightarrow \mathcal{H}$;
    
    \item $\mathrm{reg}_{t : x, \varepsilon}$ is a continuous map
    $\mathcal{L}_w \rightarrow \mathcal{H}$;
    
    \item $\mathrm{cut}_N$ is continuous as a map $\mathcal{L} \rightarrow B
    (0, N) \subset \mathcal{L}$;
    
    \item if $u$ is a predictable process then $\mathrm{reg}_{x, \varepsilon}
    (u)$, $\mathrm{reg}_{t : x, \varepsilon} (u)$, $\mathrm{cut}_N (u)$ will
    also be predictable.
  \end{itemize}
  Furthermore we have the bounds
  \[ \left\| \mathrm{reg}_{x, \varepsilon} (u) \right\|_{\mathcal{L}},
     \left\| \mathrm{reg}_{t : x, \varepsilon} (u) \right\|_{\mathcal{L}},
     \left\| \mathrm{cut}_N (u) \right\|_{\mathcal{L}} \leqslant \| u
     \|_{\mathcal{L}} . \]
  uniformly in $\varepsilon, N$, and for every $u \in \mathcal{L}$,
  \[ \lim_{\varepsilon \rightarrow 0} \left\| \mathrm{reg}_{x, \varepsilon}
     (u) - u \right\|_{\mathcal{L}} = \lim_{\varepsilon \rightarrow 0} \left\|
     \mathrm{reg}_{t : x, \varepsilon} (u) - u \right\|_{\mathcal{L}} =
     \lim_{N \rightarrow \infty} \left\| \mathrm{cut}_N (u) - u
     \right\|_{\mathcal{L}} = 0. \]
  With abuse of notation, for $\mu \in \mathcal{P} (\mathfrak{S} \times
  \mathcal{L})$ and $f : \mathcal{L} \rightarrow \mathcal{L}$, we let
  \[ f_{\ast} \mu = (\tmop{Id}, f)_{\ast} \mu = \tmop{Law}_{\mu} (\mathbb{X},
     f (u)) . \]
\end{definition}

\begin{remark}
  Let us briefly comment on the uses of these approximation in the sequel.
  $\mathrm{reg}_{t : x, \varepsilon}$ will be used when one wants to obtain a
  sequence of weakly convergent measures on $\mathfrak{S} \times \mathcal{H}$
  or $\mathfrak{S} \times \mathcal{L}$ from a sequence of measures weakly
  convergent on $\mathfrak{S} \times \mathcal{L}_w$. $\mathrm{reg}_{x,
  \varepsilon}$ will be used when one wants to obtain a measure on
  $\mathfrak{S} \times \mathcal{H}$ from one on $\mathfrak{S} \times
  \mathcal{L}$, while preserving the estimates on the moments of $Z (u)$ since
  $Z (u \ast_{\Lambda} \eta_{\varepsilon}) = Z (u) \ast_{\Lambda}
  \eta_{\varepsilon}$.
\end{remark}

\begin{lemma}
  \label{strongconvergence}Let $\mu \in \overline{\mathcal{X}}$. Then there
  exist $(\mu_n)_n$ in $\mathcal{X}$ such that $\mu_n \rightarrow \mu$ on
  $\mathfrak{S} \times \mathcal{L}$ (now with the norm topology) and $\sup_n
  \mathbb{E}_{\mu_n} [\| u \|^2_{\mathcal{L}}] < \infty$.
\end{lemma}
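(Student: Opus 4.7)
The plan is a diagonal smoothing argument based on $\mathrm{reg}_{t:x,\varepsilon}$, whose defining properties are exactly what is needed: it is continuous from $\mathcal{L}_w$ to $\mathcal{H}$, it satisfies the contraction bound $\|\mathrm{reg}_{t:x,\varepsilon}(u)\|_{\mathcal{L}} \leq \|u\|_{\mathcal{L}}$, it approximates the identity on $\mathcal{L}$, and it preserves predictability since the time convolution uses only past values.

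First, by the very definition of $\overline{\mathcal{X}}$, pick a sequence $(\tilde{\mu}_k)_k \subset \mathcal{X}$ with $\tilde{\mu}_k \to \mu$ on $\mathfrak{S} \times \mathcal{L}_w$ and $M \assign \sup_k \mathbb{E}_{\tilde{\mu}_k}[\|u\|^2_{\mathcal{L}}] < \infty$. For each $\varepsilon > 0$ set $\tilde{\mu}^\varepsilon_k \assign (\mathrm{reg}_{t:x,\varepsilon})_\ast \tilde{\mu}_k$ and $\mu^\varepsilon \assign (\mathrm{reg}_{t:x,\varepsilon})_\ast \mu$. Each $\tilde{\mu}^\varepsilon_k$ still lies in $\mathcal{X}$ thanks to preservation of predictability, and the contraction bound yields $\sup_{k,\varepsilon} \mathbb{E}_{\tilde{\mu}^\varepsilon_k}[\|u\|^2_{\mathcal{L}}] \leq M$.

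The convergence now proceeds in two stages. For fixed $\varepsilon > 0$, continuity of $\mathrm{reg}_{t:x,\varepsilon} \colon \mathcal{L}_w \to \mathcal{H}$ gives $\tilde{\mu}^\varepsilon_k \to \mu^\varepsilon$ weakly on $\mathfrak{S} \times \mathcal{H}$ as $k \to \infty$; combined with the continuous embedding $\mathcal{H} \hookrightarrow \mathcal{L}$, this yields convergence on $\mathfrak{S} \times \mathcal{L}$ with the norm topology. Separately, $\mathrm{reg}_{t:x,\varepsilon}(u) \to u$ strongly in $\mathcal{L}$ for each $u$, together with the contraction bound and dominated convergence, give $\mu^\varepsilon \to \mu$ on $\mathfrak{S} \times \mathcal{L}$ (norm topology) as $\varepsilon \to 0$. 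Since $\mathfrak{S} \times \mathcal{L}$ (norm) is Polish, weak convergence of probability measures is metrizable, and a diagonal extraction $\mu_n \assign \tilde{\mu}^{\varepsilon_n}_{k_n}$ with $\varepsilon_n \to 0$ and $k_n$ sufficiently large produces a sequence in $\mathcal{X}$ converging to $\mu$ in the strong sense, while retaining the uniform moment bound from Step~2.

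The only delicate point is the continuity of $\mathrm{reg}_{t:x,\varepsilon}$ from $\mathcal{L}_w$ to $\mathcal{H}$, which is a Rellich-type compactness of the smoothing convolution already listed among the properties of the approximation. Everything else is standard bookkeeping using the uniform moment bound and the metrizability of weak convergence on Polish spaces.
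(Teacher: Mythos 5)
Your proof is correct and follows essentially the same route as the paper's: take a sequence from the definition of $\overline{\mathcal{X}}$, push forward by $\mathrm{reg}_{t:x,\varepsilon}$ (which is continuous $\mathcal{L}_w \to \mathcal{H}$, contracts the $\mathcal{L}$-norm, and preserves predictability), pass to the limit first in $k$ for fixed $\varepsilon$ and then in $\varepsilon$, and extract a diagonal sequence. The paper's proof is a three-line version of exactly this argument.
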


\begin{proof}
  By definition of $\overline{\mathcal{X}}$ of there exists $\tilde{\mu}_n
  \rightarrow \mu$ weakly on \ $\mathfrak{S} \times \mathcal{L}_w$. Then
  $\left( \mathrm{reg}_{t : x, \varepsilon} \right)_{\ast} \tilde{\mu}_n
  \rightarrow \left( \mathrm{reg}_{t : x, \varepsilon} \right)_{\ast} \mu$ on
  $\mathfrak{S} \times \mathcal{L}$ as $n \rightarrow \infty$, and since
  $\left( \mathrm{reg}_{t : x, \varepsilon} \right)_{\ast} \mu \rightarrow
  \mu$ weakly on $\mathfrak{S} \times \mathcal{L}$ as $\varepsilon \rightarrow
  0$, we obtain the statement by taking a diagonal sequence.
\end{proof}

\begin{lemma}
  Let $\mu_n \rightarrow \mu$ on $\mathfrak{S} \times \mathcal{L}$, such that
  $\sup_n \mathbb{E}_{\mu_n} [\| u \|^2_{\mathcal{L}}]<{\infty}$. Then
  \begin{enumerate}
    \item for every Lipschitz function f on $\mathcal{L}$, $\mathbb{E}_{\mu_n}
    [f (\nobracket u) \nobracket] \rightarrow \mathbb{E}_{\mu} [f (\nobracket
    u) \nobracket]$;
    
    \item for every Lipschitz function f on $C ( [0, \infty], L^4)$ we have
    $\mathbb{E}_{\mu_n} [f (Z (u))] \rightarrow \mathbb{E}_{\mu} [f (Z (u))]$.
  \end{enumerate}
\end{lemma}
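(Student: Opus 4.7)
My plan is to reduce both statements to a single uniform integrability principle: if $g\colon \mathcal{L}\to\mathbb{R}$ is continuous for the norm topology of $\mathcal{L}$ and has at most linear growth $|g(u)|\leq A+B\|u\|_{\mathcal{L}}$, then $\mathbb{E}_{\mu_n}[g(u)]\to\mathbb{E}_{\mu}[g(u)]$ under the hypotheses of the lemma. To prove this principle I would truncate $g_M\assign(g\wedge M)\vee(-M)$, which is bounded and continuous on $\mathfrak{S}\times\mathcal{L}$ with the norm topology, so that the weak convergence $\mu_n\to\mu$ on this space directly yields $\mathbb{E}_{\mu_n}[g_M(u)]\to\mathbb{E}_{\mu}[g_M(u)]$. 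The tail is controlled by Markov's inequality and the assumed moment bound:
\[
\mathbb{E}_{\mu_n}\bigl[|g(u)|\,\mathbbm{1}_{|g(u)|>M}\bigr]\leq \frac{1}{M}\mathbb{E}_{\mu_n}[|g(u)|^2]\leq \frac{C}{M}\Bigl(A^2+B^2\sup_n\mathbb{E}_{\mu_n}[\|u\|_{\mathcal{L}}^2]\Bigr),
\]
uniformly in $n$, with the analogous estimate for $\mu$ following from Fatou's lemma applied to the lower semicontinuous map $u\mapsto \|u\|_{\mathcal{L}}^2$. Letting $n\to\infty$ first and then $M\to\infty$ concludes the principle.

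For part (1) I apply it with $g\assign f$: any Lipschitz function on $\mathcal{L}$ is automatically continuous and obeys $|f(u)|\leq |f(0)|+\mathrm{Lip}(f)\,\|u\|_{\mathcal{L}}$. For part (2) I take $g\assign f\circ Z$: the compactness of $u\mapsto Z(u)$ from $\mathcal{L}$ to $C([0,\infty],L^4)$ established in Lemma~\ref{lemma-Zcompact} already gives the needed continuity, while the Cauchy--Schwarz step in the proof of that lemma also delivers the linear bound $\|Z(u)\|_{C([0,\infty],L^4)}\lesssim \|u\|_{\mathcal{L}}$. Composing yields continuity and linear growth of $g$, and the principle applies.

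The only delicate point I would spell out explicitly is that the convergence $\mu_n\to\mu$ imported from Lemma~\ref{strongconvergence} is weak convergence of probability measures on $\mathfrak{S}\times\mathcal{L}$ equipped with the \emph{norm} topology, which is exactly what makes $g_M$ a legitimate test function; this is in contrast to the weaker topology of $\mathcal{L}_w$ used in the tightness statement of Lemma~\ref{lemma:tightness}, for which the truncation argument alone would not suffice. Once this distinction is pinned down, the rest is routine: truncate, pass to the limit in $n$, then send $M\to\infty$ using the uniform second moment bound.
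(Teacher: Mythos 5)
Your proof is correct and follows essentially the same strategy as the paper: truncate to reduce to a bounded continuous test function, apply weak convergence on $\mathfrak{S}\times\mathcal{L}$, and control the tail via the uniform $\sup_n\mathbb{E}_{\mu_n}[\|u\|^2_{\mathcal{L}}]$ bound. The only cosmetic difference is that you clip the \emph{values} ($g_M = (g\wedge M)\vee(-M)$) whereas the paper clips the \emph{domain} via a smooth cutoff $\eta_N(\|u\|_{\mathcal{L}})$; both are equally valid here, and your formulation in terms of continuous functions of linear growth is marginally more flexible than the paper's Lipschitz-specific version.
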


\begin{proof}
  Let $f$ be a Lipschitz function on $\mathcal{L}$ with Lipschitz constant
  $L$. Let $\eta \in C (\mathbb{R}, \mathbb{R})$ be supported on $B (0, 2)$
  with $\eta = 1$ on $B (0, 1)$, and $\eta_N (x) = \eta (x / N)$. Then $u
  \mapsto f (u) \eta_N (\| u \|_{\mathcal{L}})$ is bounded,
  \[ \lim_{n \rightarrow \infty} \mathbb{E}_{\mu_n} [f (u) \eta_N (\| u
     \|_{\mathcal{L}})] =\mathbb{E}_{\mu} [f (u) \eta_N (\| u
     \|_{\mathcal{L}})], \]
  and \ \
  \begin{eqnarray*}
    \mathbb{E}_{\mu_n} [f (u) \eta_N (\| u \|_{\mathcal{L}})]
    -\mathbb{E}_{\mu_n} [f (u)] & = & \mathbb{E}_{\mu_n} [(f (u) \eta_N (\| u
    \|_{\mathcal{L}}) - f (u)) \mathbbm{1}_{\{ \| u \|_{\mathcal{L}} \geqslant
    N \}} \nobracket\\
    & \leqslant & \mathbb{E}_{\mu_n} [2 L \| u \|_{\mathcal{L}} 
    \mathbbm{1}_{\{ \| u \|_{\mathcal{L}} \geqslant N \}}]\\
    & \leqslant & 2 L\mathbb{E}_{\mu_n} [\| u \|^2_{\mathcal{L}} ]^{1 / 2}
    \mu_n (\| u \|_{\mathcal{L}} \geqslant N)\\
    & \leqslant & \frac{2 L}{N} \mathbb{E}_{\mu_n} [\| u \|^2_{\mathcal{L}} ]
    .
  \end{eqnarray*}
  Using that $\sup_n \mathbb{E}_{\mu_n} [\| u \|^2_{\mathcal{L}} ] < \infty$
  we have
  \begin{eqnarray*}
    \lim_{n \rightarrow \infty} \mathbb{E}_{\mu_n} [f (u)] -\mathbb{E}_{\mu}
    [f (u)] & \leqslant & | \lim_{n \rightarrow \infty} \mathbb{E}_{\mu_n} [f
    (u) \eta_N (\| u \|^2_{\mathcal{L}})] -\mathbb{E}_{\mu} [f (u) \eta_N (\|
    u \|^2_{\mathcal{L}})] |\\
    &  & + \sup_n | \mathbb{E}_{\mu_n} [f (u) \eta_N (\| u
    \|^2_{\mathcal{L}})] -\mathbb{E}_{\mu_n} [f (u)] |\\
    &  & + \sup_n | \mathbb{E}_{\mu} [f (u) \eta_N (\| u \|^2_{\mathcal{L}})]
    -\mathbb{E}_{\mu} [f (u)] | \\
    & \leqslant & \frac{4 L}{N} \sup_n \mathbb{E}_{\mu_n} [\| u
    \|^2_{\mathcal{L}} ] \lesssim N^{- 1},
  \end{eqnarray*}
  and sending $N \rightarrow \infty$ gives the statement. The second statement
  follows from the first and Lemma~\ref{lemma-Zcompact}.
\end{proof}

From the definition of $\mathcal{X}$ we have that
\[ \mathcal{W}_T (f) = \inf_{\mu \in \mathcal{X}} \breve{F}_T (\mu), \]
where
\[ \breve{F}_T (\mu) \assign \mathbb{E}_{\mu} \left[ \Phi_T (\mathbb{X}, Z
   (u), K (u)) + \lambda \| Z_T (u) \|_{L^4}^4 + \frac{1}{2} \| l^T (u)
   \|^2_{\mathcal{H}} \right] . \]
The space $\mathcal{X}$ is not necessarily closed w.r.t weak convergence of
measures. To be able to use an argument involving equicoercivity we need to
show that we can pass to the closure $\overline{\mathcal{X}}$ of $\mathcal{X}$
in the infimum. To do this we first need to prove that we can approximate
measures in $\overline{\mathcal{X}}$ by measures with bounded support in the
second marginal which are still in $\overline{\mathcal{X}}$. This is the
content of the following

\begin{lemma}
  \label{lemma-boundedapprox}Let $\mu \in \overline{\mathcal{X}}$ such that
  $E_{\mu} [\| Z_T (u) \|^4_{L^4}] + E_{\mu} [\| u \|^2_{\mathcal{L}}] <
  \infty$. For any $L > 0$ there exists $\mu_L \in \overline{\mathcal{X}}$
  such that $\| u \|_{\mathcal{L}} \leqslant L$, $\mu_L$-almost surely, $\mu_L
  \rightarrow \mu$ weakly on \ $\mathfrak{S} \times \mathcal{L}$ as $L
  \rightarrow \infty$,
  \[ \mathbb{E}_{\mu_L} [\| Z_T (u) \|^4_{L^4}] \rightarrow \mathbb{E}_{\mu}
     [\| Z_T (u) \|^4_{L^4}], \quad \text{and} \quad \mathbb{E}_{\mu_L} [\| u
     \|^2_{\mathcal{L}}] \rightarrow \mathbb{E}_{\mu} [\| u
     \|^2_{\mathcal{L}}] . \]
  Furthermore for any $\mu_L$ there exists $(\mu_{L, n})_n \subset
  \mathcal{X}$ such that $\| u \|_{\mathcal{L}} \leqslant L$, $\mu_{L,
  n}$-almost surely and $\mu_{L, n} \rightarrow \mu_L$ weakly on $\mathfrak{S}
  \times \mathcal{L}_w$.
\end{lemma}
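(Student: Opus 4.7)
The natural candidate is the predictable stopping-time truncation introduced just above the statement: set $\mu_L := (\mathrm{cut}_{L^2})_{\ast}\mu$, so that the second coordinate under $\mu_L$ is $\mathrm{cut}_{L^2}(u) = u\,\mathbbm{1}_{[0,\tilde T^{L^2}(u)]}$. Since $\tilde T^{L^2}$ is a stopping time in the Brownian filtration, the map $\mathrm{cut}_{L^2}$ preserves predictability, and by construction
\[
\|\mathrm{cut}_{L^2}(u)\|_{\mathcal{L}}^2 \;=\; \int_0^{\tilde T^{L^2}(u)} \|u(s)\|_{W^{-1/2-\kappa,3}}^2\,\mathrm{d} s \;=\; \min\!\bigl(L^2,\|u\|_{\mathcal{L}}^2\bigr)\;\leqslant\; L^2,
\]
which gives the $\mu_L$-a.s.\ support bound. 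The crucial pointwise fact is that $\tilde T^{L^2}(u)=+\infty$ as soon as $L^2>\|u\|_{\mathcal{L}}^2$, hence $\mathrm{cut}_{L^2}(u)=u$ for $L$ large enough (depending on $u$); in particular $\mathrm{cut}_{L^2}(u)\to u$ in $\mathcal{L}$ for $\mu$-almost every $u$. Testing against bounded continuous $g:\mathfrak{S}\times\mathcal{L}\to\mathbb{R}$ and applying dominated convergence then yields $\mu_L\to\mu$ weakly on $\mathfrak{S}\times\mathcal{L}$.

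For the $\|u\|_{\mathcal{L}}^2$ moment, the monotone family $\min(L^2,\|u\|_{\mathcal{L}}^2)\uparrow\|u\|_{\mathcal{L}}^2$ and monotone convergence give $\mathbb{E}_{\mu_L}[\|u\|_{\mathcal{L}}^2]\uparrow\mathbb{E}_{\mu}[\|u\|_{\mathcal{L}}^2]$. For the $L^4$-moment of $Z_T$, I use the linear identity $Z_T(\mathrm{cut}_{L^2}(u)) = Z_{T\wedge\tilde T^{L^2}(u)}(u)$ together with the Cauchy--Schwarz estimate of Lemma~\ref{lemma-Zcompact} applied to the difference, which yields
\[
\|Z_T(u)-Z_T(\mathrm{cut}_{L^2}(u))\|_{L^4}^2 \;\lesssim\; \Bigl(\int_{T\wedge\tilde T^{L^2}(u)}^{T}\langle s\rangle^{-1-2\varepsilon}\,\mathrm{d} s\Bigr)\Bigl(\int_{T\wedge\tilde T^{L^2}(u)}^{T}\|u_s\|^2_{W^{-1/2-\kappa,3}}\,\mathrm{d} s\Bigr).
\]
The right-hand side is dominated by $C\|u\|_{\mathcal{L}}^2\in L^1(\mu)$ and vanishes pointwise $\mu$-a.s.\ as $L\to\infty$, so by dominated convergence $Z_T(\mathrm{cut}_{L^2}(u))\to Z_T(u)$ strongly in $L^2(\mu;L^4(\Lambda))$. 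Combined with the pointwise eventual equality $\|Z_T(\mathrm{cut}_{L^2}(u))\|_{L^4}=\|Z_T(u)\|_{L^4}$ on $\{\|u\|_{\mathcal{L}}\leqslant L\}$ and the integrability assumption $\mathbb{E}_\mu[\|Z_T(u)\|_{L^4}^4]<\infty$, a Vitali-type uniform integrability argument upgrades pointwise convergence of $\|Z_T(\mathrm{cut}_{L^2}(u))\|_{L^4}^4$ to the required convergence of the fourth moment.

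For the second statement, apply Lemma~\ref{strongconvergence} to obtain $\tilde\mu_n\in\mathcal{X}$ with $\tilde\mu_n\to\mu$ strongly on $\mathfrak{S}\times\mathcal{L}$ and $\sup_n\mathbb{E}_{\tilde\mu_n}[\|u\|_{\mathcal{L}}^2]<\infty$, and define $\mu_{L,n}:=(\mathrm{cut}_{L^2})_{\ast}\tilde\mu_n$. Predictability of the second coordinate is preserved, so $\mu_{L,n}\in\mathcal{X}$ with $\|u\|_{\mathcal{L}}\leqslant L$ $\mu_{L,n}$-a.s. Because $\mathrm{cut}_{L^2}$ is continuous on the normed space $\mathcal{L}$ (Lipschitz with constant $1$, by the same monotone identity above), the pushforwards converge strongly on $\mathfrak{S}\times\mathcal{L}$ to $(\mathrm{cut}_{L^2})_{\ast}\mu=\mu_L$, and a fortiori weakly on $\mathfrak{S}\times\mathcal{L}_w$. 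This simultaneously exhibits $\mu_L$ as an element of $\overline{\mathcal{X}}$ and provides the required approximating sequence in $\mathcal{X}$.

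The main obstacle is the convergence $\mathbb{E}_{\mu_L}[\|Z_T(u)\|_{L^4}^4]\to\mathbb{E}_{\mu}[\|Z_T(u)\|_{L^4}^4]$: the naive bound $\|Z_T(\mathrm{cut}_{L^2}(u))\|_{L^4}^4\lesssim \|u\|_{\mathcal{L}}^4$ is not $\mu$-integrable under the hypothesis $\mathbb{E}_\mu[\|u\|_{\mathcal{L}}^2]<\infty$ alone, so the argument must genuinely exploit the two-factor estimate above, where the vanishing time integral $\int_{T\wedge\tilde T^{L^2}(u)}^T\langle s\rangle^{-1-2\varepsilon}\,\mathrm{d} s$ provides the extra smallness compensating the $\|u\|_{\mathcal{L}}^2$ factor, together with the assumed fourth-moment integrability of $\|Z_T(u)\|_{L^4}$, in order to bring Vitali's theorem to bear.
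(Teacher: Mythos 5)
Your construction is genuinely different from the paper's, and unfortunately the difference matters at exactly the step you identify as the main obstacle. The two-factor estimate from Lemma~\ref{lemma-Zcompact} controls only the \emph{second} power of $D_L(u):=\|Z_T(u)-Z_T(\mathrm{cut}_{L^2}(u))\|_{L^4}$ by $\varepsilon_L(u)\,\|u\|_{\mathcal{L}}^2$ with $\varepsilon_L(u)=\int_{T\wedge\tilde{T}^{L^2}(u)}^{T}\langle s\rangle^{-1-2\varepsilon}\mathrm{d} s$, so it yields $\mathbb{E}_\mu[D_L^2]\to 0$ but says nothing about $\mathbb{E}_\mu[D_L^4]$: the natural bound $D_L^4\lesssim \varepsilon_L(u)^2\|u\|_{\mathcal{L}}^4$ involves a fourth moment of $\|u\|_{\mathcal{L}}$ that is unavailable under the hypothesis $\mathbb{E}_\mu[\|u\|^2_{\mathcal{L}}]<\infty$, and $\varepsilon_L(u)$ is merely bounded (not small) on the exceptional set $\{\|u\|_{\mathcal{L}}>L\}$ where $D_L\neq 0$. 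Vitali requires uniform integrability of $\|Z_T(\mathrm{cut}_{L^2}(u))\|^4_{L^4}\lesssim \|Z_T(u)\|^4_{L^4}+D_L^4$, which is precisely what is missing. The failure is not merely technical: since $Z_T(\mathrm{cut}_{L^2}(u))=Z_{T\wedge\tilde{T}^{L^2}(u)}(u)$ freezes the time integral at an intermediate scale, a drift whose increments cancel over $[0,T]$ can have $\|Z_T(u)\|_{L^4}$ small while $\|Z_{\tilde{T}^{L^2}(u)}(u)\|_{L^4}\sim L$; putting mass $\sim L^{-2-\delta}$ on such drifts keeps $\mathbb{E}_\mu[\|u\|^2_{\mathcal{L}}]$ and $\mathbb{E}_\mu[\|Z_T(u)\|^4_{L^4}]$ finite while $\mathbb{E}_{\mu_L}[\|Z_T(u)\|^4_{L^4}]\gtrsim L^{2-\delta}$ diverges. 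So the crude time truncation can inflate the very moment you need to preserve.

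The paper avoids this by a different device: starting from an approximating sequence $\mu_n=\mathrm{Law}(\mathbb{W},u^n)$ in $\mathcal{X}$, it stops the $L^4$-valued \emph{martingale} $\tilde{Z}^n_s=\mathbb{E}[Z_T(u^n)\mid\mathcal{F}_s]$ at $T_{L,n}=\inf\{t:\|\tilde{Z}^n_t\|_{L^4}\geqslant L\}$ and replaces the drift by $u^{L,n}_t=\mathbb{E}[u^n_t\mid\mathcal{F}_{T_{L,n}}]$ (and similarly composes with $\mathrm{cut}_N$ for the $\mathcal{L}$-bound). Optional sampling gives $Z_T(u^{L,n})=\tilde{Z}^n_{T_{L,n}}$, so the $L^4$-bound on $Z_T$ holds by construction, and conditional Jensen applied to Lipschitz convex approximations $f^M_1,f^M_2$ of $x\mapsto x^2$ and $x\mapsto x^4$ gives the one-sided inequalities $\mathbb{E}_{\tilde{\mu}_L}[\|u\|^2_{\mathcal{L}}]\leqslant\mathbb{E}_\mu[\|u\|^2_{\mathcal{L}}]$ and $\mathbb{E}_{\tilde{\mu}_L}[\|Z_T(u)\|^4_{L^4}]\leqslant\mathbb{E}_\mu[\|Z_T(u)\|^4_{L^4}]$, which combined with Portmanteau lower semicontinuity produce the claimed moment convergence. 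Your treatment of the $\mathcal{L}$-moment (monotone convergence), of the weak convergence $\mu_L\to\mu$, and of the approximation by $\mathcal{X}$-measures via continuity of $\mathrm{cut}$ and Lemma~\ref{strongconvergence} is fine; the proof stands or falls on the $Z_T$-moment, and there you need to replace the pathwise truncation by the conditional-expectation/optional-stopping mechanism (or supply an equally effective substitute).
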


\begin{proof}
  \
  
  {\tmstrong{Step 1}} First let us show how to approximate $\mu$ with
  $\tilde{\mu}_L$ which are defined such that $\|Z_T (u) \|_{L^4} \leqslant
  L$, $\tilde{\mu}_L$ almost surely. As $\mu \in \overline{\mathcal{X}}$,
  there exists $(\mu_n)_n \subset \mathcal{X}$ such that $\mu_n \rightarrow
  \mu$ on $\mathfrak{S} \times \mathcal{L}$ and $\sup_n \mathbb{E}_{\mu_n} [\|
  u \|^2_{\mathcal{L}}] < \infty$. Since $\mu_n \in \mathcal{X}$ there exist
  $(u^n)_n$ adapted such that $\mu_n = \tmop{Law} (\mathbb{W}, u^n)$. Define
  $\tilde{Z}^n_s \assign \mathbb{E} \left[ \int^T_0 J_t u^n_t \mathd t|
  \mathcal{F}_s \right] = \int^T_0 J_t \mathbb{E} [u^n_t | \mathcal{F}_s]
  \mathd t$. Then $\tilde{Z}$ is a martingale with continuous paths in $L^4
  (\Lambda)$. Define the stopping time $T_{L, n} = \inf \{ t \in [0, T] |\|
  \tilde{Z}^n_t \|_{L^4} \geqslant L \}$ where the inf is equal to $T$ if the
  set is empty. Observe that $\tilde{Z}_{T_{L, n}} = \int^T_0 J_t \mathbb{E}
  [u^n_t | \mathcal{F}_{T_{L, n}}] \mathd t = Z_T (u^{L, n})$ with $u^{L, n}_t
  \assign \mathbb{E} [u^n_t | \mathcal{F}_{T_{L, n}}]$ adapted, by optional
  sampling, and almost surely $\| \tilde{Z}_{T_L} \|_{L^4} \leqslant L$. Now
  set $\tilde{\mu}_{L, n} \assign \tmop{Law}_{\mathbb{P}} (\mathbb{W}, u^{L,
  n})$.
  
  {\tmstrong{Step 1.1}} (Tightness) The next goal is to show that for fixed
  $L$, we can select a suitable convergent subsequence from $(\tilde{\mu}_{L,
  n})_n$. For this we first show that $(\tilde{\mu}_{L, n})_n$ is tight on
  $\mathfrak{S} \times \mathcal{L}_w$. From the definition of $\mathcal{X}$ we
  have that $\sup_n \mathbb{E}_{\mu_n} [\| u \|^2_{\mathcal{L}}] < \infty$,
  and by construction \
  \[ \sup_n \mathbb{E}_{\tilde{\mu}_{L, n}} [\| u \|^2_{\mathcal{L}}]
     \leqslant \sup_n \mathbb{E}_{\mathbb{P}} [\| \mathbb{E} [u^n_t |
     \mathcal{F}_{T_{L, n}}] \|^2_{\mathcal{L}}] \leqslant \sup_n
     \mathbb{E}_{\mathbb{P}} [\| u^n \|^2_{\mathcal{L}}] = \sup_n
     \mathbb{E}_{\mu_n} [\| u \|^2_{\mathcal{L}}] < \infty, \]
  which gives tightness according to
  \tmcolor{black}{Lemma}~\ref{lemma:tightness}. We can then select a
  subsequence which converges on $\mathcal{L}_w$.
  
  {\tmstrong{Step 1.2}} (Bounds) Let $\tilde{\mu}_L$ be the limit of the
  sequence constructed in Step 1.1. In this step we prove bounds on the
  relevant moments of $\tilde{\mu}_L$. Let $f^M_1, f^M_2$ be sequences of
  functions on $\mathbb{R}$ which are Lipschitz, convex and monotone for every
  $N$, while for every $x \in \mathbb{R}$
  \begin{eqnarray*}
    0 \leqslant f^M_1 (x) \leqslant x^2, &  & \lim_{M \rightarrow \infty}
    f_1^M (x) = x^2,\\
    0 \leqslant f^M_2 (x) \leqslant x^4, &  & \lim_{M \rightarrow \infty}
    f_2^M (x) = x^4 .
  \end{eqnarray*}
  Then $f_1^M (\| u \|_{\mathcal{L}})$ is a lower-semi continuous positive
  function on $\mathcal{L}_w$ so by the Portmanteau lemma we have
  \[ \mathbb{E}_{\tilde{\mu}_L} [f_1^M (\| u \|_{\mathcal{L}})] \leqslant
     \liminf_{n \rightarrow \infty} \mathbb{E}_{\tilde{\mu}_{L, n}} [f_1^N (\|
     u \|_{\mathcal{L}})], \]
  and since it is also Lipschitz continuous and convex we have
  \begin{eqnarray*}
    \liminf_{n \rightarrow \infty} \mathbb{E}_{\tilde{\mu}_{L, n}} [f_1^M (\|
    u \|_{\mathcal{L}})] & = & \liminf_{n \rightarrow \infty}
    \mathbb{E}_{\mathbb{P}} [f_1^M \nobracket \| \mathbb{E} [u_n |
    \mathcal{F}_{T_{L, n}} \nobracket] \|_{\mathcal{L}})]\\
    & \leqslant & \liminf_{n \rightarrow \infty} \mathbb{E}_{\mathbb{P}}
    [f_1^M (\| u_n \|_{\mathcal{L}})] =\mathbb{E}_{\mu} [f_1^M (\| u
    \|_{\mathcal{L}})] .
  \end{eqnarray*}
  Therefore
  \begin{eqnarray*}
    \mathbb{E}_{\tilde{\mu}_L} [\| u \|^2_{\mathcal{L}}] & = & \lim_{M
    \rightarrow \infty} \mathbb{E}_{\tilde{\mu}_L} [f_1^M (\| u
    \|_{\mathcal{L}})]\\
    & \leqslant & \lim_{M \rightarrow \infty} \mathbb{E}_{\mu} [f_1^M (\| u
    \|_{\mathcal{L}})] =\mathbb{E}_{\mu} [\| u \|^2_{\mathcal{L}}] .
  \end{eqnarray*}
  Proceeding similarly for $Z$, we see that $f^N_2 (\| Z_T \|_{L^4})$ is a
  continuous function on $L^4$ bounded below, and Lipschitz-continuous and
  convex on $L^4$ so we again can estimate
  \[ \mathbb{E}_{\tilde{\mu}_L} [f_2^N (\| Z_T \|_{L^4})] = \lim_{n
     \rightarrow \infty} \mathbb{E}_{\tilde{\mu}_{L, n}} [f_2^M (\| Z_T
     \|_{L^4})], \]
  \begin{eqnarray*}
    \mathbb{E}_{\tilde{\mu}_L} [f_2^N (\| Z_T \|_{L^4})] & = & \lim_{n
    \rightarrow \infty} \mathbb{E}_{\tilde{\mu}_{L, n}} [f_2^M (\| Z_T
    \|_{L^4})]\\
    & = & \lim_{n \rightarrow \infty} \mathbb{E}_{\mathbb{P}} [f_2^M (\|
    \mathbb{E} [Z_T (u_n) | \mathcal{F}_{T_{L, n}} |] \|_{L^4})]\\
    & \leqslant & \lim_{n \rightarrow \infty} \mathbb{E}_{\mathbb{P}} [f_2^M
    (\| \nobracket Z_T (u_n)] \|_{L^4})] =\mathbb{E}_{\mu} [f_2^M (\|
    \nobracket Z_T (u_n)] \|_{L^4})] .
  \end{eqnarray*}
  and, taking $N \rightarrow \infty$, obtain
  \[ \mathbb{E}_{\tilde{\mu}_L} [\| Z_T \|^4_{L^4}] \leqslant
     \mathbb{E}_{\mu} [\| Z_T \|^4_{L^4}] . \]

  {\tmstrong{Step 1.3}} (Weak convergence) Now we prove weak convergence of
  $\tilde{\mu}_L$ to $\mu$ on $\mathfrak{S} \times \mathcal{L}$. Let $f :
  \mathfrak{S} \times \mathcal{L} \rightarrow \mathbb{R}$ be bounded and
  continuous. By dominated convergence and continuity of $f$,
  $\lim_{\varepsilon} \mathbb{E}_{\tilde{\mu}_L} \left[ f \left( \mathbb{X},
  \mathrm{reg}_{t : x, \varepsilon} (u) \right) \right]
  =\mathbb{E}_{\tilde{\mu}_L} [f (\mathbb{X}, u)]$. Using furthermore that
  $(\mathbb{X}, u) \mapsto f \left( \mathbb{X}, \mathrm{reg}_{t : x,
  \varepsilon} (u) \right)$ is continuous on $\mathfrak{S} \times
  \mathcal{L}_w$ and Lemma \ref{lemma-Zcompact} in the 5th line below, we can
  estimate
  \begin{eqnarray*}
    &  & \lim_{L \rightarrow \infty} | \mathbb{E}_{\mu} [f (\mathbb{X}, u)]
    -\mathbb{E}_{\tilde{\mu}_L} [f (\mathbb{X}, u)] |\\
    & = & \lim_{L \rightarrow \infty} \lim_{\varepsilon \rightarrow 0} \left|
    \lim_{n \rightarrow \infty} \mathbb{E}_{\mu_n} \left[ f \left( \mathbb{X},
    \mathrm{reg}_{t : x, \varepsilon} (u^n) \right) \right]
    -\mathbb{E}_{\tilde{\mu}_{L, n}} \left[ f \left( \mathbb{X},
    \mathrm{reg}_{t : x, \varepsilon} (u^n) \right) \right] \right|\\
    & = & \lim_{L \rightarrow \infty} \lim_{\varepsilon \rightarrow 0} \left|
    \lim_{n \rightarrow \infty} \mathbb{E}_{\mathbb{P}} \left[ f \left(
    \mathbb{W}, \mathrm{reg}_{t : x, \varepsilon} (u^n) \right) - f \left(
    \mathbb{W}, \mathbb{E} \left[ \mathrm{reg}_{t : x, \varepsilon} (u^n) |
    \mathcal{F}_{T_L} \nobracket \right] \right) \right] \right|\\
    & = & \lim_{L \rightarrow \infty} \lim_{\varepsilon \rightarrow 0} \left|
    \lim_{n \rightarrow \infty} \mathbb{E}_{\mathbb{P}} \left[ f \left(
    \mathbb{W}, \mathrm{reg}_{t : x, \varepsilon} (u^n) \right) - f \left(
    \mathbb{W}, \mathbb{E} \left[ \mathrm{reg}_{t : x, \varepsilon} (u^n) |
    \mathcal{F}_{T_L} \nobracket \right] \right) \mathbbm{1}_{\{ T_L < \infty
    \}} \right] \right|\\
    & \leqslant & \lim_{L \rightarrow \infty} \lim_{\varepsilon \rightarrow
    0} \left| \lim_{n \rightarrow \infty} \mathbb{E}_{\mathbb{P}} \left[ f
    \left( \mathbb{W}, \mathrm{reg}_{t : x, \varepsilon} (u^n) \right) - f
    \left( \mathbb{W}, \mathbb{E} \left[ \mathrm{reg}_{t : x, \varepsilon}
    (u^n) | \mathcal{F}_{T_L} \nobracket \right] \right) \mathbbm{1}_{\{ \|
    u^n \|_{\mathcal{L}} > c L \}} \right] \right|\\
    & \leqslant & \frac{2}{c} \| f \|_{\infty} \lim_{L \rightarrow \infty}
    \sup_n \frac{\mathbb{E} [\| u^n \|^2_{\mathcal{L}}]}{L^2} = 0.
  \end{eqnarray*}

  {\tmstrong{Step 2}} In this step we improve the approximation to have
  bounded support. Let $\mu_n \rightarrow \mu$ be the subsequence selected in
  Step 1.1. Recall that $\mu_n = \tmop{Law} (\mathbb{W}, u^n)$ with adapted
  $u^n$. Define $\tilde{Z}_t^{n, N} \assign \mathbb{E} \left[ Z_T \left(
  \mathrm{cut}_N (u) \right) \divides \mathcal{F}_t \right]$, and similarly to
  Step 1, $T_{n, L, N} \assign \inf \{t \geqslant 0 |\| \tilde{Z}^{n, N}_t
  \|_{L^4} \geqslant L\}$, set $u^{n, N, L} \assign \mathbb{E} \left[
  \mathrm{cut}_N (u) \divides \mathcal{F}_{T_{n, L, N}} \right]$. Then $\|
  u^{n, N, L} \|_{\mathcal{L}} \leqslant N$ uniformly in $n$ and
  $\mathbb{P}$-almost surely, so $\mu_{n, L, N} = \tmop{Law} (\mathbb{W},
  u^{n, N, L})$ is tight on $\mathfrak{S} \times \mathcal{L}_w$ \ and we can
  select a weakly convergent subsequence. Denote the limit by $\mu_{L, N}$.
  Now we follow the strategy from Step 1.
  
  {\tmstrong{Step 2.1}} (Bounds) Let $f_1^M$ be defined like in Step 1.2. Then
  again we have
  \begin{eqnarray*}
    \liminf_{n \rightarrow \infty} \mathbb{E}_{\mu_{n, L, N}} [f_1^M (\| u
    \|_{\mathcal{L}})] & = & \liminf_{n \rightarrow \infty}
    \mathbb{E}_{\mathbb{P}} [f_1^M \nobracket \| \mathbb{E} [\bar{u}^{n, N} |
    \mathcal{F}_{T_{n, L, N}} \nobracket] \|_{\mathcal{L}})]\\
    & \leqslant & \tmcolor{red}{\tmcolor{black}{\lim_{n \rightarrow \infty}
    \mathbb{E}_{\mathbb{P}} [f_1^M (\| \bar{u}^{n, N} \|_{\mathcal{L}})]}}\\
    & = & \mathbb{E}_{\mu_n} \left[ f_1^M \left( \left\| \mathrm{cut}_N (u)
    \right\|_{\mathcal{L}} \right) \right] \leqslant \mathbb{E}_{\mu} [f_1^M
    (\| u \|_{\mathcal{L}})] .
  \end{eqnarray*}
  It follows that
  \begin{eqnarray*}
    \mathbb{E}_{\mu_{L, N}} [\| u \|^2_{\mathcal{L}}] & = & \lim_{M
    \rightarrow \infty} \mathbb{E}_{\tilde{\mu}_{L, N}} [f_1^M (\| u
    \|_{\mathcal{L}})]\\
    & \leqslant & \lim_{M \rightarrow \infty} \liminf_{n \rightarrow \infty}
    \mathbb{E}_{\mu_{n, L, N}} [f_1^M (\| u \|_{\mathcal{L}})]\\
    & \leqslant & \lim_{M \rightarrow \infty} \mathbb{E}_{\mu} [f_1^M (\| u
    \|_{\mathcal{L}})] =\mathbb{E}_{\mu} [\| u \|^2_{\mathcal{L}}] .
  \end{eqnarray*}

  {\tmstrong{Step 2.1}} (Weak convergence) Now we prove that $\mu_{L, N}
  \rightarrow \tilde{\mu}_L$ weakly on $\mathcal{L}$. Let $f : \mathfrak{S}
  \times \mathcal{L} \rightarrow \mathbb{R}$ be bounded and continuous. By
  dominated convergence and continuity of $f$, 
  $$\lim_{\varepsilon}
  \mathbb{E}_{\tilde{\mu}_L} \left[ f \left( \mathbb{X}, \mathrm{reg}_{t : x,
  \varepsilon} (u) \right) \right] =\mathbb{E}_{\tilde{\mu}_L} [f (\mathbb{X},
  u)],$$
   and furthermore since $f \left( \mathbb{X}, \mathrm{reg}_{t : x,
  \varepsilon} (u) \right)$ is continuous on $\mathfrak{S} \times
  \mathcal{L}_w$ we have
  \begin{eqnarray*}
    &  & \lim_{N \rightarrow \infty} | \mathbb{E}_{\tilde{\mu}_L} [f
    (\mathbb{X}, u)] -\mathbb{E}_{\mu_{L, N}} [f (\mathbb{X}, u)] |\\
    & = & \lim_{N \rightarrow \infty} \lim_{\varepsilon \rightarrow 0} \left|
    \lim_{n \rightarrow \infty} \mathbb{E}_{\mu_{n, L}} \left[ f \left(
    \mathbb{X}, \mathrm{reg}_{t : x, \varepsilon} (u) \right) \right]
    -\mathbb{E}_{\tilde{\mu}_{n, L, N}} \left[ f \left( \mathbb{X},
    \mathrm{reg}_{t : x, \varepsilon} (u) \right) \right] \right|\\
    & = & \lim_{N \rightarrow \infty} \lim_{\varepsilon \rightarrow 0} \left|
    \lim_{n \rightarrow \infty} \mathbb{E}_{\mathbb{P}} \left[ f \left(
    \mathbb{W}, \mathbb{E} \left[ \mathrm{reg}_{t : x, \varepsilon} (u^n) |
    \mathcal{F}_{T_L} \nobracket \right] \right) - f \left( \mathbb{W},
    \mathbb{E} \left[ \mathrm{reg}_{t : x, \varepsilon} (\bar{u}^{n, N}) |
    \mathcal{F}_{T_{n, L, N}} \nobracket \right] \right) \right] \right|\\
    & = & \lim_{N \rightarrow \infty} \lim_{\varepsilon \rightarrow 0} 
    \left| \lim_{n \rightarrow \infty} \mathbb{E}_{\mathbb{P}} \left[ \left( f
    \left( \mathbb{W}, \mathbb{E} \left[ \mathrm{reg}_{t : x, \varepsilon}
    (u^n) | \mathcal{F}_{T_L} \nobracket \right] \right) - f \left(
    \mathbb{W}, \mathbb{E} \left[ \mathrm{reg}_{t : x, \varepsilon}
    (\bar{u}^{n, N}) | \mathcal{F}_{T_{n, L, N}} \nobracket \right] \right)
    \right) \mathbbm{1}_{\{ \tilde{T}_{n, N} < \infty \}} \right] \right|\\
    & \leqslant & \lim_{N \rightarrow \infty} \sup_{\varepsilon} \left|
    \sup_n \mathbb{E}_{\mathbb{P}} \left[ \left( f \left( \mathbb{W},
    \mathbb{E} \left[ \mathrm{reg}_{t : x, \varepsilon} (u^n) |
    \mathcal{F}_{T_L} \nobracket \right] \right) - f \left( \mathbb{W},
    \mathbb{E} \left[ \mathrm{reg}_{t : x, \varepsilon} (\bar{u}^{n, N}) |
    \mathcal{F}_{T_{n, L, N}} \nobracket \right] \right) \right)
    \mathbbm{1}_{\{ \nobracket \| u^n \|_{\mathcal{L}} \} > N \}} \right]
    \right|\\
    & \leqslant & \quad (\sup_{\mathfrak{S} \times \mathcal{L}} \divides f
    \divides) \lim_{N \rightarrow \infty} \sup_n \frac{\mathbb{E} [\| u^n
    \|^2_{\mathcal{L}}]}{N^2}\\
    & = & 0
  \end{eqnarray*}
  {\tmstrong{Step 3.}} We now put everything together. Since all $\mu_{L, N}$
  are supported on the set $\{ u : \| Z_T (u) \|_{L^4} \leqslant L \}$, weak
  convergence and Lemma~\ref{lemma-Zcompact} imply
  \[ \lim_{N \rightarrow \infty} \mathbb{E}_{\mu_{N, L}} [\| Z_T (u)
     \|^4_{L^4}] =\mathbb{E}_{\tilde{\mu}_L} [\| Z_T (u) \|^4_{L^4}] . \]
  By the Portmanteau lemma,
  \begin{equation}
    \liminf_{N \rightarrow \infty} \mathbb{E}_{\mu_{N, L}} [\| u
    \|_{\mathcal{L}}^2] \geqslant \mathbb{E}_{\tilde{\mu}_L} [\| u
    \|_{\mathcal{L}}^2], \label{liminfNL}
  \end{equation}
  and
  \[ \liminf_{L \rightarrow \infty} \mathbb{E}_{\tilde{\mu}_L} [\| u
     \|_{\mathcal{L}}^2] \geqslant \mathbb{E}_{\mu} [\| u \|_{\mathcal{L}}^2]
  \]
  which together with Step~1.2 imply $\lim_{L \rightarrow \infty}
  \mathbb{E}_{\tilde{\mu}_L} [\| u \|_{\mathcal{L}}^2] =\mathbb{E}_{\mu} [\| u
  \|_{\mathcal{L}}^2]$, and by the same argument $\lim_{L \rightarrow \infty}
  \mathbb{E}_{\tilde{\mu}_L} [\| Z_T (u) \|^4_{L^4}] =\mathbb{E}_{\mu} [\| Z_T
  (u) \|^4_{L^4}]$. For any $\delta > 0$ we can choose a $\tilde{\mu}_L$ such
  that
  \[ | \mathbb{E}_{\tilde{\mu}_L} [\| Z_T (u) \|^4_{L^4}] -\mathbb{E}_{\mu}
     [\| Z_T (u) \|^4_{L^4}] | + | \mathbb{E}_{\tilde{\mu}_L} [\| u
     \|_{\mathcal{L}}^2] -\mathbb{E}_{\mu} [\| u \|_{\mathcal{L}}^2] |
     \leqslant \delta . \]
  Since then by~{\eqref{liminfNL}}
  \[ \mathbb{E}_{\mu} [\| u \|^2_{\mathcal{L}}] \geqslant \liminf_{N
     \rightarrow \infty} \mathbb{E}_{\mu_{N, L}} [\| u \|_{\mathcal{L}}^2]
     \geqslant \mathbb{E}_{\mu} [\| u \|_{\mathcal{L}}^2] - \delta \]
  we can choose $N$ large enough so that
  \[ | \mathbb{E}_{\mu_{N, L}} [\| Z_T (u) \|^4_{L^4}] -\mathbb{E}_{\mu} [\|
     Z_T (u) \|^4_{L^4}] | + | \mathbb{E}_{\mu_{N, L}} [\| u
     \|_{\mathcal{L}}^2] -\mathbb{E}_{\mu} [\| u \|_{\mathcal{L}}^2] |
     \leqslant \delta \]
  which implies the statement of the theorem. 
\end{proof}

\begin{lemma}
  If $T < \infty$ we have\label{closurefinite}
  \[ \mathcal{W}_T (f) = \inf_{\mu \in \bar{\mathcal{X}}} \breve{F}_T (\mu) .
  \]
\end{lemma}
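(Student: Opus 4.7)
The easy inclusion $\inf_{\mu \in \bar{\mathcal{X}}} \breve{F}_T(\mu) \leqslant \mathcal{W}_T(f)$ follows immediately from $\mathcal{X} \subseteq \bar{\mathcal{X}}$ together with the already established identity $\mathcal{W}_T(f) = \inf_{\mu \in \mathcal{X}} \breve{F}_T(\mu)$ from eq.~\eqref{eq:functionalrestricted}. The content of the lemma is the reverse inequality, for which I plan to construct a \emph{recovery sequence} in $\mathcal{X}$ for every $\mu \in \bar{\mathcal{X}}$ with $\breve{F}_T(\mu) < \infty$ (when $\breve{F}_T(\mu) = +\infty$ there is nothing to prove).

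First I would exploit that $T$ is finite: the lower bound \eqref{eq:bound-u-finite-T} forces $\mathbb{E}_\mu[\|u\|^2_{\mathcal{H}}] < \infty$ as soon as $\breve{F}_T(\mu) < \infty$, and \eqref{eq:lower-bound-3d}--\eqref{eq:upper-bound-3d} combined with Lemma~\ref{lemma:boundcoercivity} give finite bounds on $\mathbb{E}_\mu[\|Z_T(u)\|^4_{L^4}]$, $\mathbb{E}_\mu[\|l^T(u)\|^2_{\mathcal{H}}]$, and $\mathbb{E}_\mu[\|u\|^2_{\mathcal{L}}]$. Next I would apply Lemma~\ref{lemma-boundedapprox} to approximate $\mu$ by measures $\mu_{L,N} \in \bar{\mathcal{X}}$ with $\|Z_T(u)\|_{L^4} \leqslant L$ and $\|u\|_{\mathcal{L}} \leqslant N$ almost surely, whose $L^4$ and $\mathcal{L}$ moments converge to those of $\mu$ as $L,N \to \infty$. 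For each such $\mu_{L,N}$, by definition of $\bar{\mathcal{X}}$ there is a sequence $\mu_{L,N,k} \in \mathcal{X}$ with $\mu_{L,N,k} \to \mu_{L,N}$ weakly on $\mathfrak{S} \times \mathcal{L}_w$, and after composing with the time-space mollification $\mathrm{reg}_{t:x,\varepsilon}$ of Lemma~\ref{strongconvergence} this can be upgraded to strong convergence on $\mathfrak{S} \times \mathcal{L}$.

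The core of the argument is then to verify $\breve{F}_T(\mu_{L,N,k}) \to \breve{F}_T(\mu_{L,N}) \to \breve{F}_T(\mu)$ for fixed finite $T$. Because $T$ is finite, every stochastic object entering $\Phi_T$ is evaluated at a bounded scale and has moments of every order, while the bounded $\mathcal{L}$-support of the approximating measures and the bound on $\|Z_T\|_{L^4}$ supply the uniform integrability needed for dominated convergence. The map $u \mapsto Z(u)$ is compact from $\mathcal{L}_w$ into $C([0,\infty],L^4)$ by Lemma~\ref{lemma-Zcompact}, so $Z_T(u)$ converges strongly. The derived quantities $K(u) = Z(u) + \lambda \mathbb{W}^{[3]}$ and $l^T(u) = u + \lambda \mathbb{1}_{t \leqslant T}(\mathbb{W}^{\langle 3\rangle} + J(\mathbb{W}^2 \succ Z^\flat(u)))$ are continuous once the convergence in $\mathcal{L}$ is strong, and the polynomial structure of $\Phi_T$ together with the multilinear estimates of Section~\ref{section:analytic} allow one to pass to the limit in the expectation. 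A diagonal extraction produces the sought recovery sequence $\mu_n \in \mathcal{X}$ with $\breve{F}_T(\mu_n) \to \breve{F}_T(\mu)$, which gives $\inf_{\mathcal{X}} \breve{F}_T \leqslant \breve{F}_T(\mu)$ for arbitrary $\mu \in \bar{\mathcal{X}}$ and concludes the proof.

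The main obstacle I anticipate is the convergence of the quadratic cost $\mathbb{E}_\mu[\|l^T(u)\|^2_{\mathcal{H}}]$: its quadratic dependence on $u$ means that weak convergence in $\mathcal{L}_w$ is insufficient, and one genuinely needs the strong convergence in $\mathcal{L}$ provided by the mollification step, together with the strong convergence of $Z^\flat(u)$ in order to handle the singular cross term $J(\mathbb{W}^2 \succ Z^\flat(u))$. The fact that this cross term is truncated to the finite interval $[0,T]$ is crucial, since the required averaged bounds on $\mathbb{W}^2 \succ Z^\flat$ become routine there while they would be delicate in the limit $T\to\infty$ treated in Section~\ref{sec:gamma-convergence}.
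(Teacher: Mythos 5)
Your proposal is correct and follows the same skeleton as the paper's proof: the easy inclusion from $\mathcal{X}\subseteq\overline{\mathcal{X}}$, the use of finiteness of $T$ to get $\mathbb{E}_{\mu}[\|u\|^2_{\mathcal{H}}]<\infty$ from the lower bound on $V_T$, the reduction to bounded-support measures via Lemma~\ref{lemma-boundedapprox}, the mollifications $\mathrm{reg}_{x,\varepsilon}$ and $\mathrm{reg}_{t:x,\delta}$ to upgrade to strong convergence on $\mathfrak{S}\times\mathcal{H}$, and a final diagonal extraction. The one place where you genuinely diverge is the continuity step. The paper does \emph{not} pass to the limit in the decomposed functional $\Phi_T+\lambda\|Z_T\|^4_{L^4}+\tfrac12\|l^T(u)\|^2_{\mathcal{H}}$ term by term; instead it uses that for finite $T$ one can rewrite $\breve{F}_T(\mu)=\mathbb{E}_{\mu}[\tfrac{1}{|\Lambda|}V_T(\mathbb{X}^1_T+Z_T(u))+\tfrac12\|u\|^2_{\mathcal{H}}]$, i.e.\ it undoes the change of variables of Lemma~\ref{lemma:change-of-variables}. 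This buys a substantial simplification: $V_T$ depends on $u$ only through $Z_T(u)$, which is weak-to-strong continuous into $L^4$ by Lemma~\ref{lemma-Zcompact}, and the cost is the plain $\|u\|^2_{\mathcal{H}}$, so the delicate cross term $J(\mathbb{W}^2\succ Z^{\flat}(u))$ inside $l^T(u)$ — the very term you flag as the main obstacle — never has to be touched. One only needs a cutoff $\chi(\|\mathbb{X}\|_{\mathfrak{S}}/N)$ and the moment bound \eqref{eq:upper-bound-3d} to control the tails. Your route, staying with the renormalized form and invoking the multilinear estimates of Section~\ref{section:analytic} for each $\Upsilon^{(i)}$ and for $\|l^T(u)\|^2_{\mathcal{H}}$, is viable (note that for the quadratic cost you need strong convergence in $\mathcal{H}$, not merely in $\mathcal{L}$, which the time-space mollification does supply), but it is longer and essentially reproduces the work that the paper reserves for the genuinely harder $T=\infty$ case, where the $V_T$ representation is unavailable and the $\mathrm{rem}_{\varepsilon}$ construction of Lemma~\ref{lemma:remreg} becomes necessary. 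Indeed the paper's remark that the proof of Lemma~\ref{closurefinite} ``does not apply when $T=\infty$'' refers precisely to the reliance on the $V_T$ representation that your argument avoids.
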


\begin{proof}
  To prove the claim it is enough to show that for any $\mu \in
  \overline{\mathcal{X}}$, for any $\alpha > 0$, there exists a sequence
  $\mu_n \in \mathcal{X}$ such that $\limsup_{n \rightarrow \infty}
  \breve{F}_T (\mu_n) \leqslant \breve{F}_T (\mu) + \alpha$. W.l.o.g we can
  assume that $\breve{F}_T (\mu) < \infty$. Observe that, as long as $T <
  \infty$ we can also express
  \[ \breve{F}_T (\mu) =\mathbb{E}_{\mu} \left[ \frac{1}{| \Lambda |} V_T
     (\mathbb{X}^1_T + Z_T (u)) + \tmcolor{red}{\tmcolor{black}{\frac{1}{2}}}
     \| u \|^2_{\mathcal{H}} \right], \]
  and deduce that $\mathbb{E}_{\mu} \| u \|_{\mathcal{H}}^2 < \infty$ since
  $V_T$ is bounded below at fixed $T$. By Lemma~\ref{lemma-boundedapprox}
  there exists a sequence $(\mu_L)_L \subset \overline{\mathcal{X}}$ , such
  that $\mu_L$, $\| u \|_{\mathcal{L}} \leqslant L$ almost surely, $\mu_L
  \rightarrow \mu$ on $\mathfrak{S} \times \mathcal{L}$ and
  \[ \mathbb{E}_{\mu_L} [\| Z_T (u) \|^4_{L^4}] \rightarrow
     \mathbb{E}_{\mu_{}} [\| Z_T (u) \|^4_{L^4}], \qquad \mathbb{E}_{\mu_L}
     [\| u \|^2_{\mathcal{L}}] \rightarrow \mathbb{E}_{\mu} [\| u
     \|^2_{\mathcal{L}}] . \]
  First we have to improve the regularity of $\mu_L$ to get convergence on
  $\mathfrak{S} \times \mathcal{H}_w$ but without affecting our control on the
  moments of $Z_T$, so let $\mu^{\varepsilon}_L \assign \left(
  \mathrm{reg}_{x, \varepsilon} \right)_{\ast} \mu_L$ and $\mu^{\varepsilon}
  \assign \left( \mathrm{reg}_{x, \varepsilon} \right)_{\ast} \mu$. Then
  \[ \mathbb{E}_{\mu^{\varepsilon}_L} [\| Z_T (u) \|^4_{L^4}] \rightarrow
     E_{\mu^{\varepsilon}_{}} [\| Z_T (u) \|^4_{L^4}], \qquad
     \mathbb{E}_{\mu^{\varepsilon}_L} [\| u \|^2_{\mathcal{H}}] \rightarrow
     E_{\mu^{\varepsilon}} [\| u \|^2_{\mathcal{H}}], \]
  and $\mu^{\varepsilon}_L \rightarrow \mu^{\varepsilon}$ on $\mathfrak{S}
  \times \mathcal{H}$. By continuity of $\breve{F}_T$ and the
  bound~{\eqref{eq:upper-bound-3d}}, $\breve{F}_T (\mu^{\varepsilon}_L)
  \rightarrow \breve{F}_T (\mu^{\varepsilon})$ as $L \rightarrow \infty$ and
  $\breve{F}_T (\mu^{\varepsilon}) \rightarrow \breve{F}_T (\mu)$ as
  $\varepsilon \rightarrow 0$. In particular we can find $L$ and $\varepsilon$
  such that $| \breve{F}_T (\mu^{\varepsilon}_L) - \breve{F}_T (\mu) | <
  \alpha / 2$. By Lemma~\ref{lemma-boundedapprox} there exists a sequence
  $(\mu_{n, L})_{n, L}$ such that each measure $\mu_{n, L}$ is supported on
  $\mathfrak{S} \times B (0, L)$ and $\mu_{n, L} \rightarrow \mu_L$ weakly on
  $\mathfrak{S} \times \mathcal{H}_w$. Setting $\mu^{\varepsilon, \delta}_{n,
  L} \assign \left( \mathrm{reg}_{t ; x, \delta} \right)_{\ast} \left(
  \mathrm{reg}_{x, \varepsilon} \right)_{\ast} \mu_{n, L}$ and
  $\mu^{\varepsilon, \delta}_L \assign \left( \mathrm{reg}_{t ; x, \delta}
  \right)_{\ast} \left( \mathrm{reg}_{x, \varepsilon} \right)_{\ast} \mu_L$ we
  have $\mu^{\varepsilon, \delta}_{n, L} \rightarrow \mu^{\varepsilon,
  \delta}_L$ on $\mathfrak{S} \times \mathcal{H}$ with norm topology. Then,
  for some $\chi \in C (\mathbb{R}, \mathbb{R}), \chi = 1 \tmop{on} B (0, 1)$
  supported on $B (0, 2)$ and for any $N \in \mathbb{N}$, $V_T (\mathbb{X}^1_T
  + Z_T (u)) \chi (\| \mathbb{X} \|_{\mathfrak{S}} / N), \| u
  \|^2_{\mathcal{H}}$ are continuous bounded functions on the common support
  of $\mu^{\varepsilon, \delta}_{n, L}$ and
  \begin{eqnarray*}
    &  & \lim_{n \rightarrow \infty} \left| \mathbb{E}_{\mu^{\varepsilon,
    \delta}_{n, L}} \left[ \frac{1}{| \Lambda |} V_T (\mathbb{X}^1_T + Z_T
    (u)) + \tmcolor{red}{\tmcolor{black}{\frac{1}{2}}} \| u \|^2_{\mathcal{H}}
    \right] \right. -\\
    &  & \qquad \qquad \left. -\mathbb{E}_{\mu^{\varepsilon, \delta}_L}
    \left[ \frac{1}{| \Lambda |} V_T (\mathbb{X}^1_T + Z_T (u)) +
    \tmcolor{red}{\tmcolor{black}{\frac{1}{2}}} \| u \|^2_{\mathcal{H}}
    \right] \right|\\
    & \leqslant & \lim_{n \rightarrow \infty} \left|
    \mathbb{E}_{\mu^{\varepsilon, \delta}_{n, L}} \left[ \frac{1}{| \Lambda |}
    \chi (\| \mathbb{X} \|_{\mathfrak{S}} / N) V_T (\mathbb{X}^1_T + Z_T (u))
    + \tmcolor{red}{\tmcolor{black}{\frac{1}{2}}} \| u \|^2_{\mathcal{H}}
    \right] - \right.\\
    &  & \qquad \qquad \left. -\mathbb{E}_{\mu^{\varepsilon, \delta}_L}
    \left[ \frac{1}{| \Lambda |} \chi (\| \mathbb{X} \|_{\mathfrak{S}} / N)
    V_T (\mathbb{X}^1_T + Z_T (u)) +
    \tmcolor{red}{\tmcolor{black}{\frac{1}{2}}} \| u \|^2_{\mathcal{H}}
    \right] \right|\\
    &  & + \sup_n \left| \mathbb{E}_{\mu^{\varepsilon, \delta}_{n, L}} \left[
    \frac{1}{| \Lambda |} (1 - \chi) (\| \mathbb{X} \|_{\mathfrak{S}} / N) V_T
    (\mathbb{X}^1_T + Z_T (u)) \right] \right|\\
    &  & + \sup_n \left| \mathbb{E}_{\mu^{\varepsilon, \delta}_L} \left[
    \frac{1}{| \Lambda |} (1 - \chi) (\| \mathbb{X} \|_{\mathfrak{S}} / N) V_T
    (\mathbb{X}^1_T + Z_T (u)) \right] \right|\\
    & \leqslant & \sup_n | \mathbb{E}_{\mu^{\varepsilon, \delta}_{n, L}}
    [\mathbbm{1}_{\{ \| \mathbb{X} \|_{\mathfrak{S}} \geqslant N \}} V_T
    (\mathbb{X}^1_T + Z_T (u))] |\\
    &  & + | \mathbb{E}_{\mu^{\varepsilon, \delta}_L} [\mathbbm{1}_{\{ \|
    \mathbb{X} \|_{\mathfrak{S}} \geqslant N \}} V_T (\mathbb{X}^1_T + Z_T
    (u))] |\\
    \text{(by {\eqref{eq:upper-bound-3d}})} & \lesssim & 2 \sup_n |
    (\mu^{\varepsilon, \delta}_{n, L} (\| \mathbb{X} \|_{\mathfrak{S}}
    \geqslant N)) \mathbb{E}_{\mu^{\varepsilon, \delta}_{n, L}} [\| \mathbb{X}
    \|^p_{\mathfrak{S}} + L^8] |\\
    & \lesssim & 2 \frac{\mathbb{E}_{\mu^{\varepsilon, \delta}_{n, L}} [\|
    \mathbb{X} \|_{\mathfrak{S}}]}{N} \mathbb{E}_{\mu^{\varepsilon,
    \delta}_{n, L}} [\| \mathbb{X} \|^p_{\mathfrak{S}} + L^8]\\
    \text{(as $N \rightarrow \infty$)} & \rightarrow & 0.
  \end{eqnarray*}
  and by dominated convergence (since $\mu^{\varepsilon, \delta}_L$ is
  supported on $\mathfrak{S} \times B (0, L)$) we can find a $\delta$ such
  that $| \breve{F}_T (\mu^{\varepsilon, \delta}_L) - \breve{F}_T
  (\mu^{\varepsilon}_L) | < \alpha / 2$ which proves the statement. \ 
\end{proof}

The proof of Lemma~\ref{closurefinite} does not apply when $T = \infty$.
However also in this case the functional
\begin{equation}
  \breve{F}_{\infty} (\mu) \assign \mathbb{E}_{\mu} \left[ \Phi_{\infty}
  (\mathbb{X}, Z (u), K (u)) + \lambda \| Z_{\infty} (u) \|_{L^4}^4 +
  \frac{1}{2} \| l^{\infty} (u) \|^2_{\mathcal{H}} \right], \label{eq:f-infty}
\end{equation}
has a well defined meaning, so we can investigate the relation between the two
variational problems (on $\mathcal{X}$ and on $\bar{\mathcal{X}}$) when the
cutoff is removed. An additional difficulty derives from the fact that
approximating the drift $u$ we might destroy the regularity of $l^{\infty}
(u)$, since now $l^{\infty} (u)$ needs to be more regular than $u$, contrary
to the finite $T$ case. To resolve this problem we need to be able to smooth
out the remainder without destroying the bound on $Z_T (u)$. To do so
smoothing $l^{\infty} (u)$ directly, and constructing a corresponding new $u$
will not work, since $l^{\infty} (u)$ by itself does not give enough control
on $u$ and $Z (u)$. However we are still able to prove the following lemma by
regularizing an ``augmented'' version of $l^{\infty} (u)$.

\begin{lemma}
  \label{lemma:remreg}There exists a family of continuous functions
  $\mathrm{\mathrm{rem}}_{\varepsilon} : \mathcal{L} \mapsto \mathcal{L}$,
  which are also continuous $\mathcal{L}_w \rightarrow \mathcal{L}_w$, such
  that for any $T \in [0, \infty]$,
  \begin{eqnarray*}
    \left\| \mathrm{rem}_{\varepsilon} (u) \right\|_{\mathcal{L}} & \lesssim &
    \| \mathbb{X} \|_{\mathfrak{S}} + \| u \|_{\mathcal{L}},\\
    \left\| Z_T \left( \mathrm{rem}_{\varepsilon} (u) \right) \right\|_{L^4} &
    \lesssim & \| \mathbb{X} \|_{\mathfrak{S}} + \| Z_T (u) \|_{L^4},\\
    \left\| l^{\infty} \left( \mathrm{rem}_{\varepsilon} (\mathbb{X}, u)
    \right) \right\|^2_{\mathcal{H}} & \lesssim_{\varepsilon} & (1 + \|
    \mathbb{X} \|_{\mathfrak{S}})^4 + \| Z_{\infty} (u) \|^4_{L^4} + \| u
    \|^2_{\mathcal{L}},
  \end{eqnarray*}
  and $\left\| l^{\infty} \left( \mathrm{rem}_{\varepsilon} (\mathbb{X}, u)
  \right) \right\|_{\mathcal{H}}$ depends continuously on $(\mathbb{X}, u) \in
  \mathfrak{S} \times \mathcal{L}$. Furthermore
  \[ \mathrm{rem}_{\varepsilon} (\mathbb{X}, u) \rightarrow u \text{\text{in
     \ensuremath{\mathcal{L}}}}, \]
  and if $l^{\infty} (u) \in \mathcal{H}$
  \[ l^{\infty} (\tmop{rem}_{\varepsilon} (\mathbb{X}, u)) \rightarrow
     l^{\infty} (u)  \text{\text{in} $H$} \text{ as } \varepsilon \rightarrow
     0. \]
\end{lemma}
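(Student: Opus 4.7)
The plan is to build $\mathrm{rem}_\varepsilon$ as a simple affine modification of $u$ which simultaneously regularizes the singular piece of $l^\infty$ (the $\mathbb{W}^{\langle 3 \rangle}$ term) and commutes with $Z = I$. Concretely, using only the purely spatial mollifier $\mathrm{reg}_{x,\varepsilon}$ from the definition above, I set
\[
\mathrm{rem}_\varepsilon(\mathbb{X}, u) \assign \mathrm{reg}_{x,\varepsilon}(u) - \lambda \bigl[ \mathbb{W}^{\langle 3 \rangle} - \mathrm{reg}_{x,\varepsilon}(\mathbb{W}^{\langle 3 \rangle}) \bigr].
\]
This is the ``augmented'' regularization alluded to just before the lemma: since $l^\infty(u) = u + \lambda \mathbb{W}^{\langle 3 \rangle} + \lambda J(\mathbb{W}^2 \succ Z^\flat(u))$, the object whose mollification is actually required is not $u$ alone but rather the combination $u + \lambda \mathbb{W}^{\langle 3 \rangle}$; the subtraction above ensures precisely that $\mathrm{reg}_{x,\varepsilon}(u + \lambda \mathbb{W}^{\langle 3 \rangle})$ appears naturally inside $l^\infty(\mathrm{rem}_\varepsilon(u))$.

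Granting this ansatz, the three quantitative bounds follow directly. The $\mathcal{L}$-bound is immediate from uniform $L^p$-boundedness of $\mathrm{reg}_{x,\varepsilon}$ together with $\|\mathbb{W}^{\langle 3 \rangle}\|_\mathcal{L} \lesssim \|\mathbb{X}\|_\mathfrak{S}$. For $Z_T$, the key point is that $\mathrm{reg}_{x,\varepsilon}$ is a spatial Fourier multiplier and therefore commutes with $J_s = \langle \mathD\rangle^{-1}\sigma_s(\mathD)$, so
\[
Z_T(\mathrm{rem}_\varepsilon(u)) = \mathrm{reg}_{x,\varepsilon}(Z_T(u)) - \lambda \bigl[ \mathbb{W}_T^{[3]} - \mathrm{reg}_{x,\varepsilon}(\mathbb{W}_T^{[3]}) \bigr],
\]
and the $L^4$-bound is immediate. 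For $l^\infty$, a direct algebraic computation yields
\[
l^\infty(\mathrm{rem}_\varepsilon(u)) = \mathrm{reg}_{x,\varepsilon}\bigl( u + \lambda \mathbb{W}^{\langle 3 \rangle} \bigr) + \lambda J\bigl( \mathbb{W}^2 \succ Z^\flat(\mathrm{rem}_\varepsilon(u)) \bigr).
\]
The first summand lies in $\mathcal{H}$ with norm $\lesssim_\varepsilon \|u\|_\mathcal{L} + \|\mathbb{X}\|_\mathfrak{S}$ by Bernstein's inequality at the fixed regularization scale $\varepsilon$; the second is controlled by the paraproduct and multiplier bounds entirely analogous to those used for $l^T(\check{u})$ in the proof of Lemma~\ref{lemma:bounds}, which, after a Young inequality, produce the claimed dependence $(1 + \|\mathbb{X}\|_\mathfrak{S})^4 + \|Z_\infty(u)\|^4_{L^4}$.

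Continuity of $\mathrm{rem}_\varepsilon$ on $\mathcal{L}$ in both the norm and the weak topology is automatic since the map is affine with bounded linear part $\mathrm{reg}_{x,\varepsilon}$, and continuity of $(\mathbb{X}, u) \mapsto \|l^\infty(\mathrm{rem}_\varepsilon(\mathbb{X}, u))\|_\mathcal{H}$ reduces to continuity of the paraproduct at the regularities recorded in Table~\ref{table:reg}. The convergence $\mathrm{rem}_\varepsilon(\mathbb{X}, u) \to u$ in $\mathcal{L}$ is the standard fact that $\mathrm{reg}_{x,\varepsilon}$ approximates the identity on $\mathcal{L}$. When additionally $l^\infty(u) \in \mathcal{H}$, I decompose
\[
l^\infty(\mathrm{rem}_\varepsilon(u)) - l^\infty(u) = \bigl[ \mathrm{reg}_{x,\varepsilon}(l^\infty(u)) - l^\infty(u) \bigr] + \lambda \bigl[ J(\mathbb{W}^2 \succ Z^\flat(\mathrm{rem}_\varepsilon(u))) - \mathrm{reg}_{x,\varepsilon}( J(\mathbb{W}^2 \succ Z^\flat(u))) \bigr],
\]
where the first bracket vanishes in $\mathcal{H}$ by density of smooth functions and the second by continuity of the paraproduct map $u \mapsto J(\mathbb{W}^2 \succ Z^\flat(u))$ composed with $\mathrm{rem}_\varepsilon(u) \to u$ in $\mathcal{L}$.

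The main conceptual obstacle, and the reason the naive scheme fails, is that $l^\infty$ is not coercive in $Z(u)$: smoothing $l^\infty(u)$ and inverting back to a drift would destroy quantitative control on $\|Z_T(u)\|_{L^4}$, which is exactly what is needed in the variational analysis. The augmented construction above avoids this pitfall by exploiting the explicit affine structure of $l^\infty$ in $u$: the drift is changed only by a harmless $\mathbb{W}^{\langle 3 \rangle}$-correction, so both $u$ and $Z(u)$ remain quantitatively controlled, while the singular part of $l^\infty$ is nevertheless mollified.
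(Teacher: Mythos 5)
Your first two estimates and the underlying algebra are fine, but the third estimate --- the one the lemma is really about --- does not follow from your construction, and I believe it is false for your choice of $\mathrm{rem}_{\varepsilon}$. After your substitution you are left with the term $\lambda J(\mathbb{W}^2 \succ Z^{\flat}(\mathrm{rem}_{\varepsilon}(u)))$ and you claim it is controlled in $\mathcal{H}$ ``entirely analogously'' to the proof of Lemma~\ref{lemma:bounds}. But in that proof the drift $\check{u}$ is built as the solution of an integral equation involving the rough part $\mathcal{U}_{>}\mathbb{W}^2$ precisely so that only $\mathcal{U}_{\leqslant}\mathbb{W}^2$ (which has regularity $\VV^{-1+\delta/2}$, strictly better than $-1$) survives in $l^T(\check{u})$; the full paraproduct with $\mathbb{W}^2 \in \VV^{-1-\kappa}$ never has to be placed in $\mathcal{H}$. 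For a generic drift $v$ with $Z(v)$ merely bounded in $L^4$, one has $\|J_t(\mathbb{W}^2_t \succ Z^{\flat}_t(v))\|_{L^2} \lesssim \langle t\rangle^{-1/2+\kappa}\,\|\mathbb{W}^2_t\|_{\VV^{-1-\kappa}}\|Z^{\flat}_t(v)\|_{L^4}$ (the symbol of $J_t$ lives on the annulus $|\xi|\sim t$ and is of size $\langle t\rangle^{-3/2}$ there, while the paraproduct inherits the regularity $-1-\kappa$ of $\mathbb{W}^2$ regardless of how smooth $Z^{\flat}$ is, since $\mathbb{W}^2$ carries the high frequencies in $\mathbb{W}^2\succ Z^{\flat}$), and $\int_0^{\infty}\langle t\rangle^{-1+2\kappa}\,\mathd t = \infty$. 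Spatial mollification of $u$ cannot repair this: the divergence comes from the roughness of $\mathbb{W}^2$ at frequencies $\sim t$, not from $u$. The same defect breaks your final convergence claim: even when $l^{\infty}(u)\in\mathcal{H}$, the individual piece $J(\mathbb{W}^2\succ Z^{\flat}(u))$ need not lie in $\mathcal{H}$ (only the full sum does), so the second bracket in your last decomposition is a difference of two objects that are not separately in $\mathcal{H}$.

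You correctly sensed that one must augment $l^{\infty}$ before mollifying, but you augmented by the wrong term: $\lambda\mathbb{W}^{\langle 3\rangle}\in L^2(\mathbb{R}_+,\VV^{-1/2-\kappa})$ is the harmless piece, while the genuinely singular piece is the paraproduct. The paper instead splits $\mathbb{X}^2=\mathcal{U}_{\leqslant}\mathbb{X}^2+\mathcal{U}_{>}\mathbb{X}^2$ with $\|\mathcal{U}_{>}\mathbb{X}^2\|_{\VV^{-1-\kappa}}$ small and $\mathcal{U}_{\leqslant}\mathbb{X}^2\in\VV^{-1+\kappa}$, sets $\tilde{l}(u)=u+\lambda\mathbb{W}^{\langle 3\rangle}+\lambda J(\mathcal{U}_{>}\mathbb{X}^2\succ Z^{\flat}(u))\in\mathcal{L}$, and defines $\mathrm{rem}_{\varepsilon}(u)$ as the solution of the fixed-point equation in which $\tilde{l}(u)$ is replaced by $\mathrm{reg}_{x,\varepsilon}(\tilde{l}(u))$. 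The smallness of $\mathcal{U}_{>}\mathbb{X}^2$ gives Gronwall control of $u$ and $Z(u)$ by $\tilde{l}(u)$ (and conversely), and in $l^{\infty}(\mathrm{rem}_{\varepsilon}(u))$ only $\mathcal{U}_{\leqslant}\mathbb{X}^2$ appears in a paraproduct, which does land in $\mathcal{H}$ with the stated $(1+\|\mathbb{X}\|_{\mathfrak{S}})^4+\|Z_{\infty}(u)\|_{L^4}^4$ bound. Your argument is missing this decomposition and the implicit (fixed-point) definition of the new drift; without them the central estimate fails.
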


\begin{proof}
  \ Let $\mathbb{X}^2 = \mathcal{U}_{\leqslant} \mathbb{X}^2 +
  \mathcal{U}_{>} \mathbb{X}^2$ be the decomposition introduced in
  Section~\ref{sec:bounds}, and observe that for any $c > 0$ we can easily
  modify it to ensure that $\| \mathcal{U}_{>} \mathbb{X}^2
  \|_{\mathcal{\VV}^{- 1 - \kappa}} < c$, almost surely for any $\mu \in
  \overline{\mathcal{X}}$ and for any $1 \leqslant p < \infty$,
  $\mathbb{E}_{\mu} \left[ \| \mathcal{U}_{\leqslant} \mathbb{X}^2
  \|^p_{\mathcal{\VV}^{- 1 + \kappa}} \right] \leqslant C$ where $C$ depends
  on $| \Lambda |, \kappa, c, p$. Now set $\tilde{l}_t (u) = - \lambda J_s
  (\mathcal{U}_{\leqslant} \mathbb{X}_t^2 \succ Z_t^{\flat} (u)) + l^{\infty}
  (u)$. Then $u$ satisfies
  \[ u_s = - \lambda \mathbb{X}_s^{\langle 3 \rangle} - \lambda J_s
     (\mathcal{U}_{>} \mathbb{X}_s^2 \succ Z^{\flat}_s) + \tilde{l}_s (u) . \]
  From this equation we can see that, like in Section~\ref{sec:bounds},
  \[ \| u \|^2_{\mathcal{L}} \lesssim \lambda \| \mathbb{W}^{\langle 3
     \rangle} \|^2_{\mathcal{L}} + \lambda \int^{\infty}_0 \frac{1}{\langle s
     \rangle^{1 + \varepsilon}} \| \mathcal{U}_{>} \mathbb{X}_s^2
     \|^2_{\mathcal{\VV}^{- 1 - \kappa}} \mathd s \| u \|^2_{\mathcal{L}} +_{}
     \| \tilde{l}_s (u) \|^2_{\mathcal{L}}, \]
  and choosing, $c$ small enough we get
  \begin{equation}
    \| u \|_{\mathcal{L}} \lesssim \lambda \| \mathbb{X}^{\langle 3 \rangle}
    \|_{\mathcal{L}} + \| \tilde{l}_s (u) \|_{\mathcal{L}} . \label{bound-u-l}
  \end{equation}
  And similarly we observe that
  \[ Z_T (u) = - \lambda \mathbb{X}_T^{[3]} - \lambda \int^T_0 J^2_s
     (\mathcal{U}_{>} \mathbb{X}_s^2 \succ Z^{\flat}_s) \mathd s + Z_T
     (\tilde{l} (u)), \]
  so again with $c$ small enough and since $Z^{\flat}_s = \theta_s Z_T$ for $s
  \leqslant T$:
  \begin{equation}
    \| Z_T (u) \|_{L^4} \lesssim \lambda \| \mathbb{X}_T^{[3]} \|_{L^4} + \|
    Z_T (\tilde{l} (u)) \|_{L^4} . \label{bound-Z-u-l}
  \end{equation}
  Conversely, it is not hard to see that we have the inequalities
  \begin{eqnarray}
    \| Z_T (\tilde{l} (u)) \|_{L^4} & \lesssim & \lambda \| \mathbb{X}_T^{[3]}
    \|_{L^4} + \| Z_T (u) \|_{L^4}, \label{bound-Z-l-u} 
  \end{eqnarray}
  and
  \begin{equation}
    \| \tilde{l} (u) \|_{\mathcal{L}} \lesssim \lambda \| \mathbb{X}^{\langle
    3 \rangle} \|_{\mathcal{L}} + \| u \|_{\mathcal{L}} . \label{bound-l-u}
  \end{equation}
  Clearly the map $(\mathbb{X}, u) \mapsto (\mathbb{X}, \tilde{l} (u))$ is
  continuous as a map \ $\mathfrak{S} \times \mathcal{L} \rightarrow
  \mathcal{L}$ and using Lemma~\ref{lemma-Zcompact} also as a map
  $\mathfrak{S} \times \mathcal{L}_w \rightarrow \mathfrak{S} \times
  \mathcal{L}_w$ , and the inverse is clearly continuous $\mathfrak{S} \times
  \mathcal{L} \rightarrow \mathfrak{S} \times \mathcal{L}$. We now show that
  it is also continuous as a map $\mathfrak{S} \times \mathcal{L}_w
  \rightarrow \mathfrak{S} \times \mathcal{L}_w$. Assume that $\tilde{l} (u^n)
  \rightarrow l (u)$ weakly, since then $\| l (u^n) \|_{\mathcal{L}}$ bounded,
  this implies by~{\eqref{bound-u-l}} that also $\| u^n \|_{\mathcal{L}}$ is
  bounded, and so we can select a weakly convergent subsequence, converging to
  $u^{\star}$. Then $u^{\star}$ solves the equation
  \[ u^{\star}_s = - \lambda \mathbb{X}_s^{\langle 3 \rangle} - \lambda J_s
     (\mathcal{U}_{>} \mathbb{X}_s^2 \succ Z^{\flat}_s (u^{\star})) +
     \tilde{l}_s (u), \]
  (which can be seen for example by testing with some $h \in
  \mathcal{L}^{\ast}$) which implies that $u^{\star} = u$ (e.g. by Gronwall).
  Now define $\tmop{rem}_{\varepsilon} (u)$ to be the solution to the equation
  \[ \tmop{rem}_{\varepsilon} (u) = - \lambda \mathbb{X}_s^{\langle 3
     \rangle} - \lambda J_s (\mathcal{U}_{>} \mathbb{X}_s^2 \succ Z^{\flat}_s
     (\tmop{rem}_{\varepsilon} (u))) + \mathrm{reg}_{x, \varepsilon}
     (\tilde{l}_s (u)) . \]
  Then by the properties discussed above $u \mapsto \tmop{rem}_{\varepsilon}
  (u)$ is continuous in both the weak and the norm topology, we also have
  from~{\eqref{bound-u-l}} and~{\eqref{bound-l-u}} that
  \begin{eqnarray*}
    \| \tmop{rem}_{\varepsilon} (u) \|_{\mathcal{L}} & \lesssim & \lambda \|
    \mathbb{X}^{\langle 3 \rangle} \|_{\mathcal{L}} + \| u \|_{\mathcal{L}},
  \end{eqnarray*}
  from~{\eqref{bound-Z-u-l}} we have
  \[ \| Z_T  (\tmop{rem}_{\varepsilon} (u)) \|_{L^4} \lesssim \lambda \|
     \mathbb{X}_T^{[3]} \|_{L^4} + \| Z_T (u) \|_{L^4}, \label{bound-Z-reg} \]
  and by definition of $\tmop{rem}_{\varepsilon} (u)$
  \begin{eqnarray}
    \| \tilde{l} (\tmop{rem}_{\varepsilon} (u)) \|_{\mathcal{H}} & = & \left\|
    \mathrm{reg}_{x, \varepsilon} (\tilde{l} (u)) \right\|_{\mathcal{H}}
    \nonumber\\
    & \lesssim_{\varepsilon} & \lambda \| \mathbb{X}^{\langle 3 \rangle}
    \|_{\mathcal{L}} + \| u \|_{\mathcal{L}} \label{bound-l-u-reg} 
  \end{eqnarray}
  Now observe that
  \begin{eqnarray*}
    \| l^{\infty} (\tmop{rem}_{\varepsilon} (u)) \|^2_{\mathcal{H}} & \lesssim
    & \| s \mapsto \lambda J_s (\mathcal{U}_{\leqslant} \mathbb{X}_s^2 \succ
    Z_s^{\flat} (\tmop{rem}_{\varepsilon} (u))) \|_{\mathcal{H}} + \|
    \tilde{l} (\tmop{rem}_{\varepsilon} (u)) \|^2_{\mathcal{H}}\\
    & \lesssim_{\varepsilon} & \lambda \int \frac{1}{\langle s \rangle^{1 +
    \kappa}} \| \mathcal{U}_{\leqslant} \mathbb{X}_s^2 \|^2_{\mathcal{\VV}^{-
    1 + \kappa}} \| Z^{\flat}_s (\tmop{rem}_{\varepsilon} (u)) \|^2_{L^4}
    \mathd s + \lambda \| \mathbb{X}^{\langle 3 \rangle} \|^2_{\mathcal{L}} +
    \| u \|^2_{\mathcal{L}}\\
    & \lesssim & \lambda (1 + \| \mathbb{X} \|_{\mathfrak{S}})^4 + \|
    Z_{\infty} (\tmop{rem}_{\varepsilon} (u)) \|^4_{L^4} + \| u
    \|^2_{\mathcal{L}}\\
    & \lesssim & \lambda (1 + \| \mathbb{X} \|_{\mathfrak{S}})^4 + \|
    Z_{\infty} (u) \|^4_{L^4} + \| u \|^2_{\mathcal{L}} .
  \end{eqnarray*}
  Observing that also $\| \lambda J_s (\mathcal{U}_{\leqslant} \mathbb{X}_t^2
  \succ Z_t^{\flat} (\tmop{rem}_{\varepsilon} (u))) \|_{\mathcal{H}}$ depends
  continuously on $(\mathbb{X}, u)$ (both in the weak and strong topology on
  $\mathcal{L}$) gives the statement.
\end{proof}

\begin{lemma}
  For any $\mu \in \overline{\mathcal{X}}$ such that $\breve{F}_{\infty} (\mu)
  < \infty$ there exists a sequence of measures $\mu_L \in
  \overline{\mathcal{X}}$ such that \label{infty-approximation}
  \begin{enumerateroman}
    \item For any $p < \infty$,
    \begin{equation}
      \mathbb{E}_{\mu_L} [\| u \|^p_{\mathcal{L}}] +\mathbb{E}_{\mu_L} [\|
      l^{\infty} (u) \|^p_{\mathcal{H}}] < \infty, \label{inftymomentbound}
    \end{equation}
    \item $\mu_L \rightarrow \mu$ weakly on $\mathfrak{S} \times \mathcal{L}$
    and $\tmop{Law}_{\mu_L} (l^{\infty} (u)) \rightarrow \tmop{Law}_{\mu}
    (l^{\infty} (u))$ weakly on $\mathcal{H}$,
    
    \item
    \[ \lim_{L \rightarrow \infty} \breve{F}_{\infty} (\mu_L) =
       \breve{F}_{\infty} (\mu), \]
    \item For any $\mu_L$ there exists a sequence $\mu_{n, L} \in \mathcal{X}$
    such that
    \begin{equation}
      \sup_n (\mathbb{E}_{\mu_{n, L}} [\| u \|^p_{\mathcal{L}}]
      +\mathbb{E}_{\mu_{n, L}} [\| l^{\infty} (u) \|^p_{\mathcal{H}}]) <
      \infty, \label{inftymomentbound2}
    \end{equation}
    $\mu_{n, L} \rightarrow \mu_L$ weakly on $\mathfrak{S} \times
    \mathcal{L}_w$ and $\tmop{Law}_{\mu_{n, L}} (l^{\infty} (u)) \rightarrow
    \tmop{Law}_{\mu} (l^{\infty} (u))$ weakly on $\mathcal{H}_w$.
  \end{enumerateroman}
\end{lemma}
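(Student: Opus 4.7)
The plan is a two-stage approximation. First, the hypothesis $\breve{F}_\infty(\mu) < \infty$ together with the $T=\infty$ analog of the lower bound~\eqref{eq:lower-bound-3d} (the analytic estimates of Section~\ref{section:analytic} are all uniform in $T$) yields
\[
\mathbb{E}_\mu[\|Z_\infty(u)\|_{L^4}^4] + \mathbb{E}_\mu[\|l^\infty(u)\|_{\mathcal{H}}^2] + \mathbb{E}_\mu[\|u\|_{\mathcal{L}}^2] < \infty,
\]
where the bound on $\|u\|_{\mathcal{L}}^2$ follows by applying the first estimate of Lemma~\ref{lemma:remreg} under $\mu$. Lemma~\ref{lemma-boundedapprox} then applies at $T=\infty$ (by running the martingale stopping construction of its Step~1 on $\tilde Z^n_s = \mathbb{E}[Z_\infty(u^n)\mid \mathcal{F}_s]$) and produces $\tilde\mu_L \in \overline{\mathcal{X}}$ with $\|u\|_{\mathcal{L}} \leq L$ $\tilde\mu_L$-almost surely, $\tilde\mu_L \to \mu$ weakly on $\mathfrak{S}\times\mathcal{L}$, and associated approximants $\tilde\mu_{n,L}\in\mathcal{X}$ obeying the same a.s.\ bound and converging to $\tilde\mu_L$ weakly on $\mathfrak{S}\times\mathcal{L}_w$. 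Define
\[
\mu_L := (\mathrm{rem}_{\varepsilon_L})_\ast \tilde\mu_L, \qquad \mu_{n,L} := (\mathrm{rem}_{\varepsilon_L})_\ast \tilde\mu_{n,L},
\]
for a sequence $\varepsilon_L \to 0$ to be chosen by a diagonal extraction below.

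Property~(i) follows directly: the a.s.\ bound $\|u\|_{\mathcal{L}} \leq L$ under $\tilde\mu_L$ combined with the first estimate of Lemma~\ref{lemma:remreg} bounds $\|u\|_{\mathcal{L}}$ under $\mu_L$ by $\|\mathbb{X}\|_{\mathfrak{S}} + L$, while the third estimate of the same lemma controls $\|l^\infty(u)\|^2_{\mathcal{H}}$ by a polynomial expression in $\|\mathbb{X}\|_{\mathfrak{S}}$, $\|Z_\infty(u)\|_{L^4}$ and $\|u\|_{\mathcal{L}}$. Since $\|\mathbb{X}\|_{\mathfrak{S}}$ has all moments by Lemma~\ref{lemma:stoch-reg} and $\|Z_\infty(u)\|_{L^4}$ is a.s.\ bounded via the second estimate, this gives~\eqref{inftymomentbound}; the same argument applied to $\tilde\mu_{n,L}$ yields~\eqref{inftymomentbound2} uniformly in~$n$.

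For~(ii), $\mathrm{rem}_\varepsilon(u)\to u$ strongly in $\mathcal{L}$ as $\varepsilon\to 0$ (last assertion of Lemma~\ref{lemma:remreg}), and $\tilde\mu_L \to \mu$ weakly on $\mathfrak{S}\times\mathcal{L}$, so a diagonal extraction of $\varepsilon_L\to 0$ delivers $\mu_L \to \mu$ weakly on $\mathfrak{S}\times\mathcal{L}$. Because $\breve{F}_\infty(\mu)<\infty$ forces $l^\infty(u)\in\mathcal{H}$ $\mu$-a.s., the same lemma gives $l^\infty(\mathrm{rem}_{\varepsilon_L}(u))\to l^\infty(u)$ in $\mathcal{H}$, whence $\mathrm{Law}_{\mu_L}(l^\infty(u))\to\mathrm{Law}_\mu(l^\infty(u))$ on $\mathcal{H}$. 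For~(iii), I pass to the limit term by term in the decomposition of $\Phi_\infty$ from Lemma~\ref{pointwiseconv}: strong convergence of $Z(u)$ in $C([0,\infty],L^4)$ via Lemma~\ref{lemma-Zcompact}, strong convergence of $K(u)$ in $C([0,\infty], H^{1-\kappa})$ coming from the convergence of $l^\infty(u)$ in $\mathcal{H}$, and the multilinear structure in $(\mathbb{X},Z,K)$ handle the pointwise limits; expectations then pass to the limit by dominated convergence, with uniform integrability supplied by the moment bounds of~(i) together with the analytic estimates of Section~\ref{section:analytic}.

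For~(iv), continuity of $\mathrm{rem}_{\varepsilon_L}$ as a map $\mathcal{L}_w\to\mathcal{L}_w$ (Lemma~\ref{lemma:remreg}) transfers the weak convergence $\tilde\mu_{n,L}\to\tilde\mu_L$ on $\mathfrak{S}\times\mathcal{L}_w$ to $\mu_{n,L}\to\mu_L$ on the same space; adaptedness is preserved by the construction of $\mathrm{rem}_\varepsilon$, so $\mu_{n,L}\in\mathcal{X}$. Weak convergence $\mathrm{Law}_{\mu_{n,L}}(l^\infty(u))\to\mathrm{Law}_{\mu_L}(l^\infty(u))$ on $\mathcal{H}_w$ follows from~\eqref{inftymomentbound2} (which yields tightness of the images on $\mathcal{H}_w$) combined with continuity of $l^\infty\circ\mathrm{rem}_{\varepsilon_L}$ as a map $\mathcal{L}_w\to \mathcal{H}_w$ on bounded sets, again from Lemma~\ref{lemma:remreg}. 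The main obstacle is~(iii): controlling the genuinely nonlinear cross-terms of $\Phi_\infty$ demands uniform integrability, and this is precisely why one passes through $\mathrm{rem}_{\varepsilon_L}$ rather than the spatial mollifier $\mathrm{reg}_{x,\varepsilon}$ alone, as only the former upgrades the $L^2$ integrability of $\|l^\infty(u)\|_{\mathcal{H}}$ to higher moments while preserving the structural bound on $Z$.
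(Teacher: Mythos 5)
Your proposal is correct and follows essentially the same route as the paper: first invoke Lemma~\ref{lemma-boundedapprox} to obtain approximations with $\|u\|_{\mathcal{L}}$ bounded almost surely, then push forward by $\mathrm{rem}_{\varepsilon}$ and use the bounds of Lemma~\ref{lemma:remreg} to secure the moment estimates, continuity of $\breve{F}_{\infty}$ for point (iii), and weak continuity of $\mathrm{rem}_{\varepsilon}$ for point (iv). The only cosmetic difference is your preliminary verification of $\mathbb{E}_{\mu}[\|u\|^2_{\mathcal{L}}]<\infty$ (which already holds by the definition of $\overline{\mathcal{X}}$, or via Lemma~\ref{lemma:boundcoercivity}, rather than by the first estimate of Lemma~\ref{lemma:remreg} as you cite) and your diagonal choice of $\varepsilon_L$ in place of the paper's explicit selection of $\varepsilon$ and $\tilde{L}$.
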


\begin{proof}
  By Lemma~\ref{lemma-boundedapprox} there exists a sequence $\mu_{\tilde{L}}
  \rightarrow \mu$ weakly on $\mathfrak{S} \times \mathcal{L}$ such that
  \[ \mathbb{E}_{\mu_{\tilde{L}}} [\| Z_T (u) \|^4_{L^4}] \rightarrow
     \mathbb{E}_{\mu_{}} [\| Z_T (u) \|^4_{L^4}], \qquad
     \mathbb{E}_{\mu_{\tilde{L}}} [\| u \|^2_{\mathcal{L}}] \rightarrow
     \mathbb{E}_{\mu} [\| u \|^2_{\mathcal{L}}], \]
  and $\mu_{\tilde{L}}$ is supported on $\mathfrak{S} \times B (0, \tilde{L})
  \subset \mathfrak{S} \times \mathcal{L}$. Now set
  $\mu^{\varepsilon}_{\tilde{L}} \assign (\tmop{rem}_{\varepsilon})_{\ast}
  \mu_{\tilde{L}}$. Then $\mu^{\varepsilon}_{\tilde{L}} \rightarrow
  \mu^{\varepsilon} \assign (\tmop{rem}_{\varepsilon})_{\ast} \mu$ on
  $\mathfrak{S} \times \mathcal{L}$ and by the bounds from Lemma
  \ref{lemma:remreg} also $\mathbb{E}_{\mu^{\varepsilon}_{\tilde{L}}} [\| Z_T
  (u) \|^4_{L^4}] \rightarrow \mathbb{E}_{\mu^{\varepsilon}_{}} [\| Z_T (u)
  \|^4_{L^4}]$ and $\mathbb{E}_{\mu^{\varepsilon}_{\tilde{L}}} [\| l^{\infty}
  (u) \|^2_{\mathcal{H}}] \rightarrow \mathbb{E}_{\mu^{\varepsilon}} [\|
  l^{\infty} (u) \|^2_{\mathcal{H}}]$. The bounds from
  Lemma~\ref{lemma:remreg} imply also $\mathbb{E}_{\mu^{\varepsilon}} [\| Z_T
  (u) \|^4_{L^4}] \rightarrow \mathbb{E}_{\mu_{}} [\| Z_T (u) \|^4_{L^4}]$,
  $\mathbb{E}_{\mu^{\varepsilon}} [\| l^{\infty} (u) \|^2_{\mathcal{H}}]
  \rightarrow \mathbb{E}_{\mu} [\| l^{\infty} (u) \|^2_{\mathcal{H}}]$, and
  furthermore
  \begin{eqnarray*}
    \mathbb{E}_{\mu^{\varepsilon}_{\tilde{L}}} [\| u \|^p_{\mathcal{L}}] &
    \lesssim & \mathbb{E}_{\mu_{\tilde{L}}} (\| \mathbb{X} \|^p_{\mathfrak{S}}
    + \| u \|^p_{\mathcal{L}})\\
    & \lesssim & \mathbb{E}_{\mu_{\tilde{L}}} (\| \mathbb{X}
    \|^p_{\mathfrak{S}}) + \tilde{L}^p,
  \end{eqnarray*}
  and similarly
  \begin{eqnarray*}
    \mathbb{E}_{\mu^{\varepsilon}_{\tilde{L}}} [\| l^{\infty} (u)
    \|^p_{\mathcal{L}}] & \lesssim_{\varepsilon} &
    \mathbb{E}_{\mu_{\tilde{L}}} (\| \mathbb{X} \|^p_{\mathfrak{S}} + \| u
    \|^p_{\mathcal{L}})\\
    & \lesssim & \mathbb{E}_{\mu_{\tilde{L}}} (\| \mathbb{X}
    \|^p_{\mathfrak{S}}) + \tilde{L}^p,
  \end{eqnarray*}
  and by continuity of $\breve{F}_{\infty}$ and~{\eqref{eq:upper-bound-3d}} we
  are also able to deduce that we can find $\varepsilon$ small enough and
  $\tilde{L}$ large enough depending on $\varepsilon$ such that $|
  \breve{F}_{\infty} (\mu^{\varepsilon}) - \breve{F}_{\infty} (\mu) | < 1 / 2
  L$ and $| \breve{F}_{\infty} (\mu_L^{\varepsilon}) - \breve{F}_{\infty}
  (\mu^{\varepsilon}) | < 1 / 2 L$. Choosing $\mu_L =
  \mu^{\varepsilon}_{\tilde{L}}$ we obtain the first three points of the
  Lemma. For the fourth point recall that from Lemma~\ref{lemma-boundedapprox}
  we have sequences $\mu_{n, \tilde{L}} \rightarrow \mu_{\tilde{L}}$ weakly on
  $\mathfrak{S} \times \mathcal{L}_w$, and $\mu_{n, \tilde{L}} \in
  \mathcal{X}$, which have support in $\mathfrak{S} \times B (0, \tilde{L})$
  and since $\tmop{rem}_{\varepsilon}$ is continuous on $\mathfrak{S} \times
  \mathcal{L}_w$ setting $\mu^{\varepsilon}_{n, \tilde{L}} \assign \left(
  \mathrm{reg}_{\varepsilon} \right)_{\ast} \mu_{n, \tilde{L}}$ we obtain the
  desired sequence. 
\end{proof}

\begin{lemma}
  If $T = \infty$ we have
  \[ \inf_{\mu \in \mathcal{X}} \breve{F}_{\infty} (\mu) = \inf_{\mu \in
     \bar{\mathcal{X}}} \breve{F}_{\infty} (\mu) . \]
\end{lemma}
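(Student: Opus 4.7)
The plan is to mirror the proof of Lemma~\ref{closurefinite}, substituting the richer approximation result of Lemma~\ref{infty-approximation} for Lemma~\ref{lemma-boundedapprox}. The inclusion $\mathcal{X} \subset \bar{\mathcal{X}}$ immediately gives $\inf_{\mathcal{X}} \breve{F}_{\infty} \geq \inf_{\bar{\mathcal{X}}} \breve{F}_{\infty}$. For the reverse inequality, fix $\mu \in \bar{\mathcal{X}}$ with $\breve{F}_{\infty}(\mu) < \infty$ (otherwise nothing to show) and $\alpha > 0$; the goal is to produce $\nu \in \mathcal{X}$ with $\breve{F}_{\infty}(\nu) \leq \breve{F}_{\infty}(\mu) + \alpha$.

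First I would apply Lemma~\ref{infty-approximation}(i)--(iii) to obtain a sequence $(\mu_L)_L \subset \bar{\mathcal{X}}$ enjoying the moment bounds $\mathbb{E}_{\mu_L}[\|u\|^p_{\mathcal{L}} + \|l^{\infty}(u)\|^p_{\mathcal{H}}]<\infty$ for every $p<\infty$ and with $\breve{F}_{\infty}(\mu_L) \to \breve{F}_{\infty}(\mu)$, and then pick $L$ with $|\breve{F}_{\infty}(\mu_L) - \breve{F}_{\infty}(\mu)| < \alpha/2$. Next, applying Lemma~\ref{infty-approximation}(iv) to this $\mu_L$ produces $\mu_{n,L} \in \mathcal{X}$ with $\mu_{n,L} \to \mu_L$ weakly on $\mathfrak{S} \times \mathcal{L}_w$ and $\tmop{Law}_{\mu_{n,L}}(l^{\infty}(u)) \to \tmop{Law}_{\mu_L}(l^{\infty}(u))$ weakly on $\mathcal{H}_w$, together with uniform $L^p$ bounds on $\|u\|_{\mathcal{L}}$ and $\|l^{\infty}(u)\|_{\mathcal{H}}$ for every $p<\infty$. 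Setting $\nu := \mu_{n,L}$ for suitable $n$ will then give the claim, provided I can prove $\breve{F}_{\infty}(\mu_{n,L}) \to \breve{F}_{\infty}(\mu_L)$ as $n\to \infty$.

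For this convergence I would follow the pattern of Lemma~\ref{pointwiseconv} but at the level of the expectations. Compactness of $u \mapsto Z(u)$ from $\mathcal{L}_w$ to $C([0,\infty], L^4)$ (Lemma~\ref{lemma-Zcompact}) upgrades weak convergence of $u^{n,L}$ to strong convergence $Z(u^{n,L}) \to Z(u^L)$ in $C([0,\infty], L^4)$. Combined with Lemma~\ref{lemma:impr-reg-drift} and a Rellich-type embedding, weak convergence of $l^{\infty}(u^{n,L})$ in $\mathcal{H}_w$ yields strong convergence $K(u^{n,L}) \to K(u^L)$ in $C([0,\infty], H^{1-\kappa})$. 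Each $\Upsilon^{(i)}_{\infty}$ is multilinear in the arguments $(\mathbb{X}, Z(u), K(u), l^{\infty}(u))$ and, once combined with the uniform $L^p$ moment bounds from Lemma~\ref{infty-approximation}(iv), uniform integrability permits passing to the limit under the expectation. The same applies to $\mathbb{E}[\lambda \|Z_{\infty}(u)\|^4_{L^4}]$ thanks to the strong convergence of $Z$ and uniform fourth moment control.

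The main obstacle is the quadratic cost $\mathbb{E}[\|l^{\infty}(u)\|^2_{\mathcal{H}}]$, which is only weakly lower semicontinuous and so automatically yields just $\liminf_n \mathbb{E}_{\mu_{n,L}}[\|l^{\infty}\|^2_{\mathcal{H}}] \geq \mathbb{E}_{\mu_L}[\|l^{\infty}\|^2_{\mathcal{H}}]$, i.e.\ the wrong direction for an upper bound. To upgrade to genuine convergence I would exploit the decomposition
\[
l^{\infty}(u) = u + \lambda \mathbb{W}^{\langle 3 \rangle} + \lambda J(\mathbb{W}^2 \succ Z^{\flat}(u)),
\]
in which, given the strong convergence of $Z(u^{n,L})$, the last two summands converge strongly in $\mathcal{H}$; the cross-terms then pass to the limit by weak-strong pairing and the squared norm of the strongly-convergent part passes outright. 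For the residual $\|u^{n,L}\|^2_{\mathcal{H}}$-type contribution one uses that the concrete construction of $\mu_{n,L}$ in Step~2 of Lemma~\ref{lemma-boundedapprox} realizes $u^{n,L}$ as a conditional expectation of a truncation of a drift generating $\mu_L$, and conditional expectations are $L^2$-contractive; this furnishes the matching upper bound $\limsup_n \mathbb{E}_{\mu_{n,L}}[\|u\|^2_{\mathcal{H}}] \leq \mathbb{E}_{\mu_L}[\|u\|^2_{\mathcal{H}}]$ needed to close the argument. A standard diagonal extraction in $L$ and $n$ then yields the desired $\nu \in \mathcal{X}$ and completes the proof.
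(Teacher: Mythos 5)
Your overall architecture matches the paper's: reduce to $\mu\in\overline{\mathcal{X}}$ with $\breve{F}_{\infty}(\mu)<\infty$, invoke Lemma~\ref{infty-approximation} to get $\mu_L$ and then $\mu_{n,L}\in\mathcal{X}$ with uniform moments, and close by showing the functional converges along the approximating sequence. The gap is in how you handle the quadratic cost. You propose to expand $\|l^{\infty}(u)\|_{\mathcal{H}}^2$ via the decomposition $l^{\infty}(u)=u+\lambda\mathbb{W}^{\langle 3\rangle}+\lambda J(\mathbb{W}^2\succ Z^{\flat}(u))$, treating the last two summands as strongly convergent in $\mathcal{H}$ and the residual $\|u\|^2_{\mathcal{H}}$ by $L^2$-contractivity of conditional expectation. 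This does not work: none of the three summands lies in $\mathcal{H}$ individually. By Table~\ref{table:reg}, $\mathbb{W}^{\langle 3\rangle}_t$ has regularity $\VV^{-1/2-\kappa}$, so $\|\mathbb{W}^{\langle 3\rangle}\|_{\mathcal{H}}=\infty$ a.s.; likewise $J_t(\mathbb{W}^2_t\succ Z^{\flat}_t)$ has regularity $-\kappa$ and is only in $\mathcal{H}$ after the $\mathcal{U}_{\leqslant}/\mathcal{U}_{>}$ splitting of Section~\ref{sec:bounds}; and drifts in $\mathbb{L}_a$ satisfy only $u\in\mathcal{L}$, so $\mathbb{E}\|u\|^2_{\mathcal{H}}$ is generically infinite. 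The termwise expansion of the squared $\mathcal{H}$-norm is therefore a sum of infinities, and the cross-term/weak--strong pairing argument has no meaning. (In addition, the contraction you invoke from Step~2 of Lemma~\ref{lemma-boundedapprox} is in $\mathcal{L}$, not $\mathcal{H}$, and conditional expectation does not commute with $u\mapsto l^{\infty}(u)$ since $\mathbb{W}^2$ is a random adapted field; finally, your weak--strong pairing is a pathwise argument while the convergences at hand are in law.)

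This is exactly the difficulty the paper flags before Lemma~\ref{lemma:remreg}: at $T=\infty$ the remainder $l^{\infty}(u)$ is \emph{more} regular than $u$, so neither smoothing $u$ nor smoothing $l^{\infty}(u)$ directly preserves both sides. The paper's resolution is the map $\mathrm{rem}_{\varepsilon}$, defined as the solution of a fixed-point equation driven by a mollification of the augmented remainder $\tilde{l}(u)=-\lambda J(\mathcal{U}_{\leqslant}\mathbb{X}^2\succ Z^{\flat}(u))+l^{\infty}(u)$. After pushing the approximating measures forward by $\mathrm{rem}_{\varepsilon}\circ\mathrm{reg}_{t:x,\delta}$, the quantity $\|l^{\infty}(\mathrm{rem}_{\varepsilon}(\mathbb{X},u))\|_{\mathcal{H}}$ becomes a \emph{continuous} function of $(\mathbb{X},u)$ on $\mathfrak{S}\times\mathcal{L}$ with polynomial bounds, so the whole integrand can be truncated by $\widetilde{\chi}_N$, the truncated part converges by weak convergence of measures, and the tails are killed by Chebyshev using the uniform moments of Lemma~\ref{infty-approximation}; one then removes $\varepsilon,\delta$ using the continuity of $\breve{F}_{\infty}$ and the two-sided bound~\eqref{eq:upper-bound-3d}. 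You need this extra regularization layer (or an equivalent device rendering $\|l^{\infty}(\cdot)\|^2_{\mathcal{H}}$ continuous along the approximation) to close the argument.
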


\begin{proof}
  One can now proceed very similarly to the proof of
  Lemma~\ref{closurefinite}. Let $\mu \in \overline{\mathcal{X}}$ such that
  $\breve{F}_{\infty} (\mu) < \infty$. By Lemma~\ref{infty-approximation}, for
  any $L, \mu \in \bar{\mathcal{X}}$, there exists a $\mu_L$ such that $|
  \breve{F}_{\infty} (\mu) - \breve{F}_{\infty} (\mu_L) | < 1 / L$, and a
  sequence $(\mu_{n, L})_n$ such that $\mu_{n, L} \in \mathcal{X}$, $\mu_{n,
  L} \rightarrow \mu_L$ weakly on $\mathfrak{S} \times \mathcal{L}_w$, and
  such that~{\eqref{inftymomentbound2}} is satisfied. Define
  $\mu^{\varepsilon, \delta}_{n, L} \assign \tmop{Law} \left( \mathbb{X},
  \tmop{rem}_{\varepsilon} \left( \mathrm{reg}_{t : x, \varepsilon} (u)
  \right) \right)$, and observe that now $\mu^{\varepsilon, \delta}_{n, L}
  \rightarrow \mu^{\varepsilon, \delta}_L$ on $\mathfrak{S} \times
  \mathcal{L}$, $\tmop{Law}_{\mu^{\varepsilon, \delta}_{n, L}} (\mathbb{X},
  l^{\infty} (u)) \rightarrow \tmop{Law}_{\mu^{\varepsilon, \delta}_L}
  (\mathbb{X}, l^{\infty} (u))$ on $\mathfrak{S} \times \mathcal{H}$, and that
  we have $\sup_n (\mathbb{E}_{\mu^{\varepsilon, \delta}_{n, L}} [\| u
  \|^p_{\mathcal{L}}] +\mathbb{E}_{\mu^{\varepsilon, \delta}_{n, L}} [\|
  l^{\infty} (u) \|^p_{\mathcal{H}}]) < \infty$. Then for some $\chi \in C
  (\mathbb{R}, \mathbb{R}), \chi = 1 \tmop{on} B (0, 1)$ supported on $B (0,
  2)$, for any $N \in \mathbb{N}$, the function
  \begin{eqnarray*}
    &  & \chi \left( \frac{\| \mathbb{X} \|_{\mathfrak{S}} + \| u
    \|_{\mathcal{L}} + \| l^{\infty} (u) \|_{\mathcal{H}}}{N} \right) \left(
    \Phi_{\infty} (\mathbb{X}, Z (u), K (u)) + \lambda \| Z_{\infty} (u)
    \|_{L^4}^4 + \frac{1}{2} \| l^{\infty} (u) \|^2_{\mathcal{H}} \right)\\
    & = & \widetilde{\chi_{}}_N (\mathbb{X}, u) \left( \Phi_{\infty}
    (\mathbb{X}, Z (u), K (u)) + \lambda \| Z_{\infty} (u) \|_{L^4}^4 +
    \frac{1}{2} \| l^{\infty} (u) \|^2_{\mathcal{H}} \right)
  \end{eqnarray*}
  is bounded and continuous on $\mathfrak{S} \times \mathcal{L}$, and so by
  weak convergence
  \begin{eqnarray*}
    &  & \lim_{n \rightarrow \infty} | \breve{F}_{\infty} (\mu^{\varepsilon,
    \delta}_{n, L}) - \breve{F}_{\infty} (\mu^{\varepsilon, \delta}_L) |\\
    & \leqslant & \lim_{n \rightarrow \infty} \left|
    \mathbb{E}_{\mu^{\varepsilon, \delta}_{n, L}} \left[ \widetilde{\chi_{}}_N
    (\mathbb{X}, u) \left( \Phi_{\infty} (\mathbb{X}, Z (u), K (u)) + \lambda
    \| Z_{\infty} (u) \|_{L^4}^4 + \frac{1}{2} \| l^{\infty} (u)
    \|^2_{\mathcal{H}} \right) \right] - \right.\\
    &  & \qquad \left. -\mathbb{E}_{\mu^{\varepsilon, \delta}_L} \left[
    \widetilde{\chi_{}}_N (\mathbb{X}, u) \left( \Phi_{\infty} (\mathbb{X}, Z
    (u), K (u)) + \lambda \| Z_{\infty} (u) \|_{L^4}^4 + \frac{1}{2} \|
    l^{\infty} (u) \|^2_{\mathcal{H}} \right) \right] \right|\\
    &  & + \sup_n \mathbb{E}_{\mu^{\varepsilon, \delta}_{n, L}} \left[ \left|
    (1 - \widetilde{\chi_{}}_N (\mathbb{X}, u)) \left( \Phi_{\infty}
    (\mathbb{X}, Z (u), K (u)) + \lambda \| Z_{\infty} (u) \|_{L^4}^4 +
    \frac{1}{2} \| l^{\infty} (u) \|^2_{\mathcal{H}} \right) \right| \right]\\
    &  & +\mathbb{E}_{\mu^{\varepsilon, \delta}_L} \left[ \left| (1 -
    \widetilde{\chi_{}}_N (\mathbb{X}, u)) \left( \Phi_{\infty} (\mathbb{X}, Z
    (u), K (u)) + \lambda \| Z_{\infty} (u) \|_{L^4}^4 + \frac{1}{2} \|
    l^{\infty} (u) \|^2_{\mathcal{H}} \right) \right| \right]\\
    & \leqslant & 2 \sup_n \mathbb{E}_{\mu^{\varepsilon, \delta}_{n, L}}
    \left[ \mathbbm{1}_{\{ \| \mathbb{X} \|_{\mathfrak{S}} + \| u
    \|_{\mathcal{L}} + \| l^{\infty} (u) \|_{\mathcal{H}} > N \}} \left|
    \Phi_{\infty} (\mathbb{X}, Z (u), K (u)) + \lambda \| Z_{\infty} (u)
    \|_{L^4}^4 + \frac{1}{2} \| l^{\infty} (u) \|^2_{\mathcal{H}} \right|
    \right]\\
    & \lesssim & \sup_n \left( \mu^{\varepsilon, \delta}_{n, L} (\|
    \mathbb{X} \|_{\mathfrak{S}} + \| u \|_{\mathcal{L}} + \| l^{\infty} (u)
    \|_{\mathcal{H}} > N) \mathbb{E}_{\mu^{\varepsilon, \delta}_{n, L}} [\|
    \mathbb{X} \|^p_{\mathfrak{S}} + \| u \|^8_{\mathcal{L}} + \| l^{\infty}
    (u) \|^4_{\mathcal{H}}] \right)\\
    & \lesssim & \sup_n \left( \frac{1}{N} \mathbb{E}_{\mu^{\varepsilon,
    \delta}_{n, L}} [\| \mathbb{X} \|_{\mathfrak{S}} + \| u \|_{\mathcal{L}} +
    \| l^{\infty} (u) \|_{\mathcal{H}}] \mathbb{E}_{\mu^{\varepsilon,
    \delta}_{n, L}} [\| \mathbb{X} \|^p_{\mathfrak{S}} + \| u
    \|^8_{\mathcal{L}} + \| l^{\infty} (u) \|^4_{\mathcal{H}}] \right)\\
    & \rightarrow & 0 \text{as $N \rightarrow \infty$}
  \end{eqnarray*}
  As we can find $\varepsilon, \delta$ such that $| \breve{F}_{\infty}
  (\mu^{\varepsilon, \delta}_L) - \breve{F}_{\infty} (\mu_L) | < 1 / L$ we can
  conclude. 
\end{proof}

Finally we can state the key result of this section.

\begin{theorem}
  \label{th:gamma-lim}The family $(\breve{F}_T)_T$ $\Gamma$--converges to
  $\breve{F}_{\infty}$ on $\overline{\mathcal{X}}$. Therefore
  \[ \lim_T \mathcal{W}_T (f) = \lim_T \inf_{\mu \in \bar{\mathcal{X}}}
     \breve{F}_T (\mu) = \inf_{\mu \in \bar{\mathcal{X}}} \breve{F}_{\infty}
     (\mu) =\mathcal{W} (f) . \]
\end{theorem}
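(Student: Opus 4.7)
The plan is to verify the two halves of Definition~\ref{Gammaconv} on $\overline{\mathcal{X}}$ and then combine $\Gamma$-convergence with the equicoercivity already proved in the preceding corollary, invoking Theorem~\ref{fundamentallemma}; the identification with $\lim_T \mathcal{W}_T(f)$ comes from Lemma~\ref{closurefinite} (finite $T$) and the last lemma (the $T=\infty$ case).

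For the $\Gamma$-liminf I would take a sequence $\mu_T \to \mu$ in $\overline{\mathcal{X}}$ and, after throwing away $T$'s for which $\breve{F}_T(\mu_T)=+\infty$, use the uniform lower bound~\eqref{eq:lower-bound-3d} to extract
\[
\sup_T \mathbb{E}_{\mu_T}\bigl[\lambda\|Z_T(u)\|_{L^4}^4+\tfrac12\|l^T(u)\|_{\mathcal H}^2\bigr]<\infty,
\]
which via Lemma~\ref{lemma:boundcoercivity} controls $\sup_T \mathbb{E}_{\mu_T}\|u\|_{\mathcal L}^2$. By Skorokhod representation along a subsequence I realize $(\mathbb{X}^T,u^T)$ on a common probability space converging a.s.\ in $\mathfrak{S}\times\mathcal{L}_w$ to $(\mathbb{X},u)$; Lemma~\ref{lemma-Zcompact} upgrades this to $Z(u^T)\to Z(u)$ in $C([0,\infty],L^4)$. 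The uniform $\mathcal{H}$-bound lets me extract a further subsequence with $l^T(u^T)\rightharpoonup \ell$ in $\mathcal{H}$, and the identity~\eqref{eq:full-ansatz} together with the convergence of the paracontrolled structure in $\mathfrak{S}$ and the formula~\eqref{eq:full-ansatz-infty} identifies $\ell=l^\infty(u)$ almost surely. Then Lemma~\ref{pointwiseconv} gives pointwise convergence of $\Phi_T$, while $u\mapsto \lambda\|Z_T(u)\|_{L^4}^4$ and $u\mapsto \tfrac12\|l^T(u)\|_{\mathcal H}^2$ are convex and lower semicontinuous on $\mathcal{L}_w$, so Fatou's lemma applied to the positive/bounded-below pieces (using the splitting of~\eqref{eq:lower-bound-3d} to make the non-positive part of $\Phi_T$ uniformly integrable) yields $\breve{F}_\infty(\mu)\leq\liminf_T\breve{F}_T(\mu_T)$.

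For the $\Gamma$-limsup I would first reduce, via the approximation results, to measures $\mu$ enjoying strong moment bounds: for finite $T$ via Lemma~\ref{lemma-boundedapprox}, for $T=\infty$ via Lemma~\ref{infty-approximation}, so that $\|u\|_{\mathcal{L}}$, $\|l^\infty(u)\|_{\mathcal{H}}$ and $\|\mathbb{X}\|_{\mathfrak{S}}$ have all desired moments. For such a $\mu$ the constant recovery sequence $\mu_T\equiv\mu$ does the job: indeed, comparing~\eqref{eq:full-ansatz} with~\eqref{eq:full-ansatz-infty}, $l^T(u)\to l^\infty(u)$ and $Z_T(u)\to Z_\infty(u)$ pointwise as the cutoff $\mathbb{1}_{t\leq T}$ recedes, and the definition of $\Phi_T$ in Lemma~\ref{lemma:change-of-variables} matches $\Phi_\infty$ of Lemma~\ref{pointwiseconv} up to terms $\Upsilon^{(2)}_T$ and the pieces of $\Upsilon^{(5)}_T$ that were already shown to vanish in the limit. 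A uniform $L^p$ domination (coming from~\eqref{eq:upper-bound-3d} together with the moment bounds on $\mu$) then lets me pass to the limit by dominated convergence; a diagonal argument across the $L$-approximation recovers the general $\mu\in\overline{\mathcal{X}}$.

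The main obstacle I expect is the liminf step, specifically the identification $\ell=l^\infty(u)$: since $u^T$ converges only weakly in $\mathcal{L}$, the paraproduct term $J_t(\mathbb{W}_t^{T,2}\succ\theta_t Z_t(u^T))$ pairs a factor of merely weakly convergent regularity against a distribution in $\mathfrak{S}$, so one has to exploit the strong convergence of $Z(u^T)$ (Lemma~\ref{lemma-Zcompact}) in $C([0,\infty],L^4)$ together with the strong convergence of $\mathbb{W}^{T,2}$ in $C\mathscr{C}^{-1-}$ to pass to the limit in the paraproduct. The secondary difficulty, managing the non-positive cross-terms inside $\Phi_T$ to ensure uniform integrability and apply Fatou properly, is handled by the estimates collected in Section~\ref{section:analytic} which underlie~\eqref{eq:lower-bound-3d}--\eqref{eq:upper-bound-3d}.
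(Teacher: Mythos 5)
Your overall architecture (equicoercivity plus $\Gamma$-convergence plus the two closure lemmas, Skorokhod representation and Fatou for the liminf) is the paper's, and the $\Gamma$-liminf half of your argument is essentially correct as written: the paper likewise extracts a subsequence with $\sup_T\breve F_T(\mu^T)<\infty$, realizes the convergence almost surely, upgrades $Z(u^T)$ via Lemma~\ref{lemma-Zcompact}, passes $l^T\to l^\infty(u)$ weakly in $\mathcal H$, and applies Fatou after adding an $L^1$ random variable $H(\tilde{\mathbb X}^T)$ to make the integrand nonnegative.

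The recovery sequence is where there is a genuine gap. You propose the constant sequence $\mu_T\equiv\mu$ after reducing to measures with good moments via Lemma~\ref{infty-approximation}. But for $t>T$ the definition~\eqref{eq:full-ansatz} gives $l^T_t(u)=u_t$, so $\|l^T(u)\|^2_{\mathcal H}\geqslant\int_T^\infty\|u_s\|^2_{L^2}\,\mathd s$, and a generic admissible drift is \emph{not} in $\mathcal H$: even after the $\mathrm{rem}_\varepsilon$ regularization the drift still contains the term $-\lambda\mathbb X^{\langle 3\rangle}_s$, which lives only in $L^2(\mathbb R_+,\VV^{-1/2-\kappa})$ and has no $L^2(\Lambda)$ spatial regularity. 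Hence $\breve F_T(\mu)=+\infty$ for every finite $T$ on exactly the measures one cares about (those whose drift cancels the cubic term), and the limsup inequality fails for the constant sequence. One cannot sidestep this by restricting to $u\in\mathcal H$, because the infimum of $\breve F_\infty$ is not attained (even approximately) on such drifts. The paper's fix is to take $\mu^T_L=\tmop{Law}_{\mu_L}(\mathbb X,\mathbbm 1_{\{t\leqslant T\}}u_t)$: truncating the drift yields the exact identity $l^T(\mathbbm 1_{\{t\leqslant T\}}u)=\mathbbm 1_{\{t\leqslant T\}}\,l^\infty(u)$, so the cost is dominated by $\|l^\infty(u)\|_{\mathcal H}$ and $\|Z_T(u)\|_{L^4}^4\leqslant\|u\|^4_{\mathcal L}$, both integrable by~\eqref{inftymomentbound}; dominated convergence and Lemma~\ref{pointwiseconv} then give $\breve F_T(\mu^T_L)\to\breve F_\infty(\mu_L)$, and a diagonal extraction over $L$ finishes. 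Your proof needs this truncation (or an equivalent device) to be correct.
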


\begin{proof}
  In order to establish $\Gamma$-convergence consider a sequence $\mu^T
  \rightarrow \mu$ in $\overline{\mathcal{X}}$. We need to prove that
  $\liminf_{T \rightarrow \infty}  \breve{F}_T (\mu^T) \geqslant
  \breve{F}_{\infty} (\mu)$. It is enough to prove this statement for a
  subsequence, the full statement follows from the fact that every sequence
  has a subsequence satisfying the inequality. Take a subsequence (not
  relabeled) such that
  \begin{equation}
    \sup_T  \breve{F}_T (\mu^T) < \infty . \label{eq:unif-bound}
  \end{equation}
  If there is no such subsequence there is nothing to prove. Otherwise
  tightness for the subsequence follows like in the proof of equicoercivity.
  Then invoking the Skorokhod representation theorem of~{\cite{jakubowski}} we
  can extract a subsequence (again, not relabeled) and find random variables
  $(\tilde{\mathbb{X}}^T, \tilde{u}^T)_T$ and $(\tilde{\mathbb{X}},
  \tilde{u})$ on some probability space $(\tilde{\Omega}, \tilde{\mathbb{P}})$
  such that $\tmop{Law}_{\tilde{\mathbb{P}}} (\tilde{\mathbb{X}}^T,
  \tilde{u}^T) = \mu^T$, $\tmop{Law}_{\tilde{\mathbb{P}}} (\tilde{\mathbb{X}},
  \tilde{u}) = \mu$ and almost surely $\tilde{\mathbb{X}}^T \rightarrow
  \tilde{\mathbb{X}}$ in $\mathfrak{S}$, $\tilde{u}^T \rightarrow \tilde{u}$
  in $\mathcal{L}_w$. Note that $\tilde{l}^T \assign l^T
  (\tilde{\mathbb{X}}^T, \tilde{u}^T) \rightarrow l \assign l^{\infty}
  (\tilde{\mathbb{X}}, u)$ in $\mathcal{L}_w$ and
  using~{\eqref{eq:unif-bound}} we deduce that the almost sure convergence
  $l^T \rightarrow l$ in $\mathcal{H}_w$, maybe modulo taking another
  subsequence, again not relabeled. Note that, by our analytic estimates
  (which hold pointwise on the probability space) we have
  \[ \Phi_T (\tilde{\mathbb{X}}^T, Z (\tilde{u}^T), K (\tilde{u}^T)) + \lambda
     \| Z_T (\tilde{u}^T) \|_{L^4}^4 +
     \tmcolor{red}{\tmcolor{black}{\frac{1}{2}}} \| l^T (\tilde{u}^T)
     \|^2_{\mathcal{H}} + H (\tilde{\mathbb{X}}^T) \geqslant 0, \]
  for some $L^1 (\tilde{\mathbb{P}})$ random variable $H
  (\tilde{\mathbb{X}}^T)$ such that $\mathbb{E}_{\tilde{\mathbb{P}}} [H
  (\tilde{\mathbb{X}}^T)] =\mathbb{E} [H (\mathbb{W})]$. Fatou's lemma and
  Lemma~\ref{pointwiseconv} then give
  \[ \liminf_{T \rightarrow \infty}  \breve{F}_T (\mu^T) = \liminf_{T
     \rightarrow \infty} \mathbb{E}_{\tilde{\mathbb{P}}} \left[ \Phi_T
     (\tilde{\mathbb{X}}^T, Z (\tilde{u}^T), K (\tilde{u}^T)) + \lambda \| Z_T
     (\tilde{u}^T) \|_{L^4}^4 + \tmcolor{red}{\tmcolor{black}{\frac{1}{2}}} \|
     l^T \|^2_{\mathcal{H}} \right] \]
  \[ = \liminf_{T \rightarrow \infty} \mathbb{E}_{\tilde{\mathbb{P}}} \left[
     \Phi_T (\tilde{\mathbb{X}}^T, Z (\tilde{u}^T), K (\tilde{u}^T)) + \lambda
     \| Z_T (\tilde{u}^T) \|_{L^4}^4 +
     \tmcolor{red}{\tmcolor{black}{\frac{1}{2}}} \| l^T \|^2_{\mathcal{H}} + H
     (\tilde{\mathbb{X}}^T) \right] -\mathbb{E} [H (\mathbb{W})] \]
  \[ \geqslant \mathbb{E}_{\tilde{\mathbb{P}}} \liminf_{T \rightarrow \infty}
     \left[ \Phi_T (\tilde{\mathbb{X}}^T, Z (\tilde{u}^T), K (\tilde{u}^T)) +
     \lambda \| Z_T (\tilde{u}^T) \|_{L^4}^4 +
     \tmcolor{red}{\tmcolor{black}{\frac{1}{2}}} \| l^T \|^2_{\mathcal{H}} + H
     (\tilde{\mathbb{X}}^T) \right] -\mathbb{E} [H (\mathbb{W})] \]
  \[ \geqslant \mathbb{E}_{\tilde{\mathbb{P}}}  \left[ \Phi_{\infty}
     (\tilde{\mathbb{X}}, Z (\tilde{u}), K (\tilde{u})) + \| Z_{\infty}
     (\tilde{u}) \|_{L^4}^4 + \tmcolor{red}{\tmcolor{black}{\frac{1}{2}}} \|
     l^{\infty} (\tilde{u}) \|^2_{\mathcal{H}} \right] = \breve{F}_{\infty}
     (\mu), \]
  which is the $\Gamma$-liminf inequality. Now all that remains is
  constructing a recovery sequence, for this we can again assume w.l.o.g that
  $\breve{F}_{\infty} (\mu) < \infty$. From Lemma \ref{infty-approximation}
  there is $\mu_L$ such that $| \breve{F}_{\infty} (\mu) - \breve{F}_{\infty}
  (\mu_L) | < \frac{1}{L}$ and {\eqref{inftymomentbound}} is satisfied. Then
  choosing $\mu_L^T = \tmop{Law}_{\mu_L} (\mathbb{X}, \mathbbm{1}_{\{ t
  \leqslant T \}} u_t)$ we obtain that $l^T (\mathbbm{1}_{\{ t \leqslant T
  \}} u_t) = \mathbbm{1}_{\{ t \leqslant T \}} l^{\infty} (u)$, so $\| l^T
  (\mathbbm{1}_{\{ t \leqslant T \}} u_t) \|_{\mathcal{H}} \leqslant \|
  l^{\infty} (u) \|_{\mathcal{H}}$, and $\| Z_T (\mathbbm{1}_{\{ t \leqslant T
  \}} u_t) \|^4_{L^4} = \| Z_T (u) \|^4 \leqslant \| u \|^4_{\mathcal{L}}$,
  which is integrable by {\eqref{inftymomentbound}}. By dominated convergence
  and Lemma~\ref{pointwiseconv} we obtain $\lim_{T \rightarrow \infty}
  \breve{F}_T (\mu^T_L) = \breve{F}_{\infty} (\mu_L)$. Extracting a suitable
  diagonal sequence gives the recovery sequence.
\end{proof}

\section{Analytic estimates\label{section:analytic}}

In this section we collect a series of analytic estimate which together allow
to establish the pointwise bounds~{\eqref{eq:lower-bound-3d}}
and~{\eqref{eq:upper-bound-3d}} and the continuity required for
Lemma~\ref{pointwiseconv}. First of all note that
\begin{equation}
  \begin{array}{lll}
    \| K_t \|^2_{H^{1 - \kappa}} & \lesssim & \lambda^2 \int^t_0
    \frac{1}{\langle t \rangle^{1 + \delta}} \| \mathbb{W}^2_s \|^2_{B_{4,
    \infty}^s} \mathd s \| Z_T \|^2_{L^4} + \int^t_0 \| l_s \|_{L^2}^2 \mathd
    s\\
    & \lesssim & \lambda^3 \left( \int^t_0 \frac{1}{\langle t \rangle^{1 +
    \delta}} \| \mathbb{W}^2_s \|^2_{B_{4, \infty}^s} \mathd s \right)^2 +
    \lambda \| Z_T \|^4_{L^4} + \int^t_0 \| l_s \|_{L^2}^2 \mathd s,
  \end{array} \label{eq:control-K}
\end{equation}
which implies that quadratic functions of the norm $\| K_t \|_{H^{1 -
\kappa}}$ with small coefficient can always be controlled, uniformly in $[0,
\infty]$, by the coercive term
\[ \lambda \bint Z^4_T + \frac{1}{2} \int^{\infty}_0 \| l_s \|_{L^2}^2 \mathd
   s. \]
\begin{lemma}
  For any small $\varepsilon > 0$ there exists $\delta > 0$ such
  that\label{firstterm}
  \[ | \Upsilon^{(1)}_T | \leqslant C (\varepsilon, \delta) E (\lambda) Q_T +
     \varepsilon \| K_T \|^2_{H^{1 - \delta}} + \varepsilon \lambda \| Z_T
     \|^4_{L^4} . \]
\end{lemma}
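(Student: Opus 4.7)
The plan is to bound each of the three summands of $\Upsilon^{(1)}_T$ pathwise using paraproduct and commutator estimates, and then apply weighted Young's inequalities to split each bound into a stochastic factor absorbable into $E(\lambda) Q_T$ together with pieces controlled by $\varepsilon \|K_T\|^2_{H^{1-\delta}}$ and $\varepsilon \lambda \|Z_T\|^4_{L^4}$. The moment bounds of Lemma~\ref{lemma:stoch-reg} guarantee that the stochastic factors collected along the way lie in a common $Q_T$ that is uniformly integrable in $T$ and $|\Lambda|$.

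The cleanest summand is the third one, $-\lambda^2 \bint (\mathbb{W}^2_T \prec \mathbb{W}^{[3]}_T) K_T$. The paraproduct estimate gives $\|\mathbb{W}^2_T \prec \mathbb{W}^{[3]}_T\|_{\VV^{-1/2-2\kappa}} \lesssim \|\mathbb{W}^2_T\|_{\VV^{-1-\kappa}} \|\mathbb{W}^{[3]}_T\|_{\VV^{1/2-\kappa}}$, and duality against $K_T \in H^{1-\delta} \hookleftarrow H^{1/2+2\kappa}$ bounds the pairing by $\lambda^2 \|\mathbb{W}^2_T\| \|\mathbb{W}^{[3]}_T\| \|K_T\|_{H^{1-\delta}}$. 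A plain Young's inequality $ab \leq a^2/(4\varepsilon) + \varepsilon b^2$ splits this as $\lambda^4 \|\mathbb{W}^2_T\|^2 \|\mathbb{W}^{[3]}_T\|^2/(4\varepsilon) + \varepsilon \|K_T\|^2_{H^{1-\delta}}$, and since $\lambda^4 = o(\lambda^3)$, the first piece has the correct form $C(\varepsilon) E(\lambda) Q_T$.

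The first two summands, quadratic in $K_T$ but carrying only one factor of $\lambda$, require more care. By Proposition~\ref{adjointparaproduct} applied to $\mathfrak{K}_2$ and by paraproduct duality for $\bint (\mathbb{W}^2_T \prec K_T) K_T$, both terms are dominated by $\lambda \|\mathbb{W}^2_T\|_{W^{-1-\kappa,p}} \|K_T^2\|_{W^{1+\kappa,p'}}$ for suitable dual exponents $(p,p')$, working in $L^p$-based Besov spaces to use the higher integrability of $\mathbb{W}^2_T$. Following the strategy of Lemma~\ref{lemma:bound-cubic}, one then factorises $\|K_T^2\|_{W^{1+\kappa,p'}}$ via the fractional Leibniz rule (Proposition~\ref{FractionalLeibniz}) and Gagliardo--Nirenberg interpolation into $\|K_T\|_{H^{1-\delta}}^{2-\theta} \|K_T\|_{L^4}^\theta$ for some $\theta \in (0,1)$ determined by $\kappa$ and dimension. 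Using $K_T = Z_T + \lambda \mathbb{W}^{[3]}_T$ bounds $\|K_T\|_{L^4}$ by $\|Z_T\|_{L^4}$ plus a stochastic piece of size $\lambda \|\mathbb{W}^{[3]}_T\|_{L^4}$, and a weighted Young's inequality with Hölder exponents tuned to produce exactly $\|K_T\|^2_{H^{1-\delta}}$ and $\lambda \|Z_T\|^4_{L^4}$ with small prefactors completes the estimate, leaving a residual stochastic factor of the form $\lambda^{3+\eta}\|\mathbb{W}\|^K$ with $\eta>0$.

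The main obstacle is precisely this Young's calibration for the quadratic summands: a naive splitting $\lambda \|\mathbb{W}^2_T\| \|K_T\|^2 \leq \lambda^2 \|\mathbb{W}^2_T\|^2/(4\varepsilon) + \varepsilon \|K_T\|^4$ produces both a $\lambda^2$ stochastic coefficient (insufficient to lie in $E(\lambda)$) and a spurious $\|K_T\|^4$ term not present in the target. The resolution is to peel off one factor of $K_T$ in the weaker $L^4$ norm via Gagliardo--Nirenberg, which both opens the door to the coercive $\varepsilon \lambda \|Z_T\|^4_{L^4}$ term through the decomposition $K_T = Z_T + \lambda \mathbb{W}^{[3]}_T$ and lifts the effective exponent of $\lambda$ on the stochastic side strictly above $3$.
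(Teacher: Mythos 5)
Your overall strategy (commutator estimates, interpolation, weighted Young, peeling off an $L^4$ factor to push the power of $\lambda$ above $3$ and reach the coercive term) is the right one, and your treatment of the third summand $-\lambda^2\bint(\mathbb{W}^2_T\prec\mathbb{W}_T^{[3]})K_T$ is essentially the paper's argument. One caveat there: you measure the stochastic objects in $\VV^{\cdot}=B^{\cdot}_{\infty,\infty}$ norms, but by Lemma~\ref{lemma:stoch-reg} only the finite-$p$ Besov norms have moments uniform in $|\Lambda|$, and the $Q_T$ in the statement must lie in $L^1(\mathbb{P})$ uniformly in the volume; the paper therefore works with $B^{\cdot}_{4,4}$ norms at this step.

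The genuine gap is in the two summands quadratic in $K_T$. You claim that $\mathfrak{K}_2(\mathbb{W}^2_T,K_T,K_T)$ and $\bint(\mathbb{W}^2_T\prec K_T)K_T$ are dominated by $\lambda\|\mathbb{W}^2_T\|_{W^{-1-\kappa,p}}\|K_T^2\|_{W^{1+\kappa,p'}}$ and then propose to factorize $\|K_T^2\|_{W^{1+\kappa,p'}}$ via Proposition~\ref{FractionalLeibniz}. This step fails: the available regularity is $K_T\in H^{1-\kappa}$, so the product $K_T^2$ has regularity strictly below $1$ and the norm $\|K_T^2\|_{W^{1+\kappa,p'}}$ is generically infinite; consistently, Proposition~\ref{FractionalLeibniz} (which requires $s,\alpha\geqslant 0$) would force at least one factor into $\|\langle\mathD\rangle^{1+\kappa+\alpha}K_T\|_{L^{p_2}}$, i.e.\ more than one derivative on $K_T$, and no Gagliardo--Nirenberg interpolation between $H^{1-\delta}$ and $L^4$ can produce a norm of regularity exceeding $1-\delta$. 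Moreover, Proposition~\ref{adjointparaproduct} does not factor through the product $K_T^2$: it bounds the trilinear form by $\|\mathbb{W}^2_T\|_{B^{-9/8}_{7,\infty}}\|K_T\|_{B^{9/16}_{7/3,2}}\|K_T\|_{B^{9/16}_{7/3,2}}$, with the two $K_T$ arguments measured \emph{separately}, each carrying only $9/16<1-\delta$ derivatives while their regularities still sum to $9/8>1+\kappa$. This is the point that rescues the estimate: each single factor is then interpolated via Proposition~\ref{gagliardo-nirenberg} as $\|K_T\|_{B^{5/8}_{7/3,7/3}}\lesssim\|K_T\|^{5/7}_{H^{7/8}}\|K_T\|^{2/7}_{L^4}$, and a three-exponent Young inequality (exponents $7$, $7/5$, $7$) yields $C\lambda^6\|\mathbb{W}^2_T\|^7_{B^{-9/8}_{7,\infty}}+\varepsilon\|K_T\|^2_{H^{7/8}}+\varepsilon\lambda\|K_T\|^4_{L^4}$, after which $K_T=Z_T+\lambda\mathbb{W}_T^{[3]}$ converts the last term into $\varepsilon\lambda\|Z_T\|^4_{L^4}$ plus a stochastic remainder. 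So your endpoint $\|K_T\|^{2-\theta}_{H^{1-\delta}}\|K_T\|^{\theta}_{L^4}$ and your diagnosis of the obstacle are both correct, but the route through $\|K_T^2\|_{W^{1+\kappa,p'}}$ cannot be repaired; the splitting must be done at the level of the trilinear estimate, before the two copies of $K_T$ are ever multiplied together.
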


\begin{proof}
  By Proposition~\ref{adjointparaproduct},
  \begin{equation}
    \begin{array}{l}
      \lambda \left| \bint (\mathbb{W}^2_T \succ K_T) K_T - \bint
      (\mathbb{W}^2_T \circ K_T) K_T \right|\\
      \quad \lesssim \lambda \| \mathbb{W}^2_T \|_{B_{7, \infty}^{- 9 / 8}} 
      \| K_T \|^2_{B_{7 / 3, 2}^{9 / 16}} \lesssim \lambda \| \mathbb{W}^2_T
      \|_{B_{7, \infty}^{- 9 / 8}}  \| K_T \|^2_{B_{7 / 3, 7 / 3}^{5 / 8}}\\
      \quad \lesssim \lambda \| \mathbb{W}^2_T \|_{B_{7, \infty}^{- 9 / 8}} 
      \| K_T \|^{10 / 7}_{H^{7 / 8}} \| K_T \|^{4 / 7}_{B_{4, 4}^0}\\
      \quad \lesssim \lambda^6 \| \mathbb{W}^2_T \|^7_{B_{7, \infty}^{- 9 /
      8}} + \| K_T \|^{10 / 7}_{H^{7 / 8}} + \lambda \| K_T \|^4_{L^4} .
    \end{array} \label{commutator1}
  \end{equation}
  By Proposition~\ref{paraproductestimate},
  \begin{eqnarray*}
    \left| \lambda \bint (\mathbb{W}^2_T \prec K_T) K_T \right| & \lesssim &
    \lambda \| \mathbb{W}^2_T \|_{B_{7, \infty}^{- 9 / 8}}  \| K_T \|^2_{B_{7
    / 3, 2}^{9 / 16}}
  \end{eqnarray*}
  which is estimated in the same way and finally,
  \begin{eqnarray*}
    \left| \lambda^2 \bint (\mathbb{W}^2_T \prec \mathbb{W}_T^{[3]}) K_T
    \right| & \lesssim & \lambda^2 \| \mathbb{W}^2_T \|_{B_{4, 4}^{- 1 -
    \delta / 2}}  \|\mathbb{W}_T^{[3]} \|_{B_{4, 4}^{^{1 / 2 - \delta / 2}}}
    \| K_T \|_{H^{1 / 2 + \delta}}\\
    & \leqslant & C (\delta) \lambda^4 \left( \| \mathbb{W}^2_T \|_{B_{4,
    4}^{- 1 - \delta / 2}} \|\mathbb{W}_T^{[3]} \|_{B_{4, 4}^{^{1 / 2 - \delta
    / 2}}} \right)^2 + \delta \| K_T \|^2_{H^{1 / 2 + \delta}} .
  \end{eqnarray*}
\end{proof}

\begin{lemma}
  For any small $\varepsilon > 0$ there exists $\delta > 0$ such
  that\label{gamma2}
  \begin{eqnarray*}
    | \Upsilon^{(2)}_T | & \leqslant & T^{- \delta} (C (\varepsilon, \delta) E
    (\lambda) Q_T + \varepsilon \| K \|_{H^{1 - \delta}} + \varepsilon \lambda
    \| Z_T \|_{L^4})
  \end{eqnarray*}
\end{lemma}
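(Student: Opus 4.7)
The plan is to control $\Upsilon^{(2)}_T = \lambda \bint (\mathbb{W}^2_T \succ (Z_T - Z^\flat_T)) K_T$ by duality and then exploit the spectral support of $Z_T - Z^\flat_T = (1-\theta_T) Z_T$ in $\{|\xi| \gtrsim T\}$ to pull out a factor of $T^{-\delta}$. Structurally the argument will parallel the treatment of $\Upsilon^{(1)}_T$ in Lemma~\ref{firstterm}, with the frequency cutoff $(1-\theta_T)$ providing the extra decay in $T$.

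For a small $\eta > 0$ to be fixed later, I would begin with the duality bound
\[
|\Upsilon^{(2)}_T| \lesssim \lambda \| \mathbb{W}^2_T \succ (Z_T - Z^\flat_T) \|_{H^{-(1-\eta)}} \| K_T \|_{H^{1-\eta}},
\]
and then apply the standard paraproduct estimate, using $\mathbb{W}^2_T \in \VV^{-1-\kappa}$ from Lemma~\ref{lemma:stoch-reg}, to obtain
\[
\| \mathbb{W}^2_T \succ g \|_{H^{-(1-\eta)}} \lesssim \| \mathbb{W}^2_T \|_{\VV^{-1-\kappa}} \| g \|_{H^{\kappa + \eta}}.
\]
Since the Fourier multiplier $(1-\theta_T)$ is supported in $\{|\xi| \gtrsim cT\}$, Bernstein's inequality yields $\| (1-\theta_T) f \|_{H^{s}} \lesssim T^{s-s'} \| f \|_{H^{s'}}$ for any $s \leq s'$. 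Applying this with $s = \kappa + \eta$ and $s' = 1/2 - \kappa$, and fixing $\eta$ so small that $\delta := 1/2 - 2\kappa - \eta > 0$, gives
\[
\| Z_T - Z^\flat_T \|_{H^{\kappa + \eta}} \lesssim T^{-\delta} \| Z_T \|_{H^{1/2-\kappa}}.
\]

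It then remains to absorb the two Sobolev norms into the coercive structure. Using the identity $Z_T = K_T - \lambda \mathbb{W}_T^{[3]}$ together with the embedding $H^{1-\eta} \hookrightarrow H^{1/2-\kappa}$, the previous bounds collapse to
\[
|\Upsilon^{(2)}_T| \lesssim \lambda T^{-\delta} \| \mathbb{W}^2_T \|_{\VV^{-1-\kappa}} \bigl( \| K_T \|_{H^{1-\eta}} + \lambda \| \mathbb{W}_T^{[3]} \|_{H^{1/2-\kappa}} \bigr) \| K_T \|_{H^{1-\eta}}.
\]
A standard Young's inequality, with the $T^{-\delta}$ pulled outside of the splitting, reduces the right-hand side to a small $\varepsilon$-multiple of the coercive quantity $\| K_T \|^2_{H^{1-\eta}}$---which via~\eqref{eq:control-K} is dominated by $\lambda \|Z_T\|^4_{L^4} + \tfrac{1}{2}\| l \|^2_{\mathcal{H}}$ up to a $Q_T$-type remainder---and a purely stochastic residual of the form $T^{-\delta} E(\lambda) Q_T$, whose uniform $L^p$ bounds are provided by Lemma~\ref{lemma:stoch-reg}.

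The main obstacle is the bookkeeping of regularity exponents: $\kappa$, $\eta$, and $\delta$ must be chosen coherently so that the Sobolev embedding, the paraproduct estimate, and the Bernstein gain hold simultaneously and the final $\delta > 0$ survives. Beyond this, the conceptual core---the high-frequency spectral gap of $(1-\theta_T) Z_T$---is a one-line Bernstein estimate.
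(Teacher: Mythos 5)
Your overall strategy (duality against $K_T$, then Bernstein on the high-frequency factor $Z_T - Z_T^\flat$) points in the right direction, and your exponent bookkeeping $\delta = 1/2 - 2\kappa - \eta$ is consistent with what the paper obtains. But the pivotal inequality
\[
\| \mathbb{W}^2_T \succ g \|_{H^{-(1-\eta)}} \lesssim \| \mathbb{W}^2_T \|_{\VV^{-1-\kappa}} \| g \|_{H^{\kappa+\eta}}
\]
is not the standard paraproduct estimate and is false for general $g$. In the convention of Proposition~\ref{paraproductestimate}, $f \succ g$ carries the high frequencies of $f$, and the output regularity is $\alpha+\beta$ only for $\beta \leqslant 0$; when $g$ is smooth the paraproduct saturates at the regularity of $\mathbb{W}^2_T$ itself, namely $-1-\kappa$, no matter how regular $g$ is. Since $K_T$ is only controlled in $H^{1-\kappa}$, a pairing of regularity $-1-\kappa$ against $1-\eta$ does not close --- and this regularity deficit is precisely what makes $\Upsilon^{(2)}_T$ nontrivial in the first place.

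The missing ingredient is a second use of spectral localization, this time on $\mathbb{W}^2_T$ (or equivalently on the whole paraproduct): $\mathbb{W}^2_T$ is supported in a ball of radius $\sim T$ while $Z_T - Z_T^\flat$ is supported outside a ball of radius $\sim cT$, so the paraproduct is localized in an annulus of size $\sim T$ and one may measure $\mathbb{W}^2_T$ in $B^{-1+\delta}$ at the cost of a factor $T^{2\delta}$ (reverse Bernstein), which is then overcompensated by the $T^{-3\delta}$ gained from the high-frequency factor. This is what the paper's proof does: it first substitutes $Z_T - Z_T^\flat = (K_T - K_T^\flat) + \lambda(\mathbb{W}^{[3]}_T - \mathbb{W}^{[3],\flat}_T)$, and for each piece trades regularity of $\mathbb{W}^2_T$ against positive powers of $T$ before closing with interpolation and Young's inequality as in Lemma~\ref{firstterm}. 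With that step inserted your argument can be repaired; without it the central inequality of your proof has no justification.
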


\begin{proof}
  Using the spectral support properties of the various terms we observe that
  \[ \| \mathbb{W}^2_T \|_{B_{p, q}^{- 1 + \delta}} \lesssim \| \mathbb{W}^2_T
     \|_{B_{p, q}^{- 1 + \delta}} T^{2 \delta}, \]
  and
  \[ T^{2 \delta} \| Z_T - Z^{\flat}_T \|_{L^2} \lesssim \| Z_T - Z^{\flat}_T
     \|_{H^{2 \delta}} \lesssim \| Z_T - Z^{\flat}_T \|^{\frac{2 \delta}{1 / 2
     - \delta}}_{H^{1 / 2 - \delta}} \| Z_T - Z^{\flat}_T \|^{\frac{1 / 2 - 3
     \delta}{1 / 2 - \delta}}_{L^2} \]
  \[ \lesssim \| Z_T \|^{\frac{2 \delta}{1 / 2 - \delta}}_{H^{1 / 2 - \delta}}
     \| Z_T \|^{\frac{1 / 2 - 3 \delta}{1 / 2 - \delta}}_{L^2}, \]
  where we used also interpolation and the $L^2$ bound $\| Z^{\flat}_T
  \|_{L^2} \lesssim \| Z_T \|_{L^2}$. We recall also that
  \begin{equation}
    Z_T = K_T + \lambda \mathbb{W}_T^{[3]} . \label{eq:bound-Z-K}
  \end{equation}
  Therefore we estimate as follows

  \begin{eqnarray*}
    \lambda \bint (\mathbb{W}^2_T \succ (Z_T - Z^{\flat}_T) ) K_T & = &
    \lambda \bint (\mathbb{W}^2_T \succ (K_T - K^{\flat}_T) ) K_T + \lambda^2
    \bint (\mathbb{W}^2_T \succ (\mathbb{W}_T^{[3]} -\mathbb{W}_T^{[3],
    \flat}) ) K_T
  \end{eqnarray*}
  For the second term we can estimate
  \begin{eqnarray*}
    \lambda^2 \bint (\mathbb{W}^2_T \succ (\mathbb{W}_T^{[3]}
    -\mathbb{W}_T^{[3], \flat}) ) K_T & \lesssim & \lambda^2 \| W^2_T
    \|_{B_{4, \infty}^{- 1 + \delta}} \| \mathbb{W}_T^{[3]}
    -\mathbb{W}_T^{[3], \flat} \|_{B_{4, 2}^0} \| K_T \|_{H^{1 - \delta}}\\
    & \lesssim & \lambda^2 T^{- \delta} \| W^2_T \|_{B_{4, \infty}^{- 1 -
    \delta}} \| \mathbb{W}_T^{[3]} \|_{B^{3 \delta}_{4, 2}} \| K_T \|_{H^{1 -
    \delta}}
  \end{eqnarray*}
  for the first term we get
  \begin{eqnarray*}
    \lambda \bint (\mathbb{W}^2_T \succ (K_T - K^{\flat}_T) ) K_T & \lesssim &
    \lambda \| W_T^2 \|_{B_{7, \infty}^{- 1 / 2 - \delta}} \| K_T \|_{B_{7 /
    3, 2}^0} \| K_T \|_{B_{7 / 3, 2}^{1 / 2 + \delta}}\\
    & \lesssim & \lambda \| W_T^2 \|_{B_{7, \infty}^{- 1 - \delta}} T^{1 / 2}
    T^{- 1 / 2 - \delta} \| K_T \|^2_{B_{7 / 3, 2}^{1 / 2 + \delta}}\\
    & \lesssim & \lambda T^{- \delta} \| W_T^2 \|_{B_{7, \infty}^{- 1 -
    \delta}} \| K_T \|^2_{B_{7 / 3, 2}^{1 / 2 + \delta}},
  \end{eqnarray*}
  which we can again estimate like in Lemma \ref{firstterm}.
\end{proof}

\begin{lemma}
  For any small $\varepsilon > 0$ there exists $\delta > 0$ such that
  \begin{eqnarray*}
    | \Upsilon^{(3)}_T | & \leqslant & C (\varepsilon, \delta) E (\lambda) Q_T
    + \varepsilon \sup_{0 \leqslant t \leqslant T} \| K_t \|_{H^{1 -
    \delta}}^2 + \varepsilon \lambda \| Z_T \|^4_{L^4} .
  \end{eqnarray*}
\end{lemma}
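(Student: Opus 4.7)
The key observation is that $\dot{Z}^\flat_t$ simplifies dramatically. Since $Z^\flat_t = \theta_t Z_T$ for $t\leqslant T$, differentiating gives $\dot{Z}^\flat_t = \dot\theta_t Z_T + \theta_t J_t u_t$, but the second term vanishes: $\theta_t J_t u_t = \langle D\rangle^{-1}\theta_t(D)\sigma_t(D) u_t = 0$ by the property $\theta_t(\xi)\sigma_t(\xi)\equiv 0$ from \eqref{theta}. Hence the integrand reduces to $\lambda \bint(\mathbb{W}^2_t\succ \dot\theta_t Z_T)K_t$, and substituting $Z_T = K_T + \lambda\mathbb{W}^{[3]}_T$ (from Step~1 of Lemma~\ref{lemma:change-of-variables}) splits $\Upsilon^{(3)}_T$ into a $K_T$-piece and a $\mathbb{W}^{[3]}_T$-piece.

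For each piece we dualize, pairing $K_t\in H^{1-\delta}$ against $\mathbb{W}^2_t\succ\dot\theta_t(\cdot)\in H^{-1+\delta'}$. By the paraproduct estimate (Proposition~\ref{paraproductestimate}) this is bounded by $\|\mathbb{W}^2_t\|_{B^{-1-\kappa}_{p,\infty}}\|\dot\theta_t f\|_{B^{\delta'+\kappa}_{q,r}}\|K_t\|_{H^{1-\delta}}$ for suitable exponents. The crucial point is that $\dot\theta_t$ is a Fourier multiplier with symbol supported in the annulus $|\xi|\in[t/4,t/2]$ and $L^\infty$-norm $\sim 1/t$, so Bernstein's inequality yields
\[
\|\dot\theta_t f\|_{B^{\delta'+\kappa}_{q,r}} \lesssim t^{-1}\, t^{(\delta'+\kappa)-\beta}\|\mathbbm{1}_{|\xi|\sim t}f\|_{B^\beta_{q,r}},
\]
trading regularity for a negative power of $t$. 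Choosing $\beta = 1-\delta$ when $f = K_T$ and $\beta = 1/2-\kappa$ when $f = \mathbb{W}^{[3]}_T$ produces an integrable factor $t^{-1-\eta}$ for some $\eta>0$ in the outer $t$-integral.

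After Young's inequality, the $K_T$-piece gives a contribution bounded by
\[
C(\varepsilon,\delta)\lambda^{K_1}\Bigl(\int_0^\infty \langle t\rangle^{-1-\eta}\|\mathbb{W}^2_t\|^{K_2}_{B^{-1-\kappa}_{p,\infty}}\mathd t\Bigr) + \varepsilon\|K_T\|_{H^{1-\delta}}^2 + \varepsilon\sup_{0\le t\le T}\|K_t\|_{H^{1-\delta}}^2,
\]
where the first summand is of the form $E(\lambda) Q_T$ by Lemma~\ref{lemma:stoch-reg}. The $\mathbb{W}^{[3]}_T$-piece is purely stochastic (no drift dependence) and, combined with the time integrability, is entirely absorbed into $E(\lambda) Q_T$. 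Converting the residual $\varepsilon\|K_T\|_{H^{1-\delta}}^2$ into $\varepsilon\lambda\|Z_T\|_{L^4}^4 + E(\lambda)Q_T$ via $K_T = Z_T - \lambda\mathbb{W}^{[3]}_T$ and the Sobolev embedding $L^4\hookrightarrow H^{-\kappa}$ (combined with \eqref{eq:control-K} where needed) yields the claimed bound.

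The main obstacle is the careful bookkeeping of Sobolev/Besov exponents so that (i) the paraproduct estimate closes with the given regularity of $\mathbb{W}^2_t$, (ii) the Bernstein loss produces a strictly integrable power $t^{-1-\eta}$ after all factors of $t$ are accounted for, and (iii) the exponent $K_2$ of $\|\mathbb{W}^2_t\|$ remains finite so that Lemma~\ref{lemma:stoch-reg} delivers uniform-in-$T$ bounds on the stochastic factor. These constraints leave enough slack provided $\delta,\delta',\kappa$ are chosen small enough.
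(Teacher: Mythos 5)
Your structural observations are the right ones and match the paper's proof: $\dot{Z}^{\flat}_t = \dot{\theta}_t(\mathD) Z_T$ because $\theta_t(\mathD)J_t = 0$, the symbol $\dot{\theta}_t$ lives on an annulus $\langle\xi\rangle\sim t$ and trades regularity for powers of $\langle t\rangle^{-1}$, and this decay makes the outer time integral converge. However, the way you close the estimate on the $K_T$-piece does not work. After your choices, that piece is bounded by
\[
\lambda\,\|K_T\|_{H^{1-\delta}}\,\sup_{0\leqslant t\leqslant T}\|K_t\|_{H^{1-\delta}}\int_0^T \langle t\rangle^{-1-\eta}\,\|\mathbb{W}^2_t\|_{B^{-1-\kappa}_{p,\infty}}\,\mathd t \;=\; A\cdot B^2,
\]
with $B=\sup_t\|K_t\|_{H^{1-\delta}}$ and $A$ an \emph{unbounded random variable}. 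No form of Young's inequality yields $AB^2\leqslant \varepsilon B^2+C_{\varepsilon}A^{K}$: taking $B\to\infty$ at fixed $A>\varepsilon$ falsifies it. To put $\varepsilon$ in front of the square of an $H$-norm of $K$, the total power of such norms in the trilinear term must be strictly less than $2$, with the deficit paid by the coercive $L^4$ norm. This is exactly what the paper does: it measures $\dot{Z}^{\flat}_t$ only in $B^{0}_{3,2}$ (costing $\|Z_T\|_{B^{3\delta}_{3,2}}\langle t\rangle^{-1-3\delta}$, not a full unit of regularity), interpolates $\|Z_T\|_{B^{4\delta}_{3,3}}\lesssim\|Z_T\|^{1/2}_{H^{1/2-\delta}}\|Z_T\|^{1/2}_{L^4}$, and only then substitutes $Z_T=K_T-\lambda\mathbb{W}^{[3]}_T$, arriving at $\lambda\|Z_T\|^{1/2}_{L^4}\sup_t\|K_t\|^{3/2}_{H^{1-\delta}}\cdot A$, which Young with exponents $(8,4/3,8)$ splits into $\varepsilon\lambda\|Z_T\|^4_{L^4}+\varepsilon\sup_t\|K_t\|^2_{H^{1-\delta}}+C\lambda^7A^8$. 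Your plan never brings the $L^4$ norm into the trilinear estimate itself, so the quadratic term cannot be absorbed; the closing remark about ``converting the residual $\varepsilon\|K_T\|^2_{H^{1-\delta}}$ into $\varepsilon\lambda\|Z_T\|^4_{L^4}$'' is both unnecessary (a $\varepsilon\sup_t\|K_t\|^2$ term is permitted in the statement) and impossible ($L^4$ does not control $H^{1-\delta}$, and invoking eq.~\eqref{eq:control-K} would smuggle in $\int\|l_s\|^2\,\mathd s$, which is not on the right-hand side of this lemma).

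Two smaller points. First, the paraproduct $\mathbb{W}^2_t\succ h$ has regularity no better than that of $\mathbb{W}^2_t$, namely $-1-\kappa$; it does not land in $H^{-1+\delta'}$ just because $h$ has positive regularity $\delta'+\kappa$ (and Proposition~\ref{paraproductestimate} requires $\beta\leqslant 0$ for the low-frequency factor). To pair against $K_t\in H^{1-\delta}$ you must upgrade $\mathbb{W}^2_t$ to regularity $-1+\delta$ using its spectral support, $\|\mathbb{W}^2_t\|_{B^{-1+\delta}}\lesssim t^{2\delta}\|\mathbb{W}^2_t\|_{B^{-1-\delta}}$, and absorb the $t^{2\delta}$ into the decay from $\dot\theta_t$ — fixable, but it must be said, and it constrains how much decay survives. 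Second, the $\mathbb{W}^{[3]}_T$-piece is not ``purely stochastic'': it still contains the factor $K_t$. It is, however, only \emph{linear} in $K$, so there Young's inequality does apply and that piece is unproblematic.
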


\begin{proof}
  First note that $\dot{\theta}_t (\mathD) = (\langle \mathD \rangle / t^2)
  \dot{\theta} (\langle \mathD \rangle / t)$. In particular $Z^{\flat}_t$ is
  spectrally supported in an annulus with inner radius $t / 4$ and outer
  radius $t / 2$. Then for any $\theta \in [0, 1]$
  \[ \| \dot{Z}^{\flat}_t \|_{B_{p, q}^{s + \theta}} = \left\| \dot{\theta}
     \left( \frac{\langle \mathD \rangle}{t} \right) \frac{\langle \mathD
     \rangle}{t^2} Z_T \right\|_{B_{p, q}^{s + \theta}} \lesssim \left\|
     \dot{\theta} \left( \frac{\langle \mathD \rangle}{t} \right)
     \frac{\langle \mathD \rangle^{1 + \theta}}{t^{2 + \theta}} Z_T
     \right\|_{B_{p, q}^{s + \theta}} \lesssim \frac{\| Z_T \|_{B_{p,
     q}^s}}{\langle t \rangle^{1 + \theta}} . \]
  By Proposition~\ref{paraproductestimate}, for any $\varepsilon > 0$ there
  exists $\delta > 0$ such that
  
  \[ \begin{array}{l}
       \left| \lambda \int_0^T \bint (\mathbb{W}^2_t \succ \dot{Z}^{\flat}_t)
       K_t \mathd t \right| \lesssim \lambda \int_0^T \| \mathbb{W}^2_t
       \|_{B^{- 1 + \delta}_{6, \infty}} \| \dot{Z}^{\flat}_t \|_{B^0_{3, 2}}
       \| K_t \|_{H^{1 - \delta}} \mathd t\\
       \quad \lesssim \lambda \int_0^T \| \mathbb{W}^2_t \|_{B^{- 1 +
       \delta}_{6, \infty}} \| Z_T \|_{B^{3 \delta}_{3, 2}} \| K_t \|_{H^{1 -
       \delta}} \frac{\mathd t}{\langle t \rangle^{1 + 3 \delta}}\\
       \quad \lesssim \lambda \| Z_T \|_{B^{4 \delta}_{3, 3}} \sup_{0
       \leqslant t \leqslant T} \| K_t \|_{H^{1 - \delta}} \int_0^T \|
       \mathbb{W}^2_t \|_{B^{- 1 + \delta}_{6, \infty}} \frac{\mathd
       t}{\langle t \rangle^{1 + \delta}}\\
       \quad \lesssim \lambda \| Z_T \|^{1 / 2}_{H^{1 / 2 - \delta}} \| Z_T
       \|^{1 / 2}_{B_{4, 4}} \sup_{0 \leqslant t \leqslant T} \| K_t \|_{H^{1
       - \delta}} \int_0^T \| \mathbb{W}^2_t \|_{B^{- 1 + \delta}_{6, \infty}}
       \frac{\mathd t}{\langle t \rangle^{1 + \delta}}\\
       \quad \lesssim \lambda \| Z_T \|^{1 / 2}_{L^4} \sup_{0 \leqslant t
       \leqslant T} \| K_t \|^{3 / 2}_{H^{1 - \delta}} \int_0^T \|
       \mathbb{W}^2_t \|_{B^{- 1 + \delta}_{6, \infty}} \frac{\mathd
       t}{\langle t \rangle^{1 + \delta}}\\
       \quad + \lambda^{3 / 2} \| Z_T \|^{1 / 2}_{L^4} \sup_{0 \leqslant t
       \leqslant T} \| K_t \|_{H^{1 - \delta}} \|\mathbb{W}_T^{[3]} \|^{1 /
       2}_{H^{4 \delta}} \int_0^T \| \mathbb{W}^2_t \|_{B^{- 1 + \delta}_{6,
       \infty}} \frac{\mathd t}{\langle t \rangle^{1 + \delta}}
     \end{array} \]
  
  and again
  \[ \begin{array}{l}
       \lambda \| Z_T \|^{1 / 2}_{L^4} \sup_{0 \leqslant t \leqslant T} \| K_t
       \|^{3 / 2}_{H^{1 - \delta}} \int_0^T \| \mathbb{W}^2_t \|_{B^{- 1 +
       \delta}_{7, \infty}} \frac{\mathd t}{\langle t \rangle^{1 + \delta}}\\
       \quad \leqslant C \lambda^7 \int_0^T \| \mathbb{W}^2_t \|^8_{B^{- 1 +
       \delta}_{7, \infty}} \frac{\mathd t}{\langle t \rangle^{1 + \delta}} +
       \varepsilon \sup_{0 \leqslant t \leqslant T} \| K_t \|^2_{H^{1 -
       \delta}} + \varepsilon \lambda \| Z_T \|^4_{L^4} .
     \end{array} \]
  While
  \begin{eqnarray*}
    &  & \lambda^{3 / 2} \| Z_T \|^{1 / 2}_{L^4} \sup_{0 \leqslant t
    \leqslant T} \| K_t \|_{H^{1 - \delta}} \|\mathbb{W}_T^{[3]} \|^{1 /
    2}_{H^{4 \delta}} \int_0^T \| \mathbb{W}^2_t \|_{B^{- 1 + \delta}_{6,
    \infty}} \frac{\mathd t}{\langle t \rangle^{1 + \delta}}\\
    & \leqslant & C \lambda^{11 / 3} \int_0^T \| \mathbb{W}^2_t \|^{8 /
    3}_{B^{- 1 + \delta}_{7, \infty}} \frac{\mathd t}{\langle t \rangle^{1 +
    \delta}} \|\mathbb{W}_T^{[3]} \|^{8 / 6}_{H^{4 \delta}} + \sup_{0
    \leqslant t \leqslant T} \| K_t \|^2_{H^{1 - \delta}} + \lambda \| Z_T
    \|_{L^4} .
  \end{eqnarray*}
\end{proof}

\begin{lemma}
  For any small $\varepsilon > 0$ there exists $\delta > 0$ such that
  \[ | \Upsilon^{(4)}_T | \leqslant C (\varepsilon, \delta) E (\lambda) Q_T +
     \varepsilon \| K_T \|^2_{H^{1 - \delta}} + \varepsilon \lambda \| Z_T
     \|^4_{L^4} . \]
\end{lemma}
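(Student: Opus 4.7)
The plan is to bound each of the three summands of
\[
\Upsilon^{(4)}_T \;=\; 4\lambda \bint W_T K_T^3 \;-\; 12\lambda^2 \bint W_T \mathbb{W}_T^{[3]} K_T^2 \;+\; 12\lambda^3 \bint W_T (\mathbb{W}_T^{[3]})^2 K_T
\]
separately by combining duality estimates with Young's inequality, using the coercive terms $\varepsilon\|K_T\|^2_{H^{1-\delta}}$ and $\varepsilon\lambda\|Z_T\|^4_{L^4}$ to absorb everything that depends on $K_T$ or $Z_T$. A systematic identity used throughout is $K_T = Z_T + \lambda\mathbb{W}_T^{[3]}$, which converts any $\lambda\|K_T\|^4_{L^4}$ into $\lambda\|Z_T\|^4_{L^4}$ modulo a purely stochastic error of order $\lambda^5\|\mathbb{W}_T^{[3]}\|^4_{L^4}$ that is bounded in all $L^p(\mathbb{P})$ uniformly in $T$ by Table~\ref{table:reg} and so joins $E(\lambda)Q_T$.

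For the first summand I would apply Lemma~\ref{lemma:bound-cubic} directly with $f=W_T$, $g=K_T$ (whose $H^{1-\kappa}$ regularity comfortably exceeds the $H^{7/8}$ needed in the proof of that lemma), obtaining a bound of the form $E(\lambda)\|W_T\|^K_{W^{-1/2-\kappa,p}} + \delta(\|K_T\|^2_{H^{1-\delta}} + \lambda\|K_T\|^4_{L^4})$ and then converting $\lambda\|K_T\|^4_{L^4}$ as above. For the second summand the factor $W_T\mathbb{W}_T^{[3]}$ is critical ($-\tfrac12 + \tfrac12 = 0$), so I would invoke Bony's decomposition
\[
W_T\mathbb{W}_T^{[3]} \;=\; W_T\prec\mathbb{W}_T^{[3]} \;+\; W_T\succ\mathbb{W}_T^{[3]} \;+\; \mathbb{W}_T^{[3]\circ 1},
\]
replacing the resonant part by the renormalized stochastic object $\mathbb{W}_T^{[3]\circ 1}\in C\VV^{0-}$ from Table~\ref{table:reg}. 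Each summand sits in a Besov space of negative regularity $-\kappa$ with uniform $L^p$ moments, so one pairs by duality with $K_T^2$ and controls $\|K_T^2\|_{W^{\kappa,p'}}$ by the fractional Leibniz rule of Proposition~\ref{FractionalLeibniz} in terms of $\|K_T\|^{a}_{H^{1-\delta}}\|K_T\|^{2-a}_{L^4}$, closing via Young's inequality.

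The third summand is the most delicate and I expect it to be the main obstacle: the object $W_T(\mathbb{W}_T^{[3]})^2$ does not have classical meaning and must be reinterpreted through the full paracontrolled expansion already written down in equation~\eqref{eq:prod-infty} of Lemma~\ref{pointwiseconv}. I would rewrite it using the three stochastic primitives $W_T(\mathbb{W}_T^{[3]}\circ\mathbb{W}_T^{[3]})$, $\mathbb{W}_T^{[3]\circ 1}\mathbb{W}_T^{[3]}$ and the commutator $\mathfrak{K}_1(\mathbb{W}_T^{[3]},\mathbb{W}_T^{[3]},W_T)$ from Proposition~\ref{commutatorestimate}, plus the two smoothing contributions $W_T\succ(\mathbb{W}_T^{[3]}\succ\mathbb{W}_T^{[3]})$ and $W_T\prec(\mathbb{W}_T^{[3]}\succ\mathbb{W}_T^{[3]})$. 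Each piece lies in $\VV^{-1/2-\kappa}$ with $L^p(\mathbb{P})$-norms bounded uniformly in $T$ (using Table~\ref{table:reg} together with the auxiliary analytic estimates of Section~\ref{section:analytic}), so pairing via duality with $K_T\in H^{1/2+\kappa}$ and applying Young's inequality delivers the desired split, the $K_T$-norm being absorbed in $\varepsilon\|K_T\|^2_{H^{1-\delta}}$ and all the stochastic factors collected in $C(\varepsilon,\delta)E(\lambda)Q_T$.

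The first summand reduces essentially verbatim to the already-proven cubic lemma; the second is a straightforward exercise in paraproducts. The actual work is in handling the third term with the quadratic $\mathbb{W}_T^{[3]}$ factor: one must keep track of all the renormalized resonances and the $\mathfrak{K}_1$ commutator simultaneously, while ensuring that the single small constant $\varepsilon$ on the right-hand side really does suffice to absorb all residual $K_T$-dependent contributions coming from the three summands together (and not just each one in isolation).
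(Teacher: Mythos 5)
Your proposal follows essentially the same route as the paper: the first summand via Lemma~\ref{lemma:bound-cubic} (with the $K_T\to Z_T$ conversion through $\lambda\mathbb{W}_T^{[3]}$), the second via the Bony decomposition of $W_T\mathbb{W}_T^{[3]}$ with the stochastic resonant product $\mathbb{W}_T^{[3]\circ 1}$ and a duality/fractional-Leibniz/Young estimate against $K_T^2$, and the third via exactly the paracontrolled expansion of eq.~\eqref{eq:prod-infty} with the commutator $\mathfrak{K}_1$. The only correction is that the paraproduct contribution $W_T\succ\mathbb{W}_T^{[3]}$ sits in regularity $-1/2-\kappa$ (inherited from $W_T$), not $-\kappa$ as you state — only the renormalized resonant piece gains regularity — but this is harmless since $\|K_T^2\|_{W^{1/2+\kappa,p'}}$ is still controlled by interpolation between $\|K_T\|_{H^{1-\delta}}$ and $\|K_T\|_{L^4}$.
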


\begin{proof}
  Using Lemma~\ref{lemma:bound-cubic} we establish that
  \[ \left| \lambda \bint W_T K^3_T \right| \leqslant E (\lambda) \| W_T
     \|^K_{W^{- 1 / 2 - \varepsilon, p}} + \delta (\| K_T \|^2_{H^{1 -
     \varepsilon}} + \lambda \| K_T \|^4_{L^4}) . \]
  Next, we can write,
  \[ \lambda^3 \left| \bint W_T (\mathbb{W}_T^{[3]})^2 K_T \right| \lesssim
     \lambda^3 \left| \bint W_T (\mathbb{W}_T^{[3]} \succ \mathbb{W}_T^{[3]})
     K_T \right| + \lambda^3 \| W_T \|_{B_{6, \infty}^{- 1 / 2 - \delta}} \|
     \mathbb{W}_T^{[3]} \|^2_{B_{6, 4}^{- 1 / 2 - \delta}} \| K_T \|_{H^{1 -
     \varepsilon}} . \]
  which can be easily estimated by Young's inequality. Decomposing
  \[ W_T (\mathbb{W}_T^{[3]} \succ \mathbb{W}_T^{[3]}) = W_T \succ
     (\mathbb{W}_T^{[3]} \succ \mathbb{W}_T^{[3]}) + W_T \prec
     (\mathbb{W}_T^{[3]} \succ \mathbb{W}_T^{[3]}) + W_T \circ
     (\mathbb{W}_T^{[3]} \succ \mathbb{W}_T^{[3]}) . \]
  We can estimate the first two terms by
  \[ \lambda^3 \left| \bint W_T \succ (\mathbb{W}_T^{[3]} \succ
     \mathbb{W}_T^{[3]}) K_T \right| \lesssim \lambda^3 \| W_T \|_{B_{6,
     \infty}^{- 1 / 2 - \delta}} \| \mathbb{W}_T^{[3]} \|^2_{B_{6, 2}^0} \|
     K_T \|_{H^{1 - \varepsilon}}, \]
  and
  \[ \lambda^3 \left| \bint W_T \prec (\mathbb{W}_T^{[3]} \succ
     \mathbb{W}_T^{[3]}) K_T \right| \lesssim \lambda^3 \| W_T \|_{B_{6, 2}^{-
     1 / 2 - \delta}} \| \mathbb{W}_T^{[3]} \|^2_{B_{6, \infty}^0} \| K_T
     \|_{H^{1 - \varepsilon}} . \]
  Young's inequality gives then the appropriate result. For the final term we
  use Proposition~\ref{commutatorestimate} to get
  \[ \begin{array}{l}
       \lambda^3 \left| \bint W_T \circ (\mathbb{W}_T^{[3]} \succ
       \mathbb{W}_T^{[3]}) K_T \right|\\
       \quad \lesssim \lambda^3 \left| \bint \mathbb{W}_T^{[3]}
       \mathbb{W}_T^{1 \circ [3]} K_T \right| + \lambda^3 \| W_T \|_{B_{4,
       \infty}^{- 1 / 2 - \delta}} \| \mathbb{W}_T^{[3]} \|^2_{B_{4, 2}^{- 1 /
       2 - \delta}} \| K_T \|_{H^{1 - \delta}}\\
       \quad \lesssim \lambda^3 \| \mathbb{W}_T^{[3]} \|_{B_{4, \infty}^{1 / 2
       - \delta}} \| \mathbb{W}_T^{1 \circ [3]} \|_{B_{4, 2}^{- \delta}} \|
       K_T \|_{H^{1 - \delta}} + \lambda^3 \| W_T \|_{B_{4, \infty}^{- 1 / 2 -
       \delta}} \| \mathbb{W}_T^{[3]} \|^2_{B_{4, 2}^{- 1 / 2 - \delta}} \|
       K_T \|_{H^{1 - \delta}}\\
       \quad \lesssim \lambda^6 C (\delta, \varepsilon) [\| \mathbb{W}_T^{[3]}
       \|_{B_{4, \infty}^{1 / 2 - \delta}} \| \mathbb{W}_T^{1 \circ [3]}
       \|_{B_{4, 2}^{- \delta}} + \| W_T \|_{B_{4, \infty}^{- 1 / 2 - \delta}}
       \| \mathbb{W}_T^{[3]} \|^2_{B_{4, 2}^{- 1 / 2 - \delta}}]^2 +
       \varepsilon \| K_T \|^2_{H^{1 - \delta}} .
     \end{array} \]
  For the last term we estimate
  \begin{eqnarray*}
    \left| \lambda^2 \bint (W_T \mathbb{W}_T^{[3]}) K^2_T \right| & \lesssim &
    \lambda^2 \| W_T \mathbb{W}_T^{[3]} \|_{B^{- 1 / 2 - \delta}_{7,
    \infty}}  \| K_T \|^2_{B_{7 / 3, 2}^{1 / 2 + \delta}},
  \end{eqnarray*}
  which can be estimated like in Lemma \ref{firstterm} after we observe that
  \[ \| W_T \mathbb{W}_T^{[3]} \|_{B^{- 1 / 2 - \delta}_{7, \infty}}
     \leqslant \| W_T \succ \mathbb{W}_T^{[3]} \|_{B^{- 1 / 2 - \delta}_{7,
     \infty}} + \| W_T \circ \mathbb{W}_T^{[3]} \|_{B^{- 1 / 2 -
     \delta}_{7, \infty}} + \| W_T \prec \mathbb{W}_T^{[3]} \|_{B^{- 1 / 2
     - \delta}_{7, \infty}} \]
  \[ \lesssim \| W_T \|_{B^{- 1 / 2 - \delta}_{14, \infty}} \|
     \mathbb{W}_T^{[3]} \|_{B^0_{14, \infty}} + \| \mathbb{W}_T^{1 \circ
     [3]} \|_{B^{- \delta}_{7, \infty}} \]
  and use Lemma~\ref{lemma:renormalized-terms} to bound $\mathbb{W}_T^{1 \circ
  [3]}$.
\end{proof}

\begin{lemma}
  Assume that
  \[ \sup_T \frac{| \gamma_T |}{\langle T \rangle^{1 / 4}} + \int^T_0 \frac{|
     \gamma_t | \mathd t}{\langle t \rangle^{5 / 4}} < \infty . \]
  Then for any small $\varepsilon > 0$ there exists $\delta > 0$ such
  that\label{gamma5}
  \[ | \Upsilon^{(5)}_T | \leqslant C_{\varepsilon} E (\lambda) \left[ \frac{|
     \gamma_T |}{\langle T \rangle^{1 / 4}} + \int^T_0 \frac{| \gamma_t |
     \mathd t}{\langle t \rangle^{5 / 4}} \right]^2 + \varepsilon \| Z_T
     \|^2_{H^{1 / 2 - \delta}} + \varepsilon \lambda \| Z_T \|^4_{L^4} . \]
\end{lemma}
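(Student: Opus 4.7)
Split $\Upsilon^{(5)}_T = A_T + B_T + C_T$ according to the three summands in the definition, and denote
\[
G_T \assign \frac{|\gamma_T|}{\langle T\rangle^{1/4}} + \int_0^T \frac{|\gamma_t|}{\langle t\rangle^{5/4}}\,\mathd t.
\]
The plan is to reduce each of $A_T, B_T, C_T$ to the common cubic shape $\lambda^2 G_T\,\|Z_T\|_{L^4}\|Z_T\|_{H^{1/2-\delta}}$ by using spectral localization to cancel the polynomial weights $\langle\cdot\rangle^{1/4}$ and $\langle\cdot\rangle^{5/4}$ hidden in $\gamma$, and then to close via a three-factor Young inequality.

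For $A_T$ and $B_T$ the key tool is that $(1-\theta_T)$ cuts off at frequencies $|\xi|\gtrsim T$, so that for any $\delta > 0$,
\[
\|Z_T - Z_T^\flat\|_{L^2} \lesssim T^{-(1/2-\delta)}\|Z_T\|_{H^{1/2-\delta}}.
\]
Combined with either $\|Z_T^\flat\|_{L^2}\lesssim\|Z_T\|_{L^4}$ (for $A_T$) or the same bound applied to one copy in the square (for $B_T$), Cauchy--Schwarz yields
\[
|A_T| + |B_T| \lesssim \lambda^2|\gamma_T|\,T^{-(1/2-\delta)}\|Z_T\|_{L^4}\|Z_T\|_{H^{1/2-\delta}} \lesssim \lambda^2 G_T\,\|Z_T\|_{L^4}\|Z_T\|_{H^{1/2-\delta}},
\]
after factoring $|\gamma_T| = \langle T\rangle^{1/4}(|\gamma_T|/\langle T\rangle^{1/4})$ and absorbing $\langle T\rangle^{-(1/4-\delta)}\leq 1$ (valid for $\delta < 1/4$).

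For $C_T$ the analogous tool is that $\dot\theta_t(\mathD)$ is a Fourier multiplier of order $-1$ spectrally supported on the annulus $|\xi|\sim t$, yielding
\[
\|\dot Z_t^\flat\|_{L^2} \lesssim t^{-1}\|\Delta_{\sim t}Z_T\|_{L^2} \lesssim \langle t\rangle^{-3/2+\delta}\|Z_T\|_{H^{1/2-\delta}}, \qquad \|Z_t^\flat\|_{L^2}\lesssim\|Z_T\|_{L^4}.
\]
(Both bounds are uniform in the volume since $\dot\theta$ is supported in a fixed annulus on scale $t$.) Cauchy--Schwarz in space and factoring $\langle t\rangle^{5/4}$ out of $\gamma_t$ then give
\[
|C_T|\lesssim \lambda^2\Big(\int_0^T\frac{|\gamma_t|}{\langle t\rangle^{5/4}}\langle t\rangle^{-1/4+\delta}\,\mathd t\Big)\|Z_T\|_{L^4}\|Z_T\|_{H^{1/2-\delta}} \lesssim \lambda^2 G_T\,\|Z_T\|_{L^4}\|Z_T\|_{H^{1/2-\delta}},
\]
where the second inequality uses $\langle t\rangle^{-1/4+\delta}\leq 1$ for $\delta < 1/4$.

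The common model bound is closed by a three-factor Young inequality with exponents $(4,2,4)$ applied to $(\lambda^{1/4}\|Z_T\|_{L^4})\cdot\|Z_T\|_{H^{1/2-\delta}}\cdot(\lambda^{7/4}G_T)$, producing after $\varepsilon$-rescaling $\varepsilon\lambda\|Z_T\|^4_{L^4} + \varepsilon\|Z_T\|^2_{H^{1/2-\delta}} + C_\varepsilon\lambda^7 G_T^4$. The hypothesis $\sup_T G_T < \infty$ gives $G_T^4 \leq (\sup_T G_T)^2\,G_T^2$, so absorbing this sup into $C_\varepsilon$ yields the claimed form with $E(\lambda) = \lambda^7$ (so $E(\lambda)/\lambda^3 = \lambda^4 \to 0$ as $\lambda \to 0^+$). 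The main obstacle is the algebraic bookkeeping in Young's inequality to recover the two distinct coercive terms $\|Z_T\|^4_{L^4}$ and $\|Z_T\|^2_{H^{1/2-\delta}}$ simultaneously with a $\lambda$-power strictly greater than $\lambda^3$; the secondary point is checking that the small-$t$ behavior of the integrand in $C_T$ is harmless because $\dot Z_t^\flat$ vanishes when the annulus $|\xi|\in[ct,t/2]$ contains no lattice frequencies.
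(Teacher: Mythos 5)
Your proof is correct and follows essentially the same route as the paper's: Cauchy--Schwarz on each of the three summands, the spectral localization of $Z_T-Z_T^{\flat}$ (resp.\ $\dot Z_t^{\flat}$) to trade regularity for the decay matching the weights $\langle T\rangle^{1/4}$ and $\langle t\rangle^{5/4}$ in the hypothesis, and a three-factor Young inequality to close. The only cosmetic differences are that the paper interpolates only up to $H^{1/4}$ where you use $H^{1/2-\delta}$ and discard the surplus decay, and your explicit replacement of $G_T^4$ by $(\sup_T G_T)^2 G_T^2$ makes precise a step the paper leaves implicit.
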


\begin{proof}
  We can estimate
  \[ \left| \lambda^2 \gamma_T \bint Z^{\flat}_T (Z_T - Z^{\flat}_T) \right|
     \leqslant \lambda^2 | \gamma_T | \| Z^{\flat}_T \|_{L^2} \| Z_T -
     Z^{\flat}_T \|_{L^2} \lesssim \lambda^2 \frac{| \gamma_T |}{\langle T
     \rangle^{1 / 4}} \| Z^{\flat}_T \|_{L^2} \| Z_T - Z^{\flat}_T \|_{H^{1 /
     4}}, \]
  and
  \[ \left| \lambda^2 \gamma_T \bint (Z_T - Z^{\flat}_T)^2 \right| \leqslant
     \lambda^2 | \gamma_T | \| Z_T - Z^{\flat}_T \|^2_{L^2} \lesssim \lambda^2
     \frac{| \gamma_T |}{\langle T \rangle^{1 / 4}} \| Z^{\flat}_T - Z_T
     \|_{L^2} \| Z_T - Z^{\flat}_T \|_{H^{1 / 4}} . \]
  These bounds imply that both of them remain bounded provided $\gamma_T$ does
  not grow too fast in $T$ which is indeed insured by
  Lemma~\ref{lemma:renormalized-terms}. For the last term we can apply the
  estimate
  \[ \left| \lambda^2 \int^T_0 \bint \gamma_t Z^{\flat}_t \dot{Z}_t^{\flat}
     \mathd t \right| \leqslant \lambda^2 \int^T_{0} | \gamma_t | \|
     Z^{\flat}_t \|_{L^2}  \| \dot{Z}_t^{\flat} \|_{L^2} \mathd t \lesssim
     \lambda^2 \| Z_T \|_{L^2}  \| Z_T \|_{H^{1 / 4}} \int^T_0 \frac{|
     \gamma_t | \mathd t}{\langle t \rangle^{5 / 4}} . \]
  Again, after we have fixed $\gamma_t$ below, we will see this to be bounded.
  Collecting these bounds we get
  \[ | \Upsilon^{(5)}_T | \lesssim C_{\varepsilon} \lambda^7 \left[ \frac{|
     \gamma_T |}{\langle T \rangle^{1 / 4}} + \int^T_0 \frac{| \gamma_t |
     \mathd t}{\langle t \rangle^{5 / 4}} \right]^2 + \lambda \varepsilon \|
     Z_T \|^4_{L^4} + \varepsilon \| Z_T \|^2_{H^{1 / 2 - \delta}} . \]
\end{proof}

\begin{lemma}
  For any small $\varepsilon > 0$ there exists a $\delta > 0$ such that
  \begin{eqnarray*}
    | \Upsilon^{(6)}_T | & \leqslant & C (\varepsilon, \delta) E (\lambda) Q_T
    + \varepsilon \| K_T \|^2_{H^{1 - \delta}} + \varepsilon \lambda \| Z_T
    \|^4_{L^4} .
  \end{eqnarray*}
\end{lemma}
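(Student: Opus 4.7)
The plan is to split $\Upsilon^{(6)}_T$ into its three constitutive pieces and bound each separately in the target form, relying on the renormalizations that have been built into the stochastic objects $\mathbb{W}^{2 \diamond [3]}$ and $\mathbb{W}^{\langle 2 \rangle \diamond \langle 2 \rangle}$ in Lemma~\ref{lemma:stoch-reg} and Table~\ref{table:reg}. Throughout I will use the decomposition $Z_T = K_T + \lambda \mathbb{W}^{[3]}_T$, the relation $\|Z^{\flat}_t\|_{X} \lesssim \|Z_T\|_X$ valid for all standard Besov/Sobolev norms (because $\theta_t$ is a bounded Fourier multiplier uniformly in $t$), and interpolation between $H^{1/2-\kappa}$ and $L^4$ to trade regularity against the coercive term $\lambda\|Z_T\|^4_{L^4}$ via Young's inequality, exactly as in the previous lemmas.

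\emph{Term 1.} For $-\lambda^2 \bint \mathbb{W}^{2\diamond[3]}_T K_T$ I would pair by duality:
\[ \lambda^2 \Bigl|\bint \mathbb{W}^{2\diamond[3]}_T K_T\Bigr| \lesssim \lambda^2 \|\mathbb{W}^{2\diamond[3]}_T\|_{H^{-1/2-\kappa}} \|K_T\|_{H^{1/2+\kappa}}, \]
and since $\|K_T\|_{H^{1/2+\kappa}} \leqslant \|K_T\|_{H^{1-\delta}}$ for $\delta<1/2-\kappa$, Young's inequality yields $C(\varepsilon) \lambda^4 \|\mathbb{W}^{2\diamond[3]}_T\|^2_{H^{-1/2-\kappa}} + \varepsilon \|K_T\|^2_{H^{1-\delta}}$, which is of the required form using the regularity in Table~\ref{table:reg}.

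\emph{Term 2.} For the second piece I would use duality on each slice $t$ to get
\[ \lambda^2 \Bigl|\bint \mathbb{W}^{\langle 2 \rangle \diamond \langle 2 \rangle}_t (Z^{\flat}_t)^2\Bigr| \lesssim \lambda^2 \|\mathbb{W}^{\langle 2 \rangle \diamond \langle 2 \rangle}_t\|_{\VV^{-\kappa}} \|(Z^{\flat}_t)^2\|_{B^{\kappa}_{1,1}}, \]
and then the fractional Leibniz rule (Proposition~\ref{FractionalLeibniz}) together with $\|Z^{\flat}_t\|_X\lesssim \|Z_T\|_X$ gives $\|(Z^{\flat}_t)^2\|_{B^{\kappa}_{1,1}} \lesssim \|Z_T\|^2_{H^{2\kappa}}$ uniformly in $t$. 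After integrating in $t$ I exploit that $\mathbb{W}^{\langle 2 \rangle \diamond \langle 2 \rangle} \in L^1(\mathbb{R}_+, \VV^{-\kappa})$ to get an overall bound $\lambda^2 \|Z_T\|^2_{H^{2\kappa}} \|\mathbb{W}^{\langle 2 \rangle \diamond \langle 2 \rangle}\|_{L^1\VV^{-\kappa}}$. Using $Z_T = K_T + \lambda \mathbb{W}^{[3]}_T$, interpolation between $H^{1/2-\kappa}$ and $L^4$, and Young's inequality, this is absorbed into $E(\lambda) Q_T + \varepsilon \|K_T\|^2_{H^{1-\delta}} + \varepsilon \lambda \|Z_T\|^4_{L^4}$.

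\emph{Term 3.} The third piece, involving $\mathfrak{K}_{3,t}$, is the main obstacle: individually, $(J_t \mathbb{W}^2_t) \circ (J_t \mathbb{W}^2_t)$ is divergent and is only renormalizable after subtracting $-2\dot{\gamma}_t$. However, by construction in Proposition~\ref{prop:squarecomm} the commutator $\mathfrak{K}_{3,t}$ isolates precisely the regular remainder arising from the mismatch between the product $(J_t\mathbb{W}^2_t \succ Z^{\flat}_t)^2$ and its resonant diagonal tested against $(Z^{\flat}_t)^2$. I expect the bound produced by that proposition to read schematically
\[ |\mathfrak{K}_{3,t}(\mathbb{W}^2_t,\mathbb{W}^2_t,Z^{\flat}_t,Z^{\flat}_t)| \lesssim \langle t\rangle^{-1-\delta}\, \|\mathbb{W}^2_t\|^2_{\VV^{-1-\kappa}} \|Z^{\flat}_t\|^2_{H^{1/2-\kappa}}, \]
with the time weight coming from the two factors of $J_t$ (each contributing $\langle t\rangle^{-1/2}$ as in Lemma~\ref{lemma:impr-reg-drift}). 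Integrating in $t\in[0,T]$ gives a finite stochastic prefactor $Q_T$ times $\sup_t\|Z^{\flat}_t\|^2_{H^{1/2-\kappa}} \lesssim \|Z_T\|^2_{H^{1/2-\kappa}}$, which is then absorbed by the same interpolation/Young scheme using $Z_T=K_T+\lambda\mathbb{W}^{[3]}_T$. Summing the three contributions yields the claim, with $\delta$ chosen small enough for the interpolations to close and $\varepsilon$ inherited from Young's inequality.
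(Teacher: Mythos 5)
Your proposal is correct and matches the paper's proof in all essentials: the same three-way split of $\Upsilon^{(6)}_T$, duality plus the Table~\ref{table:reg} regularity for the $\mathbb{W}^{2 \diamond [3]}_T K_T$ term, the $L^1$-in-time control of $\mathbb{W}^{\langle 2 \rangle \diamond \langle 2 \rangle}$ paired against $(Z^{\flat}_t)^2$ for the second, and the $\langle t \rangle^{-1-\delta}$-weighted commutator bound of Proposition~\ref{prop:squarecomm} for the third (which the paper re-derives inline via Propositions~\ref{paraproductleibniz}, \ref{adjointparaproduct} and \ref{commutatorestimate} rather than citing). The one step to execute carefully is the absorption in your Term~3: using the proposition as a black box leaves $\lambda^2 Q_T \| Z_T \|^2_{H^{1/2-\delta}}$ with a random, unbounded coefficient, which is homogeneous of the same degree as the coercive term and so cannot be handled by Young's inequality alone — after writing $Z_T = K_T - \lambda \mathbb{W}_T^{[3]}$ you need the interpolation $\| K_T \|_{H^{1/2-\delta}} \lesssim \| K_T \|^{\theta}_{H^{1-\delta}} \| K_T \|^{1-\theta}_{L^4}$ with $\theta < 1$ to gain room, whereas the paper sidesteps this by arranging its commutator estimates to output the mixed product $\| Z^{\flat}_t \|_{L^4} \| Z^{\flat}_t \|_{H^{1/2-\delta}}$ directly.
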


\begin{proof}
  We start by observing that
  \[ \lambda^2 \left| \bint (\mathbb{W}^2_T \circ \mathbb{W}_T^{[3]} + 2
     \gamma_T W_T) K_T \right| \lesssim \lambda^2 \| \mathbb{W}^{2 \diamond
     [3]}_T \|_{W^{- 1 / 2 - \varepsilon, 2}} \| K_T \|_{W^{1 / 2 +
     \varepsilon, 2}} . \]
  and using Lemma~\ref{lemma:renormalized-terms} and
  eq.~{\eqref{eq:control-K}} we have this term under control. Next split
  \[ \left| \frac{\lambda^2}{2} \mathbb{E} \int_0^T \bint [(J_t
     (\mathbb{W}^2_t \succ Z^{\flat}_t))^2 + 2 \dot{\gamma}_t (Z^{\flat}_t)^2]
     \mathd t \right| \]
  \[ \lesssim \frac{\lambda^2}{2} \left| \int^T_0 \bint ((J_t \mathbb{W}^2_t
     \succ Z^{\flat}_t))^2 - (J_t \mathbb{W}^2_t \circ J_t \mathbb{W}^2_t)
     (Z^{\flat}_t)^2 \mathd t \right| + \lambda^2 \left| \int_0^T \bint
     \mathbb{W}^{\langle 2 \rangle \diamond \langle 2 \rangle}_t
     (Z^{\flat}_t)^2 \mathd t \right| \]
  and note that $\langle t \rangle^{1 / 2} J_t$ satisfies the assumptions of
  Proposition~\ref{paraproductleibniz} with $m = - 1$. Therefore
  \[ \| J_t (\mathbb{W}^2_t \succ Z^{\flat}_t) - (J_t \mathbb{W}^2_t) \succ
     Z^{\flat}_t \|_{H^{1 / 4 - 2 \delta}} \lesssim \langle t \rangle^{- 1 /
     2} \| \mathbb{W}^2_t \|_{B_{6, \infty}^{- 1 - \delta}} \| Z^{\flat}_t 
     \|_{B_{3, 3}^{- 1 / 4 - \delta}}, \]
  and by Proposition~\ref{multiplierestimate},
  \[ \| J_t (\mathbb{W}^2_t \succ Z^{\flat}_t) \|_{H^{- 2 \delta}} + \| J_t
     (\mathbb{W}^2_t \succ Z^{\flat}_t) \|_{H^{- 2 \delta}} \lesssim \langle t
     \rangle^{- 1 / 2 - \delta} \| \mathbb{W}^2_t \|_{B_{6, \infty}^{- 1 -
     \delta}} \| Z^{\flat}_t  \|_{B_{3, 3}^0} . \]
  and by binomial formula
  \[ \left| \frac{\lambda^2}{2} \int_0^T \bint (J_t (\mathbb{W}^2_t \succ
     Z^{\flat}_t))^2 \mathd t - \frac{\lambda^2}{2} \int_0^T \bint ((J_t
     \mathbb{W}^2_t) \succ Z^{\flat}_t)^2 \mathd t \right| \]
  \[ \lesssim \lambda^2 \sup_{t \leqslant T}  [\| Z^{\flat}_t  \|_{B_{3, 3}^0}
     \| Z^{\flat}_t  \|_{B_{3, 3}^{1 / 4 - \delta}}] \int_0^T \|
     \mathbb{W}^2_t \|^2_{B_{6, \infty}^{- 1 - \delta}} \frac{\mathd
     t}{\langle t \rangle^{1 + \delta}} . \]
  \[ \lesssim \lambda^2 \sup_{t \leqslant T}  [\| Z^{\flat}_t  \|_{L^4} \|
     Z^{\flat}_t  \|_{H^{1 / 2 - \delta}}] \int_0^T \| \mathbb{W}^2_t
     \|^2_{B_{6, \infty}^{- 1 - \delta}} \frac{\mathd t}{\langle t \rangle^{1
     + \delta}} \]
  Which can be easily estimated by Young's inequality. From
  Proposition~\ref{adjointparaproduct} and Proposition~\ref{besovembedding}
  \[ \left| \frac{\lambda^2}{2} \bint ((J_t \mathbb{W}^2_t \succ
     Z^{\flat}_t))^2 - \frac{\lambda^2}{2} \bint (J_t \mathbb{W}^2_t \succ
     Z^{\flat}_t) \circ J_t \mathbb{W}^2_t Z^{\flat}_t \right| \lesssim
     \lambda^2 \| J_t \mathbb{W}^2_t \|_{B_{6, \infty}^{- 1 - \delta}}^2  \|
     Z^{\flat}_t \|_{B_{3, \infty}^{- 1 / 4 - \delta}}  \| Z^{\flat}_t
     \|_{B_{3, 3}^0} . \]
  and by interpolation
  \[ \lesssim \lambda^2 \| J_t \mathbb{W}^2_t \|_{B_{6, \infty}^{- 1 -
     \delta}}^2  \| Z^{\flat}_t  \|_{L^4} \| Z^{\flat}_t  \|_{H^{1 / 2 -
     \delta}} \]
  The integrability of this term in time follows from the inequality
  \[ \| J_t \mathbb{W}^2_t \|_{B_{6, \infty}^{- 1 - \delta}}^2 \lesssim
     \langle t \rangle^{- 1 - 2 \delta} \| \mathbb{W}^2_t \|_{B_{6, \infty}^{-
     1 - \delta}}^2 . \]
  To prove this last bound, recall that $t^{1 / 2} J_t$ is a Fourier
  multiplier with symbol
  \[ \langle k \rangle^{- 1} (- \hat{\rho}' (\langle k \rangle / t) \langle k
     \rangle / t)^{1 / 2} = \langle k \rangle^{- 1} \eta (\langle k \rangle /
     t), \]
  where $\eta$ is a smooth function supported in an annulus of radius $1$.
  Using this observation and applying Proposition~\ref{multiplierestimate}
  gives the estimate. Applying Proposition~\ref{commutatorestimate} and
  Proposition~\ref{besovembedding} we get
  \[ \lambda^2 \| (J_t \mathbb{W}^2_t \succ Z^{\flat}_t) \circ J_t
     \mathbb{W}^2_t - (J_t \mathbb{W}^2_t \circ J_t \mathbb{W}^2_t)
     (Z^{\flat}_t) \|_{B_{3 / 2, \infty}^0} \lesssim \lambda^2 \| J_t
     \mathbb{W}^2_t \|_{B_{6, \infty}^{- 1 - \delta}}^2  \| Z^{\flat}_t
     \|_{B_{3, \infty}^{3 \delta}} . \]
  and after using duality and interpolation we obtain
  \[ \frac{\lambda^2}{2} \left| \int^T_0 \bint ((J_t \mathbb{W}^2_t \succ
     Z^{\flat}_t))^2 - (J_t \mathbb{W}^2_t \circ J_t \mathbb{W}^2_t)
     (Z^{\flat}_t)^2 \mathd t \right| \]
  \[ \lesssim \lambda^2 \sup_{t \leqslant T}  [\| Z^{\flat}_t  \|_{L^4} \|
     Z^{\flat}_t  \|_{H^{1 / 2 - \delta}}] \int_0^T \| \mathbb{W}^2_t
     \|^2_{B_{6, \infty}^{- 1 - \delta}} \frac{\mathd t}{\langle t \rangle^{1
     + \delta}} \]
  \[ \lesssim \varepsilon \left( \frac{1}{2} \sup_{t \leqslant T} \|
     Z^{\flat}_t  \|_{H^{1 / 2 - \delta}}^2 + \lambda \| Z_T \|_{L^4}^4
     \right) + C (\varepsilon, \delta) \lambda^7 \left( \int_0^T \|
     \mathbb{W}^2_t \|^2_{B_{6, \infty}^{- 1 - \delta}} \frac{\mathd
     t}{\langle t \rangle^{1 + \delta}} \right)^4 \]
  \[ \lesssim \varepsilon \left( \frac{1}{2} \| Z_T \|_{H^{1 / 2 - \delta}}^2
     + \lambda \| Z_T \|_{L^4}^4 \right) + C (\varepsilon, \delta) \lambda^7
     \int_0^T \| \mathbb{W}^2_t \|^8_{B_{6, \infty}^{- 1 - \delta}}
     \frac{\mathd t}{\langle t \rangle^{1 + \delta}} . \]
  Finally we have
  \[ \lambda^2 \left| \int_0^T \bint \mathbb{W}^{\langle 2 \rangle \diamond
     \langle 2 \rangle}_t (Z^{\flat}_t)^2 \mathd t \right| \lesssim \lambda^2
     \left[ \int_0^T \| \mathbb{W}^{\langle 2 \rangle \diamond \langle 2
     \rangle}_t \|_{L^4} \mathd t \right] \| Z_T \|_{H^{\varepsilon}} \| Z_T
     \|_{L^4} \]
  \[ \leqslant C (\varepsilon) \lambda^7 \left[ \int_0^T \|\mathbb{W}^{\langle
     2 \rangle \diamond \langle 2 \rangle}_t \|_{L^4} \mathd t \right]^4 +
     \lambda \varepsilon \| Z_T \|_{L^4}^4 + \varepsilon \| Z_T \|_{H^{1 / 2 -
     \delta}}^2 . \]
  Using eq.~{\eqref{eq:bound-Z-K}} to control $\| Z_T \|_{H^{1 / 2 - \delta}}$
  in terms of $K_T$ we obtain the claim.
\end{proof}

\section{Stochastic estimates\label{sec:stochastic}}

In this section we close our argument proving the following lemmas which give
uniform estimates as $T \rightarrow \infty$ of the stochastic terms appearing
in our analytic estimates.

\begin{lemma}
  \label{lemma:renormalized-terms}For any $\varepsilon > 0$ and any $p > 1, r
  < \infty, q \in [1, \infty]$, there exists a constant $C (\varepsilon, p,
  q)$ which does not depend on $\Lambda$ such that
  \begin{equation}
    \sup_T \mathbb{E} [\| W_T \circ W_T^{[3]} \|_{B^{- \varepsilon}_{r, q}}^p]
    \leqslant C (\varepsilon, p, q) . \label{eq:stoch-est-w-w3}
  \end{equation}
  Moreover there exists a function $\gamma_t \in C^1 (\mathbb{R}_+,
  \mathbb{R})$ such that for any $\varepsilon > 0$ and any $p > 1$,
  \begin{equation}
    \sup_T \mathbb{E} [\| (\mathbb{W}^2_T \circ \mathbb{W}_T^{[3]} - 2
    \gamma_T W_T) \|_{B^{- 1 / 2 - \varepsilon}_{r, q}}^p] \leqslant C
    (\varepsilon, p, q), \label{eq:renorm-1}
  \end{equation}
  \begin{equation}
    \mathbb{E} \left[ \left( \int^{\infty}_0 \| J_t \mathbb{W}^2_t \circ J_t
    \mathbb{W}^2_t - 2 \dot{\gamma}_t \|_{B^{- \varepsilon}_{r, q}} \mathd t
    \right)^p \right] \leqslant C (\varepsilon, p, q) . \label{eq:renorm-2}
  \end{equation}
  \[ \sup_t \mathbb{E} [\| J_t \mathbb{W}^2_t \circ J_t \mathbb{W}^2_t - 2
     \dot{\gamma}_t \|_{B^{- \varepsilon}_{r, q}}] \leqslant C (\varepsilon,
     p, q) \]
  and
  \begin{equation}
    | \gamma_t | + \langle t \rangle | \dot{\gamma}_t | \lesssim 1 + \log
    \langle t \rangle, \qquad t \geqslant 0. \label{eq:bounds-gamma}
  \end{equation}
  Furthermore $\gamma$ is independent of $\Lambda$. By Besov embedding
  H{\"o}lder norms of these objects are also uniformly bounded in $T$ (but not
  uniformly in $\Lambda$).
\end{lemma}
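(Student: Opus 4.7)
The plan is to treat each estimate by Wiener chaos decomposition, BDG plus hypercontractivity on each Littlewood--Paley block, and explicit spectral computations to identify $\gamma_t$ and bound the counterterms.

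First, I would use the martingale representation $Y_t = \int_0^t J_s \mathd X_s$ and Itô's formula to write $W_t$, $\mathbb{W}_t^2$, $\mathbb{W}_t^3$, $\mathbb{W}_t^{\langle 3\rangle}$ and $\mathbb{W}_t^{[3]}$ as iterated stochastic integrals against the cylindrical Brownian motion $X$, so that each lives in a fixed finite Wiener chaos. The product $W_T \circ W_T^{[3]}$ then decomposes into chaoses of orders $0$ and $2$; direct spectral computation of the kernels, using that the sum of regularities $(-1/2-\kappa) + (1/2-\kappa)$ is strictly $> -1$, gives $\|\Delta_q(W_T\circ W_T^{[3]})\|_{L^2(\mathbb{P})}^2 \lesssim 2^{-2\varepsilon q}$ uniformly in $T$, which yields \eqref{eq:stoch-est-w-w3} via hypercontractivity together with a standard Besov Kolmogorov criterion (this is the easy term).

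For \eqref{eq:renorm-1}, the product $\mathbb{W}_T^2 \circ \mathbb{W}_T^{[3]}$ lives in chaoses $1, 3, 5$, and the chaos-$3$ and chaos-$5$ parts are handled by the same direct estimate as above. The chaos-$1$ component must, by spatial translation invariance, be of the form $2\gamma_T W_T$ for a deterministic scalar $\gamma_T$; I would \emph{define} $\gamma_T$ by this identity. Applying Itô's formula to the process $t \mapsto \mathbb{W}_t^2 \circ \mathbb{W}_t^{[3]}(x)$ at each point $x$, the drift is precisely $2\dot\gamma_t W_t(x) + (\text{chaos-}3\text{ mean-zero correction})$, so that $\Delta_q(\mathbb{W}_t^2\circ\mathbb{W}_t^{[3]} - 2\gamma_t W_t)(x)$ is a martingale in $t$ whose bracket is computable. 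An explicit Fourier calculation shows
\[
\dot\gamma_t \;=\; \frac{1}{2|\Lambda|}\sum_{k \in (L^{-1}\mathbb{Z})^d} \frac{\sigma_t^2(k)\,\rho_t^2(k)}{\langle k\rangle^4} \,\cdot\, (\text{combinatorial constant}),
\]
which by support of $\sigma_t$ and Riemann sum comparison yields $|\dot\gamma_t| \lesssim 1/\langle t\rangle$ and hence $|\gamma_t| \lesssim \log\langle t\rangle$, establishing \eqref{eq:bounds-gamma}; note that these bounds are $| \Lambda|$-independent because the integrand depends only on the product of a smooth, compactly supported cutoff with $\langle k\rangle^{-4}$. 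The Besov bound \eqref{eq:renorm-1} then follows by applying BDG to the martingale $\Delta_q[\mathbb{W}_T^2\circ\mathbb{W}_T^{[3]} - 2\gamma_T W_T](x)$: this reduces the $p$-th moment to the $p/2$-th moment of the quadratic variation, and since we remain in a finite chaos Nelson hypercontractivity allows us to bound higher moments by the second moment, which in turn is an explicit spectral sum scaling like $2^{-q(1+2\varepsilon)}$.

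For \eqref{eq:renorm-2} I would use the same recipe applied to $J_t \mathbb{W}_t^2 \circ J_t \mathbb{W}_t^2$, whose chaos-$0$ part is exactly $2\dot\gamma_t$ by construction, so $J_t \mathbb{W}_t^2 \circ J_t \mathbb{W}_t^2 - 2\dot\gamma_t$ is purely chaos-$4$ and mean zero. The pointwise-in-$t$ bound on its $B^{-\varepsilon}_{r,q}$ norm follows by hypercontractivity and a spectral computation showing the second moment of each Littlewood--Paley block decays like $\langle t\rangle^{-1-\delta}2^{-2\varepsilon q}$; integration in $t$ then gives \eqref{eq:renorm-2} and the uniform-in-$t$ statement (the factor $\langle t\rangle^{-1-\delta}$ is the same mechanism giving $|\dot\gamma_t| \lesssim 1/\langle t\rangle$). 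The main obstacle is the second step: one must verify that the specific counterterm $2\gamma_t W_t$ (whose value is determined by translation invariance) is \emph{exactly} the chaos-$1$ drift in the Itô expansion, so that the renormalized object lies in a strictly higher chaos and benefits from an improved regularity of $-1/2-\varepsilon$ rather than the naive $-1-\varepsilon$; this probabilistic cancellation is the core of the $\Phi^4_3$ renormalization and must be tracked carefully through the paraproduct commutator calculus of Appendix~\ref{sec:appendix-para}.
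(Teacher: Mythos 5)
Your overall architecture---iterated stochastic integrals against $\mathd w_s(k)=\langle k\rangle^{-1}\sigma_s(k)\mathd B^k_s$, chaos decomposition of each Littlewood--Paley block, BDG plus hypercontractivity to reduce $p$-th moments to second moments, and explicit spectral sums---is exactly the paper's. But the step you yourself flag as "the main obstacle" is resolved by a false claim, and this is a genuine gap. The first-chaos component of $\mathbb{W}^2_T\circ\mathbb{W}^{[3]}_T$ is \emph{not} of the form $2\gamma_T W_T$ for a deterministic scalar: translation invariance only forces it to be of the form $\sum_{k}e^{ik\cdot x}\int_0^T A_T(s,k)\,\mathd w_s(k)$ with a kernel $A_T(s,k)$ depending nontrivially on both the time and the frequency variable, whereas $2\gamma_T W_T$ corresponds to the constant kernel $A_T(s,k)\equiv 2\gamma_T$. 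No choice of scalar kills the first chaos, so "defining $\gamma_T$ by this identity" is impossible and the renormalized object does \emph{not} lie in a strictly higher chaos. What the paper actually does is set $\gamma_T:=\tfrac12 A_T(0,0)$ (eq.~\eqref{eq:choice-gamma}), which isolates the $\log T$ divergence, and then proves by comparing the two momentum sums that the residual kernel satisfies $|A_T(s,k)-2\gamma_T|\lesssim 1+\log\langle k\rangle$ uniformly in $T$ and $s$. That logarithmically growing but $T$-uniform kernel is what yields $I_{2,q}\lesssim q\,2^{q}$ and hence regularity $-1/2-\varepsilon$; the improvement over $-1-\varepsilon$ has nothing to do with chaos order (the fifth and third chaoses independently satisfy $I_{0,q},I_{1,q}\lesssim 2^q$). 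Without this kernel-comparison estimate your argument cannot close, because the quantity you plan to subtract does not exist.

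Several smaller inaccuracies stem from the same misconception that renormalization removes whole chaos components. The product $W_T\circ W_T^{[3]}$ lives in chaoses $4,2,0$, not only $0$ and $2$. The object $J_t\mathbb{W}^2_t\circ J_t\mathbb{W}^2_t-2\dot\gamma_t$ is not "purely chaos-$4$": the chaos-$0$ part is indeed exactly cancelled (this part of your claim is correct, and is the one place where the subtraction is exact), but a chaos-$2$ component survives and needs its own estimate, namely $I_{1,q}(t)\lesssim\langle t\rangle^{-4}2^{2q}\mathbbm{1}_{2^q\lesssim t}$, which is essential for the time-integrability in \eqref{eq:renorm-2}. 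Finally, your candidate formula for $\dot\gamma_t$ is a single momentum sum of tadpole type, whereas the correct counterterm is the two-loop ("sunset") expression $\dot\gamma_t\propto\sum_{q_1,q_2}\rho_t(q_1)^2\rho_t(q_2)^2\sigma_t(q_1+q_2)^2\langle q_1\rangle^{-2}\langle q_2\rangle^{-2}\langle q_1+q_2\rangle^{-2}$; your single sum would give $\dot\gamma_t=O(\langle t\rangle^{-2})$ and a bounded $\gamma_t$, contradicting the logarithmic divergence that makes the renormalization necessary in the first place and that is recorded in \eqref{eq:bounds-gamma}.
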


\begin{proof}
  We will concentrate in proving the bounds on the renormalized terms in
  eq.~{\eqref{eq:renorm-1}} and {\eqref{eq:renorm-2}} and leave to the reader
  to fill the details for the easier term in eq.~{\eqref{eq:stoch-est-w-w3}}.
  Recall the representation of $W_t = Y_t$ in terms of the family of Brownian
  motions $(B_t^n)_{t, n}$ in eq.~{\eqref{eq:rep-Y}}. Wick's products of the
  Gaussian field $W_T$ can be represented as iterated stochastic integrals
  wrt. $(B_t^n)_{t, n}$. In particular, if we let $\mathd w_s (k) = \langle k
  \rangle^{- 1} \sigma_s (k) \mathd B_s^k$, we have
  \[ \mathbb{W}^2_T (x) = 12 \llbracket W_T^2 \rrbracket (x) = 24 \sum_{k_1,
     k_2} e^{i (k_1 + k_2) \cdot x} \int_0^T \int_0^{s_2} \mathd w_{s_1} (k_1)
     \mathd w_{s_2} (k_2), \]
  \[ \mathbb{W}^{[3]}_T (x) = 24 \sum_{k_1, k_2, k_3} e^{i k_{(123)} \cdot x}
     \int_0^T \int_0^{s_3} \int_0^{s_2} \left( \int_{s_3}^T \frac{\sigma_u^2
     (k_{(123)})}{\langle k_{(123)} \rangle^2} \mathd u \right) \mathd w_{s_1}
     (k_1) \mathd w_{s_2} (k_2) \mathd w_{s_3} (k_3), \]
  where $k_{(123)} \assign k_1 + k_2 + k_3$. Now products of iterated
  integrals can be decomposed in sums of iterated integrals and we get
  \begin{equation}
    \begin{array}{lll}
      \Delta_q (\mathbb{W}^{2 \diamond [3]}_T) (x) & = & \Delta_q
      (\mathbb{W}^2_T \circ \mathbb{W}^{[3]}_T - 2 \gamma_T W_T) (x)\\
      & = & \sum_{k_1, \ldots, k_5} \int_{A_T^5} G^{2 \diamond [3]}_{0, q}
      ((s, k)_{1 \cdots 5}) \mathd w_{s_1} (k_1) \cdots \mathd w_{s_5} (k_5)\\
      &  & + \sum_{k_1, \ldots, k_3} \int_{A_T^3} G^{2 \diamond [3]}_{1, q}
      ((s, k)_{1 \cdots 3}) \mathd w_{s_1} (k_1) \cdots \mathd w_{s_3} (k_3)\\
      &  & + \sum_{k_1} \int_{A_T^1} G^{2 \diamond [3]}_{2, q} ((s, k)_1)
      \mathd w_{s_1} (k_1),
    \end{array} \label{eq:W2o3}
  \end{equation}
  where $A^n_T \assign \{ 0 \leqslant s_1 < \cdots < s_n \leqslant T \}
  \subseteq [0, T]^n$ and where the deterministic kernels are given by
  \begin{eqnarray*}
    G^{2 \diamond [3]}_{0, q} ((s, k)_{1 \cdots 5}) & \assign & (24^2) K_q
    (k_{(1 \cdots 5)}) e^{i (k_{(1 \cdots 5)}) \cdot x} \sum_{\sigma \in
    \tmop{Sh} (2, 3)} \sum_{i \sim j} \times\\
    &  & \times K_i (k_{(\sigma_1 \sigma_2)}) K_j (k_{(\sigma_3 \sigma_4
    \sigma_5)}) \left( \int_{s_{\sigma_5}}^T \frac{\sigma_u (k_{(\sigma_3
    \sigma_4 \sigma_5)})^2}{\langle k_{(\sigma_3 \sigma_4 \sigma_5)}
    \rangle^2} \mathd u \right),\\
    G^{2 \diamond [3]}_{1, q} ((s, k)_{1 \cdots 3}) & \assign & (24^2) K_q
    (k_{(1 \cdots 3)}) e^{i (k_{(1 \cdots 3)}) \cdot x} \sum_{\sigma \in
    \tmop{Sh} (1, 2)} \sum_{i \sim j} \sum_p \int_0^T \mathd r \frac{\sigma_r
    (p)^2}{\langle p \rangle^2} \times\\
    &  & \times K_i (k_{\sigma_1} + q) K_j (k_{(\sigma_2 \sigma_3)} - q)
    \left( \int_{s_{\sigma_3} \vee r}^T \frac{\sigma_u (k_{(\sigma_2
    \sigma_3)} - p)^2}{\langle k_{(\sigma_2 \sigma_3)} - p \rangle^2} \mathd u
    \right),\\
    G^{2 \circ [3]}_{2, q} ((s, k)_1) & \assign & (24^2) K_q (k_1) e^{i k_1
    \cdot x} \sum_{i \sim j} \sum_{p_1, p_2} \int_0^T \mathd r_1 \int_0^T
    \mathd r_2 \frac{\sigma_{r_1} (p_1)^2}{\langle p_1 \rangle^2}
    \frac{\sigma_{r_2} (p_2)^2}{\langle p_2 \rangle^2} \times\\
    &  & \times K_i (p_1 + p_2) K_j (k_1 - p_1 - p_2) \left( \int_{r_1 \vee
    r_2 \vee s_1}^T \frac{\sigma_u (k_1 - p_1 - p_2)^2}{\langle k_1 - p_1 -
    p_2 \rangle^2} \mathd u \right),\\
    G^{2 \diamond [3]}_{2, q} ((s, k)_1) & \assign & G^{2 \circ [3]}_{2, q}
    ((s, k)_1) - 2 \gamma_T K_q (k_1) e^{i k_1 \cdot x},
  \end{eqnarray*}
  where $\tmop{Sh} (k, l)$ is the set of permutations of $\{ 1, \ldots, k + l
  \}$ keeping the orders $\sigma (1) < \cdots < \sigma (k)$ and $\sigma (k +
  1) < \cdots < \sigma (k + l)$ and where, for any symbol $z$, we denote with
  expression of the form $z_{1 \cdots n}$ the vector $(z_1, \ldots, z_n)$.
  Estimation of $\Delta_q (\mathbb{W}^2_T \circ \mathbb{W}^{[3]}_T) (x)$
  reduces then to estimate each of the three iterated integrals using BDG
  inequalities to get, for any $p \geqslant 2$,
  \[ I_{0, q} = \left\{ \mathbb{E} \left[ \left| \sum_{k_1, \ldots, k_5}
     \int_{A_T^5} G^{2 \diamond [3]}_{0, q} ((s, k)_{1 \cdots 5}) \mathd
     w_{s_1} (k_1) \cdots \mathd w_{s_5} (k_5) \right|^{2 p} \right]
     \right\}^{1 / p} \]
  \[ \lesssim \mathbb{E} \left[ \left| \sum_{k_1, \ldots, k_5} \int_{A_T^5}
     G^{2 \diamond [3]}_{0, q} ((s, k)_{1 \cdots 5}) \mathd w_{s_1} (k_1)
     \cdots \mathd w_{s_5} (k_5) \right|^2 \right] \]
  \[ \lesssim \sum_{k_1, \ldots, k_5} \int_{A_T^5} | G^{2 \diamond [3]}_{0, q}
     ((s, k)_{1 \cdots 5}) |^2 \frac{\sigma_{s_1} (k_1)^2}{\langle k_1
     \rangle^2} \cdots \frac{\sigma_{s_5} (k_5)^2}{\langle k_5 \rangle^2}
     \mathd s_1 \cdots \mathd s_5 . \]
  The kernel $G^{2 \diamond [3]}_{0, q} ((s, k)_{1 \cdots 5})$ being a
  symmetric function of its argument, we can simplify this expression into an
  integral over $[0, T]^5$:
  \[ \  \]
  \[ I_{0, q} \lesssim \sum_{k_1, \ldots, k_5} \int_{[0, T]^5} | G^{2 \diamond
     [3]}_{0, q} ((s, k)_{1 \cdots 5}) |^2 \frac{\sigma_{s_1} (k_1)^2}{\langle
     k_1 \rangle^2} \cdots \frac{\sigma_{s_5} (k_5)^2}{\langle k_5 \rangle^2}
     \mathd s_1 \cdots \mathd s_5 . \]
  Under the measure $\frac{\sigma_{s_5} (k_5)^2}{\langle k_5 \rangle^2} \mathd
  s_5$, we have
  \[ \left| \int_{s_{\sigma_5}}^T \frac{\sigma_u (k_{(\sigma_3 \sigma_4
     \sigma_5)})^2}{\langle k_{(\sigma_3 \sigma_4 \sigma_5)} \rangle^2} \mathd
     u \right| \lesssim \frac{1}{\langle k_{\sigma_5} \rangle^2} . \]
  Therefore with some standard estimates we can reduce us to consider
  \[ I_{0, q} \lesssim \sum_{k_1, \ldots, k_5} \int_{[0, T]^5} \frac{K_q
     (k_{(1 \cdots 5)})^2}{\langle k_5 \rangle^4} \mathbbm{1}_{k_{(12)} \sim
     k_{(345)}} \frac{\sigma_{s_1} (k_1)^2}{\langle k_1 \rangle^2} \cdots
     \frac{\sigma_{s_5} (k_5)^2}{\langle k_5 \rangle^2} \mathd s_1 \cdots
     \mathd s_5 \]
  \[ \lesssim \sum_{k_1, \ldots, k_5} \int_{[0, T]^5} \frac{K_q (k_{(1 \cdots
     5)})^2}{\langle k_5 \rangle^4} \mathbbm{1}_{k_{(12)} \sim k_{(345)}}
     \frac{\sigma_{s_1} (k_1)^2}{\langle k_1 \rangle^2} \cdots
     \frac{\sigma_{s_5} (k_5)^2}{\langle k_5 \rangle^2} \mathd s_1 \cdots
     \mathd s_5 \]
  \[ \lesssim \sum_{k_1, \ldots, k_5} \frac{K_q (k_{(1 \cdots 5)})^2}{\langle
     k_5 \rangle^4} \mathbbm{1}_{k_{(12)} \sim k_{(345)}} \frac{1}{\langle k_1
     \rangle^2} \cdots \frac{1}{\langle k_5 \rangle^2} \]
  \[ \lesssim \sum_{p_1, p_2} \mathbbm{1}_{p_1 \sim p_2} K_q (p_1 + p_2)^2
     \sum_{k_1, \ldots, k_5} \frac{1}{\langle k_5 \rangle^4}
     \mathbbm{1}_{k_{(12)} = p_1, k_{(345)} = p_2} \frac{1}{\langle k_1
     \rangle^2} \cdots \frac{1}{\langle k_5 \rangle^2} \]
  \[ \lesssim \sum_{p_1, p_2} \mathbbm{1}_{p_1 \sim p_2} K_q (p_1 + p_2)^2
     \frac{1}{\langle p_1 \rangle} \frac{1}{\langle p_2 \rangle^4} \lesssim
     \sum_{p_1, r} K_q (r)^2 \frac{1}{\langle p_1 \rangle} \frac{1}{\langle
     p_1 + r \rangle^4} \lesssim \sum_r K_q (r)^2 \frac{1}{\langle r
     \rangle^2} \lesssim 2^q . \]
  Now by similar reasoning we also have
  \[ | G^{2 \diamond [3]}_{1, q} ((s, k)_{1 \cdots 3}) | \lesssim \sum_{\sigma
     \in \tmop{Sh} (1, 2)} | K_q (k_{(1 \cdots 3)}) | \sum_{i \sim j} \sum_p
     \int_0^T \mathd r \frac{\sigma_r (p)^2 | K_i (k_{\sigma_1} + p) K_j
     (k_{(\sigma_2 \sigma_3)} - p) |}{\langle p \rangle^2 \langle k_{\sigma_1}
     + p \rangle^2} \]
  \[ \lesssim \sum_{\sigma \in \tmop{Sh} (1, 2)} \frac{| K_q (k_{(1 \cdots
     3)}) |}{\langle k_{\sigma_1} \rangle} \]
  so
  \[ I_{1, q} = \left\{ \mathbb{E} \left[ \left| \sum_{k_1, \ldots, k_3}
     \int_{A_T^3} G^{2 \diamond [3]}_{1, q} ((s, k)_{1 \cdots 5}) \mathd
     y_{s_1} (k_1) \cdots \mathd y_{s_3} (k_3) \right|^{2 p} \right]
     \right\}^{1 / p} \]
  \[ \lesssim \sum_{k_1, \ldots, k_3} \int_{[0, T]^5} \left| \sum_{\sigma \in
     \tmop{Sh} (1, 2)} \frac{| K_q (k_{(1 \cdots 3)}) |}{\langle k_{\sigma_1}
     \rangle} \right|^2 \frac{\sigma_{s_1} (k_1)^2}{\langle k_1 \rangle^2}
     \cdots \frac{\sigma_{s_3} (k_3)^2}{\langle k_3 \rangle^2} \mathd s_1
     \cdots \mathd s_3 \]
  \[ \lesssim \sum_{k_1, \ldots, k_3} \frac{| K_q (k_{(1 \cdots 3)})
     |^2}{\langle k_1 \rangle^4 \langle k_2 \rangle^2 \langle k_3 \rangle^2}
     \lesssim \sum_r \frac{K_q (r)^2}{\langle r \rangle^2} \lesssim 2^q . \]
  Finally, we note that the same strategy cannot be applied to the first
  chaos, since the kernel $G^{2 \diamond [3]}_{2, q}$ cannot be uniformly
  bounded. We let
  \[ \begin{array}{lll}
       A_T (s_1, k_1) & \assign & (24^2) \sum_{i \sim j} \sum_{q_1, q_2}
       \int_0^T \mathd r_1 \int_0^T \mathd r_2 \frac{\sigma_{r_1}
       (q_1)^2}{\langle q_1 \rangle^2} \frac{\sigma_{r_2} (q_2)^2}{\langle q_2
       \rangle^2} \times\\
       &  & \times K_i (q_1 + q_2) K_j (k_1 - q_1 - q_2) \left( \int_{r_1
       \vee r_2 \vee s_1}^T \frac{\sigma_u^2 (k_1 - q_1 - q_2)}{\langle k_1 -
       q_1 - q_2 \rangle^2} \mathd u \right),
     \end{array} \]
  so
  \[ G^{2 \diamond [3]}_{2, q} ((s, k)_1) = K_q (k_1) e^{i k_1 \cdot x} [A_T
     (s_1, k_1) - 2 \gamma_T] . \]
  Observe that
  \[ \begin{array}{lll}
       A_T (0, 0) & = & (12^2 \cdot 2) \sum_{q_1, q_2} \int_0^T \mathd r_1
       \int_0^T \mathd r_2 \frac{\sigma_{r_1} (q_1)^2}{\langle q_1 \rangle^2}
       \frac{\sigma_{r_2} (q_2)^2}{\langle q_2 \rangle^2} \times\\
       &  & \times \int_{r_1 \vee r_2}^T \frac{\sigma_u^2 (q_1 +
       q_2)}{\langle q_1 + q_2 \rangle^2} \mathd u \sum_{i \sim j} K_i (q_1 +
       q_2) K_j (- q_1 - q_2) .
     \end{array} \]
  We choose $\gamma_T$ as
  \begin{equation}
    \gamma_T = A_T (0, 0) = (12^2 \cdot 2) \sum_{q_1, q_2} \int_0^T \mathd u
    \int_0^u \mathd r_1 \int_0^u \mathd r_2 \frac{\sigma_{r_1}
    (q_1)^2}{\langle q_1 \rangle^2} \frac{\sigma_{r_2} (q_2)^2}{\langle q_2
    \rangle^2} \frac{\sigma_u^2 (q_1 + q_2)}{\langle q_1 + q_2 \rangle^2}
    \label{eq:choice-gamma}
  \end{equation}
  where we used the fact that for all $q \in \mathbb{R}^d$ we have $\sum_{i
  \sim j} K_i (q) K_j (q) = 1$, since $\int f \circ g = \int f g$. Note that,
  as claimed,
  \[ | \gamma_T | \lesssim \sum_{q_1, q_2} \frac{\mathbbm{1}_{| q_1 |, | q_2
     |, | q_1 + q_2 | \lesssim T}}{\langle q_1 \rangle^2 \langle q_2 \rangle^2
     \langle q_1 + q_2 \rangle^2} \lesssim 1 + \log \langle T \rangle . \]
  Now
  \[ A_T (s_1, k_1) - 2 \gamma_T = (24^2 \cdot 6) \sum_{q_1, q_2} \int_0^T
     \mathd r_1 \mathd r_2 \frac{\sigma_{r_1} (q_1)^2}{\langle q_1 \rangle^2}
     \frac{\sigma_{r_2} (q_2)^2}{\langle q_2 \rangle^2} \sum_{i \sim j} K_i
     (q_1 + q_2) \times \]
  \[ \times \left( K_j (k_1 - q_1 - q_2) \int_{s_1 \vee r_1 \vee r_2}^T
     \frac{\sigma_u^2 (k_1 - q_1 - q_2)}{\langle k_1 - q_1 - q_2 \rangle^2}
     \mathd u - K_j (q_1 + q_2) \int_{r_1 \vee r_2}^T \frac{\sigma_u^2 (q_1 +
     q_2)}{\langle q_1 + q_2 \rangle^2} \mathd u \right) \]
  so when $| q_1 + q_2 | \gg | k_1 |$ the quantity in round brackets can be
  estimated by $| k_1 | \langle q_1 + q_2 \rangle^{- 4}$ while when $| q_1 +
  q_2 | \lesssim | k_1 |$ it is estimated by $\langle q_1 + q_2 \rangle^{- 2}$
  so we have
  \[ | A_T (s_1, k_1) - \gamma_T | \lesssim \sum_{q_1, q_2} \frac{1}{\langle
     q_1 \rangle^2} \frac{1}{\langle q_2 \rangle^2} \frac{1}{\langle q_1 + q_2
     \rangle^2} \left( \mathbbm{1}_{| q_1 + q_2 | \lesssim | k_1 |} +
     \mathbbm{1}_{| q_1 + q_2 | \gtrsim | k_1 |} \frac{| k_1 |}{\langle q_1 +
     q_2 \rangle^2} \right) \]
  \[ \lesssim 1 + \log \langle k_1 \rangle . \]
  And then with this choice of $\gamma_T$ the kernel $\tilde{G}^{2 \circ
  [3]}_{2, q}$ stays uniformly bounded as $T \rightarrow \infty$ and satisfies
  \[ | G^{2 \diamond [3]}_{2, q} ((s, k)_1) | \lesssim K_q (k_1) \log \langle
     k_1 \rangle . \]
  From this we easily deduce that
  \[ I_{2, q} = \left\{ \mathbb{E} \left[ \left| \sum_{k_1} \int_{A_T} G^{2
     \diamond [3]}_{2, q} ((s, k)_{1 \cdots 5}) \mathd y_{s_1} (k_1)
     \right|^{2 p} \right] \right\}^{1 / p} \lesssim q 2^q, \qquad q \geqslant
     - 1. \]
  All together these estimates imply that
  \[ \mathbb{E} \| \Delta_q \mathbb{W}^{2 \diamond [3]}_T \|_{L^{2 p}}^{2 p}
     \lesssim (q 2^{q / 2})^{2 p}, \qquad q \geqslant - 1. \]
  Standard argument allows to deduce eq.~{\eqref{eq:renorm-1}}. The analysis
  of the other renormalized product proceeds similarly. Let
  \[ V (t) \assign \mathbb{W}^{\langle 2 \rangle \diamond \langle 2 \rangle}_t
     = J_t \mathbb{W}^2_t \circ J_t \mathbb{W}^2_t - 2 \dot{\gamma}_t, \qquad
     t \geqslant 0. \]
  First note that by definition of Besov spaces we have
  \[ \mathbb{E} \left[ \left( \int^{\infty}_0 \| V (t) \|_{B^{- \varepsilon -
     d / r}_{r, r}} \mathd t \right)^p \right] \]
  \[ \lesssim \mathbb{E} \left[ \left( \int^{\infty}_0 \left( \sum_q 2^{- q r
     (\varepsilon + d / r)} \| \Delta_q V (t) \|_{L^r} \right)^{1 / r} \mathd
     t \right)^p \right] . \]
  By Minkowski's integral inequality this is bounded by
  \[ \lesssim \left( \int^{\infty}_0 \mathd t \left\{ \mathbb{E} \left[ \left(
     \sum_q 2^{- q r (\varepsilon + d / r)} \| \Delta_q V (t) \|_{L^r}^r
     \right)^{p / r} \right] \right\}^{1 / p} \right)^p . \]
  When $r \geqslant p$ Jensen's inequality and Fubini's theorem give
  \[ \lesssim \left( \int^{\infty}_0 \mathd t \left\{ \sum_q 2^{- q r
     (\varepsilon + d / r)} \int_{\Lambda} \frac{\mathd x}{| \Lambda |}
     \mathbb{E} [| \Delta_q V (t) (x) |^r] \right\}^{1 / r} \right)^p . \]
  Finally hypercontractivity and stationarity allow to reduce this to bound
  \[ \lesssim \left( \int^{\infty}_0 \mathd t \left\{ \sum_q 2^{- q r
     (\varepsilon + d / r)}  (\mathbb{E} [| \Delta_q V (t) (0) |^2])^{r / 2}
     \right\}^{1 / r} \right)^p . \]
  Letting $I_q (t) =\mathbb{E} [| \Delta_q V (t) (0) |_{}^2]$ we have
  \[ \mathbb{E} \left[ \left( \int^{\infty}_0 \|\mathbb{W}^{\langle 2 \rangle
     \diamond \langle 2 \rangle}_t \|_{B^{- \varepsilon - d / r}_{r, r}}
     \mathd t \right)^p \right] \lesssim \left( \int^{\infty}_0 \mathd t
     \left\{ \sum_q 2^{- q r (\varepsilon + d / r)}  (I_q (t))^{r / 2}
     \right\}^{1 / r} \right)^p . \]
  Now we decompose the random field $\Delta_q (\mathbb{W}^{\langle 2 \rangle
  \diamond \langle 2 \rangle}_t) (x)$ into homogeneous stochastic integral as
  above and obtain
  \begin{equation}
    \begin{array}{lll}
      \Delta_q (\mathbb{W}^{\langle 2 \rangle \diamond \langle 2 \rangle}_t)
      (x) & = & \sum_{k_1, \ldots, k_4} \int_{A_t^4} G^{\langle 2 \rangle
      \circ \langle 2 \rangle}_{0, q} ((s, k)_{1 \cdots 4}) \mathd w_{s_1}
      (k_1) \cdots \mathd w_{s_4} (k_4)\\
      &  & + \sum_{k_1, k_2} \int_{A_t^2} G^{\langle 2 \rangle \circ \langle
      2 \rangle}_{1, q} ((s, k)_{1 2}) \mathd w_{s_1} (k_1) \mathd w_{s_2}
      (k_2)\\
      &  & + G^{J 2 \circ J 2}_{2, q}
    \end{array} \label{eq:W2o2}
  \end{equation}
  with
  \begin{eqnarray}
    G^{\langle 2 \rangle \diamond \langle 2 \rangle}_{0, q} ((s, k)_{1 \cdots
    4}) & = & (24^2) K_q (k_{(1 \cdots 4)}) e^{i (k_{(1 \cdots 4)}) \cdot x}
    \times \nonumber\\
    &  & \times \sum_{\sigma \in \tmop{Sh} (2, 2)} \sum_{i \sim j} K_i
    (k_{(\sigma_1 \sigma_2)}) K_j (k_{(\sigma_3 \sigma_4)}) \frac{\sigma_t
    (k_{(\sigma_1 \sigma_2)})}{\langle k_{(\sigma_1 \sigma_2)} \rangle}
    \frac{\sigma_t (k_{(\sigma_3 \sigma_4)})}{\langle k_{(\sigma_3 \sigma_4)}
    \rangle} \nonumber\\
    G^{\langle 2 \rangle \diamond \langle 2 \rangle}_{1, q} ((s, k)_{12}) & =
    & (24^2) K_q (k_{(12)}) e^{i (k_{(12)}) \cdot x} \sum_{\sigma \in
    \tmop{Sh} (1, 1)} \sum_{i \sim j} \sum_q \times \nonumber\\
    &  & \times \int_0^t \mathd r \frac{\sigma_r^2 (q)}{\langle q \rangle^2}
    K_i (k_{\sigma_1} + q) K_j (k_{\sigma_2} - q) \left( \frac{\sigma_t
    (k_{\sigma_1} + q)}{\langle k_{\sigma_1} + q \rangle} \frac{\sigma_t
    (k_{\sigma_2} - q)}{\langle k_{\sigma_2} - q \rangle} \right) \nonumber\\
    G^{\langle 2 \rangle \diamond \langle 2 \rangle}_{2, q} & = & (24^2)
    \mathbbm{1}_{q = - 1}  \sum_{i \sim j} \sum_{q_1, q_2} \int_0^t \mathd r_1
    \int_0^t \mathd r_2 \times \nonumber\\
    &  & \times \frac{\sigma_{r_1} (q_1)^2}{\langle q_1 \rangle^2}
    \frac{\sigma_{r_2} (q_2)^2}{\langle q_2 \rangle^2} K_i (q_1 + q_2) K_j (-
    q_1 - q_2) \frac{\sigma_t (q_1 + q_2)^2}{\langle q_1 + q_2 \rangle^2}
    \nonumber\\
    &  & - 2 \dot{\gamma}_t \mathbbm{1}_{q = - 1} . \nonumber
  \end{eqnarray}
  Using our choice of $\gamma_T$ in eq.~{\eqref{eq:choice-gamma}} we have that
  \[ \dot{\gamma}_t = (12^2 \cdot 2) \sum_{q_1, q_2} \int_0^t \mathd r_1
     \int_0^t \mathd r_2 \frac{\sigma_{r_1} (q_1)^2}{\langle q_1 \rangle^2}
     \frac{\sigma_{r_2} (q_2)^2}{\langle q_2 \rangle^2} \frac{\sigma_t^2 (q_1
     + q_2)}{\langle q_1 + q_2 \rangle^2} \mathd u, \]
  which implies also that
  \[ G^{\langle 2 \rangle \diamond \langle 2 \rangle}_{2, q} = 0, \quad
     \text{and} \qquad | \dot{\gamma}_t | \lesssim \frac{1 + \log \langle t
     \rangle}{\langle t \rangle} . \]
  as claimed. We pass now to estimate the other two chaoses. The technique is
  the same we used above. Consider first
  \[ I_{0, q} (t) \assign \mathbb{E} \left[ \left| \sum_{k_1, \ldots, k_4}
     \int_{A_t^4} G^{\langle 2 \rangle \diamond \langle 2 \rangle}_{0, q} ((s,
     k)_{1 \cdots 4}) \mathd w_{s_1} (k_1) \cdots \mathd w_{s_4} (k_4)
     \right|^2 \right] \]
  \[ \lesssim \sum_{k_1, \ldots, k_4} \int_{A_t^4} | G^{\langle 2 \rangle
     \diamond \langle 2 \rangle}_{0, q} ((s, k)_{1 \cdots 4}) |^2
     \frac{\sigma_{s_1} (k_1)^2}{\langle k_1 \rangle^2} \cdots
     \frac{\sigma_{s_4} (k_4)^2}{\langle k_4 \rangle^2} \mathd s_1 \cdots
     \mathd s_4 \]
  \[ \lesssim \sum_{k_1, \ldots, k_4} \int_{[0, t]^4} | G^{\langle 2 \rangle
     \diamond \langle 2 \rangle}_{0, q} ((s, k)_{1 \cdots 4}) |^2
     \frac{\sigma_{s_1} (k_1)^2}{\langle k_1 \rangle^2} \cdots
     \frac{\sigma_{s_4} (k_4)^2}{\langle k_4 \rangle^2} \mathd s_1 \cdots
     \mathd s_4 \]
  \[ \lesssim \sum_{k_1, \ldots, k_4} K_q (k_{(1 \cdots 4)})^2 \int_{[0, t]^4}
     \frac{\sigma_t^2 (k_{(12)})}{\langle k_{(12)} \rangle^2} \frac{\sigma_t^2
     (k_{(34)})}{\langle k_{(34)} \rangle^2} \frac{\sigma_{s_1}
     (k_1)^2}{\langle k_1 \rangle^2} \cdots \frac{\sigma_{s_4}
     (k_4)^2}{\langle k_4 \rangle^2} \mathd s_1 \cdots \mathd s_4 \]
  \[ \lesssim \sum_{k_1, \ldots, k_4} K_q (k_{(1 \cdots 4)})^2
     \frac{\sigma_t^2 (k_{(12)})}{\langle k_{(12)} \rangle^2} \frac{\sigma_t^2
     (k_{(34)})}{\langle k_{(34)} \rangle^2} \frac{1}{\langle k_1 \rangle^2}
     \cdots \frac{1}{\langle k_4 \rangle^2} \]
  \[ \lesssim \frac{\mathbbm{1}_{2^q \lesssim t}}{\langle t \rangle^6}
     \sum_{k_1, \ldots, k_4} \frac{K_q (k_{(1 \cdots 4)})^2}{\langle k_1
     \rangle^2 \langle k_2 \rangle^2 \langle k_3 \rangle^2 \langle k_4
     \rangle^2} \lesssim \frac{\mathbbm{1}_{2^q \lesssim t}}{\langle t
     \rangle^6} 2^{4 q} \]
  where we used that $| \sigma_t (x) | \lesssim t^{- 1 / 2} \mathbbm{1}_{x
  \sim t}$. Now taking $\varepsilon + d / r > 0$ we have
  \[ \int^{\infty}_0 \mathd t \left\{ \sum_q 2^{- q r (\varepsilon + d / r)} 
     (I_{0, q} (t))^{r / 2} \right\}^{1 / r} \lesssim \int^{\infty}_0 \mathd t
     \left\{ \sum_{q : 2^q \lesssim t}  \frac{2^{q r (2 - \varepsilon - d /
     r)}}{\langle t \rangle^{3 r}} \right\}^{1 / r} \]
  \[ \lesssim \int^{\infty}_0 \frac{\mathd t}{\langle t \rangle^{1 +
     \varepsilon + d / r}} \lesssim 1. \]
  Taking into account that $| k_1 |, | k_2 | \lesssim t$ we can estimate

  \[ | G^{\langle 2 \rangle \diamond \langle 2 \rangle}_{1, q} ((s, k)_{12}) |
     \lesssim | K_q (k_{(12)}) | \sum_p \frac{\mathbbm{1}_{| p | \lesssim
     t}}{\langle p \rangle^2} \left( \frac{\sigma_t (k_1 + p)}{\langle k_1 + p
     \rangle} \frac{\sigma_t (k_2 - p)}{\langle k_2 - p \rangle} \right)
     \lesssim | K_q (k_{(12)}) | \langle t \rangle^{- 2}, \]
  from which we deduce that
  \[ I_{1, q} (t) \assign \mathbb{E} \left[ \left| \sum_{k_1, k_2}
     \int_{A_t^2} G^{\langle 2 \rangle \diamond \langle 2 \rangle}_{1, q} ((s,
     k)_{12}) \mathd w_{s_1} (k_1) \mathd w_{s_2} (k_2) \right|^2 \right] \]
  \[ \lesssim \sum_{k_1, k_2} \int_{A_t^2} | G^{\langle 2 \rangle \diamond
     \langle 2 \rangle}_{1, q} ((s, k)_{12}) |^2 \frac{\sigma_{s_1}
     (k_1)^2}{\langle k_1 \rangle^2} \frac{\sigma_{s_2} (k_2)^2}{\langle k_2
     \rangle^2} \mathd s_1 \mathd s_2 \]
  \[ \lesssim \langle t \rangle^{- 4} \sum_{k_1, k_2} | K_q (k_{(12)}) |^2
     \int_{[0, t]^2} \frac{\sigma_{s_1} (k_1)^2}{\langle k_1 \rangle^2}
     \frac{\sigma_{s_2} (k_2)^2}{\langle k_2 \rangle^2} \mathd s_1 \mathd s_2
  \]
  \[ \lesssim \langle t \rangle^{- 4} \sum_{k_1, k_2} | K_q (k_{(12)}) |^2
     \frac{\mathbbm{1}_{k_1 \lesssim t}}{\langle k_1 \rangle^2}
     \frac{\mathbbm{1}_{k_2 \lesssim t}}{\langle k_2 \rangle^2} \lesssim
     \langle t \rangle^{- 4} 2^{2 q} \mathbbm{1}_{2^q \lesssim t}, \]
  and then, as for $I_{0, q}$, we have

  \[ \int^{\infty}_0 \mathd t \left( \sum_q 2^{- q r (\varepsilon + d / r)} 
     (I_{1, q} (t))^{r / 2} \right)^{1 / r} \lesssim \int^{\infty}_0
     \frac{\mathd t}{\langle t \rangle^2} \left( \sum_q 2^{q r (1 -
     \varepsilon - d / r)} \mathbbm{1}_{2^q \lesssim t} \right)^{1 / r}
     \lesssim 1, \]
  as claimed. From these estimates standard arguments give
  eq.~{\eqref{eq:renorm-2}}.
\end{proof}

\begin{lemma}
  \[ \mathbb{E} [\| \mathbb{W}^3_T \|^p_{L^p}]^{1 / p} \lesssim T^{3 / 2} \]
  This implies that $\mathbb{W}^{\langle 3 \rangle} \in C ([0, \infty], B_{p,
  p}^{- 1 / 2 - \kappa}) \cap L^2 (\mathbb{R}_+, B_{p, p}^{- 1 / 2 - \kappa})$
  for any $p < \infty$ uniformly in the volume and $\mathbb{W}^{\langle 3
  \rangle} \in C \left( [0, \infty], \VV^{- 1 / 2 - \kappa} \right) \cap L^2
  \left( \mathbb{R}_+, \VV^{- 1 / 2 - \kappa} \right)$
\end{lemma}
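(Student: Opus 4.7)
The plan is to prove the pointwise-in-$T$ moment bound first, then obtain all the spatial and temporal Besov regularity by standard multiplier and Kolmogorov arguments.

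For the first estimate, I would use two basic facts: the translation invariance of the law of $W_T$ under spatial shifts, and Gaussian hypercontractivity. Since $W_T$ is stationary, so is $\mathbb{W}^3_T(x) = 4\llbracket W_T(x)^3\rrbracket$, and therefore
\[ \mathbb{E}[\| \mathbb{W}^3_T \|_{L^p}^p] = \bint \mathbb{E}[| \mathbb{W}^3_T(x)|^p]\,\mathrm{d}x = \mathbb{E}[|\mathbb{W}^3_T(0)|^p]. \]
The random variable $\mathbb{W}^3_T(0)$ belongs to the third homogeneous Wiener chaos of the Gaussian field $(B^n_s)_{s,n}$, so Nelson's hypercontractivity gives
\[ \mathbb{E}[|\mathbb{W}^3_T(0)|^p]^{1/p} \lesssim_p \mathbb{E}[|\mathbb{W}^3_T(0)|^2]^{1/2}. \]
The second moment reduces to $6\mathbb{E}[W_T(0)^2]^3$ by Wick's formula, and $\mathbb{E}[W_T(0)^2] = |\Lambda|^{-1}\sum_n \rho_T^2(n)/\langle n\rangle^2 \lesssim T$ in three dimensions (independently of $|\Lambda|$), yielding the claimed $T^{3/2}$ bound.

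For the consequence on $\mathbb{W}^{\langle 3\rangle}_t = J_t\mathbb{W}^3_t$, the key observation is that the symbol $\langle k\rangle^{-1}\sigma_t(k)$ of $J_t$ is spectrally supported in an annulus $|k|\sim t$, so $\mathbb{W}^{\langle 3\rangle}_t$ is localized at frequency scale $t$. By Bernstein's inequality
\[ \|\mathbb{W}^{\langle 3\rangle}_t\|_{B^{-1/2-\kappa}_{p,p}} \lesssim \max(1,\langle t\rangle^{-1/2-\kappa})\,\|\mathbb{W}^{\langle 3\rangle}_t\|_{L^p}, \]
and the multiplier estimate of Proposition~\ref{multiplierestimate} combined with the $\langle t\rangle^{-1}$ gain from $\langle D\rangle^{-1}$ on the support of $\sigma_t$ yields $\|J_t\mathbb{W}^3_t\|_{L^p} \lesssim \langle t\rangle^{-3/2}\|\mathbb{W}^3_t\|_{L^p}$. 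Combining with the first part, $\mathbb{E}[\|\mathbb{W}^{\langle 3\rangle}_t\|_{B^{-1/2-\kappa}_{p,p}}^q]^{1/q} \lesssim \langle t\rangle^{-1/2-\kappa}$ for $t\geq 1$ and $\lesssim t$ for $t\leq 1$, both of which suffice to place $\mathbb{W}^{\langle 3\rangle}$ simultaneously in $L^\infty_t B^{-1/2-\kappa}_{p,p}$ and $L^2_t B^{-1/2-\kappa}_{p,p}$ in every $L^q(\mathbb{P})$; the Besov embedding then transfers this to $\VV^{-1/2-\kappa}$ uniformly in $|\Lambda|$ for finite $p$.

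Continuity in $t\in[0,\infty]$ is obtained by a Kolmogorov criterion: since $\mathbb{W}^{\langle 3\rangle}_t$ lies in a fixed (third) Wiener chaos, hypercontractivity reduces it to a second-moment estimate of the form
\[ \mathbb{E}\|\mathbb{W}^{\langle 3\rangle}_t - \mathbb{W}^{\langle 3\rangle}_s\|^2_{B^{-1/2-\kappa}_{p,p}} \lesssim |t-s|^{\alpha} \]
for some $\alpha > 0$. This bound is computed by expanding each Littlewood--Paley block as an iterated stochastic integral, exactly as in the proof of Lemma~\ref{lemma:renormalized-terms}, and exploiting the joint smoothness of $(s,t)\mapsto J_t-J_s$ in the Fourier symbol. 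The endpoint $t=\infty$ is handled by observing that $\sigma_t \to 0$ and the Paragraph~2 bound already gives $\mathbb{W}^{\langle 3\rangle}_t \to 0$ in the Besov norm, while $t=0$ is immediate since $\sigma_0 \equiv 0$. I expect the main technical obstacle to be keeping all estimates uniform in the volume $|\Lambda|$ while using Besov norms with finite $p,r$: stationarity together with the normalized integration $\bint$ is what makes this work, and is the essential reason we require $p,r<\infty$ in the statement.
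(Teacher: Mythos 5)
Your proposal is correct and follows essentially the same route as the paper: stationarity plus hypercontractivity reduce the $L^p$ bound to the second moment at a point (your Wick-formula computation is equivalent to the paper's It\^o-isometry computation with iterated integrals, both giving $\mathbb{E}[|\mathbb{W}^3_T(0)|^2]\lesssim T^3$), and the regularity of $\mathbb{W}^{\langle 3\rangle}$ then comes from the annulus support of $\sigma_t$ together with the multiplier estimate, exactly as in the paper. Your Kolmogorov-criterion treatment of continuity on $[0,\infty]$ fills in a step the paper leaves implicit (it defers to the martingale/BDG structure of Lemma~\ref{lemma:stoch-reg}), but this is a faithful elaboration rather than a different argument.
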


Estimating the cube
\[ \mathbb{W}^3_T (x) = 12 \llbracket W_T^3 \rrbracket (x) = 24 \sum_{k_1,
   k_2, k_3} e^{i (k_1 + k_2 + k_3) \cdot x} \int_0^T \int_0^{s_2} \mathd
   w_{s_1} (k_1) \mathd w_{s_2} (k_2) \mathd w_{s_3} (k_3) \]
We get for any $p$, by space homogeneity,
\[ \mathbb{E} [\| \mathbb{W}^3_T (x) \|^{2 p}_{L^p}]^{1 / p} =\mathbb{E}
   \left[ \left| \sum_{k_1, k_2, k_3} \int_0^T \int_0^{s_2} \int^{s_1}_0
   \mathd w_{s_1} (k_1) \mathd w_{s_2} (k_2) \mathd w_{s_3} (k_3) \right|^{2
   p} \right]^{1 / p} \]
\begin{eqnarray*}
  \mathbb{E} [\| \mathbb{W}^3_T (x) \|^p_{L^p}] & = & \mathbb{E} \left[ \left|
  \sum_{k_1, k_2, k_3} \int_0^T \int_0^{s_2} \int^{s_1}_0 \mathd w_{s_1} (k_1)
  \mathd w_{s_2} (k_2) \mathd w_{s_3} (k_3) \right|^p \right]^{1 / p}\\
  & \lesssim & \mathbb{E} \left[ \left| \sum_{k_1, k_2, k_3} \int_0^T
  \int_0^{s_2} \int^{s_1}_0 \mathd w_{s_1} (k_1) \mathd w_{s_2} (k_2) \mathd
  w_{s_3} (k_3) \right|^2 \right]\\
  & = & \sum_{k_1, k_2, k_3} \int_0^T \int_0^{s_2} \int^{s_1}_0
  \frac{\sigma_{s_1} (k_1)^2}{\langle k_1 \rangle^2} \cdots \frac{\sigma_{s_3}
  (k_3)^2}{\langle k_3 \rangle^2} \mathd s_1 \cdots \mathd s_3
\end{eqnarray*}
Now
\begin{eqnarray*}
  &  & \sum_{k_1, k_2, k_3} \int_0^T \int_0^{s_2} \int^{s_1}_0
  \frac{\sigma_{s_1} (k_1)^2}{\langle k_1 \rangle^2} \cdots \frac{\sigma_{s_3}
  (k_3)^2}{\langle k_3 \rangle^2} \mathd s_1 \cdots \mathd s_3\\
  & \leqslant & \sum_{k_1, k_2, k_3} \int_0^T \int_0^T \int^T_0
  \frac{\sigma_{s_1} (k_1)^2}{\langle k_1 \rangle^2} \cdots \frac{\sigma_{s_3}
  (k_3)^2}{\langle k_3 \rangle^2} \mathd s_1 \cdots \mathd s_3\\
  & = & \left( \sum_k \int^T_0 \frac{\sigma_s (k)^2}{\langle k \rangle^2}
  \mathd s \right)^3\\
  & \lesssim & T^3
\end{eqnarray*}
Now the second properties follow by the fact that $\sigma_t$ is supported in
an annulus of radius $t$, so
\[ \| \mathbb{W}_t^{\langle 3 \rangle} \|_{B_{p, p}^{- 1 / 2 - \kappa}} =
   \left\| \frac{\sigma_t (D)}{\langle D \rangle} \mathbb{W}_t^3
   \right\|_{B_{p, p}^{- 1 / 2 - \kappa}} \lesssim \| \sigma_t (D)
   \mathbb{W}_t^3 \|_{B_{p, p}^{- 3 / 2 - \kappa}} \lesssim \langle t
   \rangle^{- 1 / 2 - \kappa} \langle t \rangle^{- 3 / 2} \| \mathbb{W}_t^3
   \|_{L^p} \]
and Hoelder estimates follow by Besov-embedding.

\

\

\

\appendix\section{Besov spaces and paraproducts}\label{sec:appendix-para}

In this section we will recall some well known results about Besov spaces,
embeddings, Fourier multipliers and paraproducts. The reader can find full
details and proofs
in~{\cite{bahouri_fourier_2011,gubinelli_paracontrolled_2015}} and for
weighted spaces in ~{\cite{gubinelli_global_2018,mourrat_plane_2015}}. First
recall the definition of Littlewood--Paley blocks. Let $\chi, \varphi$ be
smooth radial functions $\mathbb{R}^d \rightarrow \mathbb{R}$ such that
\begin{itemize}
  \item $\tmop{supp} \chi \subseteq B (0, R)$, $\tmop{supp} \varphi \subseteq
  B (0, 2 R) \setminus B (0, R)$;
  
  \item $0 \leqslant \chi, \varphi \leqslant 1$, $\chi (\xi) + \sum_{j \geq 0}
  \varphi (2^{- j} \xi) = 1$ for any $\xi \in \mathbb{R}^d$;
  
  \item $\tmop{supp} \varphi (2^{- j} \cdot) \cap \tmop{supp} \varphi (2^{- i}
  \cdot) = \varnothing$ if $| i - j | > 1$.
\end{itemize}
Introduce the notations $\varphi_{- 1} = \chi$, $\varphi_j = \varphi (2^{- j}
\cdot)$ for $j \geqslant 0$. For any $f \in \CS' (\Lambda)$ we define the
operators $\Delta_j f \assign \mathcal{F}^{- 1} \varphi_j (\xi) \hat{f}
(\xi)$, $j \geqslant - 1$.

\begin{definition}
  We say a function $\rho : \mathbb{R}^d \rightarrow \mathbb{R}$ of the form
  $\rho (x) = \langle x \rangle^{- \sigma}$ for $\sigma \geqslant 0$ is a
  weight.
\end{definition}

\begin{definition}
  For a weight $\rho$ let
  \[ \| f \|_{L^p (\rho)} = \left( \int_{\mathbb{R}^d} | f (x) |^p \rho (x)
     \mathd x \right)^{1 / p} \]
  and by $L^p (\rho)$ the space of functions for which this norm is finite.
  For function defined on a torus in $\mathbb{R}^d$ we consider their periodic
  extensions on $\mathbb{R}^d$.
\end{definition}

\begin{definition}
  Let $s \in \mathbb{R}, p, q \in [1, \infty]$, and $\rho$ be a weight. For a
  Schwarz distribution $f \in \CS' (\Lambda)$ define the norm
  \[ \| f \|_{B_{p, q}^s (\rho)} = \| (2^{j s} \| \Delta_j f \|_{L^p
     (\rho)})_{j \geqslant - 1} \|_{\ell^q} . \]
  Then the space $B^s_{p, q}$(r is the set of functions in $\CS' (\Lambda)$
  such that this norm is finite. We denote $B_{p, q}^s = B_{p, q}^s (1 / |
  \Lambda |)$ the normalized Besov space and $H^s = B^s_{2, 2}$ the Sobolev
  spaces, and by $\VV^s = B_{\infty, \infty}^s$ the (unweighted) Hoelder
  spaces.
\end{definition}

\begin{definition}
  Let $s \in \mathbb{R}$ and $\rho$ be a weight. Then we denote by
  \[ \| f \|_{\VV^s (\rho)} = \| (2^{j s} \| \rho \Delta_j f
     \|_{L^{\infty}})_{j \geqslant - 1} \|_{\ell^{\infty}} \]
  and by $\VV^s (\rho)$ the space of Schwarz distributions such that this norm
  is finite. 
\end{definition}

\begin{proposition}
  \label{besovembedding}Let $\delta > 0$.We have for any $q_1, q_2 \in [1,
  \infty], q_1 < q_2$
  \[ \| f \|_{B_{p, q_2}^s} \leq \| f \|_{B_{p, q_1}^s} \leq \| f \|_{B_{p,
     \infty}^{s + \delta}} \]
  Furthermore, if we denote by $W^{s, p}$ the normalized fractional Sobolev
  spaces then for any $q \in [1, \infty]$
  \[ \| f \|_{B_{p, q}^s} \leq \| f \|_{W^{s + \delta, p}} \leq \| f \|_{B_{p,
     \infty}^{s + 2 \delta}} \]
\end{proposition}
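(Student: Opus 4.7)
The plan is to derive all four inclusions from two elementary ingredients: the monotonicity of $\ell^q$--norms in $q$ applied to the Littlewood--Paley sequence $(2^{js}\|\Delta_j f\|_{L^p})_{j\geqslant -1}$, and a Mihlin-type multiplier bound on individual dyadic blocks (Proposition~\ref{multiplierestimate}). Throughout one works with the normalised Besov and Sobolev norms on $\Lambda$, but the argument is purely spectral and weight-free.

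For the first chain $\|f\|_{B^s_{p,q_2}}\leqslant \|f\|_{B^s_{p,q_1}}\leqslant \|f\|_{B^{s+\delta}_{p,\infty}}$, the left inequality is just the classical embedding $\ell^{q_1}\hookrightarrow\ell^{q_2}$ for $q_1\leqslant q_2$, applied to the sequence $(2^{js}\|\Delta_j f\|_{L^p})_j$. For the right inequality I would write
\[
\|f\|_{B^s_{p,q_1}}^{q_1}=\sum_{j\geqslant -1} 2^{-j\delta q_1}\bigl(2^{j(s+\delta)}\|\Delta_j f\|_{L^p}\bigr)^{q_1}\leqslant \Bigl(\sum_{j\geqslant -1}2^{-j\delta q_1}\Bigr)\|f\|_{B^{s+\delta}_{p,\infty}}^{q_1},
\]
and the geometric series converges since $\delta>0$; the case $q_1=\infty$ is treated by an obvious supremum variant.

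For the second chain I would use the Littlewood--Paley characterisation together with Proposition~\ref{multiplierestimate}. The operator $\Delta_j\langle D\rangle^{-(s+\delta)}$ has symbol $\varphi_j(\xi)\langle\xi\rangle^{-(s+\delta)}$, supported in an annulus of size $\sim 2^j$ for $j\geqslant 0$ (and in $B(0,R)$ for $j=-1$), and is therefore bounded on $L^p$ with norm $\lesssim 2^{-j(s+\delta)}$. Hence $\|\Delta_j f\|_{L^p}\lesssim 2^{-j(s+\delta)}\|f\|_{W^{s+\delta,p}}$, and summing as in the first chain yields $\|f\|_{B^s_{p,q}}\lesssim\|f\|_{W^{s+\delta,p}}$. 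For the remaining estimate I would use the triangle inequality in $L^p$ together with the companion bound $\|\Delta_j\langle D\rangle^{s+\delta}f\|_{L^p}\lesssim 2^{j(s+\delta)}\|\Delta_j f\|_{L^p}$ (again Proposition~\ref{multiplierestimate}), obtaining
\[
\|f\|_{W^{s+\delta,p}}=\|\langle D\rangle^{s+\delta}f\|_{L^p}\leqslant \sum_{j\geqslant -1}\|\Delta_j\langle D\rangle^{s+\delta}f\|_{L^p}\lesssim \sum_{j\geqslant -1}2^{-j\delta}\cdot 2^{j(s+2\delta)}\|\Delta_j f\|_{L^p},
\]
and bounding each factor in the last sum by $\|f\|_{B^{s+2\delta}_{p,\infty}}$ times a summable decay $2^{-j\delta}$ gives the claim.

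There is essentially no conceptual obstacle; the only point requiring some care is the low-frequency block $j=-1$, where $\langle D\rangle^{\pm(s+\delta)}$ acts on a smooth function with spectrum in $B(0,R)$ and is therefore bounded on $L^p$ by a constant independent of the sign of $s+\delta$, which is again a direct consequence of Proposition~\ref{multiplierestimate}. The whole argument stays within the Littlewood--Paley framework recalled in the appendix and is independent of the main results of the paper.
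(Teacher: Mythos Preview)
Your argument is correct and is the standard textbook proof of these embeddings. Note, however, that the paper does not actually prove Proposition~\ref{besovembedding}: it is stated in the appendix without proof or explicit reference, as one of several well-known facts about Besov spaces collected there (compare the nearby Propositions~\ref{multiplierestimate}, \ref{paraproductleibniz}, \ref{gagliardo-nirenberg}, which simply cite \cite{bahouri_fourier_2011} or \cite{Brezis_2017}). So there is no ``paper's own proof'' to compare against; your write-up supplies precisely the elementary argument that such a citation would point to. One cosmetic remark: the inequalities as stated in the paper use $\leq$, but apart from the first one (which holds with constant $1$ by the $\ell^{q_1}\hookrightarrow\ell^{q_2}$ embedding) your argument produces constants depending on $\delta$, so the statement should really be read as $\lesssim$; this is harmless for every use made of the proposition in the paper.
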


\begin{proposition}
  \label{compactembedding}For any $s_1, s_2 \in \mathbb{R}$ such that $s_1 <
  s_2$, any $p, q \in [1, \infty]$ the Besov space $B^{s_1}_{p, q_{}}$ is
  compactly embedded into $B^{s_2}_{p, q}$.
\end{proposition}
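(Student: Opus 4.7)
The statement as written with $s_1 < s_2$ inverts the natural direction of inclusion; the content is the compactness of the embedding $B^{s_2}_{p,q}(\Lambda) \hookrightarrow B^{s_1}_{p,q}(\Lambda)$ for $s_1 < s_2$, which my plan addresses. The strategy is the classical Rellich-type argument adapted to Besov scales: starting from a bounded sequence $(f_n)_n$ in the stronger space, extract a subsequence converging blockwise in $L^p$ by exploiting that on the torus each $\Delta_j$ lands in a finite-dimensional subspace, and then use the regularity surplus $s_2 - s_1 > 0$ to control the high-frequency tail uniformly.

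First I would carry out the blockwise diagonal extraction. For each $j \geqslant -1$, the block $\Delta_j f_n$ has Fourier support in a ball of radius $\lesssim 2^j$ in $(L^{-1}\mathbb{Z})^d$; the torus geometry confines it to a finite-dimensional subspace $V_j \subset L^p(\Lambda)$, namely the span of finitely many trigonometric exponentials. Combined with the uniform bound $\|\Delta_j f_n\|_{L^p} \leqslant 2^{-j s_2}\|f_n\|_{B^{s_2}_{p,q}}$, Bolzano--Weierstrass in $V_j$ yields convergent subsequences for each fixed $j$, and a standard diagonal extraction produces a subsequence (not relabeled) along which $\Delta_j f_n \to g_j$ in $L^p(\Lambda)$ for every $j$. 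I would then define $f := \sum_j g_j$; convergence of this series in $\CS'(\Lambda)$ follows by pairing against any smooth test function on $\Lambda$, whose Fourier coefficients decay faster than any polynomial, and lower semicontinuity of the Besov norm gives $\|f\|_{B^{s_2}_{p,q}} \leqslant \liminf_n \|f_n\|_{B^{s_2}_{p,q}} < \infty$.

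Second, for any cutoff $J \geqslant -1$ I would split
\[ \|f_n - f\|_{B^{s_1}_{p,q}} \leqslant \bigl\|(2^{j s_1}\|\Delta_j(f_n-f)\|_{L^p})_{j \leqslant J}\bigr\|_{\ell^q} + \bigl\|(2^{j s_1}\|\Delta_j(f_n-f)\|_{L^p})_{j > J}\bigr\|_{\ell^q}, \]
and bound the tail by $2^{-(s_2-s_1)J}\bigl(\|f_n\|_{B^{s_2}_{p,q}} + \|f\|_{B^{s_2}_{p,q}}\bigr)$, which becomes arbitrarily small uniformly in $n$ upon choosing $J$ large; the remaining finite-range piece then tends to zero as $n \to \infty$ by the $L^p$ blockwise convergence of the first step, yielding $f_n \to f$ in $B^{s_1}_{p,q}$. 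The only mildly delicate points, rather than real obstacles, are the endpoint cases $q = \infty$ (where the $\ell^q$ norm becomes a supremum, handled identically) and checking that $\sum_j g_j$ defines a bona fide element of $\CS'(\Lambda)$; both are routine thanks to the torus setting and the rapid decay of the Fourier coefficients of smooth test functions.
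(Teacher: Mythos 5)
The paper states this proposition without proof, so there is no argument to compare against; yours is the classical Rellich-type proof on the torus and it is correct. You are also right to flag the misprint in the statement: the compact embedding is $B^{s_2}_{p,q} \hookrightarrow B^{s_1}_{p,q}$ for $s_1 < s_2$ (as literally written there is not even a continuous embedding), and it is this corrected version that the paper actually uses. The one step worth spelling out is the identification of the blockwise limits with the blocks of the limit: each $g_j$ inherits Fourier support in the annulus of $\varphi_j$, so $\Delta_k g_j = 0$ for $|k-j|>1$ and hence $\Delta_k f = \Delta_k(g_{k-1}+g_k+g_{k+1}) = \lim_n \Delta_k f_n$ in $L^p$; this is exactly what both the Fatou/lower-semicontinuity step and the finite-range part of your splitting at the cutoff $J$ require. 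With that observation the tail bound $2^{-(s_2-s_1)J}\sup_n\|f_n\|_{B^{s_2}_{p,q}}$ plus blockwise convergence closes the argument, including the endpoint $q=\infty$.
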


\begin{definition}
  Let $f, g \in \mathcal{\CS} (\Lambda)$. We define the paraproducts and
  resonant product
  \[ f \succ g = g \prec f \assign \sum_{j < i - 1} \Delta_i f \Delta_j g,
     \qquad \text{and} \qquad f \circ g \assign \sum_{| i - j | \leqslant 1}
     \Delta_i f \Delta_j g. \]
  Then
  \[ fg = f \prec g + f \circ g + f \succ g. \]
\end{definition}

\begin{proposition}
  \label{paraproductestimate}Let $f, g \in \mathcal{\CS} (\Lambda)$. We define
  the paraproducts and resonant product by
  \[ f \succ g = g \prec f \assign \sum_{j < i - 1} \Delta_i f \Delta_j g,
     \qquad \text{and} \qquad f \circ g \assign \sum_{| i - j | \leqslant 1}
     \Delta_i f \Delta_j g. \]
  Then
  \[ fg = f \prec g + f \circ g + f \succ g. \]
  Moreover for any weight $\rho$, $\beta \leqslant 0, \alpha \in \mathbb{R}$
  and $p_1, p_2 \in [1, \infty]$, $\frac{1}{p_1} + \frac{1}{p_2} =
  \frac{1}{p}$ we have the estimates
  \[ \begin{array}{lll}
       \| f \succ g \|_{B^{\alpha + \beta}_{p, q} (\rho)} & \lesssim & \| f
       \|_{B^{\alpha}_{p_1, \infty} (\rho)} \| g \|_{B^{\beta}_{p_2, q}
       (\rho)},
     \end{array} \]
  and for any $\alpha, \beta \in \mathbb{R}$ such that $\alpha + \beta > 0$
  the estimates
  \[ \begin{array}{lll}
       \| f \circ g \|_{B^{\alpha + \beta}_{p, q} (\rho)} & \lesssim & \| f
       \|_{B^{\alpha}_{p_1, \infty} (\rho)} \| g \|_{B^{\beta}_{p_2, q}
       (\rho)},
     \end{array} \]
\end{proposition}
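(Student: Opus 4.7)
The plan is to follow the classical Bony argument adapted to weighted spaces. First, the decomposition $fg = f \prec g + f \circ g + f \succ g$ is a tautology once we expand $f = \sum_i \Delta_i f$ and $g = \sum_j \Delta_j g$ and partition the double sum according to whether $i < j - 1$, $|i - j| \leqslant 1$, or $i > j + 1$; the main content is in the estimates. For these I would exploit the two standard facts: (i) each block $\Delta_i f$ is frequency-supported in an annulus $\{|\xi| \sim 2^i\}$, so Bernstein's inequality gives $\|\Delta_i f\|_{L^q(\rho)} \lesssim 2^{id(1/p - 1/q)} \|\Delta_i f\|_{L^p(\rho)}$ and the reverse Bernstein $\|\Delta_i f\|_{B^\beta_{p,\infty}(\rho)} \sim \sup_i 2^{i\beta} \|\Delta_i f\|_{L^p(\rho)}$; and (ii) polynomial weights $\rho = \langle x \rangle^{-\sigma}$ are compatible with convolution by Schwartz functions, in the sense that $\rho(x) \lesssim \rho(y)\langle x - y\rangle^\sigma$ and hence any Schwartz convolution kernel $\psi$ satisfies $\|(f*\psi)\rho^{1/p}\|_{L^p} \lesssim \|f \rho^{1/p}\|_{L^p}$ because $\langle\cdot\rangle^\sigma \psi \in L^1$. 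In particular the Littlewood--Paley operators $\Delta_j$, $S_j = \sum_{i < j} \Delta_i$ are uniformly bounded on each $L^p(\rho)$.

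For the paraproduct, write $f \succ g = \sum_i \Delta_i f \cdot S_{i-1} g$; the summand is frequency-supported in an annulus of size $2^i$ (since $S_{i-1}g$ lives on scales $\lesssim 2^{i-2}$), so $\Delta_k (f \succ g) = \sum_{|i-k| \leqslant N} \Delta_k(\Delta_i f \cdot S_{i-1} g)$ for some fixed $N$. Applying weighted Hölder (valid since $\rho^{1/p} = \rho^{1/p_1} \rho^{1/p_2}$ when $1/p = 1/p_1 + 1/p_2$), boundedness of $\Delta_k$ on $L^p(\rho)$, and the bound $\|S_{i-1} g\|_{L^{p_2}(\rho)} \lesssim \sum_{j < i-1} 2^{-j\beta} (2^{j\beta} \|\Delta_j g\|_{L^{p_2}(\rho)}) \lesssim 2^{-i\beta} \|g\|_{B^\beta_{p_2,q}(\rho)}$ (where the geometric sum is summable precisely because $\beta \leqslant 0$, up to a logarithmic loss absorbed in $B^\beta_{p_2,q}$ for $q = \infty$ via a slight shift; more cleanly, $\|S_{i-1} g\|_{L^{p_2}(\rho)} \lesssim 2^{-i\beta} \|g\|_{B^\beta_{p_2,q}(\rho)}$ for $\beta < 0$ and follows by direct estimation for $\beta = 0$), one gets $2^{k(\alpha+\beta)} \|\Delta_k(f\succ g)\|_{L^p(\rho)} \lesssim \|f\|_{B^\alpha_{p_1,\infty}(\rho)} \cdot c_k$ with $(c_k)_k \in \ell^q$ controlled by $\|g\|_{B^\beta_{p_2,q}(\rho)}$.

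For the resonant part, the summand $\Delta_i f \Delta_j g$ with $|i - j| \leqslant 1$ is supported in a ball of radius $\sim 2^i$, so $\Delta_k(f \circ g) = \sum_{i \geqslant k - N} \Delta_k \sum_{|i-j|\leqslant 1} \Delta_i f \Delta_j g$. Weighted Hölder again gives
\[
\|\Delta_k(f \circ g)\|_{L^p(\rho)} \lesssim \sum_{i \geqslant k - N} 2^{-i\alpha} 2^{-i\beta} \bigl(2^{i\alpha}\|\Delta_i f\|_{L^{p_1}(\rho)}\bigr)\bigl(2^{i\beta}\|\Delta_j g\|_{L^{p_2}(\rho)}\bigr),
\]
and multiplying by $2^{k(\alpha+\beta)}$ yields a tail sum $\sum_{i \geqslant k - N} 2^{(k-i)(\alpha+\beta)} a_i b_i$, which by the assumption $\alpha + \beta > 0$ is a discrete convolution of the $\ell^\infty$ sequence $(a_i)$ against the $\ell^q$ sequence $(b_i)$ with an $\ell^1$ kernel; Young's inequality in $\ell^q$ then closes the estimate. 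The main technical point to verify carefully is the boundedness of $\Delta_k$ and $S_k$ on $L^p(\rho)$ uniformly in $k$ for polynomial weights: this is the weighted analogue of the unweighted statement and is where the argument differs from the standard one, but it reduces to the pointwise inequality $\rho(x) \lesssim \rho(y)\langle x - y\rangle^\sigma$ together with the Schwartz decay of the Littlewood--Paley kernels, so no new ideas are required beyond what appears in the weighted-paraproduct literature cited in the paper.
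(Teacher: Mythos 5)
Your argument is the standard Bony--Littlewood--Paley proof adapted to polynomial weights, which is exactly the route the paper relies on: it offers no proof of its own and cites Mourrat--Weber (Theorem~3.17 and Remark~3.18), whose argument rests on the same spectral-support analysis, weighted H\"older inequality, and the pointwise bound $\rho(x)\lesssim\rho(y)\langle x-y\rangle^{\sigma}$ giving uniform boundedness of $\Delta_k$ and $S_k$ on $L^p(\rho)$. The only delicate point is the endpoint $\beta=0$ with finite $q$, where $\|S_{i-1}g\|_{L^{p_2}(\rho)}$ is controlled by $\|g\|_{L^{p_2}(\rho)}$ rather than directly by the $B^{0}_{p_2,q}(\rho)$ norm; you rightly flag that this case requires a separate direct estimate, and the paper only ever applies the bound with strictly negative regularity on the low-frequency factor.
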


For a proof see Theorem~3.17 and Remark~3.18 in~{\cite{mourrat_plane_2015}}.

\begin{proposition}
  For any weight $\rho, \beta \leqslant 0, \alpha \in \mathbb{R}$ we have
  \[ \begin{array}{lll}
       \| f \succ g \|_{B^{\alpha + \beta}_{p, q} (\rho^2)} & \lesssim & \| f
       \|_{\VV^{\alpha} (\rho)} \| g \|_{B^{\beta}_{p, q} (\rho)},
     \end{array} \]
\end{proposition}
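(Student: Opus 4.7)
The claim is the standard weighted paraproduct estimate for the mixed case where $f$ sits in a Hölder-type space with weight $\rho$ and $g$ sits in a Besov space with weight $\rho$, producing an output weighted by $\rho^2$. The plan is to reduce to a block-by-block estimate via Littlewood-Paley analysis, then exploit the multiplicative splitting $\rho^2=\rho\cdot\rho$ through Hölder's inequality. First I would localize in frequency: writing $f\succ g=\sum_{i}\Delta_i f\cdot S_{i-2}g$ and using that $\Delta_i f$ is Fourier-supported in an annulus of radius $\sim 2^i$ while $S_{i-2}g$ has spectral support in a ball of radius $\lesssim 2^{i-2}$, the product $\Delta_i f\cdot S_{i-2}g$ is Fourier-supported in an annulus of radius $\sim 2^i$, so $\Delta_k(\Delta_i f\cdot S_{i-2}g)$ vanishes unless $|i-k|\leq N$ for a fixed $N$. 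The block $\Delta_k$ is convolution with a Schwartz kernel at scale $2^k$, and a standard weighted Young-type inequality (available since $\rho=\langle x\rangle^{-\sigma}$ is slowly varying) gives boundedness of $\Delta_k$ on $L^p(\rho^2)$ uniformly in $k$, so
\[
\|\Delta_k(f\succ g)\|_{L^p(\rho^2)}\lesssim \sum_{|i-k|\leq N}\|\Delta_i f\cdot S_{i-2}g\|_{L^p(\rho^2)}.
\]

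Second, I would apply Hölder with the weight split $\rho^2=\rho\cdot\rho$,
\[
\|\Delta_i f\cdot S_{i-2}g\|_{L^p(\rho^2)}\leq \|\Delta_i f\|_{L^\infty(\rho)}\,\|S_{i-2}g\|_{L^p(\rho)}.
\]
The first factor is bounded by $2^{-i\alpha}\|f\|_{\mathscr{C}^\alpha(\rho)}$ directly from the definition of the weighted Hölder-Besov norm. For the second, set $c_j:=2^{j\beta}\|\Delta_j g\|_{L^p(\rho)}$ so that $\|(c_j)_j\|_{\ell^q}=\|g\|_{B^\beta_{p,q}(\rho)}$; then
\[
\|S_{i-2}g\|_{L^p(\rho)}\leq \sum_{j\leq i-2}c_j\,2^{-j\beta}=2^{-i\beta}\sum_{m\geq 2}c_{i-m}\,2^{m\beta}.
\]
Since $\beta\leq 0$, the kernel $(2^{m\beta}\mathbf{1}_{m\geq 2})_m$ lies in $\ell^1$ (strictly so for $\beta<0$), and Young's convolution inequality on $\ell^q$ yields $\|S_{i-2}g\|_{L^p(\rho)}\leq 2^{-i\beta}\tilde c_i$ with $\|(\tilde c_i)\|_{\ell^q}\lesssim \|g\|_{B^\beta_{p,q}(\rho)}$.

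Combining,
\[
2^{k(\alpha+\beta)}\|\Delta_k(f\succ g)\|_{L^p(\rho^2)}\lesssim \|f\|_{\mathscr{C}^\alpha(\rho)}\sum_{|i-k|\leq N}2^{(k-i)(\alpha+\beta)}\tilde c_i,
\]
and the right-hand side is a convolution of the $\ell^q$ sequence $(\tilde c_i)$ with a finitely supported kernel in $k$, hence $\ell^q$-bounded with norm $\lesssim \|g\|_{B^\beta_{p,q}(\rho)}$. Taking the $\ell^q$ norm in $k$ gives the announced inequality. The only delicate point is the endpoint $\beta=0$, where the kernel $(2^{m\beta}\mathbf{1}_{m\geq 2})_m$ fails to be in $\ell^1$; this is handled separately by observing that $S_{i-2}g$ is spectrally supported in a ball so that $\|S_{i-2}g\|_{L^p(\rho)}\lesssim \|g\|_{L^p(\rho)}\lesssim \|g\|_{B^0_{p,q}(\rho)}$ via Bernstein, at the cost of at most a logarithmic shift in regularity if $q=\infty$. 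The whole argument is entirely parallel to the unweighted proof in~\cite{bahouri_fourier_2011}, the only novelty being the weighted Young estimate for $\Delta_k$ and the Hölder splitting of $\rho^2$, both of which rely on $\rho$ being of polynomial type.
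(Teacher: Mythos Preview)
Your proof is correct and is precisely the ``easy modification of the proof of Theorem~3.17 in~\cite{mourrat_plane_2015}'' that the paper invokes in lieu of a detailed argument: frequency localization of the paraproduct, the weighted H\"older splitting $\rho^2=\rho\cdot\rho$ to separate the $\mathscr{C}^\alpha(\rho)$ and $B^\beta_{p,q}(\rho)$ factors, and Young's convolution inequality on $\ell^q$. Your endpoint remark on $\beta=0$ is apt, though in the paper's applications one always has $\beta<0$ strictly, so the issue does not arise.
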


The proof is an easy modification of the proof of Theorem 3.17
in~{\cite{mourrat_plane_2015}}.

\begin{proposition}
  \label{commutatorestimate}Let $\alpha \in (0, 1)$ $\beta, \gamma \in
  \mathbb{R}$ such that $\beta + \gamma < 0$, $\alpha + \beta + \gamma > 0$
  and $p_1, p_2, p_3, p \in [1, \infty]$ such that $\frac{1}{p_1} +
  \frac{1}{p_2} + \frac{1}{p_3} = \frac{1}{p}$. Then there exists a bounded
  trilinear form $\mathfrak{K}_1 (f, g, h)$ such that for any $\delta > 0$,
  \[ \| \mathfrak{K}_1 (f, g, h) \|_{B_{p_{}, \infty}^{\alpha + \beta +
     \gamma}} \lesssim \| g \|_{B_{p_1, \infty}^{\alpha}} \| f \|_{B_{p_2,
     \infty}^{\beta}} \| h \|_{B_{p_3, \infty}^{\gamma}}, \]
  and when $f, g, h \in \CS$ we have
  \[ \mathfrak{K}_1 (f, g, h) = (f \succ g) \circ h - g (f \circ h) . \]
\end{proposition}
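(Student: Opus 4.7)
The plan is to adapt the classical paraproduct commutator lemma (cf.~Bony, Meyer, and Gubinelli--Imkeller--Perkowski) to the weighted $L^p$-based Besov setting of the paper. The guiding observation, exploiting the symmetry $f \succ g = g \prec f$, is that
\[
(f \succ g) \circ h = (g \prec f) \circ h = \sum_{|i-j| \leq 1} \Delta_i (g \prec f) \, \Delta_j h,
\]
and that $\Delta_i (g \prec f) = \Delta_i \bigl( \sum_k S_{k-1} g \, \Delta_k f \bigr)$. Since $S_{k-1} g \, \Delta_k f$ has Fourier support concentrated at scale $2^k$, only $k \sim i$ contributes, so up to a boundary term handled by Bony's standard estimates we have $\Delta_i (g \prec f) \approx S_{i-1} g \, \Delta_i f$. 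Comparing with $g(f \circ h) = \sum_{|i-j|\leq 1} g \,\Delta_i f \,\Delta_j h$, the commutator reduces to
\[
\mathfrak{K}_1(f,g,h) \approx \sum_{|i-j|\leq 1} (S_{i-1} g - g) \, \Delta_i f \, \Delta_j h,
\]
and the $\alpha$-regularity of $g$ furnishes a gain of $2^{-i\alpha}$ that absorbs the negative regularity $\beta + \gamma$ of the ill-defined product $f \circ h$.

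First, I would define $\mathfrak{K}_1$ for $f, g, h \in \CS$ by the explicit formula, perform the Littlewood--Paley expansion above, and carefully keep track of the boundary piece $R_i$ coming from the Fourier-support restriction $i \sim k$. Second, I would estimate each block $\Delta_k \mathfrak{K}_1(f,g,h)$. Each summand $(S_{i-1} g - g)\,\Delta_i f\,\Delta_j h$ has Fourier support at scale $\lesssim 2^i$, so only indices $i \gtrsim k$ contribute to $\Delta_k$. For each such $i$, the three-fold H\"older inequality in $L^p(\rho)$ with exponents $p_1, p_2, p_3$, together with the standard Besov characterizations
\[
\| g - S_{i-1} g \|_{L^{p_1}} \lesssim 2^{-i\alpha} \| g \|_{B^\alpha_{p_1, \infty}}, \quad \| \Delta_i f \|_{L^{p_2}} \lesssim 2^{-i\beta} \| f \|_{B^\beta_{p_2, \infty}}, \quad \| \Delta_j h \|_{L^{p_3}} \lesssim 2^{-j\gamma} \| h \|_{B^\gamma_{p_3, \infty}},
\]
yields $\| \Delta_k \mathfrak{K}_1 \|_{L^p} \lesssim \sum_{i \gtrsim k} 2^{-i(\alpha+\beta+\gamma)} \|g\|_{B^\alpha_{p_1,\infty}} \|f\|_{B^\beta_{p_2,\infty}} \|h\|_{B^\gamma_{p_3,\infty}}$. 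The geometric series converges precisely because $\alpha + \beta + \gamma > 0$ and sums to $2^{-k(\alpha+\beta+\gamma)}$, giving the desired $B^{\alpha+\beta+\gamma}_{p,\infty}$ estimate. Finally, I would extend $\mathfrak{K}_1$ to arbitrary arguments by density of $\CS$, which is the key point since neither $(f \succ g) \circ h$ nor $g(f \circ h)$ is individually defined on the full Besov spaces when $\beta + \gamma < 0$.

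The main obstacle is the bookkeeping of the Littlewood--Paley boundary terms together with the weight compatibility: one must verify that H\"older's inequality $L^{p_1}(\rho) \times L^{p_2}(\rho) \times L^{p_3}(\rho) \to L^p(\rho)$ and the Bernstein-type inequalities used implicitly remain valid in the polynomial-weighted setting, and that the summability gain $2^{-i\alpha}$ from $g - S_{i-1} g$ survives in the weighted $L^{p_1}(\rho)$ norm. The constraint $\alpha \in (0,1)$ is essential: the bound $\|g - S_{i-1}g\|_{L^{p_1}} \lesssim 2^{-i\alpha}\|g\|_{B^\alpha_{p_1,\infty}}$ is morally a first-order Taylor estimate, and beyond $\alpha = 1$ one would need either a higher-order expansion or a modified commutator that subtracts additional terms.
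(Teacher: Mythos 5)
Your overall strategy is the same as the paper's (the Bony/GIP commutator decomposition, reducing to $\sum_{|i-j|\leqslant 1}(S_{i-1}g-g)\,\Delta_i f\,\Delta_j h$ after absorbing the $[\Delta_i,S_{k-1}g]$ boundary terms), but there is a concrete gap in the final summation step. You claim that each summand $(S_{i-1}g-g)\,\Delta_i f\,\Delta_j h$ has Fourier support at scale $\lesssim 2^i$, so that only $i\gtrsim k$ contributes to $\Delta_k$ of the commutator. This is false: $g-S_{i-1}g=\sum_{l\geqslant i-1}\Delta_l g$ contains arbitrarily high frequencies, so the product can have Fourier modes at any scale $2^l$ with $l\geqslant i$, and $\Delta_k$ picks them up for all $k\lesssim l$, including $k\gg i$. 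A telltale sign of the gap is that your argument never invokes the hypothesis $\beta+\gamma<0$, which is not decorative.

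The repair is exactly the extra layer of bookkeeping the paper performs. Decompose the tail as $\sum_{l\geqslant i-1}\Delta_l g\,\Delta_i f\,\Delta_j h$; each such term has Fourier transform supported in a ball of radius $\sim 2^{\max(l,i)}=2^l$, hence contributes to $\Delta_k$ only when $l\gtrsim k$. H\"older then gives, per term, $2^{-l\alpha}2^{-i(\beta+\gamma)}$ times the three Besov norms; the sum over $i\leqslant l+O(1)$ converges to $\lesssim 2^{-l(\beta+\gamma)}$ \emph{because} $\beta+\gamma<0$, and the remaining sum $\sum_{l\gtrsim k}2^{-l(\alpha+\beta+\gamma)}\lesssim 2^{-k(\alpha+\beta+\gamma)}$ converges because $\alpha+\beta+\gamma>0$. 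This is precisely the paper's treatment of the piece $\mathbbm{1}_{k\geqslant i+N}\,\Delta_k g\,\Delta_j f\,\Delta_i h$ (the paper's index $k$ on $g$ playing the role of $l$ here), while the low-frequency piece $k\leqslant i+N$ is handled by the commutator remainder $R_j$, as in your sketch. With that correction, and the density argument you already indicate for extending $\mathfrak{K}_1$ beyond Schwartz functions, the proof closes.
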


\begin{proof}
  The proof is a slight modification of the one given in
  {\cite{gubinelli_paracontrolled_2015}} Lemma 2.97 from
  {\cite{bahouri_fourier_2011}} and interpolation implies that $\| \Delta_j f
  g - \Delta_j (f g) \|_{L^p} \leq 2^{- j \alpha} \| f \|_{W^{\alpha, p_1}} \|
  g \|_{L^{p_2}}$. This in turn gives after some algebraic computations(see
  {\cite{gubinelli_paracontrolled_2015}})that
  \[ \Delta_j (f \succ g) = (\Delta_j f) \succ g + R_j (f, g) \]
  with $\| R_j (f, g) \|_{L^p} \lesssim 2^{- j (\alpha + \beta)} \| f
  \|_{B_{p_1, \infty}^{\alpha}} \| g \|_{B_{p_2, \infty}^{\beta}}$. Now to
  prove the statement of the proposition observe that for smooth $f, g, h$ we
  have
  \[ \mathfrak{K}_1 (f, g, h) = \sum_{j, k \geqslant - 1} \sum_{| i - j |
     \leqslant 1} \Delta_j (f \succ \Delta_k g) \Delta_i h - \Delta_k g
     \Delta_j f_i h \]
  Now observe that the term $f \succ \Delta_k g$ has Fourier transform outside
  of $2^k B$ for some Ball $B$ independent of $k$, so choosing $N$ large
  enough we can rewrite the sum as
  \[ \mathfrak{K}_1 (f, g, h) = \sum_{j, k \geqslant - 1} \sum_{| i - j |
     \leqslant 1} \mathbbm{1}_{k \leqslant i + N} (\Delta_j f \Delta_k g
     \Delta_i h + R_j (f, \Delta_k g)) - \Delta_k g \Delta_j f_i h \]
  \[ \sum_{j, k \geqslant - 1} \sum_{| i - j | \leqslant 1} \mathbbm{1}_{k
     \leqslant i + N} R_j (f, \Delta_k g) _i h - \mathbbm{1}_{k \geqslant i +
     N} \Delta_k g \Delta_j f_i h \]
  Now we estimate the norm of the two terms separately. First note that for
  fixed $j$
  \[ \sum_{k \geqslant - 1} \sum_{| i - j | \leqslant 1} \mathbbm{1}_{k
     \leqslant i + N} R_j (f, \Delta_k g) \]
  has a Fourier transform supported in $2^j B$. By Lemma 2.69 from
  {\cite{bahouri_fourier_2011}} it is enough to get an estimate on  $\sup_k
  \left\| 2^{(\alpha + \beta + \gamma) j} \sum_{j \geqslant - 1} \sum_{| i - j
  | \leqslant 1} \mathbbm{1}_{k \leqslant i + N} R_j (f, \Delta_k g) _i h
  \right\|_{L^p}$ to estimate it in $B_{p_{}, \infty}^{\alpha + \beta +
  \gamma}$, so by H{\"o}lder inequality
  \begin{eqnarray*}
    \left\| \sum_{| i - j | \leqslant 1} R_j \left( f, \sum^{i + N}_{k
    \geqslant - 1} \Delta_k g \right) _i h \right\|_{L^p} & \lesssim & \sum_{|
    i - j | \leqslant 1} 2^{- j (\alpha + \beta)} 2^{- i \gamma} \| g
    \|_{B_{p_1, \infty}^{\alpha}} \| f \|_{B_{p_2, q_1}^{\beta}} \| h
    \|_{B_{p_3, q_2}^{\gamma}}\\
    & \lesssim & 2^{- j (\alpha + \beta + \gamma)} \| g \|_{B_{p_1,
    \infty}^{\alpha}} \| f \|_{B_{p_2, q_1}^{\beta}} \| h \|_{B_{p_3,
    q_2}^{\gamma}}
  \end{eqnarray*}
  For the second term observe that for fixed $k$ the Fourier transform of
  \[ \sum_{j \geqslant - 1} \sum_{| i - j | \leqslant 1} \mathbbm{1}_{k
     \geqslant i + N} \Delta_k g \Delta_j f_i h \]
  is supported in $2^k B$. Now we can estimate again by H{\"o}lder inequality
  \begin{eqnarray*}
    & \lesssim & \left\| \sum_{j \geqslant - 1} \sum_{| i - j | \leqslant 1}
    \mathbbm{1}_{k \geqslant i + N} \Delta_k g \Delta_j f_i h \right\|_{L^p}\\
    & \lesssim & 2^{- \alpha k} \sum^{k + N}_{j \geqslant - 1} 2^{- (\beta +
    \gamma) k} \mathbbm{1}_{k \geqslant i + N} \| g \|_{B_{p_1,
    \infty}^{\alpha}} \| f \|_{B_{p_2, \infty}^{\beta}} \| h \|_{B_{p_1,
    \infty}^{\gamma}}\\
    & \lesssim & 2^{- j (\alpha + \beta + \gamma)} \| g \|_{B_{p_1,
    \infty}^{\alpha}} \| f \|_{B_{p_2, q_1}^{\beta}} \| h \|_{B_{p_3,
    q_2}^{\gamma}}
  \end{eqnarray*}
  \[ \  \]
\end{proof}

\begin{proposition}
  \label{adjointparaproduct}Assume $\alpha \in (0, 1)$, $\beta, \gamma \in
  \mathbb{R}$ such that $\beta + \gamma < 0$, and $\alpha + \beta + \gamma =
  0$ , $\frac{1}{p_1} + \frac{1}{p_2} + \frac{1}{p_3} = 1$ and $\frac{1}{q_1}
  + \frac{1}{q_2} = 1$. Then there exists a bounded trilinear form
  $\mathfrak{K}_2 (f, g, h)$ for which
  \[ | \mathfrak{K}_2 (f, g, h) | \lesssim \| f \|_{B_{p_1, \infty}^{\alpha}}
     \| g \|_{B_{p_2, q_1}^{\beta}} \| h \|_{B_{p_2, q_2}^{\gamma}} \]
  and
  \[ \mathfrak{K}_2 (f, g, h) = \frac{1}{| \Lambda |} \int_{\Lambda} [(f \succ
     g) h - (f \circ h) g] \]
  for smooth functions.
\end{proposition}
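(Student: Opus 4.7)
The plan is to reduce the estimate for $\mathfrak{K}_2$ to an integrated form of the commutator $\mathfrak{K}_1$ from Proposition~\ref{commutatorestimate}, and then refine that commutator estimate to handle the borderline case $\alpha+\beta+\gamma=0$.

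\textbf{Reduction to an integrated commutator.} The first observation is that for any Schwartz distributions $u,v$ on $\Lambda$ and any Littlewood--Paley blocks with $|i-j|>1$, the Fourier supports of $\Delta_i u$ and $\Delta_j v$ are disjoint, so $\bint \Delta_i u\,\Delta_j v = 0$. Consequently the paraproducts $u\succ v$ and $u\prec v$ have vanishing spatial mean on $\Lambda$. Writing
\begin{align*}
(f\succ g)\,h &= (f\succ g)\succ h + (f\succ g)\circ h + (f\succ g)\prec h,\\
g\,(f\circ h) &= g\succ (f\circ h) + g\circ (f\circ h) + g\prec (f\circ h),
\end{align*}
and applying $\bint$ eliminates four terms out of six and leaves the clean identity
\[
\mathfrak{K}_2(f,g,h) \;=\; \bint\bigl[(f\succ g)\circ h \;-\; g\circ (f\circ h)\bigr] \;=\; \bint \mathfrak{K}_1(f,g,h),
\]
with $\mathfrak{K}_1$ the commutator of Proposition~\ref{commutatorestimate}.

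\textbf{Estimating the integrated commutator.} I would then revisit the proof of Proposition~\ref{commutatorestimate}, but track only the spatial mean $\bint \mathfrak{K}_1$ rather than its pointwise Besov norm. This bypasses the strict inequality $\alpha+\beta+\gamma>0$: it is enough to have an $L^1(\Lambda)$-type bound, which is what the endpoint $\alpha+\beta+\gamma=0$ delivers. Concretely, write $\mathfrak{K}_1$ as the two-piece LP sum from the proof of Proposition~\ref{commutatorestimate}: the ``shifted paraproduct remainder'' $R_j(f,\Delta_k g)\Delta_i h$ for $|i-j|\le 1$, $k\le i+N$, and the ``off-diagonal tail'' $\Delta_k g\,\Delta_j f\,\Delta_i h$ for $|i-j|\le 1$, $k\ge i+N$. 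Setting $a_i = 2^{\alpha i}\|\Delta_i f\|_{L^{p_1}}$, $b_j = 2^{\beta j}\|\Delta_j g\|_{L^{p_2}}$, $c_k = 2^{\gamma k}\|\Delta_k h\|_{L^{p_3}}$ (so $\|a\|_{\ell^\infty}=\|f\|_{B^\alpha_{p_1,\infty}}$, $\|b\|_{\ell^{q_1}}=\|g\|_{B^\beta_{p_2,q_1}}$, $\|c\|_{\ell^{q_2}}=\|h\|_{B^\gamma_{p_3,q_2}}$), Hölder's inequality (licit by $1/p_1+1/p_2+1/p_3=1$) and the commutator bound $\|R_j(f,\Delta_k g)\|_{L^{p_{12}}}\lesssim 2^{-\alpha j-\beta k}a_j b_k$ give
\[
\Bigl|\bint R_j(f,\Delta_k g)\,\Delta_i h\Bigr| \;\lesssim\; 2^{-\alpha i-\beta k-\gamma i}\,a_i b_k c_i,
\]
with the analogous bound, $\beta \leftrightarrow \gamma$, for the off-diagonal piece.

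\textbf{Summation.} Using $\alpha+\beta+\gamma=0$ the exponent $-\alpha i-\beta k-\gamma i$ collapses to $-\beta(k-i)$, so the free-index sum over $k$ is geometric with ratio $2^{-\beta}$. Applying Hölder in $\ell^{q_1}$--$\ell^{q_1'}$ on the $k$-sum for the first piece (and $\ell^{q_2}$--$\ell^{q_2'}$ on the analogous $k$-sum for the second), and then Hölder in the conjugate pair $\ell^{q_1}$--$\ell^{q_2}$ on the remaining diagonal index (valid by $1/q_1+1/q_2=1$), yields the target bound $\|f\|_{B^\alpha_{p_1,\infty}}\|g\|_{B^\beta_{p_2,q_1}}\|h\|_{B^\gamma_{p_3,q_2}}$. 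Density of smooth functions in the Besov spaces and trilinearity extend $\mathfrak{K}_2$ to the full product space.

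\textbf{Main obstacle.} The delicate point is in Step 3: one must choose the Hölder orderings so that the geometric series in the free index converges. This is where $\beta+\gamma<0$ is crucial --- in whichever piece the ``wrong-sign'' exponent appears, the companion weight guarantees summability, but only the combined condition $\beta+\gamma<0$ forces both pieces to behave simultaneously. The condition $\alpha+\beta+\gamma=0$ is relaxed from the strict $>0$ of Proposition~\ref{commutatorestimate} precisely by the zero-mean reduction of Step 1, which transforms a (fail-at-endpoint) Besov bound into a (succeed-at-endpoint) $L^1$-type bound via the pairing with the constant function.
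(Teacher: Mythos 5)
Your Step 1 is correct and is a clean way to start: since $u\succ v$ and $u\prec v$ have vanishing mean, indeed $\mathfrak{K}_2(f,g,h)=\bint\mathfrak{K}_1(f,g,h)$. The paper proceeds differently — it expands both terms of $\mathfrak{K}_2$ directly into Littlewood--Paley trilinear pairings $\langle\Delta_i g,\Delta_j h\,\Delta_k f\rangle$ and observes that after the cancellation between $(f\succ g)h$ and $(f\circ h)g$ the surviving index set is essentially diagonal ($i\sim j\sim k$, since $\langle\Delta_i g,\Delta_j h\Delta_k f\rangle=0$ unless $2^i\lesssim 2^{\max(j,k)}$), so a single H\"older application in $\ell^\infty\times\ell^{q_1}\times\ell^{q_2}$ finishes without any infinite geometric sum. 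The whole point of that algebra is that the endpoint $\alpha+\beta+\gamma=0$ costs nothing once no index is free.

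The gap in your proposal is in the summation step, and it is not cosmetic. In the decomposition of $\mathfrak{K}_1$ that you borrow, \emph{both} pieces carry the free index $k$ on the same factor $g$: the remainder piece is $\sum_{k\leqslant i+N}R_j(f,\Delta_k g)\Delta_i h$ and the off-diagonal piece is $\sum_{k\geqslant i+N}\Delta_k g\,\Delta_j f\,\Delta_i h$. With your bookkeeping the first gives a series with ratio $2^{-\beta(k-i)}$ over $k-i\leqslant N$ (convergent only if $\beta<0$) and the second gives the same ratio over $k-i\geqslant N$ (convergent only if $\beta>0$); these conditions are mutually exclusive, and the hypotheses allow either sign of $\beta$ (e.g.\ $\alpha=1/2$, $\beta=1/4$, $\gamma=-3/4$, which is exactly the shape that occurs in the applications where one of $g,h$ is a positive-regularity $K_T$). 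Your claim that ``the companion weight guarantees summability'' rests on the second piece being weighted by $\gamma$ rather than $\beta$, which is a misidentification of the structure; $\beta+\gamma<0$ does not rescue this. Two repairs are available: (i) note that for $N$ large the off-diagonal piece has \emph{identically zero mean} (the Fourier support of $\Delta_k g\,\Delta_j f\,\Delta_i h$ with $k\geqslant i+N$, $j\sim i$ misses the origin), so only the remainder piece survives the integration, and then use the genuine commutator gain $\|R_j(f,\Delta_k g)\|_{L^{p_{12}}}\lesssim 2^{-(j-k)}2^{-\alpha j-\beta k}a_jb_k$ (one derivative transferred onto the low block), which improves the ratio to $2^{(1-\beta)(k-i)}$ — but this still only covers $\beta<1$, not the full stated range; or (ii) abandon the detour through $\mathfrak{K}_1$ and perform the cancellation at the level of $\mathfrak{K}_2$ itself, as the paper does, so that the free index disappears before any estimate is made. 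As written, your argument does not prove the proposition under its stated hypotheses.
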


\begin{proof}
  This is modification of the proof of Lemma A.6 in
  {\cite{gubinelli_hamiltonian_2018}}. Repeating an algebraic computation
  given in {\cite{gubinelli_hamiltonian_2018}} in the proof of Lemma A.6, we
  get that for smooth $f, g, h$ we have
  \[ \mathfrak{K}_2 (f, g, h) = \left( \sum_{i \geqslant k - 1, | j - k |
     \leqslant L} - \sum_{i \sim k, 1 < | j - k | \leqslant L} \right) \langle
     \Delta_i g, \Delta_j h \Delta_k f \rangle \]
  Then we estimate
  \begin{eqnarray*}
    | \mathfrak{K}_2 (f, g, h) | & \lesssim & \left( \sum_{i \geqslant k - 1,
    | j - k | \leqslant L} - \sum_{i \sim k, 1 < | j - k | \leqslant L}
    \right) \langle \Delta_i g, \Delta_j h \Delta_k f \rangle\\
    & \lesssim & \sum_{i \geqslant k, j \sim k} | \langle \Delta_i g,
    \Delta_j h \Delta_k f \rangle |\\
    & \lesssim & \sum_{i \geqslant k, j \sim k} \| \Delta_k f \|_{L^{p_1}} \|
    \Delta_i g \|_{L^{p_2}} \| \Delta_j h \|_{L^{p_3}}\\
    & \lesssim & \sup_k  \left( 2^{\alpha k} \| \Delta_k f \|_{L^{p_1}}
    \right) \sum_k \sum_{i \geqslant k, j \sim k} 2^{(\beta + \gamma) k} \|
    \Delta_i g \|_{L^{p_2}} \| \Delta_j h \|_{L^{p_3}}\\
    & \lesssim & \| f \|_{B_{p_1, \infty}^{\alpha}} \| g \|_{B_{p_2,
    q_1}^{\beta}} \| h \|_{B_{p_2, q_2}^{\gamma}}
  \end{eqnarray*}
\end{proof}

\begin{proposition}
  \label{prop:squarecomm}There exists a family $(\mathfrak{K}_{3, t})_{t
  \geqslant 0}$ of bounded multilinear forms on $\VV^{- 1 - \kappa} \times
  \VV^{- 1 - \kappa} \times H^{1 / 2 - \delta} \times H^{1 / 2 - \delta}$ such
  that for smooth $\varphi, \psi, g^{(1)}, g^{(2)}$ it holds
  \[ \mathfrak{K}_{3, t} (\varphi, \psi \comma g^{(1)}, g^{(2)}) = \bint [J_t
     (\varphi \succ g^{(1)}) J_t (\psi \succ g^{(2)}) - (J_t \varphi \circ J_t
     \psi) g^{(1)} g^{(2)}], \]
  and
  \[ | \mathfrak{K}_{3, t} (\varphi, \psi, g^{(1)}, g^{(2)}) | \lesssim
     \frac{1}{\langle t \rangle^{1 + \delta}} \| \varphi \|_{\VV^{- 1 -
     \kappa}} \| \psi \|_{\VV^{- 1 - \kappa}} \| g^{(1)} \|_{H^{1 / 2 -
     \delta}} \| g^{(2)} \|_{H^{1 / 2 - \delta}}, \]
  for some $\delta > 0$.
\end{proposition}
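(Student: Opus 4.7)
The strategy is to prove the bound via two successive paracontrolled decompositions that isolate and cancel the resonant structure $J_t\varphi \circ J_t\psi$, and then to extract the decay in $t$ from the frequency localization of the symbol $\sigma_t$ at scale $t$.

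\textbf{Step 1 (exchange $J_t$ with the outer paraproduct).} The Fourier multiplier $\langle t\rangle^{1/2}\, \sigma_t(D)/\langle D\rangle$ is admissible in the sense of the paraproduct--Leibniz rule of Proposition~\ref{paraproductleibniz} (its symbol is a smoothly rescaled bump supported in an annulus of radius $\sim t$). This allows us to write
\[
  J_t(\varphi \succ g^{(1)}) = (J_t\varphi) \succ g^{(1)} + R^{(1)}_t(\varphi, g^{(1)}),
  \qquad J_t(\psi \succ g^{(2)}) = (J_t\psi) \succ g^{(2)} + R^{(2)}_t(\psi, g^{(2)}),
\]
with commutator remainders $R^{(i)}_t$ bounded in a suitable Sobolev norm by $\langle t\rangle^{-1-\delta'}\, \|\varphi\|_{\VV^{-1-\kappa}}\, \|g^{(i)}\|_{H^{1/2-\delta}}$ for some $\delta' > \delta$. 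Substituting these identities into $\bint J_t(\varphi \succ g^{(1)})\, J_t(\psi \succ g^{(2)})$ produces the main term $\bint ((J_t\varphi) \succ g^{(1)})((J_t\psi) \succ g^{(2)})$ together with four cross and mixed integrals involving $R^{(i)}_t$, each estimated by duality and Proposition~\ref{paraproductestimate} against the negative regularity of $J_t\varphi$ or $J_t\psi$.

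\textbf{Step 2 (extraction of the resonant contribution).} For the main term, paraproduct-decompose one of the factors and use the trilinear commutator of Proposition~\ref{commutatorestimate}:
\[
  ((J_t\varphi) \succ g^{(1)}) \circ J_t\psi \;=\; g^{(1)}\,(J_t\varphi \circ J_t\psi) \;+\; \mathfrak{K}_1(g^{(1)}, J_t\varphi, J_t\psi).
\]
Combined with the mirror decomposition in the $g^{(2)}$ factor, the resonant piece reconstructs exactly $\bint (J_t\varphi \circ J_t\psi)\, g^{(1)} g^{(2)}$, leaving paraproduct remainders ($\succ$ and $\prec$ type) and $\mathfrak{K}_1$ commutators. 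These remainders all gain strict positive regularity and can be paired against the $H^{1/2-\delta}$ functions $g^{(1)}, g^{(2)}$. This defines $\mathfrak{K}_{3,t}$ as the resulting sum.

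\textbf{Step 3 (decay in $t$).} Every factor $J_t\varphi$ satisfies $\|J_t\varphi\|_{\VV^\alpha} \lesssim \langle t\rangle^{\alpha - 1/2+\kappa}\, \|\varphi\|_{\VV^{-1-\kappa}}$ for all $\alpha$, by the Bernstein-type estimate combined with the frequency localization of $\sigma_t$ at scale $t$ and amplitude $\langle t\rangle^{-1/2}$. Using this with $\alpha$ slightly positive (to make resonant products well-defined) and slightly negative (to control paraproduct remainders), together with duality pairing against $g^{(i)} \in H^{1/2-\delta}$, each remainder from Steps~1--2 is bounded by a constant times
\[
  \langle t\rangle^{-1-\delta}\, \|\varphi\|_{\VV^{-1-\kappa}}\, \|\psi\|_{\VV^{-1-\kappa}}\, \|g^{(1)}\|_{H^{1/2-\delta}}\, \|g^{(2)}\|_{H^{1/2-\delta}}.
\]

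\textbf{Main obstacle.} The delicate point is obtaining the full $\langle t\rangle^{-1-\delta}$ decay with the modest regularity budget $H^{1/2-\delta}$ on $g^{(1)}, g^{(2)}$. A brute-force bound using $\|J_t\varphi\|_{L^\infty}\, \|J_t\psi\|_{L^\infty} \lesssim \langle t\rangle^{-1+2\kappa}$ falls short. The additional half-power of $\langle t\rangle^{-1/2-\delta}$ must come from the cancellation that makes $\mathfrak{K}_{3,t}$ a genuine commutator: the diagonal paraproduct structure forces the Littlewood--Paley block index to be $k \sim \log_2 t$, and one trades a fractional Sobolev derivative from $g^{(i)}$ (via interpolation between $H^{1/2-\delta}$ and $L^2$) for the missing factor $\langle t\rangle^{-1/2-\delta}$. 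Getting these bookkeeping estimates to balance in all three remainder families of Step~1 and all commutator/paraproduct pieces of Step~2 is where the argument requires the most care.
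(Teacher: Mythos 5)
Your choice of tools is right and Step~1 (replacing $J_t(\varphi\succ g^{(i)})$ by $J_t\varphi\succ g^{(i)}$ using Proposition~\ref{paraproductleibniz} applied to $\langle t\rangle^{1/2}J_t\in S^{-1}$) matches the paper exactly. Step~2, however, is where the argument has a gap: to pass from the product of two paraproducts $\bint (J_t\varphi\succ g^{(1)})(J_t\psi\succ g^{(2)})$ to a term with a resonance structure $((J_t\varphi\succ g^{(1)})\circ J_t\psi)\,g^{(2)}$, you need a quantitative argument — the paper uses the adjoint paraproduct estimate of Proposition~\ref{adjointparaproduct} (the $\mathfrak{K}_2$ form) for precisely this step. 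Your phrase ``paraproduct-decompose one of the factors'' does not deliver this: decomposing the second factor against the first gives three pieces, and while the $\prec$ and $\succ$ pieces indeed vanish under $\bint$ by frequency-support disjointness, the surviving $(J_t\varphi\succ g^{(1)})\circ(J_t\psi\succ g^{(2)})$ has $g^{(2)}$ glued into the third slot of $\mathfrak{K}_1$, which changes the regularity-arithmetic constraints ($\beta+\gamma<0$ and $\alpha+\beta+\gamma>0$) of Proposition~\ref{commutatorestimate} compared with the paper's $\mathfrak{K}_1(J_t\varphi, g^{(1)}, J_t\psi)$. You would have to re-verify these constraints and also that you can still invoke $\mathfrak{K}_1$ a second time to peel off $g^{(2)}$; you did not carry out this bookkeeping.

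Your ``Main obstacle'' discussion is also not how the decay actually arises. You propose trading a fractional derivative from $g^{(1)},g^{(2)}$ by interpolation to buy $\langle t\rangle^{-1/2-\delta}$. The paper does not do this, and it is unnecessary: $g^{(i)}$ stays at regularity $H^{1/2-\delta}$ throughout. The full $\langle t\rangle^{-1-\delta}$ comes from (i) the fact that $\langle t\rangle^{1/2}J_t\in S^{-1}$, so the commutator in Proposition~\ref{paraproductleibniz} gains one whole derivative in frequency and the normalization contributes $\langle t\rangle^{-1/2}$, and (ii) the spectral localization of $J_t$ in an annulus of radius $\sim t$, so that in the dual pairing against $J_t(\psi\succ g^{(2)})$ you may estimate that factor in a negative Sobolev norm and pick up additional inverse powers of $\langle t\rangle$ by Bernstein, still without touching $g^{(2)}$. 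Because you flagged this as an unresolved obstacle and proposed a resolution that is not needed, I would count this as a genuine gap: you had the right pieces, but did not see how the commutator gain and the annular support of $\sigma_t$ already combine to give the full decay.
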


\tmcolor{black}{\begin{proof}
  Note that $\langle t \rangle^{1 / 2} J_t$ satisfies the assumptions of
  Proposition~\ref{paraproductleibniz} and with $m = - 1$, therefore using
  also Proposition \ref{besovembedding}
  \[ \| J_t (\varphi \succ g^{(1)}) - J_t \varphi \succ g^{(1)} \|_{H^{1 / 2 -
     2 \delta - \kappa}} \lesssim \langle t \rangle^{- 1 / 2} \| \varphi
     \|_{\VV^{- 1 - \kappa}} \| g^{(1)} \|_{H^{1 / 2 - \delta}} \]
  and therefore
  \begin{eqnarray*}
    &  & \left| \bint [J_t (\varphi \succ g^{(1)}) - (J_t \varphi \succ
    g^{(1)})] J_t (\psi \succ g^{(2)}) \right|\\
    & \lesssim & \| J_t (\varphi \succ g^{(1)}) - J_t \varphi \succ g^{(1)}
    \|_{H^{1 / 2 - 2 \delta - \kappa}} \| J_t (\psi \succ g^{(2)}) \|_{H^{- 1
    / 2 + 2 \delta + \kappa}}\\
    & \lesssim & \langle t \rangle^{- 1 / 2} \| \varphi \|_{\VV^{- 1 -
    \kappa}} \| g^{(1)} \|_{H^{1 / 2 - \delta}} \langle t \rangle^{- 1 / 2 -
    \delta} \| \psi \|_{\VV^{- 1 - \kappa}} \| g^{(2)} \|_{H^{1 / 2 - \delta}}
  \end{eqnarray*}
  and by symmetry also
  \begin{eqnarray*}
    &  & \left| \bint [J_t (\varphi \succ g^{(1)}) J_t (\psi \succ g^{(2)}) -
    (J_t \varphi \succ g^{(1)}) (J_t \psi \succ g^{(2)})] \right|\\
    & \lesssim & \langle t \rangle^{- 1 - \delta} \| \varphi \|_{\VV^{- 1 -
    \kappa}} \| g^{(1)} \|_{H^{1 / 2 - \delta}} \| \psi \|_{\VV^{- 1 -
    \kappa}} \| g^{(2)} \|_{H^{1 / 2 - \delta}}
  \end{eqnarray*}
  Furthermore from Proposition~\ref{adjointparaproduct} and for sufficiently
  small $\kappa, \delta$
  \begin{eqnarray*}
    &  & \left| \bint (J_t \varphi \succ g^{(1)}) (J_t \psi \succ g^{(2)}) -
    \bint ((J_t \varphi \succ g^{(1)}) \circ J_t \psi) g^{(1)}_t \right|\\
    & \lesssim & \| J_t \varphi \|_{\VV^{- \kappa - \delta}}  \| g^{(1)}
    \|_{H^{1 / 2 - \delta}}  \| J_t \psi \|_{\VV^{- \kappa}} \| g^{(2)}
    \|_{H^{1 / 2 - \delta}}\\
    & \lesssim & \langle t \rangle^{- 1 - \delta} \| \varphi \|_{\VV^{- 1 -
    \kappa}}  \| g^{(1)} \|_{H^{1 / 2 - \delta}}  \| \psi \|_{\VV^{- 1 -
    \kappa}} \| g^{(2)} \|_{H^{1 / 2 - \delta}}
  \end{eqnarray*}
  and applying Proposition-\ref{commutatorestimate}
  \begin{eqnarray*}
    &  & \| (J_t \varphi^{(1)} \succ g^{(1)}) \circ J_t \psi_t - (J_t
    \varphi_t \circ J_t \psi_t) (g^{(1)}) \|_{H^{- 1 / 2 + \delta}}\\
    & \lesssim & \| J_t \varphi_t \|_{\VV^{- \kappa - \delta}}  \| g^{(1)}
    \|_{H^{1 / 2 - \delta}}  \| J_t \psi_t \|_{\VV^{- \kappa}}\\
    & \lesssim & \langle t \rangle^{- 1 - \delta} \| \varphi \|_{\VV^{- 1 -
    \kappa}}  \| g^{(1)} \|_{H^{1 / 2 - \delta}}  \| \psi \|_{\VV^{- 1 -
    \kappa}}
  \end{eqnarray*}
  and putting things together gives the estimate.
  
  \ 
\end{proof}}

\begin{definition}
  A smooth function $\eta$ is said to be an $S^m$ multiplier if for every
  multiindex $\alpha$ there exists a constant $C_{\alpha}$ such that
  \begin{equation}
    \label{symbolinq} \quad \left| \frac{\partial^{\alpha}}{\partial
    \xi^{\alpha}} f (\xi) \right| \lesssim_{\alpha} (1 + | \xi |)^{m - |
    \alpha |}, \qquad \xi \in \mathbb{R}^d .
  \end{equation}
  We say that a family $\eta_t$ is a uniformly $S^m$ multiplier
  if~{\eqref{symbolinq}} is satisfied for every $t$ with $C_{\alpha}$
  independent of $t$. 
\end{definition}

\begin{proposition}
  \label{multiplierestimate}Let $\eta$ be an $S^m$ multiplier, $s \in
  \mathbb{R}$, $p, q \in [1, \infty]$, and $f \in B_{p, q}^s (\mathbb{T}^d)$,
  then
  \[ \| \eta (\mathD) f \|_{B_{p, q}^{s - m}} \lesssim \| f \|_{B_{p, q}^s} .
  \]
  Furthermore the constant depends only on $s, p, q, d$ and the constants
  $C_{\alpha}$ in {\eqref{symbolinq}}.
\end{proposition}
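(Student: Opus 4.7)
The proof is a standard Bernstein-type argument for Fourier multipliers on Littlewood--Paley blocks, so the plan is to reduce the estimate, block by block, to an $L^1$ bound on the kernel of a rescaled symbol.

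First I would localize: for each $j \geqslant -1$, $\Delta_j f$ is spectrally supported in an annulus $\{|\xi| \sim 2^j\}$ (or a ball for $j = -1$). Choose a smooth cutoff $\tilde{\varphi}$ which equals $1$ on $\operatorname{supp} \varphi$ and is supported slightly outside it, and set $\tilde{\varphi}_j = \tilde{\varphi}(2^{-j}\cdot)$ for $j \geqslant 0$, with the analogous ball-supported $\tilde{\varphi}_{-1}$. Since $\tilde{\varphi}_j \varphi_j = \varphi_j$, on the Fourier side one has $\Delta_j(\eta(\mathD) f) = \eta_j(\mathD) \Delta_j f$ with $\eta_j(\xi) \assign \eta(\xi) \tilde{\varphi}_j(\xi)$. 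So matters reduce to proving the uniform bound
\[
\| \eta_j(\mathD) g \|_{L^p} \lesssim 2^{jm} \| g \|_{L^p}, \qquad j \geqslant -1,
\]
for all $g \in L^p$, uniformly in $j$ and $1 \leqslant p \leqslant \infty$.

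For the rescaling I would write $\eta_j(\xi) = 2^{jm} h_j(2^{-j}\xi)$ with $h_j(\zeta) \assign 2^{-jm} \eta(2^j \zeta) \tilde{\varphi}(\zeta)$, supported in a fixed annulus independent of $j$. The symbol bound \eqref{symbolinq} gives $|\partial^\alpha \eta(2^j \zeta)| \lesssim 2^{j(m - |\alpha|)} 2^{j |\alpha|} = 2^{jm}$ on that annulus, so the Schwartz seminorms of $h_j$ are uniformly bounded in $j$, depending only on the constants $C_\alpha$. Consequently $\| \check{h}_j \|_{L^1} \leqslant C$ uniformly, and by the scaling identity $\check{\eta}_j(x) = 2^{jm} \cdot 2^{jd} \check{h}_j(2^j x)$ we obtain $\| \check{\eta}_j \|_{L^1} \lesssim 2^{jm}$. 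Young's convolution inequality then yields the claimed bound on $\eta_j(\mathD)$; the $j = -1$ case is handled the same way, with the annulus replaced by a fixed ball.

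Putting this together gives $\| \Delta_j(\eta(\mathD) f) \|_{L^p} \lesssim 2^{jm} \| \Delta_j f \|_{L^p}$, so multiplying both sides by $2^{j(s-m)}$ and taking the $\ell^q$ norm in $j$ produces exactly $\| \eta(\mathD) f \|_{B^{s-m}_{p,q}} \lesssim \| f \|_{B^s_{p,q}}$. The only slightly delicate point is the uniformity of the Schwartz seminorms of $h_j$, which requires the scaling in \eqref{symbolinq} to exactly compensate the $2^{jm}$ factor pulled out; this is the heart of Mihlin--Hörmander-type arguments and is where the $S^m$ hypothesis is essential. Everything else is bookkeeping, and the final constant depends only on $s$, $p$, $q$, $d$, and finitely many of the $C_\alpha$ in \eqref{symbolinq}, as claimed.
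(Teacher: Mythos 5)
Your proof is correct and is exactly the standard argument behind the reference the paper cites for this statement (Bahouri--Chemin--Danchin, Lemma 2.78): localization to a fixed annulus, rescaling of the symbol to get uniform Schwartz seminorms, an $L^1$ kernel bound, and Young's inequality. The only point worth a remark is that on the torus the kernel of $\eta_j(\mathD)$ is the periodization of $\check{\eta}_j$, so the $L^1(\mathbb{T}^d)$ bound follows from the $L^1(\mathbb{R}^d)$ bound by Poisson summation; this is routine since $\eta_j$ is smooth and compactly supported.
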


For a proof see {\cite{bahouri_fourier_2011}} Lemma 2.78.

\begin{proposition}
  \label{paraproductleibniz}Assume $m \leqslant 0$, $\alpha \in (0, 1), \beta
  \in \mathbb{R}$. Let $\eta$ be an $S^m$ multiplier and $q, p_1, p_2 \in [1,
  \infty]$, $\frac{1}{p_1} + \frac{1}{p_2} = \frac{1}{p}$, $f \in B_{p_1,
  \infty}^{\beta}$, $g \in B_{p_1, \infty}^{\alpha}$. Then for any $\delta >
  0$.
  \[ \| \eta (\mathD) (f \succ g) - (\eta (\mathD) f \succ g) \|_{B_{p,
     q}^{\alpha + \beta - m - \delta}} \lesssim \| f \|_{B_{p_1,
     \infty}^{\beta}} \| g \|_{B_{p_1, \infty}^{\alpha}} . \]
  The constant depends only on $\alpha, \beta, \delta$ and the constants in
  {\eqref{symbolinq}}.
\end{proposition}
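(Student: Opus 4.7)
The plan is to expand the difference dyadically in the Littlewood--Paley decomposition, recognize it as a sum of commutators, and exploit that $\eta \in S^{m}$ gives ``one derivative of gain'' against the low-frequency factor in each commutator.

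First, using the definition $f \succ g = \sum_{i} \Delta_i f \cdot S_{i-2} g$ with $S_{j} g \assign \sum_{k \leqslant j} \Delta_k g$, and the fact that $\eta(\mathD)$ commutes with itself, I would write
\[ \eta(\mathD)(f \succ g) - (\eta(\mathD) f) \succ g \;=\; \sum_{i \geqslant 1} \bigl[\eta(\mathD),\, M_{S_{i-2} g}\bigr] \Delta_i f, \]
where $M_h$ denotes pointwise multiplication by $h$. The crucial observation is that the summand indexed by $i$ has Fourier support contained in an annulus of radius $\sim 2^i$ (this is exactly the reason the paraproduct is defined with the gap $j < i-1$), so when one computes $\Delta_k$ of the full sum, only $i \sim k$ contribute, and the overall $B^{\bullet}_{p,q}$-norm can be read off a scale-by-scale bound.

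Next I would establish the single-scale commutator estimate. Writing $\eta(\mathD)\Delta_i = \tilde K_i \ast \cdot$ with $\tilde K_i = \mathcal{F}^{-1}(\eta \tilde \varphi_i)$ and $\tilde\varphi_i$ a smooth fattening of $\varphi_i$, the $S^m$ hypothesis together with integration by parts against $\bigl(1 + 2^{2i}|x|^2\bigr)^{-N}$ gives, for $m \leqslant 0$, the moment bounds
\[ \int |\tilde K_i(x)| \, \mathd x \;\lesssim\; 2^{im}, \qquad \int |x|\,|\tilde K_i(x)| \, \mathd x \;\lesssim\; 2^{i(m-1)}. \]
Using the commutator identity
\[ \bigl[\eta(\mathD), M_{S_{i-2} g}\bigr] \Delta_i f(x) \;=\; \int \tilde K_i(x-y)\bigl(S_{i-2} g(y) - S_{i-2} g(x)\bigr)\Delta_i f(y)\,\mathd y, \]
the mean value theorem on $S_{i-2} g$, together with Hölder's inequality ($1/p = 1/p_1 + 1/p_2$), yields
\[ \bigl\| \bigl[\eta(\mathD), M_{S_{i-2} g}\bigr]\Delta_i f \bigr\|_{L^p} \;\lesssim\; 2^{i(m-1)} \|\nabla S_{i-2} g\|_{L^{p_1}} \,\|\Delta_i f\|_{L^{p_2}}. \]
Inserting Bernstein's inequality $\|\nabla S_{i-2} g\|_{L^{p_1}} \lesssim \sum_{j < i-1} 2^j \|\Delta_j g\|_{L^{p_1}} \lesssim 2^{i(1-\alpha)} \|g\|_{B^{\alpha}_{p_1,\infty}}$ (which uses $\alpha \in (0,1)$ so that the geometric sum converges) and $\|\Delta_i f\|_{L^{p_2}} \lesssim 2^{-i\beta}\|f\|_{B^{\beta}_{p_2,\infty}}$ produces the single-scale bound $2^{i(m-\alpha-\beta)} \|g\|_{B^\alpha_{p_1,\infty}} \|f\|_{B^\beta_{p_2,\infty}}$.

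Finally, because of the Fourier-support property, $\|\Delta_k(\text{difference})\|_{L^p}$ is bounded by $\sum_{i \sim k}$ of the single-scale estimate, and thus by $2^{k(m-\alpha-\beta)} \|g\|_{B^\alpha_{p_1,\infty}} \|f\|_{B^\beta_{p_2,\infty}}$. Multiplying by $2^{k(\alpha+\beta-m-\delta)}$ and taking $\ell^q$ in $k$ gives the claim, with the arbitrary loss $\delta > 0$ absorbed by the summable weight $2^{-k\delta}$ (this margin also accommodates the inhomogeneous low-frequency block $i = -1$, which must be treated separately using only the $L^1$ bound on $\tilde K_i$). The main technical point, and the only place where the structure of $S^m$ symbols really enters, is the first-moment bound $\int |x|\,|\tilde K_i| \lesssim 2^{i(m-1)}$; this is the precise quantitative statement that commuting $\eta(\mathD)$ through multiplication by a low-frequency function costs one derivative of regularity on that function while improving the action on the high-frequency factor by one order.
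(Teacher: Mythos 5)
Your argument is correct and is essentially the standard proof of the result the paper cites for this proposition (Bahouri--Chemin--Danchin, Lemma~2.99): the commutator identity $\sum_{i}[\eta(\mathD),M_{S_{i-2}g}]\Delta_i f$, the first-moment kernel bound $\int |x|\,|\tilde K_i|\lesssim 2^{i(m-1)}$, and Bernstein with $\alpha<1$ are exactly the ingredients of that proof. Note only that you have (correctly) read the hypotheses as $g\in B^{\alpha}_{p_1,\infty}$ and $f\in B^{\beta}_{p_2,\infty}$; the statement's double use of $p_1$ is a typo, since otherwise H\"older could not produce an $L^{p}$ bound with $1/p=1/p_1+1/p_2$.
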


For a proof see {\cite{bahouri_fourier_2011}} Lemma 2.99.

\begin{proposition}
  \label{gagliardo-nirenberg}Let $\theta$ $p, p_1, p_2$ and $s, s_1, s_2$ be
  such that $\frac{1}{p} = \frac{\theta}{p_1} + \frac{1 - \theta}{p_2}$ and $s
  = \theta s_1 + (1 - \theta) s_2$ and assume that $f \in W^{s_1, p_1} \cap
  W^{s_2, p_2}$. Then
  \[ \| f \|_{W^{s, p}} \leqslant \| f \|^{\theta}_{W^{s_1, p_1}} \| f \|^{1
     - \theta}_{W^{s_2, p_2}} . \]
\end{proposition}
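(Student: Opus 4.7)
The plan is to recognize this as the classical Gagliardo--Nirenberg interpolation inequality for fractional Sobolev spaces, whose complete proof (covering all admissible regimes) can be found in the Brezis--Mironescu reference already cited in the bibliography. Since all exponents relevant to the paper satisfy $p, p_1, p_2 \in (1,\infty)$, I would take the Littlewood--Paley route, which is both short and compatible with the Besov machinery of Appendix~\ref{sec:appendix-para}.

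First, I would use the Littlewood--Paley characterisation $\|f\|_{W^{s,p}} \simeq \|(2^{js}\|\Delta_j f\|_{L^p})_j\|_{\ell^{q_0}}$ valid for $p \in (1,\infty)$ with appropriate $q_0 \in \{2,p\}$ (this equivalence follows from Mihlin--Hörmander, Proposition~\ref{multiplierestimate}, and the classical Littlewood--Paley inequality). Then, for each dyadic block, the standard logarithmic convexity of $L^p$ norms gives
\[
\|\Delta_j f\|_{L^p} \leqslant \|\Delta_j f\|_{L^{p_1}}^{\theta}\|\Delta_j f\|_{L^{p_2}}^{1-\theta},
\]
since $1/p = \theta/p_1 + (1-\theta)/p_2$. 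Multiplying both sides by $2^{js} = (2^{js_1})^{\theta}(2^{js_2})^{1-\theta}$ yields
\[
2^{js}\|\Delta_j f\|_{L^p} \leqslant \bigl(2^{js_1}\|\Delta_j f\|_{L^{p_1}}\bigr)^{\theta}\bigl(2^{js_2}\|\Delta_j f\|_{L^{p_2}}\bigr)^{1-\theta}.
\]

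Second, I would take $\ell^{q_0}$ norms in $j$ and apply Hölder's inequality in $\ell^{q_0}$ with exponents $1/q_0 = \theta/q_0 + (1-\theta)/q_0$ to conclude
\[
\|(2^{js}\|\Delta_j f\|_{L^p})_j\|_{\ell^{q_0}} \leqslant \|(2^{js_1}\|\Delta_j f\|_{L^{p_1}})_j\|_{\ell^{q_0}}^{\theta}\|(2^{js_2}\|\Delta_j f\|_{L^{p_2}})_j\|_{\ell^{q_0}}^{1-\theta},
\]
then translate back to the $W^{s_i,p_i}$ norms on the right-hand side using the Littlewood--Paley equivalence in the reverse direction. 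A small subtlety is that the Littlewood--Paley characterisation has a slight mismatch between the natural $\ell^{q_i}$ exponent on each side, but this is handled either by using the embeddings of Proposition~\ref{besovembedding} (losing an arbitrarily small $\delta > 0$ in regularity, which is harmless for the paper's applications) or more cleanly by invoking complex interpolation for the analytic family $\langle D\rangle^{z s_1 + (1-z)s_2}$ between $L^{p_1}$ and $L^{p_2}$.

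The only genuine obstacle is the fractional regularity case, where a direct Hölder argument on derivatives is unavailable and one must rely either on the Littlewood--Paley block-by-block bound above or on the Calderón complex interpolation theorem for Bessel potential spaces. Both approaches require the restriction $p \in (1,\infty)$ (which is implicit in the definition of $W^{s,p}$ as used in the paper) and fail at the endpoints $p \in \{1,\infty\}$; since those endpoints are not needed elsewhere in the paper, this is not a real limitation.
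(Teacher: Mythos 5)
The paper does not actually prove this proposition: it simply cites Brezis--Mironescu, so there is no ``paper proof'' to match against. Your proposal identifies the same reference and then supplies a genuine argument, which is welcome. The argument is essentially correct on the Besov scale: the block-by-block bound $\|\Delta_j f\|_{L^p}\leqslant\|\Delta_j f\|_{L^{p_1}}^{\theta}\|\Delta_j f\|_{L^{p_2}}^{1-\theta}$ followed by H\"older in $\ell^{q}$ with exponents $1/\theta$ and $1/(1-\theta)$ gives, with constant one, the inequality $\|f\|_{B^{s}_{p,q}}\leqslant\|f\|_{B^{s_1}_{p_1,q}}^{\theta}\|f\|_{B^{s_2}_{p_2,q}}^{1-\theta}$. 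The one point you should state more carefully is the passage from Besov to the $W^{s,p}$ spaces of the statement: for $p\neq 2$ the Littlewood--Paley characterisation of $W^{s,p}$ is the $L^p(\ell^2)$ square-function norm, not an $\ell^{q_0}(L^p)$ norm, so the identity you write in the first step is only an equivalence up to the embeddings $B^{s}_{p,\min(p,2)}\hookrightarrow W^{s,p}\hookrightarrow B^{s}_{p,\max(p,2)}$; your block-by-block route therefore yields the $W^{s,p}$ statement only with an implicit constant (or after an $\varepsilon$-loss in regularity via Proposition~\ref{besovembedding}), not with the clean constant $1$ as written. Your second fallback --- complex interpolation for the analytic family $\langle\mathD\rangle^{zs_1+(1-z)s_2}$, i.e.\ Calder\'on's identification of $[W^{s_1,p_1},W^{s_2,p_2}]_{\theta}$ together with the three-lines interpolation inequality on the intersection --- is the correct way to get the statement on the Bessel-potential scale directly, and is in substance what the cited reference does. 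Since every application of this proposition in the paper (e.g.\ in Lemma~\ref{lemma:bound-cubic}) tolerates both an implicit constant and an arbitrarily small regularity loss, either of your fixes is adequate.
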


For a proof see {\cite{Brezis_2017}}.

\

\end{document}